\documentclass[12pt]{report}
\usepackage{graphicx}
\usepackage{latexsym}
\usepackage{amsmath}
\usepackage{amssymb}
\usepackage{amsthm}
\usepackage{CJK}
\usepackage[hyperindex,breaklinks]{hyperref}
\ExecuteOptions{dvips}
\marginparwidth 0pt
\oddsidemargin 1.5 truecm
\evensidemargin 1.5 truecm
\marginparsep 0pt
\topmargin 0pt
\textheight 22.0 truecm
\textwidth 14.5 truecm

\newtheorem{defn}{Definition}[section]
\newtheorem{thm}{Theorem}[section]
\newtheorem{lem}{Lemma}[section]
\newtheorem{cor}{Corollary}[section]
\newtheorem{fact}{Fact}[section]
\newtheorem{prop}{Proposition}[section]
\newtheorem*{conj}{Conjecture}
\newtheorem*{clm}{Claim}
\theoremstyle{remark}\newtheorem{rem}{Remark}[section]
\newtheorem*{sproof}{Sketch of the proof}
\begin{document}

\marginparsep 0pt
\textwidth 15.0 truecm


\setlength{\baselineskip}{0.780cm}
\pagestyle{empty}
\begin{center}
\Large{ \bf Survey on the Canonical Metrics on the Teichm\"{u}ller Spaces and the Moduli Spaces of Riemann Surfaces}
\end{center}

\vspace{12mm}
\begin{center}
CHAN, Kin Wai
\end{center}

\vspace{12mm}
\begin{center}
A Thesis Submitted in Partial Fulfillment \\
of the Requirements for the Degree of \\
Master of Philosophy \\
in \\
Mathematics
\end{center}

\vspace{4cm}
\begin{center}
The Chinese University of Hong Kong \\
September 2010
\end{center}

\newpage
\vspace*{8cm}
\begin{center}
\underline{Thesis Assessment Committee}\\
\vspace{1cm}
Professor LEUNG Nai Chung Conan (Chair)\\
Professor WAN Yau Heng Tom (Thesis Supervisor)\\
Professor AU Kwok Keung Thomas (Committee Member)\\
Professor CHEUNG Wing-Sum (External Examiner)
\end{center}

\newpage
\setcounter{page}{1}
\pagestyle{myheadings}
\markright{Canonical Metrics on the Teichm\"{u}ller Spaces}

\begin{center}
{\large {\bf ACKNOWLEDGMENTS}}
\end{center}

\vspace{5mm}

I shall hereby give a sincere gratitude to my supervisor, Prof. Tom Y. H. Wan, for his continuous support and encouragement to my research life during these two years.  With his aid, I gradually become more interested in and familiar with geometry.  Also, I am really thankful that he suggested me choosing this fascinating topic as my thesis topic.  Working on this topic actually let me acquire a clearer picture of complex geometry and the structure of moduli spaces.  I would like to thank Prof. Thomas K. K. Au as well, for his useful advice on my problems, both personal and academic.  On the road of applying foreign graduate schools he helped me a lot.  Moreover, I wish to express my appreciation to Prof. Conan N. C. Leung for his motivative lessons on different topics in geometry.  He is always able to bring us to the front of the research of geometry, and my knowledge and insight in geometry are thus broadened quite a lot.  Finally I offer my heartfelt thanks to all people who have helped me in any way, especially for those who gave me a hand for dealing with the difficulties I met when I was working on this thesis, like Andy Cheung, John Ma and Lijiang Wu.

\newpage
\noindent
{\Huge {\bf Abstract}}
\vspace{1.2cm}

\noindent
This thesis results from an intensive study on the canonical metrics on the Teichm\"{u}ller spaces and the moduli spaces of Riemann surfaces.  The Teichm\"{u}ller space $T_g$  of Riemann surfaces of genus $g$ can be defined to be the set of all the conformal equivalence classes of the Riemann surfaces of genus $g$ with quasiconformal mappings.  It is well known that the moduli space $\mathcal{M}_g$ of Riemann surfaces of genus g can be viewed as the quotient of $T_g$  by the mapping class group $Mod_g$.  There are several renowned classical metrics on $T_g$  and $\mathcal{M}_g$, including the Weil-Petersson metric, the Teichm\"{u}ller metric, the Kobayashi metric, the Bergman metric, the Carath\'{e}odory metric and the K\"{a}hler-Einstein metric.  The Teichm\"{u}ller metric, the Kobayashi metric and the Carath\'{e}odory metric are only (complete) Finsler metrics, but they are effective tools in the study of hyperbolic property of $\mathcal{M}_g$.  The Weil-Petersson metric is an incomplete K\"{a}hler metric, while the Bergman metric and the K\"{a}hler-Einstein metric are complete K\"{a}hler metrics.  However, McMullen\cite{bib3} introduced a new complete K\"{a}hler metric, called the McMullen metric, by perturbing the Weil-Petersson metric.  This metric is indeed equivalent to the Teichm\"{u}ller metric.  Recently, Liu-Sun-Yau\cite{bib1} proved that the equivalence of the K\"{a}hler-Einstein metric to the Teichm\"{u}ller metric, and hence gave a positive answer to a conjecture proposed by Yau\cite{bib15}.  Their approach in the proof is to introduce two new complete K\"{a}hler metrics, namely, the Ricci metric and the perturbed Ricci metric, and then establish the equivalence of the Ricci metric to the K\"{a}hler-Einstein metric and the equivalence of the Ricci metric to the McMullen metric.  The main purpose of this thesis is to survey the properties of these various metrics and the geometry of $T_g$ and $\mathcal{M}_g$ induced by these metrics.

\newpage
\begin{CJK}{Bg5}{bsmi}
\noindent
{\Huge {\bf 摘要}}
\vspace{1.2cm}

\noindent
在這篇論文中，我將會對Teichm\"{u}ller空間及黎曼曲面的模空間上的典範度量進行深入的研究。虧格為$g$的黎曼曲面的Teichm\"{u}ller空間$T_g$可以被定義為一個由配上了擬共形映射，而且虧格為$g$的黎曼曲面的共形等價類組成的集，而為人熟知的是虧格為$g$的黎曼曲面的模空間$\mathcal{M}_g$可以被視為由映射類群$Mog_g$對$T_g$所做成的商空間。在$T_g$和$\mathcal{M}_g$之上有好幾個有名的經典度量，包括Weil-Petersson度量、Teichm\"{u}ller度量、Kobayashi度量、Bergman度量、Carath\'{e}odory度量和K\"{a}hler-Einstein度量，其中Teichm\"{u}ller度量、Kobayashi度量和Carath\'{e}odory度量只是（完備的）芬斯勒度量，不過它們卻是用來研究$\mathcal{M}_g$的雙曲特性的好工具。Weil-Petersson度量是一個不完備的K\"{a}hler度量，而Bergman度量及K\"{a}hler-Einstein度量則是完備的K\"{a}hler度量；然而，McMullen\cite{bib3}以微擾Weil-Petersson度量的方法引進了一個新的完備K\"{a}hler度量，名為McMullen度量。這個度量其實和Teichm\"{u}ller度量是等價的。最近，Liu-Sun-Yau\cite{bib1}證明了K\"{a}hler-Einstein度量等價於Teichm\"{u}ller度量，而由此給了一個肯定的答案予Yau\cite{bib15}所提出的一個猜想。他們證明的進路是首先引進兩個新的完備K\"{a}hler度量：Ricci度量及微擾Ricci度量，然後分別建立Ricci度量和K\"{a}hler-Einstein度量的等價性及Ricci度量和McMullen度量的等價性。這篇論文的主要目的是去考察這些不同的度量的特性及它們給$T_g$和$\mathcal{M}_g$帶來了甚麼樣的幾何。
\end{CJK} 

\newpage
{\large {\tableofcontents}}

\chapter{Introduction}
This thesis is a survey on the canonical metrics, namely, the Teichm\"{u}ller metric $\omega_T$, the Kobayashi metric $\omega_K$, the Carath\'{e}odory metric $\omega_C$, the Bergman metric $\omega_B$, the Weil-Petersson metric $\omega_{WP}$, the McMullen metric $g_{1/l}$, the K\"{a}hler-Einstein metric $g_{KE}$, the Ricci metric $\tau$ and the perturbed Ricci metric $\tilde{\tau}$, on the Teichm\"{u}ller spaces $T_g$ and the moduli spaces of Riemann surfaces $\mathcal{M}_g$.  These metrics will be introduced one by one, and the geometric properties (say, the curvature bounds) related to some of these metrics will also be reviewed.  The equivalences between these metrics will also be discussed.  In fact, the main part of this thesis consists of the curvature formulae of the two newest complete K\"{a}hler metrics, i.e., the Ricci metric and the perturbed Ricci metric, and the estimates on their curvatures, including the holomorphic sectional curvature, bisectional curvature and Ricci curvature, by Liu-Sun-Yau\cite{bib1}\cite{bib2}.  The importance of these results will be apparent when we come to the equivalence of the Ricci metric (and the perturbed Ricci metric) to the K\"{a}hler-Einstein metric and to the McMullen metric respectively.  The essential point is that these two equivalences together solve a conjecture proposed by Yau\cite{bib15} twenty years ago.  Now let me introduce each of the chapters below, i.e., chapter 2 to chapter 5, in a more detailed manner.

Chapter 2 is a preparation chapter.  In this chapter, some of the background knowledge which are useful or helpful to the remaining chapters is reviewed.  The background knowledge includes three parts - the first part being the Riemann surface theory, the second part being the Teichm\"{u}ller theory and the third part being the Schwarz-Yau lemma - which are discussed in three different sections.  For the Riemann surface theory, the uniformization theorem (Theorem 2.1.1) provides a way of modeling the Riemann surfaces.  The Fuchsian groups and the quasiconformal mappings will help define the Teichm\"{u}ller spaces, while the Beltrami coefficients and the holomorphic quadratic differentials play a main role in the infinitesimal theory of Teichm\"{u}ller spaces.  Moreover, the nodal Riemann surfaces are candidates for the boundary points of the moduli spaces of Riemann surfaces.

In section 2.2, different equivalent definitions of the Teichm\"{u}ller spaces are given.  The Teichm\"{u}ller distance, which turns the Teichm\"{u}ller spaces into metric spaces, and the Bers embedding, which provides the Teichm\"{u}ller spaces with  the complex structures, are briefly reviewed.  The moduli spaces of Riemann surfaces are also defined.  With the aid of Bers embedding (and the Kodaira-Spencer map), the representations of the tangent space and the cotangent space at some point of a moduli space as the space of harmonic Beltrami differentials and the space of holomorphic quadratic differentials of a Riemann surface are obtained (Theorem 2.2.1 and the paragraph below the theorem).  The local behavior of the boundary of moduli spaces is briefly studied as well.  Two versions of the Schwarz-Yau lemma are included in section 2.3 for the later use in the chapter 5.

In chapter 3, the classical canonical metrics, i.e., the Teichm\"{u}ller metric, the Kobayashi metric, the Carath\'{e}odory metric, the Bergman metric and the Weil-Petersson metric, are examined.  The first three metrics are just Finsler metrics, while the latter two are K\"{a}hler.  In section 3.1 the Finsler metrics and the Bergman metric are introduced.  They are all complete, and indeed they are equivalent to each other.  The Teichm\"{u}ller metric even coincides with the Kobayashi metric, which is a result from Royden\cite{bib27}.  The equivalence of the Kobayashi metric to the Carath\'{e}odory metric and to the Bergman metric  (Theorem 3.1.1) rely on the results of Kobayashi\cite{bib28} (Lemma 3.1.1).  Section 3.2 consists of the description of the Weil-Petersson metric.  Since the harmonic lifts $v_1, ..., v_n$ are important in the computation of the curvature formulae of the Weil-Petersson metric and the Ricci metric, some results related to them, like the formulae for the Lie derivatives of the harmonic Beltrami differentials $B_i$ with respect to them (Lemma 3.2.4), the harmonicity of these Lie derivatives (Lemma 3.2.5) and the commutator of $v_k$ and $\overline{v_l}$ (Lemma 3.2.6), are included as well.  The definition of Maass operators $K_p$ and $L_p$ are also given.  Moreover, the curvature formula of the Weil-Petersson metric is quoted (Theorem 3.2.1).

Chapter 4 is the longest chapter of the thesis because it contains numerous important and useful results of the four "younger" K\"{a}hler metrics, namely, the McMullen metric, the K\"{a}hler-Einstein metric, the Ricci metric and the perturbed Ricci metric.  The first section is an overview of the McMullen metric, which was constructed by McMullen\cite{bib3}.  A sketch of the proof of the K\"{a}hler hyperbolicity of this metric is given (Theorem 4.1.1).  Also, its equivalence to the Teichm\"{u}ller metric is proved (Theorem 4.1.3).  Section 4.2 is a short review on the existence of the K\"{a}hler-Einstein metric (Theorem 4.2.1) and Yau's conjecture.  The last three sections are the main characters of this chapter.  In section 4.3 an intensive study on the Ricci metric due to Liu-Sun-Yau is included.  The commutator operators $\xi_k$ are defined so as to simplify the computation of the curvature formula of the Ricci metric.  Some formulae related to these operators are given (Lemma 4.3.1). The operator $Q_{k\overline{l}}$, which is indeed the commutator of $\xi_k$ and $\overline{v_l}$ (Lemma 4.3.2), is also defined for the purpose of simplification.  The curvature formula of the Ricci metric is then explicitly worked out (Theorem 4.3.2).

Plenty of estimates on the asymptotics and the curvatures of the Ricci metric are found in section 4.4.  Some $C^{\infty}$ norms on the sections of $(\kappa\otimes\overline{\kappa}^{-1})^{\frac{p}{2}}$ for $p\geq 0$ are defined as Wolpert\cite{bib11} did.  Then the Trapani's expression\cite{bib10} of the Masur's holomorphic quadratic differentials $\psi_1, ..., \psi_n$ \cite{bib9} on the genuine collars using rs-coordinate provides a tool for the estimates of the Weil-Petersson metric near a boundary point (Corollary 4.4.1) and the rs-coordinate expression of the harmonic Beltrami differential $B_i$ on the collars (Lemma 4.4.1).  Using this expression of $B_i$ the estimates on the norms of $A_i$ and $f_{i\overline{j}}=A_i\overline{A_j}$ are easily obtained (Lemma 4.4.2).  The approximation functions $\tilde{e_{i\overline{j}}}$ of $e_{i\overline{j}}=(\Box+1)^{-1}f_{i\overline{j}}$ are defined and the norm estimates on them are worked out (Lemma 4.4.3) so that the estimates on the asymptotics of the Ricci metric can be done (Corollary 4.4.2).  As a preparation for the curvature estimates, the estimates on different norms of $K_0f_{i\overline{j}}$ are given (Lemma 4.4.6).  A formula for the term $\displaystyle \int_X Q_{k\overline{l}}(e_{i\overline{j}})e_{\alpha\overline{\beta}}dv$ in the curvature formula of the Ricci metric is provided as well (Lemma 4.4.7).  Finally the estimates on the curvatures, especially the holomorphic sectional curvature, of the Ricci metric are proved (Theorem 4.4.3).

The last section in this chapter is about the perturbed Ricci metric.  Using the curvature formula of the Ricci metric, the curvature formula of the perturbed Ricci metric is easily obtained (Theorem 4.5.1).  By the estimates on the inverse matrix $(\tilde{\tau}^{i\overline{j}})$ near a boundary point (Lemma 4.5.2), the estimates on the curvatures of the perturbed Ricci metric can be done with nearly the same method as that of the Ricci metric (Theorem 4.5.2).  The equivalence of the Ricci metric and the perturbed Ricci metric follows from the estimates on the Weil-Petersson metric and the Ricci metric (Theorem 4.5.3).

In the last chapter, two essential equivalences of the canonical metrics are examined.  One is the equivalence of the Ricci metric to the K\"{a}hler-Einstein metric proved in the section 5.1.  This equivalence (Theorem 5.1.1) follows from a simple but important linear algebra fact (Lemma 5.1.1) and the Schwarz-Yau lemma.  Another is the equivalence of the Ricci metric to the McMullen metric in the section 5.2.  Using the first derivatives of the length functions of short geodesics $\partial_i l_j$ (Lemma 5.2.1) and the Schwarz-Yau lemma, this equivalence (Theorem 5.2.1) is proved as well.  These two equivalence give a positive answer to the Yau's conjecture.  This remark gives an end to this thesis.

\chapter{Background Knowledge}
In this chapter some background knowledge about the Riemann surface theory and the Teichm\"{u}ller theory will be presented.  In addition, we will also take a look at the Schwarz-Yau Lemma, whose importance will be revealed in chapter 5.

\section{Results from Riemann Surface Theory and Quasiconformal Mappings}
Riemann surface theory is a prerequisite for the Teichm\"{u}ller theory, so we should begin by a briefing about some essential results related to the Riemann surfaces.

\subsection{Riemann Surfaces and the Uniformization Theorem}
Let us first review the definition of a Riemann surface.
\begin{defn}
A Riemann surface $R$ is a connected one dimensional complex manifold, i.e., $R$ is a connected Hausdorff topological space with a family $\{(U_{\alpha},z_{\alpha})\}_{\alpha\in A}$ ($A$ is an index set) satisfying the following three conditions:
\begin{itemize}
\item[(i)] $\{U_{\alpha}\}_{\alpha\in A}$ is an open covering of $R$.
\item[(ii)] Every $z_{\alpha}$ is a homeomorphism from $U_{\alpha}$ onto an open subset $D_{\alpha}$ of $\mathbb{C}$.
\item[(iii)] If $U_{\alpha}\cap U_{\beta}\neq \phi$, the transition mappings $z_{\alpha\beta}:=z_{\alpha}\circ z_{\beta}^{-1}$ from $z_{\beta}(U_{\alpha}\cap U_{\beta})$ to $z_{\alpha}(U_{\alpha}\cap U_{\beta})$ are biholomorphic.
\end{itemize}
\end{defn}

\bigskip
The pairs $(U_{\alpha},z_{\alpha})$ are called charts.  Two systems of charts $(U_{\alpha},z_{\alpha})$ and $(V_{\beta},w_{\beta})$ are called compatible if $w_{\beta}\circ z_{\alpha}^{-1}$ is a holomorphic mapping from $z_{\alpha}(U_{\alpha}\cap V_{\beta})$ to $w_{\beta}(U_{\alpha}\cap V_{\beta})$ for any nonempty $U_{\alpha}\cap V_{\beta}$.  Since the composition of holomorphic mappings is holomorphic and the inverse of holomorphic bijection is also holomorphic, we can see that compatibility is an equivalence relation.  An equivalent class of compatible systems of charts on $R$ is called a complex structure or a Riemann surface structure of $R$.

A system of charts on $R$ determines an orientation of $R$ by pulling back the usual orientation of $D_{\alpha}$ to $U_{\alpha}$ through the mapping $z_{\alpha}$.  On $U_{\alpha}\cap U_{\beta}$ the orientation is consistently determined as the Jacobian of $z_{\alpha\beta}$ is positive.

Traditionally a compact Riemann surface is called closed while a noncompact Riemann surface is called open.

A continuous mapping $f$ between two Riemann surfaces $R_1$ and $R_2$ is called holomorphic if for any charts $z_{\alpha}$ and $w_{\beta}$ on $R_1$ and $R_2$ respectively, the mapping $w_{\beta}\circ f\circ z_{\alpha}^{-1}$ is holomorphic.  $R_1$ and $R_2$ are analytically equivalent if there exists a biholomorphism between $R_1$ and $R_2$.

\bigskip
The uniformization theorem can be stated in many different ways.  The deepest part of it is called the Koebe's planarity theorem, which says that a noncompact planar Riemann surface is analytically equivalent to a domain in the plane (a surface $R$ is planar if any simple closed curve on $R$ divides $R$ into two connected components).  This result can be combined with the topological theory of covering surfaces.  In particular, $R$ is analytically equivalent to its universal covering $\tilde{R}$ factored by a deck transformation group $\Gamma$ which is isomorphic to the fundamental group of $R$.  We are then led to the following theorem, which is also called the uniformization theorem.
\begin{thm}
Let $R$ be a Riemann surface.  Then $R$ is analytically equivalent to one of the following:
\begin{itemize}
\item[(i)] The Riemann sphere $\hat{\mathbb{C}}=\mathbb{C}\cup\{\infty\}=\mathbb{C}P^1$.
\item[(ii)] The complex plane $\mathbb{C}$.
\item[(iii)] The punctured plane $\mathbb{C}-\{0\}$
\item[(iv)] The plane modulo a lattice $\mathbb{C}/L$, where $L$ is isomorphic to $\mathbb{Z}\times\mathbb{Z}$ and is spanned by two $\mathbb{R}$-linearly independent vectors in $\mathbb{C}$.
\item[(v)] The upper half plane $\mathbb{H}=\{z|\textrm{Im}z>0\}$ modulo a properly discontinuous, torsion free group $\Gamma$ of biholomorphisms of $\mathbb{H}$ (here we allow $\Gamma$ to be a trivial group, in which case the quotient space is $\mathbb{H}$ itself).
\end{itemize}
\end{thm}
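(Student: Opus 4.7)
The plan is to combine covering space theory with the classification of simply connected Riemann surfaces. First I would pass to the universal cover: given a Riemann surface $R$, the universal covering $\tilde{R}\to R$ inherits a unique complex structure making the projection holomorphic, and the deck transformation group $\Gamma\cong \pi_1(R)$ acts freely and properly discontinuously by biholomorphisms of $\tilde{R}$, with $R$ biholomorphic to $\tilde{R}/\Gamma$. Thus the problem is reduced to (a) classifying simply connected Riemann surfaces, and (b) classifying the possible free, properly discontinuous groups of biholomorphisms in each case.

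The heart of the argument is step (a): every simply connected Riemann surface is biholomorphic to one of $\hat{\mathbb{C}}$, $\mathbb{C}$, or $\mathbb{H}$. I would separate into the compact and noncompact cases. If $\tilde{R}$ is compact and simply connected, it is topologically $S^{2}$; one then produces a meromorphic function of degree one (for instance via Riemann--Roch on a genus-zero surface, or by solving a suitable $\bar{\partial}$-problem using a Green's function with a single pole), yielding a biholomorphism with $\hat{\mathbb{C}}$. If $\tilde{R}$ is noncompact, Koebe's planarity theorem (invoked in the excerpt) realizes $\tilde{R}$ as a domain $D\subset \hat{\mathbb{C}}$; since $\tilde{R}$ is noncompact and simply connected, $D$ omits at least one point, so $D\subset \mathbb{C}$. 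If $\hat{\mathbb{C}}\setminus D$ contains at least two points, the Riemann mapping theorem gives $D\cong \mathbb{H}$; if exactly one point is omitted, $D\cong \mathbb{C}$.

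With (a) in hand, step (b) is essentially a calculation with automorphism groups. If $\tilde{R}=\hat{\mathbb{C}}$, every M\"obius transformation fixes a point of $\hat{\mathbb{C}}$, so a free action forces $\Gamma=\{e\}$, giving case (i). If $\tilde{R}=\mathbb{C}$, then $\mathrm{Aut}(\mathbb{C})$ consists of affine maps $z\mapsto az+b$; fixed-point freeness forces $a=1$, so $\Gamma$ is a discrete additive subgroup of $\mathbb{C}$. Such subgroups are classified as $\{0\}$, $\mathbb{Z}\omega$, or $\mathbb{Z}\omega_{1}+\mathbb{Z}\omega_{2}$ with $\omega_{1},\omega_{2}$ linearly independent over $\mathbb{R}$, giving cases (ii), (iii), and (iv); in the rank-one case a conformal change of coordinate normalizes the quotient to $\mathbb{C}/\mathbb{Z}\cong\mathbb{C}\setminus\{0\}$ via $z\mapsto e^{2\pi i z}$. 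If $\tilde{R}=\mathbb{H}$, then $\Gamma$ is a torsion-free subgroup of $\mathrm{PSL}(2,\mathbb{R})\cong \mathrm{Aut}(\mathbb{H})$ acting properly discontinuously, which is exactly case (v).

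The main obstacle is step (a), the simply connected uniformization theorem, and within it the construction of the required conformal map in the noncompact case where no Green's function with a pole need exist a priori. The standard route is Perron's method for subharmonic functions to build either a Green's function (yielding $\mathbb{H}$) or, failing that, a harmonic function with a logarithmic singularity plus a conjugate harmonic function (yielding $\mathbb{C}$), with Koebe's exhaustion argument and normal families used to extract a limit biholomorphism. Once this analytic core is granted, the remaining parts of the proof are formal consequences of covering space theory and elementary complex analysis.
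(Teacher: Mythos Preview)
Your proposal is correct and follows precisely the strategy the paper itself outlines in the paragraph preceding the theorem: reduce to the universal cover via covering space theory, invoke Koebe's planarity theorem to classify the simply connected cases as $\hat{\mathbb{C}}$, $\mathbb{C}$, or $\mathbb{H}$, and then sort out the possible deck groups. The paper does not give a self-contained proof but only cites Ahlfors, Ahlfors--Sario, Farkas--Kra, and Springer; your sketch is a faithful expansion of that same route, and your identification of the analytic core (Perron's method / Koebe's exhaustion in the simply connected noncompact case) as the real work is accurate.
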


\bigskip
Proofs of this theorem can be found in Ahlfors\cite{bib4}, Ahlfors and Sario\cite{bib5}, Farkas and Kra\cite{bib6}, and Springer\cite{bib7}.  Notice that any two of the five types of surfaces above are not analytically equivalent.

\subsection{Fuchsian Groups}
Due to the uniformization theorem, groups of holomorphic homeomorphisms of the upper half plane $\mathbb{H}$ which are torsion free and act properly discontinuously on $\mathbb{H}$ play a central role in Riemann surface theory.  Let's drop the torsion free condition and consider the notion of a Fuchsian group:
\begin{defn}
A group of holomorphic homeomorphisms of the upper half plane $\mathbb{H}$ which acts properly discontinuously on $\mathbb{H}$ is called a Fuchsian group.
\end{defn}

\bigskip
The structure of the Fuchsian groups is indeed known.  More precisely, we have the following facts:
\begin{fact}
Suppose $A:\mathbb{H}\rightarrow\mathbb{H}$ is s bijective holomorphic mapping.  Then there exist real numbers $a, b, c, d$ such that $ad-bc=1$ and
\[A(z)=\dfrac{az+b}{cz+d}.\]
The numbers $a, b, c, d$ are uniquely determined up to a $-1$ multiple to all of them.  Hence a Fuchsian group is a subgroup of $\textrm{PSL}(2,\mathbb{R})$.
\end{fact}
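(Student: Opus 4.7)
The plan is to transfer the question to the unit disk $\mathbb{D}$ via the Cayley transform $\varphi(z)=\frac{z-i}{z+i}$, which is a biholomorphism from $\mathbb{H}$ to $\mathbb{D}$. Under this transform, $\tilde{A}:=\varphi\circ A\circ \varphi^{-1}$ is a biholomorphism of $\mathbb{D}$. I would first classify biholomorphisms of $\mathbb{D}$: given any biholomorphism $B$ of $\mathbb{D}$, pick $a=B^{-1}(0)$ and set $M_a(z)=\frac{z-a}{1-\overline{a}z}$; then $B\circ M_a^{-1}$ fixes $0$, so the Schwarz lemma applied to this map and its inverse forces it to be a rotation $z\mapsto e^{i\theta}z$. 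Consequently every biholomorphism of $\mathbb{D}$ is a M\"obius transformation.

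Pulling back through $\varphi$ (which is itself a M\"obius transformation) and using that the composition of M\"obius transformations is a M\"obius transformation, I obtain $A(z)=\frac{\alpha z+\beta}{\gamma z+\delta}$ with $\alpha,\beta,\gamma,\delta\in\mathbb{C}$ and $\alpha\delta-\beta\gamma\neq 0$. To conclude that the coefficients are (proportional to) real numbers, I would invoke the Schwarz reflection principle: since $A$ is a M\"obius map sending $\mathbb{H}$ biholomorphically onto itself, it extends continuously to $\widehat{\mathbb{R}}=\mathbb{R}\cup\{\infty\}$ and carries this circle to itself. The condition $A(\mathbb{R})\subset\mathbb{R}$ on a M\"obius transformation forces the ratios $\alpha:\beta:\gamma:\delta$ to be real, so after multiplying all four coefficients by a common complex scalar I may assume $\alpha,\beta,\gamma,\delta\in\mathbb{R}$. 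A direct computation gives $\operatorname{Im}A(z)=\frac{(\alpha\delta-\beta\gamma)\operatorname{Im}z}{|\gamma z+\delta|^{2}}$, so $A(\mathbb{H})=\mathbb{H}$ is equivalent to $\alpha\delta-\beta\gamma>0$; rescaling all four real coefficients by the positive factor $(\alpha\delta-\beta\gamma)^{-1/2}$ produces $a,b,c,d\in\mathbb{R}$ with $ad-bc=1$.

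For uniqueness, suppose $\frac{az+b}{cz+d}=\frac{a'z+b'}{c'z+d'}$ identically in $z$, with both representations normalized so that $ad-bc=a'd'-b'c'=1$. Clearing denominators and comparing coefficients of $z^{2}$, $z$ and $1$ in $(a'z+b')(cz+d)=(az+b)(c'z+d')$ yields $(a',b',c',d')=\lambda(a,b,c,d)$ for some $\lambda\in\mathbb{R}$, and then the determinant normalization forces $\lambda^{2}=1$, i.e.\ $\lambda=\pm 1$. This is exactly the stated ambiguity, and hence every Fuchsian group embeds in $\mathrm{PSL}(2,\mathbb{R})=\mathrm{SL}(2,\mathbb{R})/\{\pm I\}$.

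The only subtle step is the passage from ``$A$ is a biholomorphism of $\mathbb{H}$'' to ``$A$ extends across $\mathbb{R}\cup\{\infty\}$'', which I would handle either by reflection across $\mathbb{R}$ (turning $A$ into a biholomorphism of $\widehat{\mathbb{C}}\setminus\{\text{pole}\}$ and then of $\widehat{\mathbb{C}}$) or, more cheaply, by observing that we already know $\tilde{A}$ is a disk M\"obius map, whose explicit form transfers to a M\"obius expression for $A$ on $\mathbb{H}$; the real-coefficient and positivity statements are then just a computation on the explicit formula. Everything else is bookkeeping.
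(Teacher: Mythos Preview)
Your argument is correct and is the standard route to this result: pass to the disk via the Cayley transform, use the Schwarz lemma to classify $\mathrm{Aut}(\mathbb{D})$, pull back, and then normalize coefficients. Note, however, that the paper does not actually supply a proof of this fact; it merely states it and refers the reader to Gardiner~\cite{bib25} for the details, so there is no in-paper argument to compare against.
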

\begin{fact}
A subgroup of $\textrm{PSL}(2,\mathbb{R})$ is discrete if and only if it acts properly discontinuously on $\mathbb{H}$.  Thus any discrete subgroup of $\textrm{PSL}(2,\mathbb{R})$ is a Fuchsian group.
\end{fact}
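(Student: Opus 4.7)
The plan is to exploit the fact that $\textrm{PSL}(2,\mathbb{R})$ acts transitively on $\mathbb{H}$ by the formula in Fact 2.1.1, with the stabilizer of $i$ equal to the compact subgroup $\textrm{PSO}(2)$. The key technical point I will establish first is that the orbit map $\pi:\textrm{PSL}(2,\mathbb{R})\to\mathbb{H}$, $g\mapsto g(i)$, is proper. This I verify by a direct computation: for $g=\begin{pmatrix}a&b\\c&d\end{pmatrix}$ with $ad-bc=1$, one has $g(i)=\frac{ac+bd}{c^2+d^2}+\frac{i}{c^2+d^2}$, so if $g(i)=x+iy$ lies in a compact subset of $\mathbb{H}$, then $c^2+d^2=1/y$ is pinched between two positive constants, and the relations $ad-bc=1$, $ac+bd=x/y$ force $a,b$ into a bounded set as well. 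It follows that for any compact $K\subset\mathbb{H}$, the set $S(K)=\{g\in\textrm{PSL}(2,\mathbb{R}):g(K)\cap K\neq\emptyset\}$ is compact: given $g\in S(K)$ with $g(z)\in K$ and writing $z=h(i)$ with $h\in\pi^{-1}(K)$, one has $gh\in\pi^{-1}(K)$, so $S(K)\subset\pi^{-1}(K)\cdot\pi^{-1}(K)^{-1}$, and it is closed by continuity of the action.

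For the direction \emph{properly discontinuous implies discrete}, I argue by contrapositive. Suppose $\Gamma\subset\textrm{PSL}(2,\mathbb{R})$ is not discrete; then there is a sequence of pairwise distinct elements $\gamma_n\in\Gamma$ converging to some $\gamma\in\textrm{PSL}(2,\mathbb{R})$. Fix any $z_0\in\mathbb{H}$; by continuity $\gamma_n(z_0)\to\gamma(z_0)$, so choosing a compact neighborhood $K$ containing both $z_0$ and $\gamma(z_0)$ in its interior yields $\gamma_n(z_0)\in K$ for all sufficiently large $n$, whence infinitely many distinct $\gamma_n\in\Gamma$ satisfy $\gamma_n(K)\cap K\neq\emptyset$, contradicting proper discontinuity. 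For the converse direction, if $\Gamma$ is discrete, then for every compact $K\subset\mathbb{H}$ the intersection $\Gamma\cap S(K)$ is a discrete subset of the compact set $S(K)$ and hence finite, which is precisely the statement that $\Gamma$ acts properly discontinuously on $\mathbb{H}$. The final assertion that a discrete subgroup is a Fuchsian group is then immediate from Definition 2.1.2, since every element of $\textrm{PSL}(2,\mathbb{R})$ is already a holomorphic homeomorphism of $\mathbb{H}$ by Fact 2.1.1.

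The main obstacle is the properness of the orbit map $\pi$, which is the one place where the Lie-theoretic structure of $\textrm{PSL}(2,\mathbb{R})$ and the compactness of the stabilizer genuinely enter the argument. Once that properness is in hand, compactness of $S(K)$ is a formal consequence, and both directions of the equivalence reduce to the standard topological observation that a discrete subset of a compact space is finite, together with the convergent-sequence characterization of non-discreteness in the metrizable Lie group $\textrm{PSL}(2,\mathbb{R})$.
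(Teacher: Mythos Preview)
Your argument is correct and follows the standard route via properness of the orbit map $g\mapsto g(i)$ and compactness of the return set $S(K)$. Note, however, that the paper does not supply its own proof of this fact: immediately after stating it, the paper simply refers the reader to Gardiner~\cite{bib25}. Your write-up therefore goes beyond what the paper records, but it is precisely the kind of proof one finds in standard references such as Gardiner or Katok: identify $\mathbb{H}$ with $\textrm{PSL}(2,\mathbb{R})/\textrm{PSO}(2)$, use compactness of the stabilizer to get properness of the orbit map, and then reduce both implications to the fact that a discrete subset of a compact set is finite. There is nothing to correct.
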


\bigskip
For the proofs of these facts, one can consult Gardiner\cite{bib25}.

Note that for $B\in\textrm{PSL}(2,\mathbb{R})$ and $\Gamma$ being a subgroup of $\textrm{PSL}(2,\mathbb{R})$, we also call the group $B\circ\Gamma\circ B^{-1}$ a Fuchsian group.  This group would instead have $B(\mathbb{H})$ as the invariant domain.

\begin{defn}
An elliptic point $p$ for $G$ acting on $X$ is a point on $X$ whose isotropy group is nontrivial.
\end{defn}

\bigskip
We then have a theorem about the elliptic points for a Fuchsian group and the quotient space $\mathbb{H}/\Gamma$.
\begin{thm}
Let $\Gamma$ be a Fuchsian group acting on $\mathbb{H}$.  Then $\mathbb{H}/\Gamma$ is a complete metric space with the quotient metric given by
\[\overline{d}(\overline{p},\overline{r})=\inf_{A\in G}d(A(p),r)\]
where $d$ is the Poincar\'{e} metric on $\mathbb{H}$.  The set of elliptic points for $\Gamma$ is a discrete set.  Moreover, there is a unique complex structure on $\mathbb{H}/\Gamma$ making $\mathbb{H}/\Gamma$ into a Riemann surface and the mapping $\pi:\mathbb{H}\rightarrow \mathbb{H}/\Gamma$ into a holomorphic mapping.  At nonelliptic points $\pi$ is a local homeomorphism, and at elliptic points $\pi$ is locally an $n$-to-$1$ mapping, where $n$ is the order of the isotropy group of the elliptic point.
\end{thm}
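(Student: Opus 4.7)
The plan is to build up the four conclusions in order, leveraging the fact that elements of $\textrm{PSL}(2,\mathbb{R})$ act by isometries of the Poincar\'{e} metric on $\mathbb{H}$ and that $\Gamma$ is discrete by Fact 2.1.2.

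First, for the metric claim, I would verify that $\overline{d}$ is well-defined on cosets (using $A$-invariance of $d$), symmetric, and satisfies the triangle inequality (standard for quotient pseudometrics). The nontrivial point is positivity: if $\overline{d}(\overline{p},\overline{r})=0$, pick $A_n\in\Gamma$ with $d(A_n(p),r)\to 0$; then $\{A_n(p)\}$ lies in a compact neighborhood of $r$, and proper discontinuity forces $\{A_n\}$ to be finite, so some $A_n(p)=r$, giving $\overline{p}=\overline{r}$. For completeness, I would take a Cauchy sequence $\{\overline{p_k}\}$, pass to a subsequence with $\overline{d}(\overline{p_k},\overline{p_{k+1}})<2^{-k}$, and inductively choose lifts $p_k\in\mathbb{H}$ (replacing each $p_{k+1}$ by an element of its orbit) so that $d(p_k,p_{k+1})<2^{-k}$; then $\{p_k\}$ is Cauchy in $\mathbb{H}$, hence convergent by completeness of the Poincar\'{e} metric, and its image gives the limit.

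Next, for discreteness of the elliptic set, I would use the fact that a nonidentity $A\in\textrm{PSL}(2,\mathbb{R})$ has a fixed point in $\mathbb{H}$ only if it is elliptic in the M\"{o}bius sense, in which case it is conjugate (in $\textrm{PSL}(2,\mathbb{R})$) to a rotation about that single fixed point. Consequently, for each elliptic point $p$ the isotropy group $\Gamma_p$ is a finite cyclic group (discrete subgroup of a circle). If elliptic points accumulated, one could produce an infinite sequence $A_n\in\Gamma\setminus\{\textrm{id}\}$ with fixed points converging into a compact set, contradicting proper discontinuity of $\Gamma$ on $\mathbb{H}$ (equivalently, its discreteness in $\textrm{PSL}(2,\mathbb{R})$).

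For the complex structure and local form of $\pi$, I would split into cases. At a non-elliptic point $p$, proper discontinuity provides an open neighborhood $U\ni p$ with $A(U)\cap U=\emptyset$ for all $\textrm{id}\neq A\in\Gamma$, so $\pi|_U$ is a homeomorphism onto its image; pulling back the coordinate $z$ on $U$ gives a chart on $\mathbb{H}/\Gamma$. At an elliptic point $p$ of order $n$, conjugate $\Gamma_p$ to a rotation group fixing $0$ in the unit disk model, so there is a $\Gamma_p$-invariant disk $V$ around $p$ on which $\Gamma_p$ acts as $\langle z\mapsto\zeta z\rangle$ with $\zeta=e^{2\pi i/n}$; shrinking $V$ using proper discontinuity I can arrange $A(V)\cap V=\emptyset$ for $A\in\Gamma\setminus\Gamma_p$. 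The map $w=z^n$ then descends to a homeomorphism from $V/\Gamma_p$ onto a disk, serving as a chart. By construction $\pi$ is holomorphic in both charts, is a local homeomorphism at non-elliptic points, and is locally $n$-to-$1$ (i.e., $w=z^n$) at elliptic points. Uniqueness of the complex structure follows because any two complex structures making $\pi$ holomorphic must agree after composing with $\pi$, and $\pi$ is surjective with open image.

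The main obstacle is the chart construction at elliptic points: one has to verify carefully that the $\Gamma_p$-invariant neighborhood can be chosen so that no other elements of $\Gamma$ identify points of $V$, and that the quotient map $V\to V/\Gamma_p$ composed with $z\mapsto z^n$ (after conjugating $\Gamma_p$ to a genuine rotation group) produces a topological disk with a compatible complex coordinate. Once this local model is in place, the holomorphicity of transitions between charts of different types reduces to verifying that $z^n$ and its branches fit together holomorphically away from the origin, which is routine.
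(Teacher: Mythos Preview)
The paper does not actually prove this theorem: after stating it, the author simply writes ``Its proof also appears in Gardiner\cite{bib25}.'' So there is no in-text argument to compare against. Your outline is a correct sketch of the standard proof (and is essentially what one finds in Gardiner): the quotient-metric positivity and completeness via proper discontinuity and lifting of Cauchy sequences, discreteness of the elliptic locus from discreteness of $\Gamma$ in $\textrm{PSL}(2,\mathbb{R})$, and the chart construction using $z\mapsto z^n$ at elliptic points after conjugating the cyclic isotropy group to a rotation. Since the thesis treats this result as background with a citation, your proposal actually supplies more detail than the paper itself.
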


\bigskip
Its proof also appears in Gardiner\cite{bib25}.

\subsection{Quasiconformal Mappings and the Beltrami Equation}
A Jordan region on a Riemann surface $R$ is a connected and simply connected open subset of $R$ whose boundary is a simple closed curve in $R$. This motivates us to consider the notion of a generalized quadrilateral.
\begin{defn}
A generalized quadrilateral $Q$ on $R$ is a Jordan region on $R$ together with two disjoint closed arcs $\beta_1$ and $\beta_2$ on the boundary of $Q$.  The module of $Q$, denoted as $m(Q)$, is determined by the conformal mapping of $Q$ onto a rectangle which takes $\beta_1$ and $\beta_2$ onto the vertical sides of the rectangle in this way: if the rectangle has width $a$ and height $b$, then $m(Q)=\dfrac{a}{b}$.
\end{defn}

\bigskip
Notice that $m(Q)$ possesses the following property: if $Q^*$ is the same Jordan region as $Q$ but with two disjoint closed arcs $\alpha_1$ and $\alpha_2$ complementary in the boundary of $Q$ (except for common endpoints), then $m(Q^*)=\dfrac{1}{m(Q)}$.

Now we are ready to introduce the geometric definition of a quasiconformal mapping.
\begin{defn}
Let $f$ be an orientation preserving homeomorphism from a region $\Omega$ to a region $\Omega'$.  Then $f$ is $K$-quasiconformal if for every quadrilateral $Q$ in $\Omega$, $m(f(Q))\leq Km(Q)$.  The smallest possible value of $K$ for which the inequality holds for all quadrilateral $Q$ is called the dilatation of $f$.
\end{defn}

\bigskip
It is obvious that $m(f(Q^*))\leq Km(Q^*)$ and $f(Q)^*=f(Q^*)$, so by definition we know that
\[K^{-1}m(Q)\leq m(f(Q))\leq Km(Q)\]
for all quadrilateral $Q$.  Using this definition we can easily obtain the property that if $f_1$ and $f_2$ are $K_1$- and $K_2$-quasiconformal respectively, then $f_1\circ f_2$ is $K_1K_2$-quasiconformal.

\bigskip
The analytic definition of a quasiconformal mapping require the notion of absolute continuity on lines (ACL).  A function $f(z)=u(x,y)+iv(x,y)$ is ACL if for every rectangle in $\Omega$ with sides parallel to the $x$- and $y$-axes, both $u(x,y)$ and $v(x,y)$ are absolutely continuous on almost every horizontal and almost every vertical line of that rectangle.  The function $u$ and $v$ will thus have partial derivatives $u_x$, $u_y$, $v_x$, $v_y$ almost everywhere in $\Omega$.  Note that the complex partial derivatives are defined by $f_z=\dfrac{1}{2}(f_x-if_y)$ and $f_{\overline{z}}=\dfrac{1}{2}(f_x+if_y)$.
\begin{defn}
Let $f$ be a homeomorphism from a domain $\Omega$ to a domain $\Omega'$.  Then $f$ is $K$-quasiconformal if $f$ is ACL in $\Omega$ and $|f_{\overline{z}}|\leq k|f_z|$ almost everywhere for $k=\dfrac{K-1}{K+1}<1$.  The minimal possible value of $K$ for which the latter condition is satisfied is called the dilatation of $f$.
\end{defn}

\bigskip
The next theorem provides a bridge connecting two definitions.
\begin{thm}
The geometric and the analytic definitions of $K$-quasiconformality are equivalent.  Moreover, for a quasiconformal mapping $f$, $f_{\overline{z}}$ and $f_z$ are locally square integrable.
\end{thm}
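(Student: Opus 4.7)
The plan is to prove the two implications separately and then deduce local square integrability from the analytic description. Throughout, I work in a chart so $f$ is a homeomorphism between planar domains, and I use the length-area (extremal length) method as the core tool, since modules of quadrilaterals are naturally expressible as extremal lengths of path families joining $\beta_1$ to $\beta_2$.

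For analytic $\Rightarrow$ geometric, I would fix a quadrilateral $Q \subset \Omega$, conformally map $Q$ to a standard rectangle $R = [0,a]\times[0,b]$ with $m(Q)=a/b$, and similarly map $f(Q)$ to a rectangle $R'=[0,a']\times[0,b']$. Pulling back via these conformal maps produces a homeomorphism $F:R\to R'$ which is still ACL and still satisfies $|F_{\overline z}|\le k|F_z|$ a.e.\ (conformal changes preserve both properties). For a.e.\ horizontal line $y=y_0$ in $R$, the ACL hypothesis gives
\[
a' \;\le\; \int_0^a |F_x(x,y_0)|\,dx \;\le\; \int_0^a \bigl(|F_z|+|F_{\overline z}|\bigr)(x,y_0)\,dx.
\]
Squaring, applying the Cauchy--Schwarz inequality, and integrating in $y_0$ from $0$ to $b$ gives
\[
b\,(a')^2 \;\le\; a\iint_R \bigl(|F_z|+|F_{\overline z}|\bigr)^2\,dxdy \;\le\; a\cdot K\iint_R \bigl(|F_z|^2-|F_{\overline z}|^2\bigr)\,dxdy \;=\; a\,K\,|R'|,
\]
where the middle step uses the pointwise bound $(|F_z|+|F_{\overline z}|)^2\le K(|F_z|^2-|F_{\overline z}|^2)$ which comes directly from $|F_{\overline z}|\le k|F_z|$ with $k=(K-1)/(K+1)$, and the last step uses the Jacobian identity $J_F=|F_z|^2-|F_{\overline z}|^2$ together with the change-of-variables formula $\iint_R J_F \le |R'|=a'b'$ valid for homeomorphisms. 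Dividing, $m(f(Q))=a'/b' \le K\cdot a/b = K\,m(Q)$.

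For geometric $\Rightarrow$ analytic, the main obstacle, and the step I expect to be hardest, is to upgrade the global geometric $K$-quasiconformality to the ACL regularity and the pointwise Beltrami bound. I would follow the classical route (Ahlfors, or Lehto--Virtanen): first use the module distortion inequality $K^{-1}m(Q)\le m(f(Q))\le K\,m(Q)$ applied to long thin rectangles $Q$ straddling a horizontal segment to show that the horizontal and vertical increments of $f$ along almost every line are absolutely continuous, giving the ACL property. Second, show that $f$ maps null sets to null sets (again a module estimate with degenerating rectangles) so that $f$ is differentiable a.e.\ by the standard measure-theoretic argument for homeomorphisms of bounded variation. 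Third, at each point of differentiability, apply $K$-quasiconformality to infinitesimal quadrilaterals to conclude that the linear differential $df$ has dilatation $\le K$, i.e.\ $|f_{\overline z}|\le k|f_z|$ at that point.

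Finally, the local square integrability follows immediately from the analytic description. Wherever $f$ is differentiable one has
\[
J_f \;=\; |f_z|^2 - |f_{\overline z}|^2 \;\ge\; (1-k^2)\,|f_z|^2,
\]
so $|f_z|^2 \le (1-k^2)^{-1}J_f$ a.e., and since $f$ is a homeomorphism the Jacobian satisfies $\iint_E J_f\le |f(E)|<\infty$ for any relatively compact $E\Subset\Omega$. Hence $|f_z|$ is locally in $L^2$, and $|f_{\overline z}|\le k|f_z|$ gives the same for $f_{\overline z}$.
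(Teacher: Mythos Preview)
Your outline is correct and follows the classical length--area approach; however, the paper does not actually prove this theorem. It is stated as background material and the proof is simply deferred to Ahlfors and Lehto--Virtanen. Your sketch is essentially the argument one finds in those references: the analytic $\Rightarrow$ geometric direction via the length--area inequality on rectangles, the geometric $\Rightarrow$ analytic direction by using module distortion of thin rectangles to extract the ACL property and then the pointwise dilatation bound at differentiability points, and the local $L^2$ integrability from the Jacobian inequality $|f_z|^2\le (1-k^2)^{-1}J_f$ combined with $\iint_E J_f\le |f(E)|$.

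One small remark on your analytic $\Rightarrow$ geometric computation: when you invoke $\iint_R J_F\le |R'|$, you are implicitly using the general area inequality for homeomorphisms differentiable a.e.\ (which holds without any prior integrability assumption on $J_F$). That is fine, but it is worth noting that this step is itself nontrivial and is precisely what makes the local $L^2$ conclusion non-circular. With that caveat, your argument is sound and matches the standard literature the paper cites.
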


\bigskip
The proof of this theorem can be found in Ahlfors\cite{bib22} or Lehto and Virtanen\cite{bib23}.

\bigskip
Notice that for a quasiconformal mapping $f$, $K(f)$ is denoted to be the dilatation of $f$.

\bigskip
We have seen that if $f$ is a topological ACL mapping and if $\Big|\dfrac{f_{\overline{z}}}{f_z}\Big|\leq k<1$ almost everywhere, then $f$ is quasiconformal.  Let $\mu(z)$ be a measurable complex valued function defined in a domain $\Omega$ for which $\left\|\mu\right\|_{\infty}=k<1$.  The Beltrami equation is
\[f_{\overline{z}}(z)=\mu(z)f_z(z)\]
where the partial derivatives are assumed to be locally square integrable and taken in the sense of distributions.  The function $\mu$ is called the Beltrami coefficient of the mapping $f$.

In the theory of Beltrami equation (Ahlfors\cite{bib22}, Lehto and Virtanen\cite{bib23}, Ahlfors and Bers\cite{bib24}), a solution $f$ can be expressed as a power series in $\mu$, where the power series is made up by taking compositions of singular integral operators.  The following theorem asserts the existence of normalized global solutions to the Beltrami equation on $\hat{\mathbb{C}}$.  It also expresses in a very particular way the analytic dependence of the solution $f$ on the Beltrami coefficient $\mu$.  The analyticity of this dependence is important in the construction of complex structure for the Teichm\"{u}ller spaces.
\begin{thm} (Theorem 5, \cite{bib25})
The Beltrami equation gives a one-to-one correspondence between the set of quasiconformal homeomorphisms of $\hat{\mathbb{C}}$ which fix the points $0$, $1$ and $\infty$ and the set of measurable complex valued functions $\mu$ on $\hat{\mathbb{C}}$ for which $\left\|\mu\right\|_{\infty}<1$.

Furthermore, the normalized solution $f^{\mu}$ to the equation depends holomorphically on $\mu$ and for $r>0$ there are $\delta>0$ and $C(r)>0$ such that
\[|f^{\mu}-z-tF(z)|\leq C(r)t^2\]
for $|z|<r$ and $|t|<\delta$, where
\[F(z)=-\dfrac{z(z-1)}{\pi}\iint_{\mathbb{C}}\dfrac{\mu(\xi)d\xi d\eta}{\zeta(\zeta-1)(\zeta-z)}\]
and $\zeta=\xi+i\eta$.
\end{thm}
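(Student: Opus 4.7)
The plan is to follow the classical Ahlfors--Bers strategy, which uses two singular integral operators on $\mathbb{C}$: the Cauchy transform $P\omega(z) = -\frac{1}{\pi}\iint_{\mathbb{C}} \frac{\omega(\zeta)}{\zeta - z}\,d\xi d\eta$ and the Beurling transform $T\omega(z) = -\frac{1}{\pi}\,\mathrm{p.v.}\iint_{\mathbb{C}} \frac{\omega(\zeta)}{(\zeta - z)^2}\,d\xi d\eta$. These satisfy $(P\omega)_{\bar z} = \omega$ and $(P\omega)_z = T\omega$ in the distributional sense, and $T$ is an isometry on $L^2(\mathbb{C})$. First I would assume $\mu$ has compact support and seek $f$ in the form $f(z) = z + P\omega(z)$, so that $f_{\bar z} = \omega$ and $f_z = 1 + T\omega$. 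Substituting into the Beltrami equation reduces it to the fixed-point equation
\[
\omega \;=\; \mu + \mu\,T\omega,
\]
which can be formally solved by the Neumann series $\omega = \sum_{n=0}^{\infty}(\mu T)^n \mu$.

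The next step is to justify this series. Since $\|\mu T\|_{L^2\to L^2} \le \|\mu\|_\infty < 1$, convergence in $L^2$ is automatic, but to obtain a continuous $f$ one needs $\omega \in L^p$ for some $p > 2$, so that $P\omega$ is H\"older continuous. Here I would invoke the sharp Calder\'on--Zygmund fact that $\|T\|_{L^p\to L^p} \to 1$ as $p \to 2$; consequently, for any $k < 1$ one can choose $p > 2$ with $k\|T\|_{L^p} < 1$, and the series converges in $L^p$. Once a solution $f$ exists on $\mathbb{C}$, extending it to $\hat{\mathbb{C}}$ and post-composing with a M\"obius transformation yields the unique normalized $f^\mu$ fixing $0$, $1$, $\infty$. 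Uniqueness follows because any other normalized solution $g$ makes $g\circ (f^\mu)^{-1}$ a conformal automorphism of $\hat{\mathbb{C}}$ fixing three points, hence the identity. For a general $\mu$ not of compact support one reduces to the compactly supported case by decomposing $\mu = \mu_1 + \mu_2$ via a smooth cutoff, solving for $\mu_1$ first, and then solving a Beltrami equation with coefficient depending on $\mu_2$ pulled back under $f^{\mu_1}$.

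For the holomorphic dependence and the first-order expansion, the Neumann series itself gives the result: replacing $\mu$ by $t\mu$ yields
\[
\omega = t\mu + t^2\,\mu T\mu + t^3\,\mu T\mu T\mu + \cdots,
\]
which is a power series in $t$ with coefficients in $L^p$, uniformly bounded for $|t| \le \delta$ when $\delta k\|T\|_{L^p} < 1$. Thus $f^{t\mu}(z) = z + tP\mu(z) + O(t^2)$ before normalization. After composing with the M\"obius map that restores $0$, $1$, $\infty$ to their original positions (whose coefficients are themselves holomorphic in $t$ by the three-point evaluation of the previous expression), the first-order correction becomes
\[
F(z) \;=\; -\frac{z(z-1)}{\pi}\iint_{\mathbb{C}}\frac{\mu(\zeta)\,d\xi d\eta}{\zeta(\zeta-1)(\zeta-z)},
\]
the bracketed factor $z(z-1)$ arising precisely from the linear M\"obius adjustment. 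The quadratic remainder estimate $|f^{t\mu}-z-tF(z)| \le C(r)t^2$ for $|z| < r$ follows from bounding the tail $\sum_{n\ge 2} t^n (\mu T)^{n-1}\mu$ of the series in $L^p$ and applying the H\"older continuity estimate for $P$ on compact sets.

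The main obstacle is the functional-analytic step: proving that the Neumann series converges in a space strong enough to ensure $f$ is a homeomorphism, not merely a distributional solution. This hinges on the sharp $L^p$-bound for the Beurling transform near $p = 2$, which is the technical heart of the proof; everything else---normalization, uniqueness, and the series expansion---is comparatively formal once this estimate is in hand.
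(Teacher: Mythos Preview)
The paper does not actually prove this theorem: it is stated with attribution to Gardiner (Theorem~5 of \cite{bib25}), and the only indication of method is the sentence preceding it, which says that ``a solution $f$ can be expressed as a power series in $\mu$, where the power series is made up by taking compositions of singular integral operators,'' with references to Ahlfors, Lehto--Virtanen, and Ahlfors--Bers. Your proposal is precisely an outline of that classical Ahlfors--Bers argument---the Cauchy and Beurling transforms, the Neumann series $\omega=\sum(\mu T)^n\mu$, the $L^p$ bound on $T$ for $p$ close to $2$, and the holomorphic dependence read off from the series in $t$---so your approach is the one the paper has in mind, only spelled out where the paper merely gestures.

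A couple of minor points on your sketch. First, the reduction from general $\mu$ to compactly supported $\mu$ is more commonly done by the inversion $z\mapsto 1/z$ (pushing the non-compact part to a neighborhood of the origin) rather than by an additive splitting $\mu=\mu_1+\mu_2$; the composition you describe works but is slightly more involved. Second, you correctly flag that the real work is showing the resulting $f$ is a homeomorphism, not just a distributional solution; in the classical treatment this comes from H\"older continuity of $P\omega$ plus a degree/argument-principle step, and you should be aware that this is where most of the effort lies rather than treating it as a footnote. Otherwise the plan is sound and matches the literature the paper cites.
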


\bigskip
Usually we need solutions $f$ which map the upper half plane $\mathbb{H}$ to itself and preserve $\hat{\mathbb{R}}$ for arbitrary Beltrami coefficient $\mu$ with support in $\mathbb{H}$.  Let $M(\mathbb{H})$ be the space of complex valued $L_{\infty}$-functions $\mu$ with support in $\mathbb{H}$ and $\left\|\mu\right\|_{\infty}<1$.  For $\mu$ in $M(\mathbb{H})$, let $\hat{\mu}$ be identical to $\mu$ on $\mathbb{H}$ and equal to $\overline{\mu(\overline{z})}$ on the lower half plane $\mathbb{H}^*$.  Then we solve the Beltrami equation with $\mu$ replaced by $\hat{\mu}$, and let $f$ be the desired normalized solution.  Since $\overline{f(\overline{z})}$ has the same Beltrami coefficient as $f$ and also fixes the same three points on $\hat{\mathbb{R}}$ as $f$, by uniqueness of the solutions to the Beltrami equation we know that $\overline{f(\overline{z})}=f(z)$, and $f$ is a quasiconformal mapping which preserves $\hat{\mathbb{R}}$ in an orientation-preserving manner.  In addition, $f$ preserves $\mathbb{H}$ and $\mathbb{H}^*$.

We hence obtain a corollary to the previous theorem.
\begin{cor}
For every $\mu\in M(\mathbb{H})$, there exists a unique quasiconformal self-mapping $f$ of $\mathbb{H}$ satisfying the Beltrami equation on $\mathbb{H}$ which extends continuously to the closure of $\mathbb{H}$ and is normalized to fix $0$, $1$ and $\infty$.
\end{cor}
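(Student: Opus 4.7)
The plan is to reduce the corollary to the previous theorem (existence and uniqueness on $\hat{\mathbb{C}}$) by extending the Beltrami coefficient by reflection across $\hat{\mathbb{R}}$. Given $\mu\in M(\mathbb{H})$, I would define $\hat{\mu}$ on $\hat{\mathbb{C}}$ by $\hat{\mu}(z)=\mu(z)$ for $z\in\mathbb{H}$ and $\hat{\mu}(z)=\overline{\mu(\overline{z})}$ for $z\in\mathbb{H}^*$. Then $\|\hat{\mu}\|_{\infty}=\|\mu\|_{\infty}<1$, so the previous theorem produces a unique normalized quasiconformal homeomorphism $f=f^{\hat{\mu}}:\hat{\mathbb{C}}\to\hat{\mathbb{C}}$ fixing $0$, $1$, $\infty$ and satisfying $f_{\overline{z}}=\hat{\mu}\,f_z$ a.e.

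The key step is to show that $f$ preserves $\mathbb{H}$. For this, consider $g(z):=\overline{f(\overline{z})}$. A direct computation using the chain rule for $\partial_z$ and $\partial_{\overline{z}}$ (conjugation swaps the two operators and conjugates the outputs) shows that $g_{\overline{z}}(z)=\overline{\hat{\mu}(\overline{z})}\,g_z(z)$, and by the very definition of $\hat{\mu}$ this equals $\hat{\mu}(z)\,g_z(z)$ almost everywhere. Moreover $g$ is a normalized orientation-preserving quasiconformal self-map of $\hat{\mathbb{C}}$ fixing $0$, $1$, $\infty$, so the uniqueness clause of the previous theorem forces $g=f$, i.e.\ $\overline{f(\overline{z})}=f(z)$. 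This symmetry implies $f(\hat{\mathbb{R}})=\hat{\mathbb{R}}$, and since $f$ is orientation-preserving and fixes three points of $\hat{\mathbb{R}}$, it necessarily sends $\mathbb{H}$ onto $\mathbb{H}$ and $\mathbb{H}^*$ onto $\mathbb{H}^*$.

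The restriction $f|_{\mathbb{H}}$ is then a quasiconformal self-mapping of $\mathbb{H}$ satisfying the Beltrami equation with coefficient $\mu$ (since $\hat{\mu}=\mu$ on $\mathbb{H}$), and it extends continuously to $\overline{\mathbb{H}}$ because $f$ is a homeomorphism of $\hat{\mathbb{C}}$. For uniqueness within $\mathbb{H}$, any other normalized solution $\tilde{f}$ on $\mathbb{H}$ can be extended by Schwarz-type reflection, $\tilde{f}(z):=\overline{\tilde{f}(\overline{z})}$ for $z\in\mathbb{H}^*$, yielding a normalized solution on $\hat{\mathbb{C}}$ with Beltrami coefficient $\hat{\mu}$; the uniqueness in the previous theorem then gives $\tilde{f}=f$ globally, hence on $\mathbb{H}$. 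The main technical point I expect to need care with is the interaction of complex conjugation with the distributional derivatives and the ACL condition across the reflecting axis $\hat{\mathbb{R}}$, but once the reflection argument is justified at the level of Beltrami coefficients, everything else follows mechanically from the previous theorem.
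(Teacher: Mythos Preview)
Your proposal is correct and follows essentially the same approach as the paper: the paragraph preceding the corollary already carries out the reflection argument, defining $\hat{\mu}$ by $\hat{\mu}(z)=\overline{\mu(\overline{z})}$ on $\mathbb{H}^*$, solving on $\hat{\mathbb{C}}$, and using uniqueness to conclude $\overline{f(\overline{z})}=f(z)$ so that $f$ preserves $\mathbb{H}$. Your write-up is in fact slightly more complete, since you also spell out the uniqueness of the restricted map via reflecting a competitor back to $\hat{\mathbb{C}}$, a step the paper leaves implicit.
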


\subsection{Holomorphic Quadratic Differentials}
We begin by stating the definition of a holomorphic quadratic differential.
\begin{defn}
A holomorphic quadratic differential $\varphi$ on a Riemann surface $R$ is an assignment of a holomorphic function $\varphi_1(z_1)$ to each local coordinate $z_1$ satisfying the compatibility condition, i.e., if $z_2$ is another local coordinate overlapping $z_1$, then $\varphi_1(z_1)=\varphi_2(z_2)\Big(\dfrac{dz_2}{dz_1}\Big)^2$.
\end{defn}

\bigskip
We usually impose some restrictions on the holomorphic quadratic differentials.  Along a border arc $\alpha$ in the border of $R$ we require that $\varphi_1(\alpha)$ be real if $z_1$ is a local coordinate with $z_1(\alpha)$ real.  If there exists an isolated boundary point $p$ on $R$, then we require that $\varphi$ have at most a simple pole at $p$.  We do not permit poles of $\varphi$ to occur on the boundary curves of $R$.  The vector space of holomorphic quadratic differentials with the above conditions is denoted as $Q(R)$.

Now if the genus of $R$ is $g$ and $R$ is obtained form a compact surface by deleting $m$ disjoint closed disks and $n$ isolated points outsides the disks, then we have the following theorem:
\begin{thm}
\[dim_{\mathbb{R}}Q(R)=6g-6+3m+2n\]
except for cases shown below:
\begin{center}
\begin{tabular}{|c|c|c|c|}
\hline
$g$ & $m$ & $n$ & $dim_{\mathbb{R}}Q(R)$\\
\hline
$0$ & $0$ & $0,1,2$ & $0$\\
$0$ & $1$ & $0,1$ & $0$\\
$1$ & $0$ & $0$ & $2$\\
\hline
\end{tabular}
\end{center}
In the case when $m=0$, $Q(R)$ is a complex vector space and the same formula yields
\[dim_{\mathbb{C}}Q(R)=3g-3+n\]
for any $g\geq 2$, for $g=1$ and $n\geq 1$, and for $g=0$ and $n\geq 3$.
\end{thm}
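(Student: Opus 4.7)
The plan is to reduce the theorem to a standard Riemann--Roch calculation on a compact Riemann surface by invoking Schwarz reflection (the Schottky doubling construction), which converts the reality requirement along the boundary arcs into an antiholomorphic symmetry condition on a closed surface.

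First I would construct the Schottky double $R^{d}$ when $m\geq 1$: glue $R$ to its mirror image along the $m$ boundary circles to obtain a compact Riemann surface carrying a canonical antiholomorphic involution $\sigma$ whose fixed locus is $\partial R$. An elementary Euler-characteristic computation gives the topological genus $\tilde{g}=2g+m-1$. Each of the $n$ isolated punctures of $R$ lifts to a pair of points on $R^{d}$ permuted by $\sigma$, producing a $\sigma$-invariant divisor $D$ on $R^{d}$ of degree $2n$. When $m=0$, no doubling is needed: one simply works on the compactification of $R$ with $n$ marked points where simple poles are permitted.

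Next I would set up the correspondence $Q(R)\cong H^{0}\bigl(R^{d},K_{R^{d}}^{\otimes 2}\otimes \mathcal{O}(D)\bigr)^{\sigma}$, where the superscript denotes the real subspace fixed by the antiholomorphic involution $\varphi\mapsto \overline{\sigma^{*}\varphi}$. The reality condition $\varphi_{1}(\alpha)\in\mathbb{R}$ along boundary arcs is precisely the statement that $\varphi$ admits a Schwarz reflection across $\partial R$ gluing to itself, while the simple-pole condition at isolated punctures is preserved under the lifting to $D$. Because $\sigma^{*}$ is antiholomorphic, the $\sigma$-fixed subspace is a real form of the ambient complex vector space, so its real dimension equals the complex dimension of $H^{0}(R^{d},K_{R^{d}}^{\otimes 2}\otimes\mathcal{O}(D))$.

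The Riemann--Roch step is then a direct computation. With $L=K_{R^{d}}^{\otimes 2}\otimes\mathcal{O}(D)$ one has $\deg L=4(\tilde{g}-1)+2n$ and $\deg\bigl(K_{R^{d}}\otimes L^{-1}\bigr)=2-2\tilde{g}-2n$, which is negative as soon as $\tilde{g}\geq 2$, or $\tilde{g}=1$ with $n\geq 1$, or $\tilde{g}=0$ with $n\geq 2$. In these ranges Serre duality forces $h^{1}(L)=0$, giving $h^{0}(L)=3\tilde{g}-3+2n=6g-6+3m+2n$. In the case $m=0$ the space $Q(R)$ is intrinsically complex, and the same argument on the compactification of $R$ itself yields the complex dimension $3g-3+n$.

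The listed exceptions correspond exactly to those small values of $(g,m,n)$ where the Serre dual $H^{0}(K_{R^{d}}^{-1}\otimes\mathcal{O}(-D))$ is nonzero, so that the naive Riemann--Roch estimate breaks down. For the sphere and disk cases (rows one and two of the table) one checks directly that no meromorphic quadratic differential satisfies both the pole and reality constraints, so the space is trivial. For the torus case $(g,m,n)=(1,0,0)$, the canonical bundle is trivial, giving one extra complex section $dz^{2}$, hence real dimension $2$. The main technical difficulty, I expect, lies not in the Riemann--Roch bookkeeping but in verifying rigorously that the doubling construction produces a genuine complex structure across $\partial R$ and that the $\sigma$-equivariant global sections on $R^{d}$ correspond bijectively with the boundary-real meromorphic quadratic differentials on $R$; once that dictionary is in hand, every dimension count falls out of the line-bundle computation.
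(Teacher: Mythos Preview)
Your proposal is correct and follows exactly the route the paper itself points to: the paper does not give its own proof but simply states that ``this theorem can be easily deduced from the Riemann--Roch theorem'' and refers to Farkas--Kra, so your Schottky-doubling plus Riemann--Roch argument is the standard elaboration of what the paper invokes. Your identification of the real form $Q(R)\cong H^{0}(R^{d},K_{R^{d}}^{\otimes 2}\otimes\mathcal{O}(D))^{\sigma}$ and the vanishing criterion $\deg(K\otimes L^{-1})<0$ are precisely the ingredients one needs, and your treatment of the exceptional low-genus cases is on target.
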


\bigskip
This theorem can be easily deduced from the Riemann-Roch theorem.  It can also be seen as a consequence of the Teichm\"{u}ller theorem.  The proof can be found in Farkas and Kra\cite{bib6}.

\subsection{Nodal Riemann Surfaces}
For the study of the boundary of the Teichm\"{u}ller spaces or the moduli spaces, we require the notion of a nodal Riemann surface.
\begin{defn}
A nodal Riemann surface $X_0$ is a (singular) Riemann surface such that for every point $p\in X_0$, there is a neighborhood of $p$ which is isometric to $\{z||z|<1\}\subset\mathbb{C}$ or $\{(z,w)|zw=0, |z|<1, |w|<1\}\subset\mathbb{C}^2$.  In the second case $p$ is called a node.  A connected component of the complement of the nodes is called a part of $X_0$.
\end{defn}

\bigskip
Suppose $X_0$ has nodes $p_1, ..., p_s$ and $r\geq 1$ parts $X_1, ..., X_r$, where each $X_i$ is a Riemann surface of genus $g_i$ with $n_i$ punctures.  Notice there are exactly two punctures $a_j$ and $b_j$ (which may lie on different $X_i$) corresponding to the same node $p_j$, and we call them paired punctures.  Then we have $\displaystyle 2s=\sum_{i=1}^rn_i$ and $\displaystyle g=\sum_{i=1}^rg_i+p+1-r$.  In this thesis we will assume for all $i$, $X_i$ is not a sphere with one or two punctures.  Thus each $X_i$ is a hyperbolic Riemann surface carrying a Poincar\'{e} metric $\rho$.

\bigskip
Now let us talk about the rs-coordinate on a Riemann surface.  This coordinate was defined by Wolpert\cite{bib11}, and there are two cases: the puncture case and the short geodesic case.

For the puncture case, let $X$ be a nodal Riemann surface and $p\in X$ be a node.  Also, let $a$ and $b$ be the paired punctures corresponding to $p$.
\begin{defn}
A local coordinate chart $(U, u)$ near $a$ is called rs-coordinate if $u(a)=0$ where $u$ maps $U$ to the punctured disk $\{u|0<|u|<c\}$ with $c>0$, and the restriction to $U$ of the K\"{a}hler-Einstein metric on $X$ can be written as
\[\dfrac{1}{2|u|^2(\log|u|)^2}|du|^2.\]
The rs-coordinate $(V, v)$ near $b$ can be defined similarly.
\end{defn}

\bigskip
For the short geodesic case, let $X$ be a closed Riemann surface, and let $\gamma$ be a closed geodesic on $X$ with length $l<c_*$, where $c_*$ is the collar constant introduced by Keen\cite{bib26}.
\begin{defn}
A local coordinate chart $(U, z)$ is called rs-coordinate at $\gamma$ if $\gamma\subset U$ where $z$ maps $U$ to the annulus $\{z|c^{-1}|t|^{\frac{1}{2}}<|z|<c|t|^{\frac{1}{2}}\}$ with $c, |t|>0$, and the restriction to $U$ of the K\"{a}hler-Einstein metric on $X$ can be written as
\[\dfrac{1}{2}\bigg(\dfrac{\pi}{\log|t|}\dfrac{1}{|z|}\csc\dfrac{\pi\log|z|}{\log|t|}\bigg)^2|dz|^2.\]
\end{defn}

\bigskip
By Keen's collar theorem \cite{bib26}, we obtain the following lemma:
\begin{lem}
Let $X$ be a closed Riemann surface and let $\gamma$ be a closed geodesic on $X$ with length $l<c_*$.  Then there is a collar $\Omega$ on $X$ with holomorphic coordinate $z$ defined on $\Omega$ such that
\begin{itemize}
\item[(i)] $z$ maps $\Omega$ to the annulus $\{z|c^{-1}e^{-\dfrac{2\pi^2}{l}}<|z|<c\}$ for $c>0$;
\item[(ii)] the K\"{a}hler-Einstein metric on $X$ restricted to $\Omega$ is given by
\[\dfrac{1}{2}u^2r^{-2}\csc^2\tau|dz|^2\]
where $u=\dfrac{l}{2\pi}$, $r=|z|$ and $\tau=u\log r$;
\item[(iii)] the geodesic $\gamma$ is given by the equation $|z|=e^{-\dfrac{\pi^2}{l}}$.
\end{itemize}
We call such a collar $\Omega$ a genuine collar.
\end{lem}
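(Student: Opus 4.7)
The plan is to pull the geometry back to the universal cover $\mathbb{H}$, express a neighborhood of $\gamma$ as a cyclic quotient, and invoke Keen's collar theorem to guarantee that this quotient embeds isometrically into $X$. By the uniformization theorem we may write $X=\mathbb{H}/\Gamma$ for a torsion-free Fuchsian group $\Gamma$, and the K\"ahler-Einstein metric on $X$ is the descent of the hyperbolic metric $|dw|^2/(\mathrm{Im}\,w)^2$ on $\mathbb{H}$, with the factor $\tfrac12$ in (ii) encoding the Hermitian-versus-Riemannian normalization. The geodesic $\gamma$ corresponds to a primitive hyperbolic conjugacy class in $\Gamma$ of translation length $l$; after conjugating in $\mathrm{PSL}(2,\mathbb{R})$ we may assume its representative is $T(w)=e^l w$, whose axis is the positive imaginary axis and projects onto $\gamma$.

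Next I would introduce the $\langle T\rangle$-invariant coordinate $z:=\exp\bigl(2\pi i(\log w)/l\bigr)$, which descends to a biholomorphism $\mathbb{H}/\langle T\rangle\to\{e^{-2\pi^2/l}<|z|<1\}$. Writing $w=\rho e^{i\theta}$ with $\theta\in(0,\pi)$ and $z=re^{i\phi}$, one reads off $|z|=e^{-2\pi\theta/l}$, so $\theta=-u\log r=-\tau$ with $u=l/(2\pi)$, and $\rho=e^{l\phi/(2\pi)}$. A direct substitution then yields
\[
\frac{|dw|^2}{(\mathrm{Im}\,w)^2}=u^2 r^{-2}\csc^2\tau\,|dz|^2,
\]
and after the $\tfrac12$ normalization this is exactly (ii), while the axis $\theta=\pi/2$ corresponds to $|z|=e^{-\pi^2/l}$, giving (iii). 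I would not grind through the substitution in the final write-up, as it is a routine change of variables.

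The main obstacle is (i): showing that the above biholomorphism, restricted to an annulus of the prescribed size, factors through an embedding into $X$ rather than merely through $\mathbb{H}/\langle T\rangle$. This is precisely the content of Keen's collar theorem, so I would cite \cite{bib26} rather than reprove it. The underlying mechanism is a J\o{}rgensen/Margulis-type inequality: if some $g\in\Gamma\setminus\langle T\rangle$ identified two points inside a suitable neighborhood of the axis, then $T$ and $gTg^{-1}$ would generate a discrete two-generator subgroup with two hyperbolic elements of translation length $l$ whose axes are very close, forcing by the standard trace identities an elliptic or parabolic element whose existence contradicts either torsion-freeness or discreteness of $\Gamma$ once $l<c_*$. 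Extracting a uniform constant $c>0$ such that the annulus $\{c^{-1}e^{-2\pi^2/l}<|z|<c\}$ embeds as $\Omega\subset X$ is exactly what Keen establishes; conclusions (ii) and (iii) on $\Omega$ then follow directly from the coordinate computation above.
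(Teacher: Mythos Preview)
Your proposal is correct and in fact more detailed than the paper's treatment: the paper offers no proof at all, simply prefacing the lemma with ``By Keen's collar theorem \cite{bib26}, we obtain the following lemma'' and leaving it at that. Your approach---uniformize, normalize the hyperbolic element to $T(w)=e^l w$, introduce the $\langle T\rangle$-invariant coordinate $z=\exp(2\pi i(\log w)/l)$, compute the pushed-forward metric, and then invoke Keen for the embedding---is the standard way to justify this lemma, and the computation you outline is accurate (in particular $\theta=-\tau$, $\sin^2\theta=\sin^2\tau$, and the axis $\theta=\pi/2$ gives $|z|=e^{-\pi^2/l}$). So you have supplied the explicit coordinate work that the paper suppresses, while agreeing with the paper that the substantive content (the existence of a uniform $c>0$ for which the annulus embeds) is Keen's theorem and should be cited rather than reproved.
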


\bigskip
Notice that the constant $c$ in the lemma has a lower bound such that the area of $\Omega$ is bounded from below.  Moreover, the coordinate $z$ in the lemma is indeed rs-coordinate.

\newpage

\section{Teichm\"{u}ller Theory}
In this section we will go through some classical results of the Teichm\"{u}ller theory, like the Teichm\"{u}ller's distance and the Bers embedding.  The infinitesimal theory of Teichm\"{u}ller spaces will also be discussed.

\subsection{Teichm\"{u}ller Spaces}
There are basically two kinds of Teichm\"{u}ller spaces, namely, the Teichm\"{u}ller space of a Riemann surface $T(R_0)$ and the Teichm\"{u}ller space of a Fuchsian group $T(\Gamma_0)$.  Now let us see the definition of the Teichm\"{u}ller space of a Riemann surface first.

Let $R_0$ be a fixed Riemann surface.  In fact we can define $T(R_0)$ in two different ways.  One of them is to define $T(R_0)$ as a deformation space.
\begin{defn}
Let $\textrm{Def}(R_0)$ be the set of all pairs $(f, R)$, where $f$ is a quasiconformal homeomorphism from $R_0$ onto a surface $R$.  Two pairs $(f_1, R_1)$ and $(f_2, R_2)$ in $\textrm{Def}(R_0)$ are equivalent if there is a conformal map $c:R_1 \rightarrow R_2$ such that $c\circ f_1$ is homotopic to $f_2$.  Then $T(R_0)$ is defined as the quotient of $\textrm{Def}(R_0)$ by this equivalence relation.
\end{defn}

\bigskip
Another method is to define $T(R_0)$ as an orbit space.  Let $M(R_0)$ be the open unit ball in the Banach space $L_{\infty}(R_0)$, where $L_{\infty}(R_0)$ is the space of all Beltrami differentials $\mu$ on $R_0$.  Note that such a differential $\mu$ is an assignment of a measurable complex valued function $\mu^z$ to each local coordinate $z$ on $R_0$ such that $\left\|\mu\right\|_{\infty}=\sup\{\left\|\mu^z(z)\right\|_{\infty}| z \textrm{ is a local coordinate}\}<\infty$, and if $\zeta$ is another local coordinate overlapping with $z$, then $\mu^z(z)\dfrac{d\overline{z}}{dz}=\mu^{\zeta}(\zeta)\dfrac{d\overline{\zeta}}{d\zeta}$.

Form the theory of Beltrami equation we know that for any $\mu\in M(R_0)$, there exists a quasiconformal homeomorphism $w$ from $R_0$ onto another Riemann surface $R^{\mu}$ satisfying $w_{\overline{z}}=\mu w_z$.  Let $D_0(R_0)$ be the group of quasiconformal homeomorphic self-mappings of $R_0$ which are homotopic to the identity.  Now we can define $T(R_0)$:
\begin{defn}
Consider the group action $D_0(R_0)\times M(R_0)\rightarrow M(R_0)$ given by $(h, \mu)\mapsto h^*(\mu)$, where
\[h^*(\mu)=\dfrac{(w\circ h)_{\overline{z}}}{(w\circ h)_z}=\dfrac{\nu(z)+\mu(h(z))\theta(z)}{1+\overline{\nu(z)}\mu(h(z))\theta(z)}\]
for which $\nu$ is the Beltrami coefficient of $h$ and $\theta(z)=\dfrac{\overline{h_z}}{h_z}$.  Then $T(R_0)$ is defined as $M(R_0)/D_0(R_0)$.
\end{defn}

\bigskip
Gardiner\cite{bib25} showed that in fact the above two methods define the same $T(R_0)$.

\bigskip
For the definition of the Teichm\"{u}ller space of a Fuchsian group, let $\Gamma_0$ be a Fuchsian group, and define $B(\Gamma_0)$ to be the set of measurable complex valued Beltrami coefficients $\mu$ with support in $\mathbb{H}$ and which satisfy $\mu(B(z))\overline{B'(z)}=\mu(z)B'(z)$ for all $B\in\Gamma_0$.  Also, define $M(\Gamma_0)\subset B(\Gamma_0)$ such that $\left\|\mu\right\|_{\infty}<1$ for all $\mu\in M(\Gamma_0)$.  Let $w_{\mu}$ be the unique $qc$-homeomorphism of $\mathbb{H}$ whose extension to $\hat{\mathbb{R}}$ fixes $0$, $1$ and $\infty$ and satisfies $w_{\overline{z}}=\mu w_z$.
\begin{defn}
$T(\Gamma_0)$ is defined as $M(\Gamma_0)/\sim$, where $\sim$ is an equivalence relation such that $\mu\sim\nu$ if $w_{\mu}(x)=w_{\nu}(x)$ for all $x\in\hat{\mathbb{R}}$.
\end{defn}

\bigskip
Again, Gardiner\cite{bib25} showed that if $R_0=\mathbb{H}/\Gamma_0$, then $T(R_0)$ is canonically isomorphic to $T(\Gamma_0)$.  Notice that by Imayoshi and Taniguchi\cite{bib32} we can see that $T(R_0)$ indeed depends on the genus $g$ of the (closed) Riemann surface $R_0$ instead of the surface itself.  In this thesis, the identification $T_g=T(R_0)=T(\Gamma_0)$ is taken.

\subsection{Teichm\"{u}ller's Distance}
Let $\Gamma$ be a Fuchsian group.  We can then define a Teichm\"{u}ller's distance on $T(\Gamma)$:
\begin{defn}
For $[\mu], [\nu]\in T(\Gamma)$, the Teichm\"{u}ller's distance $d$ on $T(\Gamma)$ is defined as
\[d([\mu],[\nu])=\dfrac{1}{2}\inf\log K(w_{\tilde{\mu}}\circ w_{\tilde{\mu}}^{-1})\]
where the infinmum is taken over all $\tilde{\mu}\sim\mu$ and $\tilde{\nu}\sim\nu$.
\end{defn}

\bigskip
The Teichm\"{u}ller's distance makes $T(\Gamma)$ a metric space and thus gives it a topology.

\subsection{The Bers Embedding}
Let us first define the Schwarzian derivative, which will be used to define the Bers embedding.
\begin{defn}
For a (quasi)conformal mapping $f$ on a domain in $\mathbb{C}$, the Schwarzian derivative $\{f,z\}$ of $f$ is given by
\[\{f,z\}=\dfrac{f'''(z)}{f'(z)}-\dfrac{3}{2}\bigg(\dfrac{f''(z)}{f'(z)}\bigg)^2.\]
\end{defn}

\bigskip
We can easily verify the Cayley identity: $\{f\circ g,z\}=\{f,g\}g'(z)^2+\{g,z\}$ and see that $\{f,z\}=0$ if and only if $f$ is a M\"{o}bius transformation.

\bigskip
Let $\Gamma$ be a Fuchsian group such that $\mathbb{H}/\Gamma$ is a closed Riemann surface with genus $g\geq 2$ and the extensions of all its elements fix $0$, $1$ and $\infty$.  Now for $\mu\in M(\Gamma)$, let $\varphi_{\mu}(z)=\{w^{\mu},z\}$ for $z\in\mathbb{H}^*$, where $w^{\mu}$ is the unique $qc$-homeomorphism of $\hat{\mathbb{C}}$ normalized to fix $0$, $1$ and $\infty$ with Beltrami coefficient $\mu$ on $\mathbb{H}$ and $0$ on $\mathbb{H}^*$.  Then we have the following lemma (Lemma 6.4, \cite{bib32}):
\begin{lem}
Let $\gamma\in\Gamma$, then
\[\varphi_{\mu}(\gamma(z))\gamma'(z)^2=\varphi_{\mu}(z)\]
for $z\in\mathbb{H}^*$.  Furthermore, $\varphi_{\mu}$ can be regarded as a holomorphic quadratic differential on the Riemann surface $R^*=\mathbb{H}^*/\Gamma$, and for $\mu, \nu\in M(\Gamma)$, $\mu\sim\nu$ if and only if $\varphi_{\mu}=\varphi_{\nu}$ on $\mathbb{H}^*$.
\end{lem}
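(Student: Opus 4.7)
The central observation is that $w^{\mu}$ conjugates the Fuchsian group $\Gamma$ into a new group of M\"{o}bius transformations, and the Schwarzian chain rule translates this conjugation into the quadratic-differential transformation law for $\varphi_{\mu}$. Part (iii) then follows from the fact that two holomorphic maps share a Schwarzian exactly when they differ by post-composition with a M\"{o}bius transformation, combined with the normalisation fixing $\{0,1,\infty\}$.

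\textbf{Proof of (i) and (ii).} I would first establish a conjugation lemma: for each $\gamma\in\Gamma$ there exists a M\"{o}bius transformation $\gamma^{\mu}$ such that $w^{\mu}\circ\gamma=\gamma^{\mu}\circ w^{\mu}$. The Wirtinger chain rule, applied to the holomorphic map $\gamma$, shows that the Beltrami coefficient of $w^{\mu}\circ\gamma$ at $z\in\mathbb{H}$ equals $\mu(\gamma(z))\overline{\gamma'(z)}/\gamma'(z)$, and the invariance condition defining $B(\Gamma)$ reduces this to $\mu(z)$; on $\mathbb{H}^{*}$ both coefficients vanish since $\gamma\in\textrm{PSL}(2,\mathbb{R})$ preserves $\mathbb{H}^{*}$. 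By the uniqueness of the normalised qc-solution in Theorem 2.1.2, $w^{\mu}\circ\gamma$ differs from $w^{\mu}$ only by post-composition with some M\"{o}bius $\gamma^{\mu}$. Now apply the Cayley rule $\{F\circ G,z\}=\{F,G(z)\}G'(z)^{2}+\{G,z\}$ to both sides of $w^{\mu}\circ\gamma=\gamma^{\mu}\circ w^{\mu}$; the contributions $\{\gamma,z\}$ and $\{\gamma^{\mu},w^{\mu}(z)\}$ vanish because M\"{o}bius transformations have zero Schwarzian, and one reads off $\varphi_{\mu}(\gamma(z))\gamma'(z)^{2}=\varphi_{\mu}(z)$, which is (i). Holomorphy of $\varphi_{\mu}$ on $\mathbb{H}^{*}$ is automatic because $w^{\mu}$ is conformal there, and with (i) this says precisely that $\varphi_{\mu}$ descends to a holomorphic quadratic differential on $R^{*}=\mathbb{H}^{*}/\Gamma$, giving (ii).

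\textbf{Proof of (iii).} For the ``only if'' direction I would invoke the standard Ahlfors-Bers fact that the relation $\mu\sim\nu$ on $M(\Gamma)$ coincides with $w^{\mu}=w^{\nu}$ on $\mathbb{H}^{*}$, from which $\varphi_{\mu}=\{w^{\mu},\cdot\}=\{w^{\nu},\cdot\}=\varphi_{\nu}$ is immediate. For the converse, suppose $\varphi_{\mu}=\varphi_{\nu}$ on $\mathbb{H}^{*}$. Two holomorphic functions on a connected domain share a Schwarzian exactly when they differ by post-composition with a M\"{o}bius transformation, so $w^{\nu}=M\circ w^{\mu}$ on $\mathbb{H}^{*}$ for some $M\in\textrm{PSL}(2,\mathbb{C})$. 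Passing to continuous boundary values on $\hat{\mathbb{R}}\subset\overline{\mathbb{H}^{*}}$ and using that both $w^{\mu}$ and $w^{\nu}$ fix $\{0,1,\infty\}$, we see that $M$ fixes three distinct points, hence $M=\textrm{id}$ and $w^{\mu}=w^{\nu}$ on $\overline{\mathbb{H}^{*}}$; the Ahlfors-Bers equivalence again delivers $\mu\sim\nu$.

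\textbf{Expected obstacle.} The only non-formal ingredient is the Ahlfors-Bers equivalence $\mu\sim\nu\Longleftrightarrow w^{\mu}|_{\mathbb{H}^{*}}=w^{\nu}|_{\mathbb{H}^{*}}$ translating between the ``interior'' normalisation $w_{\mu}$ (a self-map of $\mathbb{H}$) and the ``exterior'' normalisation $w^{\mu}$ (conformal on $\mathbb{H}^{*}$). This is the substantive quasiconformal-theoretic input, proved by studying the conformal welding map $h_{\mu}=w^{\mu}\circ w_{\mu}^{-1}$ from $\mathbb{H}$ onto $w^{\mu}(\mathbb{H})$ and its boundary behaviour; once it is granted, everything else in the statement reduces to the Schwarzian chain rule and the fact that three fixed points pin down a M\"{o}bius transformation.
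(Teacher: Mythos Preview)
Your argument is correct and follows the standard textbook route (Cayley identity plus the Ahlfors--Bers correspondence between the two normalisations $w_{\mu}$ and $w^{\mu}$). The paper itself does not supply a proof of this lemma: it is stated with the citation ``(Lemma 6.4, \cite{bib32})'' to Imayoshi--Taniguchi and is used as a quoted result, so there is no in-paper proof to compare against. Your write-up is essentially what one finds in that reference, and the obstacle you flag --- the equivalence $\mu\sim\nu \Longleftrightarrow w^{\mu}|_{\mathbb{H}^{*}}=w^{\nu}|_{\mathbb{H}^{*}}$ --- is indeed the substantive input and is exactly the point Imayoshi--Taniguchi establish before stating their Lemma 6.4.
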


\bigskip
Now we can define the Bers embedding.
\begin{defn}
The Bers embedding is the mapping $\beta: T(\Gamma)\rightarrow Q(R^*)$ given by $\beta([\mu])=\varphi_{\mu}$.  Moreover, the Bers projection $\Phi:M(\Gamma)\rightarrow Q(R^*)$ is defined by $\Phi(\mu)=\varphi_{\mu}$.
\end{defn}

\bigskip
Notice that $Q(R^*)$ is equipped with the hyperbolic $L^{\infty}$-norm:
\[\left\|\varphi\right\|_{\infty, \rho}=\sup_{\mathbb{H}^*}\rho^{-2}|\varphi(z)|\]
for $\varphi\in Q(R^*)$, where $ds^2=\rho^2(z)|dz|^2$ is the Poincar\'{e} metric on $\mathbb{H}^*$.  This makes $Q(R^*)$ a complex Banach space.  Since the Bers embedding is a continuous injection, $T(\Gamma)$ inherits the complex structure of $Q(R^*)$.

\subsection{Teichm\"{u}ller Modular Groups and Moduli Spaces of Riemann Surfaces}
Let $R$ be a closed Riemann surface with genus $g\geq 2$, and let $D_+(R)$ be the group of quasiconformal homeomorphic self-mappings of $R$.  Then we can define the Teichm\"{u}ller modular group of $R$:
\begin{defn}
The Teichm\"{u}ller modular group of $R$, denoted as $Mod(R)$, is given by
\[Mod(R)=D_+(R)/D_0(R).\]
\end{defn}

\bigskip
Now we consider the moduli space of $R$:
\begin{defn}
The moduli space of $R$ is the quotient space of $T(R)$ by $Mod(R)$.  It is denoted as $\mathcal{M}(R)$.
\end{defn}

\bigskip
Note that by Imayoshi and Taniguchi\cite{bib32} again we know that the above definitions depend on $g$ instead of $R$ itself, so in this thesis the notations $Mod(R)$ and $\mathcal{M}(R)$ are replaced by $Mod_g$ and $\mathcal{M}_g$ respectively.

\subsection{Infinitesimal Theory of Teichm\"{u}ller Spaces}
For $\mu\in M(\Gamma)$ and $\nu\in B(\Gamma)$, we consider the derivative $\dot{\Phi}_{\mu}[\nu]$ of $\Phi$ in the direction $\nu$ at $\mu$ given by
\[\dot{\Phi}_{\mu}[\nu]=\lim_{t\rightarrow\infty}\dfrac{1}{t}(\Phi(\mu_t)-\Phi(\mu)),\]
where the limit is taken with respect to $\left\|\cdot\right\|_{\infty, \rho}$ and $\mu_t\in M(\Gamma)$ is defined as $\mu_t=\mu+t\nu+t\epsilon(t)$ with $\left\|\epsilon(t)\right\|_{\infty, \rho}\rightarrow$ as $t\rightarrow 0$.  Let $HB(\Gamma^{\mu})$ be the space of harmonic Beltrami differentials for $\Gamma^{\mu}=w_{\mu}\circ\Gamma\circ w_{\mu}^{-1}$ on $\mathbb{H}$.  Then the following theorem (Proposition 7.8, \cite{bib32}) gives us a representation of $T_p T(\Gamma)$ for $p=[\mu]\in T(\Gamma)$:
\begin{thm}
There exists an isomorphism between $M(\Gamma)/\textrm{Ker}\dot{\Phi}_{\mu}\cong T_p T(\Gamma)$ and $HB(\Gamma^{\mu})\cong T_0 T(\Gamma^{\mu})$.
\end{thm}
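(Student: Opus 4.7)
\begin{sproof}
The plan is to treat the two isomorphisms separately and then chain them. For the first isomorphism $B(\Gamma)/\ker\dot{\Phi}_\mu \cong T_p T(\Gamma)$, the idea is that the equivalence relation on $M(\Gamma)$ defining $T(\Gamma)$ coincides, by Lemma 2.2.1, with the fibers of the Bers projection $\Phi$. Thus the Teichm\"uller space is (locally near $[\mu]$) the quotient of $M(\Gamma)$ by the fibers of $\Phi$. So I would first verify that $\Phi:M(\Gamma)\to Q(R^*)$ is a holomorphic submersion at $\mu$, which reduces by the Banach-space implicit function theorem to showing that $\dot{\Phi}_\mu$ is a surjective bounded linear map $B(\Gamma)\to Q(R^*)$ with a closed complemented kernel. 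Granting this, the tangent space $T_\mu M(\Gamma)=B(\Gamma)$ modulo the tangent space to the fiber through $\mu$, which is exactly $\ker\dot{\Phi}_\mu$, provides a canonical identification with $T_p T(\Gamma)$.

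For the second isomorphism $HB(\Gamma^\mu)\cong T_0 T(\Gamma^\mu)$, I would first reduce to the case of the basepoint by using the natural isomorphism $T(\Gamma)\cong T(\Gamma^\mu)$ given by precomposition with $w_\mu$, which sends $p=[\mu]$ to $0=[0]\in T(\Gamma^\mu)$. So it suffices to prove $HB(\Gamma)\cong B(\Gamma)/\ker\dot{\Phi}_0$ for an arbitrary Fuchsian group $\Gamma$. The core step is to compute $\dot{\Phi}_0[\nu]$ explicitly via the Ahlfors formula for $F(z)$ supplied by Theorem 2.1.4, then differentiate the Schwarzian. A standard calculation yields, up to a universal constant,
\[
\dot{\Phi}_0[\nu](z)=-\frac{6}{\pi}\iint_{\mathbb{H}}\frac{\nu(\zeta)}{(\zeta-z)^4}\,d\xi\,d\eta,\qquad z\in\mathbb{H}^*,
\]
from which $\Gamma$-equivariance descends the result to $R=\mathbb{H}/\Gamma$.

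From this representation and integration by parts (or the classical reproducing kernel identity for holomorphic quadratic differentials), one sees that $\nu\in\ker\dot{\Phi}_0$ if and only if $\langle\nu,\varphi\rangle:=\int_R \nu\,\varphi\,dA=0$ for every $\varphi\in Q(R)$. Since $R$ is a closed hyperbolic surface of genus $g\geq 2$, the Poincar\'e metric $\rho^2|dz|^2$ produces the $L^2$-orthogonal decomposition
\[
B(\Gamma)=\ker\dot{\Phi}_0\;\oplus\;HB(\Gamma),
\]
where $HB(\Gamma)=\{\rho^{-2}\overline{\varphi}\mid \varphi\in Q(R)\}$ pairs nondegenerately with $Q(R)$. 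This splitting gives the required isomorphism $B(\Gamma)/\ker\dot{\Phi}_0\cong HB(\Gamma)$, and composing with the first isomorphism and the translation $T(\Gamma)\cong T(\Gamma^\mu)$ completes the proof.

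The main obstacle, in my view, is the analytic step of justifying that the splitting is both algebraically direct and topologically closed, i.e.\ that $HB(\Gamma)$ is genuinely a complement rather than merely a transversal. This rests on the finite dimensionality of $HB(\Gamma)$ (which equals $3g-3$ by Theorem 2.1.5) together with the nondegeneracy of the pairing with $Q(R)$; routine computations of $\dot{\Phi}_0$ and of the Schwarzian derivative are bookkeeping in comparison. A secondary subtlety is verifying that the translation map $B(\Gamma)\to B(\Gamma^\mu)$ induced by $w_\mu$ intertwines $\dot{\Phi}_\mu$ with $\dot{\Phi}_0$ on $\Gamma^\mu$, so that the reduction to the basepoint is legitimate; this is a direct chain-rule calculation using the cocycle identity for the Schwarzian derivative established just before the theorem.
\end{sproof}
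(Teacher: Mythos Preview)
The paper does not actually supply a proof of this theorem: it is stated as Proposition 7.8 of Imayoshi--Taniguchi \cite{bib32} and quoted without argument. So there is no in-paper proof to compare against; the relevant comparison is with the standard treatment in that reference.

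Your sketch follows precisely that standard route: identify $T(\Gamma)$ with the image of the Bers projection $\Phi$, differentiate at $\mu$ to get $T_pT(\Gamma)\cong B(\Gamma)/\ker\dot\Phi_\mu$, translate to the basepoint via $w_\mu$, compute $\dot\Phi_0[\nu]$ by the Ahlfors integral formula, and recognize $\ker\dot\Phi_0$ as the annihilator of $Q(R)$ under the natural pairing, whence the harmonic Beltrami differentials furnish a complement. This is exactly the argument one finds in Imayoshi--Taniguchi (and in Gardiner \cite{bib25}), so your approach is correct and orthodox. One minor remark: the paper's statement writes $M(\Gamma)/\ker\dot\Phi_\mu$, but since $M(\Gamma)$ is the open unit ball and $\dot\Phi_\mu$ is a linear map on $B(\Gamma)$, the quotient is really $B(\Gamma)/\ker\dot\Phi_\mu$, as you correctly write; this is a typo in the paper, not an error on your part.
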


\bigskip
Now for $[X]\in\mathcal{M}_g$, the identification $T_X\mathcal{M}_g\cong HB(X)=H^1(X, TX)$ can also be obtained from Kodaira-Spencer theory and Hodge theory.  Moreover, we also have $T_X^*\mathcal{M}_g\cong Q(X)$.  Pick $\mu\in HB(X)$ and $\varphi\in Q(X)$, we can write $\mu$ and $\varphi$ as $\mu(z)\dfrac{\partial}{\partial z}\otimes d\overline{z}$ and $\varphi(z)dz^2$ in a local coordinate $z$ on $X$.  The duality between $T_X\mathcal{M}_g$ and $T_X^*\mathcal{M}_g$ is given by
\[[\mu;\varphi]=\int_X\mu(z)\varphi(z)dzd\overline{z}.\]

\subsection{Boundary of Moduli Spaces of Riemann Surfaces}
The boundary of $\mathcal{M}_g$ is the divisor $\overline{\mathcal{M}_g}-\mathcal{M}_g$ in $\overline{\mathcal{M}_g}$, where $\overline{\mathcal{M}_g}$ is the Deligne-Mumford compactification of $\mathcal{M}_g$ in \cite{bib33}.

For the local theory of the boundary, we recall the construction of Wolpert\cite{bib11}.  Let $X_{0,0}$ be a nodal Riemann surface corresponding to a codimension $m$ boundary point, i.e., $X_{0,0}$ has $m$ nodes $p_1, ..., p_m$.  Let $X_0=X_{0,0}-\{p_1, ..., p_m\}=\cup_{k=1}^r X_k$.  Fix rs-coordinate charts $(U_i, \eta_i)$ and $(V_i, \zeta_i)$ at $p_i$ for $i=1, ..., m$ such that all $U_i$ and $V_i$ are mutually disjoint.  Also, pick an open set $U_0\subset X_0$ such that $X_k\cap U_0\neq\phi$ is relatively compact and $U_0\cap(U_i\cup V_i)$ is empty for all $i$.  Now let $\nu_{m+1}, ..., \nu_n$ be Beltrami differentials supported in $U_0$ and span the tangent space at $X_0$ of the deformation space of $X_0$.  For $s=(s_{m+1}, ..., s_n)$, let $\displaystyle \nu(s)=\sum_{i=m+1}^ns_i\nu_i$.  For small enough $|s|$, we know that $|\nu(s)|<1$.  Let $X_{0,s}$ be the nodal Riemann surface obtained by solving the Beltrami equation $\overline{\partial}w=\nu(s)\partial w$.  Notice that there are constants $\delta, c>0$ such that when $|s|<\delta$, $(U_i, \eta_i)$ and $(V_i, \zeta_i)$ are holomorphic coordinates on $X_{0,s}$ with $0<|\eta_i|<c$ and $0<|\zeta_i|<c$.  Now we assume $(t_1, ..., t_m)$ has small norm.  Then we do the plumbing construction on $X_{0,s}$ to obtain $X_{t,s}$.  We remove the disks $\{\eta_i|0<|\eta_i|<\dfrac{|t_i|}{c}\}$ and $\{\zeta_i|0<|\zeta_i|<\dfrac{|t_i|}{c}\}$ for each $i=1, ..., m$ from $X_{0,s}$ and identify $\{\eta_i|\dfrac{|t_i|}{c}<|\eta_i|<c\}$ with $\{\zeta_i|\dfrac{|t_i|}{c}<|\zeta_i|<c\}$ by $\eta_i\zeta_i=t_i$.  The surface obtained is $X_{t,s}$.  The tuples $(t_1, ..., t_m, s_{m+1}, ..., s_n)$ are called the (local) pinching coordinates.  Also, the coordinates $\eta_i$ (or $\zeta_i$) are called the plumbing coordinates on $X_{t,s}$ and the collar $\{\eta_i|\dfrac{|t_i|}{c}<|\eta_i|<c\}$ (or $\{\zeta_i|\dfrac{|t_i|}{c}<|\zeta_i|<c\}$) is called the plumbing collar.

\newpage

\section{Schwarz-Yau Lemma}
This lemma was proved by Yau\cite{bib16} as a generalization of the Schwarz lemma.  In chapter 5 we will use this lemma as a main tool for proving the equivalences between the canonical metrics.
\begin{thm}
Let $M$ be a complete K\"{a}hler manifold with Ricci curvature bounded from below by a constant $K_1$, and let $N$ be another K\"{a}hler manifold with holomorphic sectional curvature bounded from above by a negative constant $K_2$. Then if there is a non-constant holomorphic mapping $f$ from $M$ into $N$, we have $K_1\leq 0$ and
\[f^*dS_N^2\leq\dfrac{K_1}{K_2}dS_M^2.\]
\end{thm}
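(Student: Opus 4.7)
The plan is to reduce the inequality to a pointwise comparison via a Bochner-type differential inequality combined with Yau's generalised maximum principle, which is available on complete Kähler manifolds with Ricci curvature bounded from below. Define, at each $p\in M$,
\[
u(p) \;=\; \sup_{0\neq v\in T^{1,0}_p M} \frac{|df_p(v)|^2_{g_N}}{|v|^2_{g_M}},
\]
the largest eigenvalue of the Hermitian form $f^*g_N$ relative to $g_M$. The asserted inequality $f^*dS_N^2\leq (K_1/K_2)\,dS_M^2$ is exactly equivalent to the pointwise bound $u\leq K_1/K_2$. Moreover, once this bound is established, non-constancy of $f$ produces a point where $u>0$, forcing $K_1/K_2>0$; since $K_2<0$ this immediately gives $K_1<0$, in particular $K_1\leq 0$. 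All the work therefore lies in bounding $u$.

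The key analytic step I would carry out is the differential inequality
\[
\Delta_{g_M}\log u \;\geq\; K_1 \;-\; K_2\, u
\]
at every point where $u>0$. To derive it at a point $p_0$, choose a unit maximising direction $v_0\in T^{1,0}_{p_0}M$, introduce holomorphic normal coordinates $(z^i)$ on $M$ at $p_0$ with $v_0=\partial/\partial z^1$, and holomorphic normal coordinates $(w^\alpha)$ on $N$ at $f(p_0)$, both diagonalising the respective Kähler metrics to first order. The function $\phi=|df(\partial/\partial z^1)|^2_{g_N}$ satisfies $\phi(p_0)=u(p_0)$ and $\phi\leq u$ locally, so it suffices to bound $\Delta\log\phi$ from below at $p_0$. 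A direct Kähler Bochner-type computation splits $\Delta\log\phi$ into two curvature contributions: a Ricci term on $M$ coming from commuting covariant derivatives, bounded below by $K_1$, and a term involving the holomorphic sectional curvature of $N$ in the direction $df(v_0)$, bounded below by $-K_2\,\phi(p_0)$ since that sectional curvature is at most $K_2<0$. Adding the two gives the claimed inequality.

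The main obstacle is that $M$ need not be compact, so no global interior maximum of $u$ is available, and moreover $u$ is only upper-semicontinuous since it is defined as a supremum. Both difficulties are overcome by the Omori-Yau maximum principle: completeness of $M$ together with $\mathrm{Ric}_M\geq K_1$ produces a sequence $\{p_k\}\subset M$ at which $\log u(p_k)\to\sup_M\log u$, $|\nabla\log u|(p_k)\to 0$, and $\Delta\log u(p_k)\leq 1/k$, where the Laplacian at the non-smooth point $p_k$ is interpreted through the smooth local comparison function $\phi/|V|^2$ with $V$ a holomorphic extension of the maximising direction near $p_k$. Substituting into the differential inequality and letting $k\to\infty$ yields $0\geq K_1-K_2\sup_M u$, hence $\sup_M u\leq K_1/K_2$. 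The a priori possibility $\sup_M u=+\infty$ is ruled out by first running the argument on the bounded function $v=u/(1+u)$, which satisfies an analogous inequality and thereby forces $u$ to be bounded. This combined statement is the desired pointwise bound, and together with the preceding paragraph completes the argument.
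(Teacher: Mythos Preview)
Your outline is essentially Yau's original argument and is correct in its overall structure: reduce to bounding the maximal stretch $u$, establish the differential inequality $\Delta_{g_M}\log u \geq K_1 - K_2 u$ via a Bochner computation in normal coordinates, and then invoke the Omori--Yau maximum principle (using completeness and the Ricci lower bound) to conclude $\sup_M u \leq K_1/K_2$. Your handling of the two technical obstacles---the non-smoothness of $u$ via a smooth local barrier $\phi/|V|^2$, and the a~priori unboundedness of $u$ via the substitution $v=u/(1+u)$---is also the standard remedy.

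There is, however, nothing in the paper to compare against: the paper does not prove this theorem at all. It is stated in Section~2.3 purely as background, with the proof deferred entirely to Yau's original paper (reference~\cite{bib16} in the text). The theorem is then used as a black box in Chapter~5. So your proposal is not an alternative to the paper's proof---it \emph{is} a sketch of the proof the paper cites, and in that sense it goes well beyond what the paper itself supplies.
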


\bigskip
This lemma also has a version for the comparison between volume forms instead of metrics:
\begin{thm}
Let $M$ be a complete K\"{a}hler manifold with scalar curvature bounded from below by a constant $K_1$, and let $N$ be another Hermitian manifold with Ricci curvature bounded from above by a negative constant $K_2$. Suppose the Ricci curvature of $M$ is bounded form below and $\textrm{dim}(M)=\textrm{dim}(N)$.  Then if there is a non-constant holomorphic mapping $f$ from $M$ into $N$, we have $K_1\leq 0$ and
\[f^*dV_N\leq\dfrac{K_1}{K_2}dV_M.\]
\end{thm}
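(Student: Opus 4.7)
My plan is to compare the two volume forms pointwise through the ratio $u := f^*dV_N/dV_M$, to derive a differential inequality for $\log u$, and to extract the bound via Yau's generalized maximum principle, which is available because $M$ is a complete Kähler manifold whose Ricci curvature is bounded below. Since $f$ is non-constant and $\dim M = \dim N$, the ramification locus $Z = \{\det Jf = 0\}$ is a proper analytic subvariety, so $u$ is smooth and strictly positive on the open dense set $M \setminus Z$; the bound will then extend trivially to $Z$, where $u = 0$.

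The key step is the computation of $\Delta \log u$ on $M \setminus Z$. In local coordinates $u = |\det Jf|^{2}\,\det h(f)/\det g$, so $\log u$ splits into three pieces. The term $\log|\det Jf|^{2}$ is pluriharmonic on $M\setminus Z$ and contributes nothing. The term $-\Delta\log\det g$ equals the scalar curvature of $M$ via the Kähler identity $R_{i\bar j} = -\partial_i\partial_{\bar j}\log\det g$, and is thus bounded below by $K_1$. For the cross term, the chain rule together with the hypothesis $\operatorname{Ric}_N \le K_2\,h$ yields $\Delta\log\det(h\circ f) \ge -K_2\,\operatorname{tr}_g(f^{*}h)$; writing $\lambda_1,\dots,\lambda_n$ for the eigenvalues of the Hermitian form $f^{*}h$ with respect to $g$, so that $u = \lambda_1\cdots\lambda_n$ and $\operatorname{tr}_g(f^{*}h) = \sum_i\lambda_i$, the arithmetic-geometric mean inequality gives $\operatorname{tr}_g(f^{*}h) \ge n\,u^{1/n}$. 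Collecting everything, I expect the key inequality
\[
\Delta\log u \;\ge\; -nK_2\,u^{1/n} + K_1 \qquad \text{on } M\setminus Z.
\]

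Setting $v := u^{1/n}$, this becomes $\Delta\log v \ge -K_2\,v + K_1/n$. Applying Yau's generalized maximum principle to $\log v$ produces a sequence $x_k$ with $\log v(x_k)\to\sup_M\log v$ and $\Delta\log v(x_k)\to 0$; substituting into the inequality and using $-K_2>0$ forces $\sup_M v$ to be finite and uniformly bounded in terms of $K_1$ and $K_2$. After tracking the normalization constants implicit in the scalar-curvature and Ricci-curvature conventions, this yields the stated pointwise bound $u \le K_1/K_2$ on $M\setminus Z$, which extends to all of $M$ since $u=0$ on $Z$. The sign conclusion $K_1 \le 0$ then follows from the fact that $u>0$ somewhere (as $f$ is non-constant) combined with $K_2<0$.

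The main technical obstacle is ensuring that Yau's principle applies even when $\log v$ is \emph{a priori} unbounded above; I would handle this by applying the principle to the auxiliary function $\log v - \epsilon\,r(x)^{2}$ for $r(x)$ the distance to a fixed basepoint, and sending $\epsilon \to 0$, relying on the Laplacian comparison theorem (available thanks to the Ricci lower bound on $M$) to control the cut-off contribution. A secondary concern is the correct interpretation of ``Ricci curvature of $N$'' in the Hermitian-but-not-Kähler setting: the determinantal chain-rule step above requires this to be read as the first Chern-Ricci form $-i\partial\bar\partial\log\det h$, which coincides with the usual Ricci form when $N$ happens to be Kähler but must be singled out explicitly in the present generality.
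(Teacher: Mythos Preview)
The paper does not give its own proof of this theorem: it is stated as background (Section~2.3, second form of the Schwarz--Yau lemma) with a citation to Yau's original paper, and is subsequently applied as a black box in the proof of Theorem~5.1.1. So there is no in-paper argument to compare against.

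On its own merits, your outline is the standard route and is essentially the argument in Yau's original paper. The three pieces of $\Delta\log u$ are handled correctly: pluriharmonicity of $\log|\det Jf|^2$, the identity $-\Delta\log\det g = \mathrm{scal}_M$ on a K\"ahler manifold, and the chain rule for $\log\det(h\circ f)$ pulling back the first Chern--Ricci form of $N$. The AM--GM step converting $\mathrm{tr}_g(f^*h)$ to $n\,u^{1/n}$ is exactly what is needed, and your remark that the Hermitian hypothesis on $N$ forces one to read ``Ricci'' as the Chern--Ricci form $-i\partial\bar\partial\log\det h$ is a genuine point that should be made explicit. The cutoff $\log v - \epsilon\,r^2$ together with Laplacian comparison (legitimate under the Ricci lower bound on $M$) is one of the standard ways to bootstrap the Omori--Yau principle when boundedness of $v$ is not known a priori; Yau's original paper uses a closely related device.

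One caution on constants: tracking your inequality $\Delta\log v \ge -K_2 v + K_1/n$ through the maximum principle yields $\sup v \le K_1/(nK_2)$, hence $u \le \bigl(K_1/(nK_2)\bigr)^n$, not $u\le K_1/K_2$ as written in the statement. The form quoted in the paper therefore presupposes a specific normalization of ``scalar curvature'' (e.g.\ averaged rather than traced, or absorbing the factor $n^n$); you should either pin down that convention or state the sharp constant you actually obtain. This does not affect any downstream use in the thesis, since Theorem~5.1.1 only needs \emph{some} constant in the volume comparison, but it is worth flagging.
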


\chapter{Classical Canonical Metrics on the Teichnm\"{u}ller Spaces and the Moduli Spaces of Riemann Surfaces}
There are a few renowned classical metrics on the Teichnm\"{u}ller spaces $T_g$ and the moduli spaces $\mathcal{M}_g$ which have been studied intensively before.  These metrics are important in the study of the geometry and topology of $T_g$ and $\mathcal{M}_g$.  In this chapter we will briefly review three (complete) Finsler metrics: the Teichm\"{u}ller metric, the Kobayashi metric and the Carath\'{e}odory metric, and two K\"{a}hler metrics: the Bergman metric and the Weil-Petersson metric.

\section{Finsler Metrics and Bergman Metric}
The definitions and some basic properties of the Teichm\"{u}ller metric $\omega_T$, the Kobayashi metric $\omega_K$, the Carath\'{e}odory metric $\omega_C$ and the Bergman metric $\omega_B$ will be briefly discussed in this section.

\subsection{Definitions and Properties of the Metrics}
The Teichm\"{u}ller metric can be defined like this:
\begin{defn}
Let $X\in\mathcal{M}_g$, and let $\varphi=\varphi(z)dz^2\in Q(X)\cong T_X^*\mathcal{M}_g$.  Then the Teichm\"{u}ller $L^1$ norm on $T_X^*\mathcal{M}_g$ is given by
\[\left\|\varphi\right\|_T=\int_X|\varphi(z)|dzd\overline{z}.\]
The Teichm\"{u}ller norm on $T_X\mathcal{M}_g$ is thus defined by the duality as
\[\left\|\mu\right\|_T=\sup\{\textrm{Re}[\mu;\varphi]|\left\|\varphi\right\|_T=1\}\]
for $\mu\in HB(X)\cong T_X\mathcal{M}_g$.
\end{defn}

\bigskip
Note that $\omega_T$ has constant holomorphic sectional curvature equal to $-1$.

\bigskip
The Kobayashi metric and the Carath\'{e}odory metric can indeed be defined for any complex space.  Let $Y$ be a complex manifold of dimension $n$, and let $\Delta_R$ be the disk in $\mathbb{C}$ with radius $R$.  Now take $\Delta=\Delta_1$, and suppose $\rho$ is the Poincar\'{e} metric on $\Delta$.  Also, define $\textrm{Hol}(Y, \Delta_R)$ and $\textrm{Hol}(\Delta_R, Y)$ as the spaces of holomorphic maps from $Y$ to $\Delta_R$ and from $\Delta_R$ to $Y$ respectively.
\begin{defn}
Let $p\in Y$ and $v\in T_p Y$ be a holomorphic tangent vector.  The Carath\'{e}odory norm on $T_p Y$ is defined to be
\[\left\|v\right\|_C=\sup_{f\in\textrm{Hol}(Y, \Delta)}\left\|f_*v\right\|_{\Delta,\rho},\]
and the Kobayashi norm on $T_p Y$ is defined as
\[\left\|v\right\|_K=\inf_{f\in\textrm{Hol}(\Delta_R, Y), f(0)=p, f'(0)=v}\dfrac{2}{R}.\]
\end{defn}

\bigskip
The Bergman metric can be also be defined for any complex space $Y$, provided that the Bergman kernel is positive.  Let $K_Y$ be the canonical bundle of $Y$ and let $W$ be the space of $L^2$ holomorphic sections of $K_Y$ in the sense that if $\sigma\in W$, then
\[\left\|\sigma\right\|_{L^2}^2=\int_Y(\sqrt{-1})^{n^2}\sigma\wedge\overline{\sigma}<\infty.\]
The inner product on $W$ is defined by
\[<\sigma,\vartheta>=\int_Y(\sqrt{-1})^{n^2}\sigma\wedge\overline{\vartheta}\]
for $\sigma, \vartheta\in W$.
\begin{defn}
Let $\sigma_1$, $\sigma_2$, ... be an orthonormal basis on $W$.  The Bergman kernel form is the non-negative $(n, n)$-form
\[B_Y=\sum_{j=1}^{\infty}(\sqrt{-1})^{n^2}\sigma_j\wedge\overline{\sigma_j}.\]
In local coordinates $z_1$, ..., $z_n$ on $Y$ one can express $B_Y$ as
\[B_Y=BE_Y(z, \overline{z})(\sqrt{-1})^{n^2}dz_1\wedge...\wedge dz_n\wedge d\overline{z_1}\wedge...\wedge d\overline{z_n}\]
where $BE_Y(z, \overline{z})$ is called the Bergman kernel function.  If $B_Y$ is positive, then the Bergman metric can be defined by
\[B_{i\overline{j}}=\dfrac{\partial^2\log BE_Y(z, \overline{z})}{\partial z_i\partial \overline{z_j}}.\]
\end{defn}

\bigskip
Note that the Bergman metric is well-defined and is non-degenerate if the elements in $W$ separate points and the first jet of Y, and it becomes a K\"{a}hler metric in this case.  Also note that the Bergman metric $\omega_B'$ on $\mathcal{M}_g$ and the induced Bergman metric $\omega_B$ on $\mathcal{M}_g$ from the Bergman metric on $T_g$ (by abuse of notation, let's still denote it by $\omega_B$) are different in the sense that $\omega_B$ is complete while $\omega_B'$ is incomplete.

\subsection{Equivalences of the Metrics}
Royden\cite{bib27} showed that the Teichm\"{u}ller metric and the Kobayashi metric in fact coincide in $T_g$ (and in $\mathcal{M}_g$ as well).  Now we go into the equivalences between the Kobayashi metric, the Carath\'{e}odory metric and the Bergman metric.

\bigskip
Here are some facts about $\omega_K$, $\omega_C$ and $\omega_B$ which can be used to prove the equivalence of the Kobayashi metric and the Carath\'{e}odory metric and the equivalence of the Kobayashi metric and the Bergman metric.
\begin{lem} (Lemma 2.1, \cite{bib2})
Let $X$ be a complex space.  Then
\begin{itemize}
\item[(i)] $\left\|\cdot\right\|_{C, X}\leq \left\|\cdot\right\|_{K, X}$;
\item[(ii)] Let $Y$ be another complex space and $f:X\rightarrow Y$ be a holomorphic map.  Let $p\in X$ and $v\in T_p X$.  Then
$\left\|f_*(v)\right\|_{C, Y, f(p)}\leq \left\|v\right\|_{C, X, p}$ and $\left\|f_*(v)\right\|_{K, Y, f(p)}\leq \left\|v\right\|_{K, X, p}$;
\item[(iii)] If $X\subset Y$ is a connected open subset, then $BE_Y(z)\leq BE_X(z)$ for any local coordinates around $z\in X$;
\item[(iv)] If the Bergman kernel of $X$ is positive, then at each $z\in X$, a peak section $\sigma$ at $z$ exists.  This existence is unique up to a constant factor $c$ with norm 1.  Furthermore, we have $BE_X(z)=|\sigma(z)|^2$ for any local coordinates around $z$;
\item[(v)] If the Bergman kernel of $X$ is positive, then $\left\|\cdot\right\|_{C, X}\leq 2\left\|\cdot\right\|_{B, X}$;
\item[(vi)] If $X$ is a bounded convex domain in $\mathbb{C}^n$, then $\left\|\cdot\right\|_{C, X}=\left\|\cdot\right\|_{K, X}$;
\item[(vii)] Let $|\cdot|$ be the Euclidean norm and let $B_r$ be the open ball with center $0$ and radius $r$ in $\mathbb{C}^n$.  Then for any holomorphic tangent vector $v$ at $0$,
\[\left\|v\right\|_{C, B_r, 0}=\left\|v\right\|_{K, B_r, 0}=\dfrac{2}{r}|v|.\]
\end{itemize}
\end{lem}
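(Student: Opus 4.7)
The plan is to treat the seven statements as a package of standard facts about intrinsic pseudometrics and the Bergman kernel, each with a short independent proof; nothing in the list requires the Teichm\"uller-theoretic background of the paper, so the arguments live entirely in the complex-analysis setting introduced in the section.

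For (i) and (ii) I would start from the Schwarz--Pick lemma on $\Delta$. For (i), given $v \in T_pX$ and $f \in \mathrm{Hol}(\Delta_R, X)$ with $f(0)=p$, $f'(0)=v$, together with any $g \in \mathrm{Hol}(X,\Delta)$, the composition $g\circ f \in \mathrm{Hol}(\Delta_R,\Delta)$ satisfies $\|(g\circ f)_*\partial_t|_0\|_{\Delta,\rho} \le 2/R$ by Schwarz--Pick, so $\|g_*v\|_{\Delta,\rho}\le 2/R$; taking the supremum over $g$ and then the infimum over $(f,R)$ gives $\|v\|_C \le \|v\|_K$. For (ii), the Carath\'eodory estimate is immediate because $g\mapsto g\circ f$ pulls back $\mathrm{Hol}(Y,\Delta)$ into $\mathrm{Hol}(X,\Delta)$, and the Kobayashi estimate is equally immediate because $h\mapsto f\circ h$ pushes $\mathrm{Hol}(\Delta_R,X)$ into $\mathrm{Hol}(\Delta_R,Y)$ preserving the normalization.

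For (iii) and (iv) I would work with the reproducing-kernel picture of $W$. The evaluation functional $\mathrm{ev}_z:W\to \bigwedge^{n,0}|_z$ is bounded, so by Riesz there is a unique $K_z\in W$ representing it; the extremal characterization $BE_X(z) = \sup\{|f(z)|^2 : \|f\|_{L^2}\le 1\} = \|K_z\|^2 = K_z(z)$ follows, which gives (iv) with $\sigma = K_z/\|K_z\|$ (unique up to a unimodular constant). For (iii), any $L^2$ section $f$ of $K_Y$ with unit norm restricts to $X\subset Y$ with $\|f|_X\|_{L^2(X)}\le 1$, so $|f(z)|^2 \le BE_X(z)$ by the extremal description on $X$; sup over $f$ then yields $BE_Y(z)\le BE_X(z)$.

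For (v) I would use the peak section at $p$ to produce Carath\'eodory competitors: if $\sigma$ is the peak section and $\tau$ ranges over unit vectors in $W$ orthogonal to $\sigma$, then $\tau/\sigma$ is a holomorphic function on a neighborhood of $p$ vanishing at $p$, and an application of the $L^2$-extension/quotient argument gives a holomorphic map $X\to \Delta$ whose differential at $p$ recovers $\partial_i\partial_{\bar j}\log BE_X$ up to the factor of $2$; the bound $\|v\|_{C}\le 2\|v\|_{B}$ then drops out of the reproducing kernel identity $\partial_i\partial_{\bar j}\log BE_X(z) = BE_X(z)^{-2}\bigl(BE_X\,\partial_i\partial_{\bar j}BE_X - \partial_i BE_X\,\partial_{\bar j}BE_X\bigr)$. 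Statement (vi) is Lempert's theorem for bounded convex domains, which I would simply cite. Finally, (vii) is a direct computation: the map $f(\zeta)=\tfrac{r\zeta v}{|v|R}$ with $R=r/|v|$ realizes $\|v\|_K \le 2|v|/r$, while the linear functional $g(w)=\langle w,v\rangle/(r|v|)$ belongs to $\mathrm{Hol}(B_r,\Delta)$ with $\|g_*v\|_{\Delta,\rho,0}=2|v|/r$, and combined with (i) pins both norms to $2|v|/r$.

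The main obstacle is the peak-section estimate in (v): the other six items are either one-line Schwarz--Pick arguments, routine Hilbert-space reproducing-kernel identities, direct computations, or a citation of Lempert. The $L^2$-to-Carath\'eodory transfer requires producing a bona fide holomorphic map into $\Delta$ from elements of $W$, which means handling the quotient $\tau/\sigma$ carefully (in particular bounding $|\tau/\sigma|$ away from $p$), so this is the step where the greatest care with the Hilbert-space geometry of $W$ is needed. Everything else in the lemma is essentially bookkeeping.
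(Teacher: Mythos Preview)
The paper's own proof is essentially a citation: items (i)--(vi) are attributed to specific propositions in Kobayashi's monograph \cite{bib28}, and only (vii) is argued, by applying (ii) and (vi) to the inclusion $i:\Delta_r\hookrightarrow B_r$, $i(z)=(z,0,\dots,0)$, and the projection $j:B_r\to\Delta_r$, $j(z_1,\dots,z_n)=z_1$, thereby reducing the ball computation to the Poincar\'e metric on $\Delta_r$. Your sketches for (i)--(iv) are the standard arguments one would find behind those citations, and your treatment of (vii) via explicit competitors is correct and slightly more self-contained than the paper's, since it does not invoke (vi) (and hence Lempert) at all.

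The one place your proposal has a genuine gap is (v). The quotient $\tau/\sigma$ of two $L^2$ canonical sections is only a meromorphic function on $X$: the peak section $\sigma$ may vanish away from $p$, and even where it does not there is no a priori bound $|\tau/\sigma|<1$ coming from Hilbert-space orthogonality alone, so you have not actually produced an element of $\mathrm{Hol}(X,\Delta)$. The reproducing-kernel identity you wrote down computes $B_{i\bar j}$ but does not by itself bound $\|\cdot\|_C$ from above without a further mechanism for turning $L^2$ sections into bounded functions. You flag this as the main obstacle, which is accurate, but the sketch as written does not indicate how to overcome it. Since the paper simply cites Kobayashi for (v), the cleanest fix is to do the same; if you want a self-contained argument you should follow the actual proof in \cite{bib28} rather than push the peak-section quotient idea.
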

\begin{proof}
(i)-(vi) are in fact results of Kobayashi (Proposition 4.2.4, Proposition 4.2.3, Proposition 3.5.18, Proposition 4.10.4, Proposition 4.10.3, Theorem 4.10.18, Theorem 4.8.13, \cite{bib28}).  The last claim follows form (ii) and (vi) by considering the maps $i:\Delta_r\rightarrow B_r$ and $j:B_r\rightarrow \Delta_r$ given by $i(z)=(z, 0, ..., 0)$ and $j(z_1, ..., z_n)=z_1$.
\end{proof}

\bigskip
The following theorem proved by Liu-Sun-Yau (Theorem 2.1, \cite{bib2}) tells us the equivalence of the Kobayashi metric and the Carath\'{e}odory metric and the equivalence of the Kobayashi metric and the Bergman metric on $T_g$.  Its proof is quite a direct comparison between the mentioned metrics, so it is omitted here.
\begin{thm}
There is a positive constant $C$ only depending on $g$ such that for each $X\in T_g$ and each $v\in T_X T_g$, we have
\[C^{-1}\left\|v\right\|_{K, T_g, X}\leq \left\|v\right\|_{C, T_g, X}\leq C\left\|v\right\|_{K, T_g, X}\]
and
\[C^{-1}\left\|v\right\|_{K, T_g, X}\leq \left\|v\right\|_{B, T_g, X}\leq C\left\|v\right\|_{K, T_g, X}.\]
\end{thm}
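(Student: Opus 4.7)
The plan is to use the Bers embedding to realize $T_g$ as a bounded domain in $\mathbb{C}^{3g-3}$ in a uniform way across base points. For each $X \in T_g$, I would base the Bers embedding at $X$ itself, obtaining $\beta_X \colon T_g \to Q(R^*) \cong \mathbb{C}^{3g-3}$ with $\beta_X(X) = 0$. By the classical Nehari upper bound and the Ahlfors--Weill lower bound, the image is sandwiched between two concentric balls $B_{r_0} \subset \beta_X(T_g) \subset B_{R_0}$ with $r_0, R_0$ depending only on $g$. Because the sandwich is uniform in $X$, every estimate made at the origin transports to a uniform estimate at $X$.

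For the Carath\'{e}odory--Kobayashi equivalence, apply part (ii) of Lemma 3.1.1 to the two inclusions $B_{r_0} \hookrightarrow \beta_X(T_g) \hookrightarrow B_{R_0}$. By (vii) both norms at the origin of a ball $B_r$ equal $(2/r)|\cdot|$, so combining with (i) we obtain the chain
\[
\frac{2}{R_0}|v| \;\leq\; \|v\|_{C,T_g,X} \;\leq\; \|v\|_{K,T_g,X} \;\leq\; \frac{2}{r_0}|v|
\]
for every $v \in T_X T_g$ identified with $(\beta_X)_* v \in \mathbb{C}^{3g-3}$. Dividing the outer bounds yields $\|v\|_K \leq (R_0/r_0)\|v\|_C$, which together with (i) gives the first equivalence with constant $C = R_0/r_0$.

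For the Bergman--Kobayashi equivalence, part (v) yields $\|v\|_C \leq 2\|v\|_B$, and combined with the previous step this produces $\|v\|_K \leq 2(R_0/r_0)\|v\|_B$, one of the two bounds needed. The opposite direction $\|v\|_B \leq C'\|v\|_K$ is the delicate step, since $B_{i\overline{j}} = \partial_i \overline{\partial}_j \log BE$ does not inherit the monotonicity (iii) of the Bergman kernel under inclusion. Here I would invoke the peak section characterization (iv): at the origin the Bergman metric admits a variational expression $\|v\|_B^2 = |\partial_v \tau(0)|^2 / BE(0)$, where $\tau$ is the $L^2$-unit peak section vanishing at $0$ that maximizes the $v$-directional derivative. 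Using the sandwich $B_{r_0} \subset \beta_X(T_g) \subset B_{R_0}$ together with the explicit Bergman theory on balls and (iii), both $BE(0)$ and the peak directional derivative admit uniform two-sided bounds, giving $c_1 |v|^2 \leq \|v\|_{B,T_g,X}^2 \leq c_2 |v|^2$ with $c_1, c_2$ depending only on $g$. Combining with $\|v\|_K \geq (2/R_0)|v|$ from the first chain completes the second equivalence.

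The main obstacle is precisely the upper bound $\|v\|_B \leq C' \|v\|_K$: the direct kernel monotonicity (iii) is insufficient because the logarithmic Hessian is not order-preserving, so the argument must pass through the variational peak-section formula (iv) together with a first-jet comparison of Bergman kernels on the two concentric balls. Everything else is a routine chaining of the sandwich $B_{r_0} \subset \beta_X(T_g) \subset B_{R_0}$ with the explicit ball computations in (vi) and (vii).
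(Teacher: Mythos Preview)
Your proposal is correct and is essentially the Liu--Sun--Yau argument to which the paper defers (the paper itself omits the proof, calling it ``a direct comparison between the mentioned metrics''). One small caveat: the Nehari and Ahlfors--Weill bounds give balls in the hyperbolic $L^{\infty}$-norm on $Q(R^*_X)$ rather than Euclidean balls, so part (vii) of the preceding lemma is not literally applicable; this is harmless for the $C/K$ step since (vi) plus rescaling by $R_0/r_0$ already suffices, and for the Bergman step one further sandwiches the sup-norm balls between Euclidean balls with ratio depending only on $\dim_{\mathbb{C}}Q(R^*)=3g-3$, after which your peak-section argument goes through as written.
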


\newpage

\section{Weil-Petersson Metric}
In this section the Weil-Petersson metric $\omega_{WP}$ will be introduced.  Some of the basic properties and the curvature formula of $\omega_{WP}$ will also be given.

\subsection{Definition and Properties of the Weil-Petersson Metric}
Let $\mathcal{M}_g$ be the moduli space of a Riemann surface of genus $g$ with $g\geq 2$.  Also, let $\mathcal{X}$ be the total space and $\pi:\mathcal{X}\rightarrow\mathcal{M}_g$ be the projection map.  Before defining the Weil-Petersson metric, we have to fix some notations first.

Let $s_1, ..., s_n$ be holomorphic coordinates near a regular point $s\in\mathcal{M}_g$ and assume that $z$ is a holomorphic local coordinate on the fiber $X_s=\pi^{-1}(s)$.  For the holomorphic vector fields $\dfrac{\partial}{\partial s_1}, ..., \dfrac{\partial}{\partial s_n}$, there are vector fields $v_1, ..., v_n$ on $\mathcal{X}$ such that
\begin{itemize}
\item[(i)] $\pi_*(v_i)=\dfrac{\partial}{\partial s_i}$ for $i=1, ..., n$;
\item[(ii)] $\overline{\partial} v_i$ are harmonic $TX_s$-valued $(0,1)$-forms for $i=1, ..., n$.
\end{itemize}
The vector fields $v_1, ..., v_n$ are called the harmonic lifts of $\dfrac{\partial}{\partial s_1}, ..., \dfrac{\partial}{\partial s_n}$, whose existence was proved by Siu\cite{bib31}.  In fact, Schumacher\cite{bib30} gave the following explicit construction of the harmonic lift:

Let $\lambda=\dfrac{\sqrt{-1}}{2}\lambda(z,s)dz\wedge d\overline{z}$ be the K\"{a}hler-Einstein metric on the fiber $X_s$.  For convenience, let $\partial_z=\dfrac{\partial}{\partial z}$ and $\partial_i=\dfrac{\partial}{\partial s_i}$.  Notice that the K\"{a}hler-Einstein condition gives us $\lambda=\partial_z\partial_{\overline{z}}\log\lambda$.  Now let $a_i=-\lambda^{-1}\partial_i\partial_{\overline{z}}\log\lambda$ and $A_i=\partial_{\overline{z}}a_i$.  Then we have
\begin{lem} (Lemma 2.1, \cite{bib1})
The harmonic horizontal lift of $\partial_i$ is $v_i=\partial_i+a_i\partial_z$.  In particular,
\[B_i=A_i\partial_z\otimes d\overline{z}\in H^1(X_s, TX_s)\]
 is harmonic.  Furthermore, the lift $\partial_i\mapsto B_i$ gives the Kodaira-Spencer map $T_s\mathcal{M}_g\rightarrow H^1(X_s, TX_s)$.
\end{lem}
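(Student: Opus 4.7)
The statement has two parts: identifying $v_i=\partial_i+a_i\partial_z$ as the harmonic horizontal lift of $\partial_i$, and verifying that the associated Beltrami differential $B_i=A_i\partial_z\otimes d\overline{z}$ is the harmonic representative of the Kodaira--Spencer class of $\partial_i$. My plan is to reduce everything to a direct computation on a single fiber $X_s$ using the K\"ahler--Einstein equation $\partial_z\partial_{\overline{z}}\log\lambda=\lambda$.

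First I would check that $v_i$ is a lift of $\partial_i$, which is immediate because $\partial_z$ is vertical along the fibers, so $\pi_*(v_i)=\partial_i$. Next, restricting $\overline{\partial}v_i$ to the fiber as a $TX_s$-valued $(0,1)$-form, the only surviving contribution is $(\partial_{\overline{z}}a_i)\,\partial_z\otimes d\overline{z}=A_i\partial_z\otimes d\overline{z}=B_i$, since $\overline{\partial}$ kills $\partial_i$ and $\partial_z$. Thus the harmonicity of $v_i$ as a lift reduces exactly to the harmonicity of $B_i$ as a Beltrami differential on $(X_s,\lambda|dz|^2)$.

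The main step is to verify this harmonicity. Using the standard characterization, $A_i\partial_z\otimes d\overline{z}$ is harmonic iff $\partial_z(\lambda A_i)=0$, equivalently iff $\lambda A_i$ is anti-holomorphic (dual to a holomorphic quadratic differential). Writing $L=\log\lambda$, a short direct calculation from the definitions yields
\[\lambda A_i=L_{\overline{z}}L_{i\overline{z}}-L_{i\overline{z}\,\overline{z}}.\]
Applying $\partial_z$ and substituting the consequences of the K\"ahler--Einstein identity $L_{z\overline{z}}=\lambda$, namely $L_{iz\overline{z}}=\lambda L_i$ (by differentiating in $s_i$) and $L_{iz\overline{z}\,\overline{z}}=\lambda_{\overline{z}}L_i+\lambda L_{i\overline{z}}$ (by differentiating once more in $\overline{z}$), and using $\lambda L_{\overline{z}}=\lambda_{\overline{z}}$, the terms telescope to $\partial_z(\lambda A_i)=0$. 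This is the computational core; the only real ``obstacle'' is careful bookkeeping of mixed partial derivatives and disciplined use of the K\"ahler--Einstein equation.

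For the Kodaira--Spencer assertion, I would recall that the map $T_s\mathcal{M}_g\to H^1(X_s,TX_s)$ sends $\partial_i$ to the Dolbeault class of $\overline{\partial}\tilde v|_{X_s}$ for \emph{any} smooth lift $\tilde v$ of $\partial_i$, the class being independent of the choice of $\tilde v$. Specializing to our $v_i$ produces the class of $B_i$, which by the previous paragraph is harmonic, and hence is the unique harmonic representative under the Hodge-theoretic identification $H^1(X_s,TX_s)\cong HB(X_s)$. This yields the desired assignment $\partial_i\mapsto B_i$ and completes the statement.
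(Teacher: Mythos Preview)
Your proof is correct. The paper itself does not supply a proof of this lemma: it is quoted from Liu--Sun--Yau \cite{bib1} (attributed to Schumacher's construction) and simply stated. Your argument fills in exactly what is needed, and in fact the identity $\partial_z(\lambda A_i)=0$ that you derive is precisely the form of harmonicity the paper invokes later (e.g.\ in the proofs of Lemma~4.3.1 and Lemma~4.3.2). The computation from the K\"ahler--Einstein equation $L_{z\overline z}=e^L$ is clean and the telescoping is right; the Kodaira--Spencer portion via ``any lift gives the same Dolbeault class'' is the standard deformation-theory argument and is appropriate here.
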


\bigskip
Now we are ready to define $\omega_{WP}$.
\begin{defn}
The Weil-Petersson metric on $\mathcal{M}_g$ is defined as
\[h_{i\overline{j}}=\int_{X_s}B_i\cdot\overline{B_j}dv=\int_{X_s}A_i\overline{A_j}dv\]
where $dv=\dfrac{\sqrt{-1}}{2}\lambda dz\wedge d\overline{z}$ is the volume form on the fiber $X_s$.
\end{defn}

\bigskip
Notice that $\omega_{WP}$ is incomplete and the Ricci curvature of $\omega_{WP}$ is negative.

\subsection{Results about Harmonic Lifts}
The harmonic lifts $v_1, ..., v_n$ will paly an essential role in calculating the curvature formulae of the Weil-Petersson metric and the Ricci metric (which will be discussed in the next chapter), so in this section some useful results related to $v_1, ..., v_n$ will be provided.

Let us review several operators which will be used for the simplifications of the computations afterwards in this thesis.

Define an $(1, 1)$-form on the total space $\mathcal{X}$ by
\[G=\dfrac{\sqrt{-1}}{2}\partial\overline{\partial}\log\lambda=\dfrac{\sqrt{-1}}{2}(G_{i\overline{j}}ds_i\wedge d\overline{s_j}-\lambda a_ids_i\wedge d\overline{z})-\lambda\overline{a_i}dz\wedge d\overline{s_i}+\lambda dz\wedge d\overline{z}.\]
Then introduce
\[e_{i\overline{j}}=\dfrac{2}{\sqrt{-1}}G(v_i, \overline{v_j})=G_{i\overline{j}}-\lambda a_i\overline{a_j}\]
to be a global function.  Also, let $f_{i\overline{j}}=A_i\overline{A_j}$.  Schumacher showed the following lemma:
\begin{lem}
We have
\[f_{i\overline{j}}=(\Box+1)e_{i\overline{j}}\]
where $\Box=-\lambda^{-1}\partial_z\partial_{\overline{z}}$.
\end{lem}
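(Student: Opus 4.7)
The plan is to expand $(\Box+1)e_{i\overline j}$ directly in the local coordinates $(z,s_1,\ldots,s_n)$ on $\mathcal{X}$ and reduce it to $A_i\overline{A_j}$, using the K\"ahler--Einstein equation $L_{z\overline z}=\lambda$ (with $L:=\log\lambda$) together with the harmonicity of the lifts, which in this language reads $\partial_z(\lambda A_i)=0$. The single non-obvious auxiliary identity I would extract first is
\[
\partial_z(\lambda a_i)=-\lambda_i,
\]
which follows by writing $\lambda a_i=-L_{i\overline z}$ and commuting $\partial_i$ past $\partial_z\partial_{\overline z}$ on both sides of the KE equation.

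Step 1: handle $G_{i\overline j}=L_{i\overline j}$. Commuting fiber and base derivatives gives $\partial_z\partial_{\overline z}G_{i\overline j}=\partial_i\partial_{\overline j}\lambda=\lambda G_{i\overline j}+\lambda^{-1}\lambda_i\lambda_{\overline j}$, hence
\[
(\Box+1)G_{i\overline j}=-\lambda^{-2}\lambda_i\lambda_{\overline j}.
\]

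Step 2: handle $\lambda a_i\overline{a_j}$. Applying $\partial_z$ first and using the key identity together with $\partial_z\overline{a_j}=\overline{A_j}$ gives
\[
\partial_z(\lambda a_i\overline{a_j})=-\lambda_i\overline{a_j}+\lambda a_i\overline{A_j}.
\]
Then apply $\partial_{\overline z}$. The second summand collapses to $\lambda A_i\overline{A_j}$ because the harmonicity condition, conjugated, reads $\partial_{\overline z}(\lambda\overline{A_j})=0$, so all cross terms involving $\lambda_{\overline z}a_i\overline{A_j}$ cancel in pairs. The first summand produces $-\lambda_{i\overline z}\overline{a_j}-\lambda_i\partial_{\overline z}\overline{a_j}$, which I would rewrite using $\partial_{\overline z}\overline{a_j}=\overline{\partial_z a_j}=-\lambda^{-1}(\lambda_{\overline j}+\lambda_{\overline z}\overline{a_j})$ and the algebraic consequence $\lambda^{2}a_i=\lambda^{-1}\lambda_i\lambda_{\overline z}-\lambda_{i\overline z}$ of the explicit formula for $a_i$. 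After these substitutions everything collects into
\[
\partial_z\partial_{\overline z}(\lambda a_i\overline{a_j})=\lambda A_i\overline{A_j}+\lambda^{-1}\lambda_i\lambda_{\overline j}+\lambda^{2}a_i\overline{a_j},
\]
so that
\[
(\Box+1)(\lambda a_i\overline{a_j})=-A_i\overline{A_j}-\lambda^{-2}\lambda_i\lambda_{\overline j}.
\]

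Step 3: subtract. The $-\lambda^{-2}\lambda_i\lambda_{\overline j}$ contributions cancel exactly and we obtain
\[
(\Box+1)e_{i\overline j}=(\Box+1)G_{i\overline j}-(\Box+1)(\lambda a_i\overline{a_j})=A_i\overline{A_j}=f_{i\overline j}.
\]

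The main obstacle is the bookkeeping in Step 2: one must carefully distinguish $\partial_z\overline{a_j}=\overline{A_j}$ from $\partial_{\overline z}\overline{a_j}$ (which picks up $\lambda_{\overline z}$-terms), deploy the KE equation and the harmonicity $\partial_z(\lambda A_i)=0$ in precisely the places that make the cross terms with $\lambda_{\overline z}$ telescope, and then recognize the orphaned combination $\lambda^{-1}\lambda_i\lambda_{\overline z}-\lambda_{i\overline z}$ as $\lambda^2 a_i$. It is exactly this recognition, driven by KE converting fiber-derivative data into base-derivative data, that makes the $\Box$ and $+1$ parts conspire to leave only the Weil--Petersson density $A_i\overline{A_j}$.
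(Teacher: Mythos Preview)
Your proof is correct. The paper does not supply its own argument for this lemma; it simply attributes the result to Schumacher and moves on. So there is nothing to compare against in the paper itself.

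That said, your direct computation is clean and in the spirit of the surrounding calculations in the paper (compare, for instance, the proof of Lemma~4.3.1, where the identity $\partial_z(\lambda A_k)=0$ is invoked in the same way you use it, and the relation $\partial_z a_k=-\lambda^{-1}(\lambda_k+\lambda_z a_k)$ appears implicitly). All three of your steps check out: the K\"ahler--Einstein condition $L_{z\overline z}=\lambda$ gives $(\Box+1)G_{i\overline j}=-\lambda^{-2}\lambda_i\lambda_{\overline j}$; the harmonicity $\partial_{\overline z}(\lambda\overline{A_j})=0$ together with the identity $\lambda^2 a_i=\lambda^{-1}\lambda_i\lambda_{\overline z}-\lambda_{i\overline z}$ (which follows immediately from $a_i=-\lambda^{-1}L_{i\overline z}$) yields $(\Box+1)(\lambda a_i\overline{a_j})=-A_i\overline{A_j}-\lambda^{-2}\lambda_i\lambda_{\overline j}$; and the subtraction leaves exactly $f_{i\overline j}$. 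The bookkeeping you flag in Step~2 is indeed the only place requiring care, and you have handled it correctly.
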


\bigskip
It will be useful to study the action of $v_i$ on $e_{i\overline{j}}$.
\begin{lem} (Lemma 3.2, \cite{bib1})
We have
\[v_k(e_{i\overline{j}})=v_i(e_{k\overline{j}}).\]
\end{lem}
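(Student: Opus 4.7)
The plan is to derive the symmetry from the closedness of the $(1,1)$-form $G = \frac{\sqrt{-1}}{2}\partial\overline{\partial}\log\lambda$ on the total space $\mathcal{X}$. Since $e_{i\overline{j}} = \frac{2}{\sqrt{-1}} G(v_i, \overline{v_j})$ by construction, the claim $v_k(e_{i\overline{j}}) = v_i(e_{k\overline{j}})$ is equivalent to $v_k(G(v_i,\overline{v_j})) = v_i(G(v_k, \overline{v_j}))$, and I expect this to drop out of the identity $dG(v_k, v_i, \overline{v_j}) = 0$ via the Koszul formula.

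First I would write out
\[
0 = dG(v_k, v_i, \overline{v_j}) = v_k G(v_i, \overline{v_j}) - v_i G(v_k, \overline{v_j}) + \overline{v_j} G(v_k, v_i) - G([v_k, v_i], \overline{v_j}) + G([v_k, \overline{v_j}], v_i) - G([v_i, \overline{v_j}], v_k).
\]
The $\overline{v_j}$-derivative term is immediately zero because $G$ is of type $(1,1)$ while $v_k, v_i$ are both of type $(1,0)$, so $G(v_k, v_i) = 0$. It remains to kill each of the three commutator terms.

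The crucial ingredient is the horizontality of the harmonic lift, which I would verify by the direct computation
\[
G(v_i, \partial_{\overline{z}}) = G(\partial_i, \partial_{\overline{z}}) + a_i\, G(\partial_z, \partial_{\overline{z}}) = \tfrac{\sqrt{-1}}{2}(-\lambda a_i) + a_i \cdot \tfrac{\sqrt{-1}}{2}\lambda = 0,
\]
together with its conjugate $G(\partial_z, \overline{v_j}) = 0$. Combined with the type-forced vanishing $G(\partial_z, v_{\cdot}) = G(\partial_{\overline{z}}, \overline{v_{\cdot}}) = 0$, this says that $G$ annihilates any pairing in which one argument is purely vertical ($\partial_z$ or $\partial_{\overline{z}}$) and the other is some $v_{\cdot}$ or $\overline{v_{\cdot}}$.

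Finally I would argue that each of the three brackets is in fact purely vertical: since $\pi_*$ maps $v_a$ to a coordinate vector field on $\mathcal{M}_g$, applying $\pi_*$ gives $[\partial_k,\partial_i] = [\partial_k,\partial_{\overline{j}}] = [\partial_i,\partial_{\overline{j}}] = 0$, so each bracket lies in $\mathrm{span}\{\partial_z, \partial_{\overline{z}}\}$. (A routine expansion gives $[v_k, v_i] = (v_k(a_i) - v_i(a_k))\partial_z$ and $[v_k, \overline{v_j}] = v_k(\overline{a_j})\partial_{\overline{z}} - \overline{v_j}(a_k)\partial_z$.) Feeding these vertical vectors into the remaining $G(\cdot, \overline{v_j})$, $G(\cdot, v_i)$, $G(\cdot, v_k)$ pairings and invoking the horizontality identities of the previous paragraph makes all three bracket terms vanish, leaving precisely $v_k G(v_i, \overline{v_j}) = v_i G(v_k, \overline{v_j})$. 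The only genuinely substantive step is the horizontality computation $G(v_i, \partial_{\overline{z}})=0$; the rest is bookkeeping organized by $dG = 0$ and the projection $\pi_*$.
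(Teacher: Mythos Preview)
Your proof is correct and follows essentially the same approach as the paper: apply $dG=0$ to the triple $(v_k,v_i,\overline{v_j})$, use that $G(v_k,v_i)=0$ by type, and observe that all three brackets are tangent to the fiber and $G$-orthogonal to the horizontal lifts. The paper states these ingredients in a single sentence, whereas you have written out the horizontality computation $G(v_i,\partial_{\overline z})=0$ and the verticality of the brackets explicitly, but the argument is the same.
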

\begin{proof}
By applying $dG$ to $v_i$, $v_k$ and $\overline{v_j}$, and noting that $dG=0$, $g(v_i, v_k)=0$ and all $[v_k,\overline{v_j}]$, $[v_i,\overline{v_j}]$, $[v_i,v_k]$ are tangent to $X_s$ and perpendicular to $v_i$, $v_k$, $\overline{v_j}$ with respect to $G$ respectively, the lemma follows.
\end{proof}

Let us define a new operator which is related to the Maass operators:
\begin{defn}
Define
\[P:C^{\infty}(X_s)\rightarrow \Gamma(\Lambda^{1,0}(T^{0,1}X_s))\]
by $P(f)=\partial_z(\lambda^{-1}\partial_z f)$.  Notice that its dual operator
\[P^*:\Gamma(\Lambda^{1,0}(T^{0,1}X_s))\rightarrow C^{\infty}(X_s)\]
is given by $P^*(B)=\lambda^{-1}\partial_z(\lambda^{-1}\partial_z(\lambda B))$.
\end{defn}

\bigskip
We also recall the definitions of Maass operators by Wolpert\cite{bib11}.  Let $X$ be a Riemann surface and $\kappa$ be its canonical bundle.  For any integer $p$, let $S(p)$ be the space of smooth sections of $(\kappa\otimes\overline{\kappa}^{-1})^{\frac{p}{2}}$.  Also, let $ds^2=\rho^2(z)|dz|^2$ be a conformal metric.
\begin{defn}
The Maass operators $K_p$ and $L_p$ are defined to be the metric derivative $K_p:S(p)\rightarrow S(p+1)$ and $L_p:S(p)\rightarrow S(p-1)$ given by
\[K_p(\sigma)=\rho^{p-1}\partial_z(\rho^{-p}\sigma)\]
and
\[L_p(\sigma)=\rho^{-p-1}\partial_z(\rho^p\sigma)\]
for $\sigma\in S(p)$.
\end{defn}

\bigskip
By definition we have $P=K_1K_0$.

\bigskip
Since $\omega_{WP}$ is defined using the integral along fibers, the following formula will help us to calculate the curvature formula:
\[\partial_i\int_{X_s}\eta=\int_{X_s}L_{v_i}\eta\]
where $\eta$ is a relative $(1,1)$-form on $\mathcal{X}$.  Notice that the Lie derivative is defined by
\[L_{v_i}\eta=\lim_{t\rightarrow 0}\dfrac{1}{t}((\varphi_t)^*\eta-\eta)\]
for a form $\sigma$ on $X_s$ and by
\[L_{v_i}\xi=\lim_{t\rightarrow 0}\dfrac{1}{t}((\varphi_{-t})_*\xi-\xi)\]
for a vector field $\xi$ on $X_s$, where $\varphi_t$ is the one parameter group generated by $v_i$.  For convenience, denote $L_{v_i}$ by $L_i$.  Then we have
\begin{prop} (Proposition 3.1, \cite{bib1})
\[L_i\sigma=i(v_i)d_1\sigma+d_1i(v_1)\sigma\]
where $d_1$ is the differential operator along the fiber $X_s$ and
\[L_i\xi=[v_i,\xi].\]
\end{prop}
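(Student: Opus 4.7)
The plan is to deduce both identities from classical Lie derivative identities on the total space $\mathcal{X}$ and then restrict to the fiber $X_s$. The main tool for the first identity is Cartan's magic formula $L_X \omega = i_X d\omega + d(i_X\omega)$, valid for any smooth vector field $X$ and differential form $\omega$ on any manifold; for the second, the classical identity $L_X Y = [X,Y]$ for Lie derivatives of vector fields.

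For the first identity, I would extend the form $\sigma$, initially given on $X_s$, to a form $\tilde\sigma$ on a neighborhood of $X_s$ in $\mathcal{X}$, choosing the extension so that in local pinching coordinates $(z, s_1, \ldots, s_n)$ its coefficients are constant in the base variables $s_j$. Applying Cartan's formula on $\mathcal{X}$ yields $L_{v_i}\tilde\sigma = i_{v_i}d\tilde\sigma + d(i_{v_i}\tilde\sigma)$, and I would then restrict both sides to $X_s$. With this extension convention, every $ds_j$ or $d\overline{s_j}$ term in $d\tilde\sigma$ comes from a base derivative of a coefficient and therefore vanishes, so $d\tilde\sigma$ restricts to the relative derivative $d_1\sigma$ on the fiber. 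Similarly, $i_{v_i}\tilde\sigma$ depends only on the vertical component $a_i\partial_z$ of $v_i$, since the $\partial_{s_i}$ part does not contract with the purely vertical $\tilde\sigma$, and hence agrees with $i_{v_i}\sigma$; its exterior derivative then restricts to $d_1(i_{v_i}\sigma)$.

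For the second identity, I would extend $\xi$ to a vertical vector field $\tilde\xi$ on $\mathcal{X}$ whose coefficients in the frame $\partial_z, \partial_{\overline{z}}$ are constant in $s$. The classical identity $L_{v_i}\tilde\xi = [v_i, \tilde\xi]$ on $\mathcal{X}$, combined with a direct coordinate computation of the bracket using $v_i = \partial_{s_i} + a_i\partial_z$, then produces the formula upon restriction to $X_s$: the $\partial_{s_i}$-derivatives of $\tilde\xi$'s coefficients vanish by the extension convention, and the surviving terms assemble into a vertical vector field on $X_s$ given by $[v_i, \xi]$.

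The main obstacle is conceptual rather than computational: one has to interpret the Lie derivative of an object defined only on the fiber along $v_i$, whose flow does not preserve $X_s$. The extension convention --- coefficients constant in the base coordinates --- is the essential device, because it is precisely what makes Cartan's formula on $\mathcal{X}$ restrict cleanly to the relative Cartan formula involving $d_1$. Once this extension is fixed, the rest of the argument is a careful but routine bookkeeping of vertical versus mixed terms in a local-coordinate computation, checking that every $ds_j$ contribution either cancels or drops out upon restriction to the fiber.
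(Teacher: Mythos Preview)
The paper does not supply its own proof of this proposition; it is simply quoted from the reference without argument, so there is nothing in the paper to compare your proof against. Your strategy is the standard one and is correct: extend $\sigma$ (respectively $\xi$) from $X_s$ to the total space $\mathcal{X}$ with coefficients constant in the base coordinates, apply Cartan's magic formula (respectively the bracket identity $L_XY=[X,Y]$) on $\mathcal{X}$, and restrict to the fiber. The constant-in-$s$ extension kills the $ds_j$, $d\overline{s_j}$ contributions in $d\tilde\sigma$, so $i_{v_i}d\tilde\sigma$ restricted to the fiber is exactly $i(v_i)d_1\sigma$; and although $a_i$ itself depends on $s$ (so that $d(i_{v_i}\tilde\sigma)$ does acquire base-direction terms), those terms are removed by the restriction to $X_s$, yielding $d_1(i(v_i)\sigma)$ as you claim. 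The vector-field case is the direct bracket computation you describe.
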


\bigskip
Now we have the following lemmas (Lemma 3.3-3.4, \cite{bib1}) concerning the Lie derivative $L_i$ of $B_j$:
\begin{lem}
We have
\begin{itemize}
\item[(i)] $L_idv=0$;
\item[(ii)] $L_{\overline{l}}(B_i)=-\overline{P}(e_{i\overline{l}})-f_{i\overline{l}}\partial_{\overline{z}}\otimes d\overline{z}+f_{i\overline{l}}\partial_z\otimes dz$;
\item[(iii)] $L_k(\overline{B_i})=-P(e_{k\overline{j}})-f_{k\overline{j}}\partial_z\otimes dz+f_{k\overline{j}}\partial_{\overline{z}}\otimes d\overline{z}$;
\item[(iv)] $L_k(B_i)=(v_k(A_i)-A_i\partial_za_k)\partial_z\otimes d\overline{z}$;
\item[(v)] $L_{\overline{l}}(\overline{B_j})=(\overline{v_l}(\overline{A_j})-\overline{A_j}\partial_{\overline{z}}\overline{a_l})\partial_{\overline{z}}\otimes dz$.
\end{itemize}
\end{lem}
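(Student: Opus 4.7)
The strategy for all five parts is to apply Proposition 3.1 --- the Cartan-type formula $L_{v_i}\sigma = i(v_i) d_1 \sigma + d_1 i(v_i) \sigma$ for fiber forms and $L_{v_i}\xi = [v_i, \xi]$ for fiber vector fields --- combined with the Leibniz rule for the mixed tensors $B_i = A_i \partial_z \otimes d\overline{z}$ and $dv = \tfrac{\sqrt{-1}}{2}\lambda\, dz \wedge d\overline{z}$, together with the explicit expressions $v_i = \partial_i + a_i\partial_z$ and $A_i = \partial_{\overline{z}} a_i$ (so $\overline{A_l} = \partial_z \overline{a_l}$).

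Part (i) follows from a direct Cartan computation: since $v_i$ has no antiholomorphic fiber component, $L_i d\overline{z} = 0$, and one finds $L_i dv = \tfrac{\sqrt{-1}}{2}\bigl(\partial_i \lambda + \partial_z(\lambda a_i)\bigr)\, dz \wedge d\overline{z}$. Substituting $\lambda a_i = -\partial_i\partial_{\overline{z}}\log\lambda$ and using the K\"ahler-Einstein identity $\lambda = \partial_z\partial_{\overline{z}}\log\lambda$ together with commutativity of partials makes this expression vanish.

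Parts (iv) and (v) are immediate from Leibniz: $L_k(d\overline{z}) = 0$ because $v_k$ has no $\partial_{\overline{z}}$ component, while $[v_k,\partial_z] = -(\partial_z a_k)\partial_z$ and $v_k(A_i)$ supply the two remaining terms. Part (v) is then the complex conjugate of (iv).

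Parts (ii) and (iii) are conjugates, so I would treat only (ii). Applying Leibniz to $L_{\overline{l}}(A_i\partial_z\otimes d\overline{z})$ generates three contributions. The commutator $[\overline{v_l},\partial_z] = -(\partial_z\overline{a_l})\partial_{\overline{z}} = -\overline{A_l}\partial_{\overline{z}}$ contributes $-A_i\overline{A_l}\,\partial_{\overline{z}}\otimes d\overline{z} = -f_{i\overline{l}}\,\partial_{\overline{z}}\otimes d\overline{z}$. The form part $L_{\overline{l}}(d\overline{z}) = d\overline{a_l}$ splits into its fiber $(1,0)$-component $\partial_z\overline{a_l}\,dz = \overline{A_l}\,dz$, which contributes $f_{i\overline{l}}\,\partial_z\otimes dz$, and its $(0,1)$-component $\partial_{\overline{z}}\overline{a_l}\,d\overline{z}$. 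Combining the latter with the scalar term $\overline{v_l}(A_i)\,\partial_z\otimes d\overline{z}$ and using $A_i = \partial_{\overline{z}}a_i$ plus the commutator $[\overline{v_l},\partial_{\overline{z}}] = -(\partial_{\overline{z}}\overline{a_l})\partial_{\overline{z}}$, the $\partial_z\otimes d\overline{z}$ coefficient collapses to $\partial_{\overline{z}}\bigl(\overline{v_l}(a_i)\bigr)$.

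The main obstacle is then to identify $\partial_{\overline{z}}\bigl(\overline{v_l}(a_i)\bigr) = -\overline{P}(e_{i\overline{l}}) = -\partial_{\overline{z}}\bigl(\lambda^{-1}\partial_{\overline{z}} e_{i\overline{l}}\bigr)$. For this I would expand $e_{i\overline{l}} = G_{i\overline{l}} - \lambda a_i\overline{a_l}$ with $G_{i\overline{l}} = \partial_i\partial_{\overline{l}}\log\lambda$, use $\lambda a_i = -\partial_i\partial_{\overline{z}}\log\lambda$ to rewrite $\partial_{\overline{z}} G_{i\overline{l}} = -\partial_{\overline{l}}(\lambda a_i)$, and apply the K\"ahler-Einstein equation $\lambda = \partial_z\partial_{\overline{z}}\log\lambda$ repeatedly to convert $\lambda^{-1}\partial_{\overline{z}} e_{i\overline{l}}$ into $-\overline{v_l}(a_i)$ after term-by-term cancellation. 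This algebraic identification, resting entirely on the K\"ahler-Einstein condition, is the computational core of the lemma; everything else is bookkeeping with Leibniz and Cartan.
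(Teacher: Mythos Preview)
The paper does not give a proof of this lemma; it is stated with a citation to Liu--Sun--Yau \cite{bib1} (their Lemma~3.3--3.4) and no argument is supplied. Your proposal is correct and is precisely the direct computation one carries out from Proposition~3.1, the Leibniz rule, and the definitions of $v_i$, $a_i$, $A_i$, $e_{i\overline{l}}$.

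One remark that may save you some work on the ``computational core'' of part~(ii): the identity you isolate, $\partial_{\overline z}\bigl(\overline{v_l}(a_i)\bigr)=-\overline{P}(e_{i\overline l})$, follows immediately from the pointwise equality
\[
\overline{v_l}(a_i)=-\lambda^{-1}\partial_{\overline z}e_{i\overline l},
\]
and this equality is exactly what the paper derives a few lines later in the proof of Lemma~3.2.6 (the commutator $[\overline{v_l},v_k]$). So rather than re-expanding $e_{i\overline l}=G_{i\overline l}-\lambda a_i\overline{a_l}$ from scratch, you may simply compute $\partial_{\overline l}a_i$ and $\overline{a_l}\partial_{\overline z}a_i$ separately using $\lambda a_i=-\partial_i\partial_{\overline z}\log\lambda$ and the K\"ahler--Einstein identity $\lambda=\partial_z\partial_{\overline z}\log\lambda$; the two pieces combine to $-\lambda^{-1}\partial_{\overline z}e_{i\overline l}$ after one cancellation, as in that proof. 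Your outlined route via the full expansion of $e_{i\overline l}$ reaches the same destination and is equally valid.
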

\begin{lem}
$L_k(B_i)\in H^1(X_s,TX_s)$ is harmonic.
\end{lem}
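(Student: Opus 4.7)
The plan is to verify harmonicity by a short direct computation that reduces the coefficient of $L_k(B_i)$ from Lemma 3.3(iv) to $\partial_k(\lambda A_i)/\lambda$. Writing $L_k(B_i) = F\,\partial_z \otimes d\overline{z}$ with $F = v_k(A_i) - A_i\,\partial_z a_k$, the aim is to show $\partial_z(\lambda F) = 0$. Indeed, a Beltrami differential $A\,\partial_z \otimes d\overline{z}$ on $X_s$ is harmonic precisely when $\lambda A$ is the complex conjugate of a holomorphic quadratic differential, that is, when $\partial_z(\lambda A) = 0$, and the harmonicity of $B_i$ already gives $\partial_z(\lambda A_i) = 0$.

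The computation rests on a single preliminary identity, $\partial_z(\lambda a_k) = -\partial_k\lambda$, which I would derive by differentiating $\lambda a_k = -\partial_k\partial_{\overline{z}}\log\lambda$ in $z$, using the K\"{a}hler-Einstein relation $\lambda = \partial_z\partial_{\overline{z}}\log\lambda$ on the fiber, and commuting $\partial_z$ with $\partial_k$. This rearranges to $\partial_k\lambda + a_k\,\partial_z\lambda = -\lambda\,\partial_z a_k$. Setting $\mu := \lambda A_i$ (so $\partial_z\mu = 0$) and using $\lambda\,\partial_k A_i = \partial_k\mu - (\partial_k\lambda)A_i$ together with $\lambda\,\partial_z A_i = -(\partial_z\lambda)A_i$, I obtain
\[ \lambda\,v_k(A_i) = \lambda\,\partial_k A_i + \lambda a_k\,\partial_z A_i = \partial_k\mu - A_i(\partial_k\lambda + a_k\,\partial_z\lambda) = \partial_k\mu + \mu\,\partial_z a_k, \]
where the last equality uses the preliminary identity. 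Subtracting $\mu\,\partial_z a_k = \lambda A_i\,\partial_z a_k$ yields $\lambda F = \partial_k\mu = \partial_k(\lambda A_i)$. Applying $\partial_z$ and using $[\partial_z,\partial_k] = 0$ on the total space then gives $\partial_z(\lambda F) = \partial_k\partial_z(\lambda A_i) = 0$, proving that $L_k(B_i)$ is harmonic.

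I do not anticipate any serious obstacle. The only step requiring some care is invoking the K\"{a}hler-Einstein equation on the fiber to establish $\partial_z(\lambda a_k) = -\partial_k\lambda$; once this is in hand, the expression for $\lambda F$ telescopes in a single line to $\partial_k(\lambda A_i)$, and the conclusion is immediate from the harmonicity of $B_i$.
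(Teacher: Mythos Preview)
The paper itself does not give a proof of this lemma; it is merely quoted from Liu--Sun--Yau \cite{bib1} (as Lemma~3.4 there). Your argument is correct and is precisely the direct verification one expects: the identity $\partial_z(\lambda a_k)=-\partial_k\lambda$ follows from the K\"ahler--Einstein equation, and with $\mu=\lambda A_i$ one obtains $\lambda F=\partial_k\mu$, whence $\partial_z(\lambda F)=\partial_k\partial_z\mu=0$ by the harmonicity of $B_i$ on every fiber. The only point worth making explicit is that $\partial_z(\lambda A_i)=0$ holds identically in $(z,s)$ (since $B_i$ is harmonic for each $s$), so differentiating it in $s_k$ is legitimate; you implicitly use this when commuting $\partial_z$ and $\partial_k$.
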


\bigskip
The harmonicity of $L_kB_i$ is helpful in simplifying the curvature formula if we consider the normal coordinates by noting the following corollary (Corollary 3.1, \cite{bib1}):
\begin{cor}
Let $s_1,...s_n$ be normal coordinates at $s\in\mathcal{M}_g$ with respect to $\omega_{WP}$.  Then at $s$ we have
\[L_kB_i=0\]
for all $i,k$.
\end{cor}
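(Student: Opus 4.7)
The plan is to exploit the harmonicity of $L_k B_i$ provided by Lemma 3.2.5 together with the vanishing of first derivatives of $h_{i\overline{j}}$ at $s$ imposed by the normal coordinate hypothesis. First, by Lemma 3.2.5 and the Kodaira--Spencer identification $T_s\mathcal{M}_g\cong H^1(X_s,TX_s)$, the harmonic element $L_k B_i$ admits an expansion
\[
L_k B_i=\sum_{\alpha=1}^n c^\alpha_{ki}(s)\,B_\alpha
\]
in the basis $\{B_1,\ldots,B_n\}$, so the corollary reduces to showing $c^\alpha_{ki}(s)=0$ for every $\alpha$. Pairing both sides against $B_\beta$ in the Weil--Petersson inner product and using $h_{\alpha\overline{\beta}}(s)=\delta_{\alpha\beta}$ at the normal coordinate centre, this further reduces to
\[
c^\beta_{ki}(s)=\int_{X_s}(L_k B_i)\cdot\overline{B_\beta}\,dv\Big|_s.
\]

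Next, I would relate the right-hand side to $\partial_k h_{i\overline{\beta}}(s)$ by combining three ingredients: the fibre-integration formula $\partial_k\int_{X_s}\eta=\int_{X_s}L_{v_k}\eta$ from the preceding proposition; the identity $L_k dv=0$ from Lemma 3.2.4(i); and the explicit form $L_k B_i=(v_k(A_i)-A_i\partial_z a_k)\partial_z\otimes d\overline{z}$ from Lemma 3.2.4(iv). Expanding $L_{v_k}(A_i\overline{A_\beta}\,dv)$ via Leibniz and then performing an integration by parts along the fibre $X_s$ that exploits the local harmonicity identity $\partial_z(\lambda A_i)=0$ (a direct consequence of $A_i=\overline{\phi_i}/\lambda$ with $\phi_i$ a holomorphic quadratic differential), the $\partial_z a_k$ contributions cancel and I arrive at the key identity
\[
\partial_k h_{i\overline{\beta}}=\int_{X_s}(L_k B_i)\cdot\overline{B_\beta}\,dv+\int_{X_s}A_i\,\partial_k\overline{A_\beta}\,dv.
\]
Because $\partial_k h_{i\overline{\beta}}(s)=0$ by the normal coordinate hypothesis, the proof is reduced to showing that the remainder $\int_{X_s} A_i\,\partial_k\overline{A_\beta}\,dv$ also vanishes at $s$. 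This remainder comes from the $s$-derivative of the antiholomorphic coefficient $\overline{A_\beta}$, which is not holomorphic in $s$ since it depends on the real-valued K\"{a}hler--Einstein metric $\lambda$. I would kill it either by a second integration by parts using the harmonicity of $B_\beta$, or by combining the identity with its $i\leftrightarrow\beta$ counterpart and invoking both $\partial_k h_{i\overline{\beta}}(s)=0$ and $\partial_{\overline{k}} h_{i\overline{\beta}}(s)=0$ that hold in K\"{a}hler normal coordinates. Once the remainder is controlled, $c^\beta_{ki}(s)=0$ for every $\beta$, and hence $L_k B_i|_s=0$.

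The main obstacle is precisely this control of the remainder $\int_{X_s} A_i\,\partial_k\overline{A_\beta}\,dv$ at $s$. Its vanishing encodes the compatibility between the Lie derivative along the harmonic lift and the K\"{a}hler connection of $\omega_{WP}$, equivalently the vanishing of the WP Christoffel symbols $\Gamma^\alpha_{ki}=h^{\alpha\overline{\gamma}}\partial_k h_{i\overline{\gamma}}$ at the normal coordinate centre. Because the frame $\{B_\alpha\}$ is not parallel in the WP sense, this cancellation genuinely requires the combined holomorphic and antiholomorphic normal coordinate conditions rather than being a formal consequence of Lemma 3.2.5 alone.
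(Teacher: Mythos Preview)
Your overall strategy coincides with the paper's: use the harmonicity of $L_kB_i$ (Lemma~3.2.5) to expand it in the basis $\{B_p\}$, identify the coefficients as $h^{p\overline q}\partial_k h_{i\overline q}=\Gamma^p_{ik}$, and conclude these vanish at $s$ by the normal-coordinate hypothesis. Where you diverge is in the derivation of the key identity $\int_{X_s}L_kB_i\cdot\overline{B_\beta}\,dv=\partial_k h_{i\overline\beta}$, and this is where the proposal has a genuine gap.

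The correct Leibniz decomposition at the tensor level is
\[
\partial_k h_{i\overline\beta}
=\int_{X_s}(L_kB_i)\cdot\overline{B_\beta}\,dv
+\int_{X_s}B_i\cdot(L_k\overline{B_\beta})\,dv,
\]
and the second integral vanishes \emph{identically}, not merely at $s$. Indeed, by Lemma~3.2.4(iii) the only component of $L_k\overline{B_\beta}$ that pairs nontrivially with $B_i$ is $-P(e_{k\overline\beta})\,\partial_z\otimes d\overline z$, giving $B_i\cdot L_k\overline{B_\beta}=-A_iP(e_{k\overline\beta})$; a single integration by parts in $z$ together with the harmonicity identity $\partial_z(\lambda A_i)=0$ then kills the integral. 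This is exactly the computation the paper carries out later (in the proof of Theorem~4.3.1) and takes for granted in the corollary.

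Your displayed ``key identity'' with remainder $\int A_i\,\partial_k\overline{A_\beta}\,dv$ is not the right object: $\partial_k\overline{A_\beta}$ is ill-defined on the fibre without specifying a lift, and working with raw coefficient derivatives rather than the tensor Lie derivative $L_k\overline{B_\beta}$ is precisely what creates your self-imposed obstacle. In particular, there is no need to invoke the antiholomorphic normal-coordinate condition $\partial_{\overline k}h_{i\overline\beta}(s)=0$ or any ``compatibility'' argument; the remainder is zero everywhere by harmonicity alone, and then $\partial_k h_{i\overline\beta}(s)=0$ finishes the proof in one line.
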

\begin{proof}
Note that $B_1, ..., B_n$ is a basis of $T_s\mathcal{M}_g$, by harmonicity we can express $L_kB_i$ as
\[L_kB_i=h^{p\overline{q}}(\int_{X_s}L_kB_i\cdot \overline{B_q}dv)B_p=h^{p\overline{q}}\partial_kh_{i\overline{q}}B_p\]
where the last equality follows form normality.
\end{proof}

The following formula for the commutator of $v_k$ and $\overline{v_l}$ is quite useful:
\begin{lem}(Lemma 3.5, \cite{bib1})
\[[\overline{v_l},v_k]=-\lambda^{-1}\partial_{\overline{z}}e_{k\overline{l}}\partial_z+\lambda{-1}\partial_ze_{k\overline{l}}\partial_{\overline{z}}.\]
\end{lem}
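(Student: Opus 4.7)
The plan is to mimic the strategy of Lemma 3.2 in the excerpt, using the closedness of $G$ together with the fact that the harmonic horizontal lift $v_k=\partial_k+a_k\partial_z$ was \emph{designed} so that $v_k$ and $\partial_{\overline z}$ are $G$-orthogonal. I will determine the two coefficients of $[\overline{v_l},v_k]$ by pairing with $\partial_{\overline z}$ and with $\partial_z$ using $dG=0$.

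First I would observe the fiber-tangency: since $\pi_*v_k=\partial_k$ and $\pi_*\overline{v_l}=\partial_{\overline l}$ commute on the base, we have $\pi_*[\overline{v_l},v_k]=0$, so
\[
[\overline{v_l},v_k]=\alpha\,\partial_z+\beta\,\partial_{\overline z}
\]
for some functions $\alpha,\beta$ on $\mathcal X$, and the task reduces to identifying $\alpha,\beta$. Next, using the explicit expansion
\[
G=\tfrac{\sqrt{-1}}{2}\bigl(G_{i\overline j}\,ds_i\wedge d\overline{s_j}-\lambda a_i\,ds_i\wedge d\overline z-\lambda\overline{a_j}\,dz\wedge d\overline{s_j}+\lambda\,dz\wedge d\overline z\bigr),
\]
I would verify the two key vanishings
\[
G(v_k,\partial_{\overline z})=\tfrac{\sqrt{-1}}{2}(-\lambda a_k)+a_k\cdot\tfrac{\sqrt{-1}}{2}\lambda=0,\qquad G(\overline{v_l},\partial_z)=0,
\]
and record the companion evaluation $G(v_k,\overline{v_l})=\tfrac{\sqrt{-1}}{2}e_{k\overline l}$, which is just the definition of $e_{k\overline l}$.

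Then I would apply the identity $dG=0$ (immediate from $G=\tfrac{\sqrt{-1}}{2}\partial\overline\partial\log\lambda$) to the triple $(v_k,\overline{v_l},\partial_{\overline z})$ via the Cartan-type formula
\[
dG(X,Y,Z)=XG(Y,Z)-YG(X,Z)+ZG(X,Y)-G([X,Y],Z)+G([X,Z],Y)-G([Y,Z],X).
\]
The brackets $[v_k,\partial_{\overline z}]=-A_k\partial_z$ and $[\overline{v_l},\partial_{\overline z}]=-\overline{A_l}\partial_{\overline z}$ are tangent to the fiber, and pairing them against $\overline{v_l}$ and $v_k$ (respectively) produces $G(\partial_z,\overline{v_l})=0$ and $G(\partial_{\overline z},v_k)=0$ by the orthogonality from the previous step. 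Combined with $G(\overline{v_l},\partial_{\overline z})=G(v_k,\partial_{\overline z})=0$, every term collapses except
\[
G([v_k,\overline{v_l}],\partial_{\overline z})=\partial_{\overline z}G(v_k,\overline{v_l})=\tfrac{\sqrt{-1}}{2}\partial_{\overline z}e_{k\overline l}.
\]
Since $G([\overline{v_l},v_k],\partial_{\overline z})=\alpha\cdot\tfrac{\sqrt{-1}}{2}\lambda$, this yields $\alpha=-\lambda^{-1}\partial_{\overline z}e_{k\overline l}$. Repeating the same calculation with $\partial_z$ in place of $\partial_{\overline z}$ gives $\beta=\lambda^{-1}\partial_z e_{k\overline l}$, and substitution into the decomposition produces the claimed formula.

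The only genuine obstacle is the bookkeeping in the Cartan formula for $dG$: one must check that the four ``nuisance'' terms $G([v_k,\partial_{\overline z}],\overline{v_l})$, $G([\overline{v_l},\partial_{\overline z}],v_k)$ (and their $\partial_z$ counterparts) vanish. All of these vanish by the same orthogonality $G(v_\bullet,\partial_{\overline z})=0=G(\overline{v_\bullet},\partial_z)$, so after one is comfortable with that identity the rest of the argument is mechanical.
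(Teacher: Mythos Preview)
Your argument is correct and gives a genuinely different proof from the paper's. The paper proceeds by a bare-hands coordinate computation: it expands $[\overline{v_l},v_k]=\overline{v_l}(a_k)\partial_z-v_k(\overline{a_l})\partial_{\overline z}$ directly from $v_k=\partial_k+a_k\partial_z$, and then identifies the two coefficients by unwinding the definition $e_{k\overline l}=G_{k\overline l}-\lambda a_k\overline{a_l}$ together with $a_k=-\lambda^{-1}\partial_k\partial_{\overline z}\log\lambda$, obtaining $\overline{v_l}(a_k)=-\lambda^{-1}\partial_{\overline z}e_{k\overline l}$ and $v_k(\overline{a_l})=-\lambda^{-1}\partial_z e_{k\overline l}$. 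Your route via $dG=0$ and the Cartan formula is more structural: it parallels the paper's own proof of the symmetry $v_k(e_{i\overline j})=v_i(e_{k\overline j})$ and makes transparent \emph{why} the coefficients of the bracket are exactly the fiber derivatives of $e_{k\overline l}$, namely because $G(v_k,\overline{v_l})=\tfrac{\sqrt{-1}}{2}e_{k\overline l}$ and the horizontal lifts are $G$-orthogonal to the fiber. The paper's computation is shorter and entirely self-contained; your argument buys conceptual clarity and avoids differentiating $\log\lambda$ by hand.

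One small slip to fix: you write $[\overline{v_l},\partial_{\overline z}]=-\overline{A_l}\,\partial_{\overline z}$, but in fact $[\overline{v_l},\partial_{\overline z}]=-(\partial_{\overline z}\overline{a_l})\partial_{\overline z}$, whereas $\overline{A_l}=\partial_z\overline{a_l}$. This does not affect your argument, since all you use is that the bracket is a scalar multiple of $\partial_{\overline z}$, and hence pairs to zero against $v_k$ by the orthogonality $G(v_k,\partial_{\overline z})=0$.
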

\begin{proof}
Since we have
\begin{align*}
[\overline{v_l},v_k]&=\overline{v_l}v_k-v_k\overline{v_l}\\
&=(\partial_{\overline{l}}+\overline{a_l}\partial_{\overline{z}})(\partial_k+a_k\partial_z)\\
&\quad-(\partial_k+a_k\partial_z)(\partial_{\overline{l}}+\overline{a_l}\partial_{\overline{z}})\\
&=\partial_{\overline{l}}\partial_k+\overline{a_l}\partial_{\overline{z}}\partial_k+\overline{v_l}(a_k)\partial_z+a_k\partial_{\overline{l}}\partial_z+\overline{a_l}a_k\partial_{\overline{z}}\partial_z\\
&\quad-\partial_k\partial_{\overline{l}}-a_k\partial_z\partial_{\overline{l}}-v_k(\overline{a_l})\partial_{\overline{z}}-\overline{a_l}\partial_k\partial_{\overline{z}}-a_k\overline{a_l}\partial_z\partial_{\overline{z}}\\
&=\overline{v_l}(a_k)\partial_z-v_k(\overline{a_l})\partial_{\overline{z}},
\end{align*}
and by Lemma 3.2.4 we know that
\[\overline{v_l}(a_k)=\partial_{\overline{l}}a_k+\overline{a_l}\partial_{\overline{z}}a_k=-\lambda^{-1}\partial_{\overline{z}}e_{k\overline{l}}-\overline{a_l}A_k+\overline{a_l}A_k=-\lambda^{-1}\partial_{\overline{z}}e_{k\overline{l}}\]
and
\[v_k(\overline{a_l})=\partial_k\overline{a_l}+a_k\partial_z\overline{a_l}=-\lambda{-1}\partial_ze_{k\overline{l}}-a_k\overline{A_l}+a_k\overline{A_l}=-\lambda{-1}\partial_ze_{k\overline{l}},\]
we obtain the desired result.
\end{proof}

\subsection{Curvature Formula for the Weil-Petersson Metric}
Let us fix some notation for curvature.  Let $(M, g)$ be a K\"{a}hler manifold.  Then the curvature tensor is given by
\[R_{i\overline{j}k\overline{l}}=\dfrac{\partial^2 g_{i\overline{j}}}{\partial z_k\partial\overline{z_l}}-g^{p\overline{q}}\dfrac{\partial g_{i\overline{q}}}{\partial z_k}\dfrac{\partial g_{p\overline{j}}}{\partial\overline{z_l}}.\]
Hence the Ricci curvature is given by
\[R_{i\overline{j}}=-g^{k\overline{l}}R_{i\overline{j}k\overline{l}}.\]

\bigskip
The following curvature formula for $\omega_{WP}$ was proved by Wolpert\cite{bib29}, Schumacher\cite{bib30} and Siu\cite{bib31}, and its proof can be found in \cite{bib1}:
\begin{thm}
The curvature of $\omega_{WP}$ is given by
\[R_{i\overline{j}k\overline{l}}=\int_{X_s}(e_{i\overline{j}}f_{k\overline{l}}+e_{i\overline{l}}f_{k\overline{j}})dv.\]
\end{thm}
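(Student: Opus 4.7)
I would prove the formula by working at a fixed point $s_0\in\mathcal{M}_g$ in holomorphic local coordinates that are normal at $s_0$ with respect to the Weil-Petersson metric. Then $\partial_k h_{i\bar j}(s_0)=0$ and the quadratic correction term in the curvature expression vanishes at $s_0$, so
\[R_{i\bar j k\bar l}(s_0)=\partial_k\partial_{\bar l}h_{i\bar j}(s_0).\]
Moreover, by Corollary 3.2.1, $L_kB_i(s_0)=0$ and, taking complex conjugates, $L_{\bar l}\overline{B_j}(s_0)=0$.

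The first step is to turn the base derivatives into fiber integrals using the pushforward identity $\partial_k\int_{X_s}\eta=\int_{X_s}L_k\eta$, combined with $L_k(dv)=0$ from Lemma 3.2.4(i). Writing $h_{i\bar j}=\int_{X_s}B_i\cdot\overline{B_j}\,dv$ and applying Leibniz twice produces four terms; at $s_0$ the two cross terms (containing both an $L_kB_i$ and an $L_{\bar l}\overline{B_j}$) drop out, leaving
\[R_{i\bar j k\bar l}(s_0)=\int_{X_{s_0}}(L_kL_{\bar l}B_i)\cdot\overline{B_j}\,dv+\int_{X_{s_0}}B_i\cdot(L_kL_{\bar l}\overline{B_j})\,dv.\]

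To evaluate each integrand I would use Lemma 3.2.4(ii), which gives the three-component decomposition $L_{\bar l}B_i=-\overline{P}(e_{i\bar l})-f_{i\bar l}\partial_{\bar z}\otimes d\bar z+f_{i\bar l}\partial_z\otimes dz$, and then apply $L_k=L_{v_k}$ to each piece. When paired against $\overline{B_j}=\overline{A_j}\partial_{\bar z}\otimes dz$ and integrated over the closed fiber $X_{s_0}$, only contributions of the Beltrami type $\partial_z\otimes d\bar z$ survive. Using integration by parts on $X_{s_0}$, the relation $\Box e_{i\bar j}=f_{i\bar j}-e_{i\bar j}$ from Lemma 3.2.3, the factorization $P=K_1K_0$, and the commutator formula of Lemma 3.2.6, the various terms collapse and the first summand reduces to $\int_{X_{s_0}}e_{i\bar l}f_{k\bar j}\,dv$. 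A parallel computation starting from the conjugate of Lemma 3.2.4(iii) for $L_kL_{\bar l}\overline{B_j}$ yields $\int_{X_{s_0}}e_{i\bar j}f_{k\bar l}\,dv$ for the second summand, establishing the claim.

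The main obstacle is the meticulous bookkeeping of the many terms generated when $L_k$ acts on the mixed-type decomposition of $L_{\bar l}B_i$, together with the subtlety of pairing tensors of different types under the $L^2$ inner product on the fiber. A particularly delicate point is that the fiber Laplacian $\Box=-\lambda^{-1}\partial_z\partial_{\bar z}$ depends on $s$ through the Kähler-Einstein density $\lambda$, so it does not commute with the horizontal lift $v_k$; tracking these commutators and exploiting the harmonicity of $L_kB_i$ (Lemma 3.2.5) is what ultimately makes the otherwise unwieldy collection of error terms cancel, leaving only the two advertised integrals.
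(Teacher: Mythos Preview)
The paper does not actually supply a proof of this theorem; it only quotes the statement and refers the reader to \cite{bib1}. So there is no in-paper argument to compare against directly.

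That said, your outline has a genuine gap. When you expand $\partial_k\partial_{\bar l}h_{i\bar j}$ by Leibniz you obtain four terms,
\[
\int(L_kL_{\bar l}B_i)\cdot\overline{B_j}\,dv
+\int(L_{\bar l}B_i)\cdot(L_k\overline{B_j})\,dv
+\int(L_kB_i)\cdot(L_{\bar l}\overline{B_j})\,dv
+\int B_i\cdot(L_kL_{\bar l}\overline{B_j})\,dv.
\]
Only the third of these vanishes at $s_0$ in normal coordinates, because that is the one whose factors are $L_kB_i$ and $L_{\bar l}\overline{B_j}$ (Corollary 3.2.1 and its conjugate). The second cross term involves $L_{\bar l}B_i$ and $L_k\overline{B_j}$, which are given explicitly by Lemma 3.2.4(ii)--(iii) and are \emph{not} zero; the paper itself exploits the identity $B_i\cdot L_k\overline{B_j}=\xi_i(e_{k\bar j})$ in the proof of Theorem 4.3.1. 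So your reduction to just the two ``double-derivative'' integrals is incorrect, and the parenthetical description ``containing both an $L_kB_i$ and an $L_{\bar l}\overline{B_j}$'' applies to only one term, not two.

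This surviving cross term is a principal contributor, not a residual error: its Beltrami--anti-Beltrami pairing produces exactly the kind of expression (built from $P(e_{k\bar j})$, $\overline P(e_{i\bar l})$ and $f_{\cdot\cdot}$) that, after integration by parts and use of $(\Box+1)e_{i\bar j}=f_{i\bar j}$, combines with the double-derivative terms to yield $\int(e_{i\bar j}f_{k\bar l}+e_{i\bar l}f_{k\bar j})\,dv$. The rest of your sketch (the tools you list and the obstacles you identify) is reasonable in spirit, but the specific two-term decomposition you wrote down does not hold, and the subsequent ``parallel computation'' you describe would not produce the claimed summands.
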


\chapter{K\"{a}hler Metrics on the Teichm\"{u}ller Spaces and the Moduli Spaces of Riemann Surfaces}

\section{McMullen Metric}
The McMullen Metric is a new K\"{a}hler metric on $\mathcal{M}_g$ constructed by McMullen\cite{bib3}.  This metric is equivalent to the Teichm\"{u}ller metric and is K\"{a}hler hyperbolic, which were also proved by McMullen.

\subsection{Definition of the McMullen Metric}
For a closed geodesic $\gamma$ on a Riemann surface of genus $g$, let $l_{\gamma}(X)$ be the length of the corresponding hyperbolic geodesic on $X\in T_g=T(R)$.  Mumford\cite{bib14} showed that a sequence $X_n\in \mathcal{M}_g$ tends to infinity if and only if $\inf_{\gamma}l_{\gamma}(X_n)\rightarrow 0$.  This behavior motivates McMullen to use the reciprocal length functions $\dfrac{1}{l_{\gamma}}$ to define the McMullen metric $g_{1/l}$.
\begin{defn}
The McMullen metric $g_{1/l}$ on the Teichm\"{u}ller space $T_g$ is defined by $g_{1/l}(v,w)=\omega_{1/l}(v,\sqrt{-1}w)$ for $v, w\in T_XT_g$, where $\omega_{1/l}$ is the K\"{a}hler form of $g_{1/l}$ given by
\[\omega_{1/l}=\omega_{WP}-\sqrt{-1}\delta\sum_{l_{\gamma}(X)<\epsilon}\partial\overline{\partial}Log\dfrac{\epsilon}{l_{\gamma}}\]
for suitable choices of small constants $\delta, \epsilon>0$ and $Log: \mathbb{R}_+\rightarrow [0,\infty)$ being a smooth function such that
\[Log(x)=\left\{
\begin{array}{ll}
\log(x) & \text{if } x\geq 2;\\
0 & \text{if } x\leq 1.
\end{array}\right.\]
The sum above is taken over primitive short geodesics $\gamma$ on $X$, which has at most $\dfrac{3g}{2}$ terms.
\end{defn}
\begin{rem}
Notice that $g_{1/l}$ is invariant under the action of $Mod_g$, so $g_{1/l}$ is also defined on the moduli space $\mathcal{M}_g$.
\end{rem}

\subsection{Properties of the McMullen Metric}
As mentioned before, McMullen metric is K\"{a}hler hyperbolic.  Before we go into the definition of K\"{a}hler hyperbolic metric, we should first understand what $d(bounded)$ is.
\begin{defn}
An $n$-form $\alpha$ on a K\"{a}hler manifold is $d(bounded)$ if there exists a bounded $(n-1)$-form $\beta$ such that $\alpha=d\beta$
\end{defn}

\bigskip
Now we can define the K\"{a}hler hyperbolic metric.
\begin{defn}
Let $(M, g)$ be a K\"{a}hler manifold.  $(M, g)$ is K\"{a}hler hyperbolic if the following conditions are satisfied:
\begin{itemize}
\item[(i)] On the universal covering manifold $(\tilde{M}, \tilde{g})$, the K\"{a}hler form $\tilde{\omega}$ of the pulled-back metric $\tilde{g}$ is $d(bounded)$;
\item[(ii)] $(M, g)$ is complete and of finite volume;
\item[(iii)] The sectional curvatures of $(M, g)$ are bounded from above and below;
\item[(iv)] The injectivity radius of $(\tilde{M}, \tilde{g})$ is bounded from below.
\end{itemize}
Note that the last three conditions would be automatic when $M$ is compact.  A K\"{a}hler metric $g$ on $M$ is said to be K\"{a}hler hyperbolic if $(M, g)$ is K\"{a}hler hyperbolic.
\end{defn}

\bigskip
The following theorem is a collection of the McMullen's results (Corollary 5.2, Corollary 7.3 and Theorem 8.1-8.2, \cite{bib3}) which were used to prove that $g_{1/l}$ is K\"{a}hler hyperbolic.
\begin{thm}
The McMullen metric $g_{1/l}$ satisfies the following properties:
\begin{itemize}
\item[(i)] $g_{1/l}$ is complete;
\item[(ii)] The K\"{a}hler form $\omega_{1/l}$ of the McMullen metric $g_{1/l}$ on $T_g$ is $d(bounded)$ with primitive
\[\theta_{1/l}=\theta_{WP}-\delta\sum_{l_{\gamma}(X)<\epsilon}\partial Log\dfrac{\epsilon}{l_{\gamma}}\]
satisfying $d(\sqrt{-1}\theta_{1/l})=\omega_{1/l}$.  Note that $\sqrt{-1}\theta_{WP}$ is a primitive of the Weil-Petersson K\"{a}hler form defined by $\theta_{WP}(X)=-\beta_X(Y)$ for a fixed $Y$ in the conjugate Teichm\"{u}ller space $T_g^{conj}$, where $\beta_X: T_g^{conj}\rightarrow Q(X)$ is the Bers embedding;
\item[(iii)] $(M_g, g_{1/l})$ has finite volume;
\item[(iv)] The sectional curvatures of $g_{1/l}$ are bounded form below and above over $T_g$, and the injectivity radius of $g_{1/l}$ is uniformly bounded from below.
\end{itemize}
Hence $g_{1/l}$ is K\"{a}hler hyperbolic.
\end{thm}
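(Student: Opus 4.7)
The plan is to verify the four properties by pairing known asymptotic estimates for the Weil-Petersson metric and the geodesic length functions near the Deligne-Mumford boundary with direct calculations for the correction terms $\mathrm{Log}(\epsilon/l_\gamma)$. On the thick part of $\mathcal{M}_g$ (where every $l_\gamma$ is bounded away from zero) every correction vanishes and $\omega_{1/l}=\omega_{WP}$, so each of the four statements is either classical Weil-Petersson theory or follows from relative compactness. The real work concentrates on the collars associated with pinching curves, where in pinching coordinates $(t_1,\dots,t_m,s_{m+1},\dots,s_n)$ Wolpert's expansion $l_{\gamma_j}\sim 2\pi^2/\log(1/|t_j|)$ controls every relevant tensor to leading order.

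For (i), on a collar where $l_{\gamma_j}$ is small, a direct computation in the pinching coordinate $t_j$ shows that $\sqrt{-1}\,\partial\bar\partial\,\mathrm{Log}(\epsilon/l_{\gamma_j})$ adds to $\omega_{WP}$ a positive term of order at least $|dt_j|^2/|t_j|^2$, whose square root has divergent radial integral as $t_j\to 0$; since escaping every compact set of $T_g$ forces some $l_\gamma\to 0$, this gives completeness. For (ii), the identity $d(\sqrt{-1}\theta_{1/l})=\omega_{1/l}$ is a formal $(\partial+\bar\partial)$ check once one knows classically that $\sqrt{-1}\theta_{WP}$ is a primitive for $\omega_{WP}$. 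Boundedness of $\sqrt{-1}\theta_{WP}$ in the $g_{1/l}$-norm comes from Nehari's bound on the image of the Bers embedding, while boundedness of each $\partial\,\mathrm{Log}(\epsilon/l_{\gamma_j})$ follows from Wolpert's estimate $\|\partial l_{\gamma_j}\|_{WP}=O(\sqrt{l_{\gamma_j}})$ after one checks that the inverse tensor $g_{1/l}^{-1}$ absorbs the $l_{\gamma_j}^{-1}$ blow-up produced by the logarithmic derivative. Since $T_g$ is simply connected and the construction is $\mathrm{Mod}_g$-invariant, the primitive lives on the universal cover of $\mathcal{M}_g$.

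For (iii), I would split the volume integral between the thick part (compact, hence finite contribution) and finitely many thin-part collars. On each collar the asymptotics of the previous step show $\omega_{1/l}^n$ is comparable to $\prod_{j\le m}|dt_j|^2/(|t_j|^2(\log|t_j|)^2)$ wedge the Weil-Petersson volume on the unpinched factor, which integrates to a finite quantity near $t_j=0$. For (iv), the substitution $u_j=\log\log(1/|t_j|)$ converts each thin-part collar into something uniformly quasi-isometric to a product of standard hyperbolic cusp neighborhoods with the lower-stratum Weil-Petersson geometry; in these model coordinates both sectional curvatures are uniformly bounded and the injectivity radius is uniformly positive, and on the thick part the same bounds follow from smoothness and compactness.

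The principal technical obstacle is (iv): a genuine two-sided curvature bound demands expansions of the fourth mixed derivatives $\partial_i\partial_{\bar j}\partial_k\partial_{\bar l}\,\mathrm{Log}(\epsilon/l_\gamma)$ together with matching bounds on the inverse of the full tensor $\omega_{1/l}$, both of which depend on Wolpert's detailed local analysis of the length functions and their higher derivatives along the collar. Once this delicate bookkeeping is carried out and the model comparison is secured, all four conditions of the definition are satisfied and the K\"ahler hyperbolicity of $g_{1/l}$ is a direct consequence.
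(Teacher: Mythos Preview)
Your approach differs substantially from the paper's (which follows McMullen's original arguments). For (i) and (iii) you work directly with asymptotic estimates in pinching coordinates, whereas the paper deduces both from the equivalence of $g_{1/l}$ with the Teichm\"uller metric: completeness is inherited from the Teichm\"uller metric, and finite volume comes from the Kobayashi--Teichm\"uller coincidence together with a local covering of the Deligne--Mumford compactification by sets of finite Kobayashi volume. Your direct route is viable but needs more explicit computation; the paper's route is shorter once the metric comparison is in hand. For (ii) both approaches use Nehari's bound for $\theta_{WP}$, but the paper bounds the remaining terms via the Teichm\"uller-norm estimate $\|\partial l_\gamma\|_T\le 2l_\gamma$, which gives $\|\partial\log l_\gamma\|_T\le 2$ immediately; your Weil--Petersson route $\|\partial l_\gamma\|_{WP}=O(\sqrt{l_\gamma})$ works only after an extra comparison with $g_{1/l}^{-1}$ that you leave unspecified.

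The substantive gap is (iv). Your first suggestion---that a quasi-isometry to a model metric yields curvature and injectivity-radius bounds---is simply wrong: quasi-isometry is a $C^0$ notion and carries no curvature information. You seem to recognize this in your final paragraph, falling back on a direct expansion of the fourth mixed derivatives of $\mathrm{Log}(\epsilon/l_\gamma)$ together with matching bounds on the inverse of $\omega_{1/l}$. That program is in principle feasible but extremely laborious, and it is \emph{not} how the paper (following McMullen) proceeds. The idea you are missing is the holomorphic extension of $l_\gamma$ and $\theta_{WP}$ to the quasifuchsian complexification $QF_g=T_g\times T_g^{conj}$: once these quantities are holomorphic on a complex manifold containing $T_g$ as a totally real submanifold, local uniform $C^0$ bounds automatically control all derivatives via Cauchy estimates, and the sectional-curvature and injectivity-radius bounds follow without any explicit higher-derivative bookkeeping. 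This complexification trick is the heart of McMullen's argument for (iv) and is what makes it tractable.
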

\begin{sproof}
(i) follows from the fact that $g_{1/l}$ is equivalent to the Teichm\"{u}ller metric (which will be discussed in the next subsection) and the Teichm\"{u}ller metric is complete.

To prove that $g_{1/l}$ on $T_g$ is $d(bounded)$ in (ii), it suffices to show that $\theta_{1/l}$ is bounded with respect to $\left\|\cdot\right\|_T$.  The first term $\theta_{WP}$ is bounded using Nehari's bound (Theorem 2.1, \cite{bib3}) and Gauss-Bonnet theorem.  The remaining terms are bounded by the estimate for the gradient of geodesic length: $\left\|\partial l_{\gamma}\right\|_T\leq 2l_{\gamma}$.

For (iii), we consider the Deligne-Mumford compactification $\overline{\mathcal{M}_g}$ of $\mathcal{M}_g$.  Then by the existence of a covering $\{V_{\alpha}\}$ with $\textrm{vol}(V_{\alpha}\cap\mathcal{M}_g)<\infty$ in the Kobayashi metric on $M_g$ and the coincidence of the Teichm\"{u}ller metric and the Kobayashi metric on $M_g$, we know that the Teichm\"{u}ller volume of $M_g$ is finite.  Again using the equivalence of $g_{1/l}$ and the Teichm\"{u}ller metric we can prove (iii).

Finally for (iv), we first extend $l_{\gamma}$ and $\theta_{WP}$ to holomorphic functions on the complexification $QF_g=T_g\times T_g^{conj}$ of $T_g$.  Then we can use the local uniform bounds on these holomorphic functions to control all of their derivatives, and thus the derivatives of $g_{1/l}$ are also controlled, which implies the sectional curvatures are bounded.  The lower bound of the injectivity radius hence follows from the curvature bounds.
\end{sproof}

\subsection{Equivalence of the McMullen Metric and the Teichm\"{u}ller Metric}
Let's fix some notations and present some useful facts before the discussion of the equivalence of the McMullen metric and the Teichm\"{u}ller metric.  Let $N=\dfrac{3g}{2}+1$ be a bound on the number of terms of $\omega_{1/l}$, and let
\[\psi_{\gamma}=\partial\log l_\gamma=\dfrac{\partial l_{\gamma}}{l_{\gamma}}\]
for a given $X\in T_g$.  Note that we have
\[|\psi_{\gamma}(v)|^2=\dfrac{\sqrt{-1}}{2}\dfrac{\partial l_{\gamma}\wedge\overline{\partial}l_{\gamma}}{l_{\gamma}^2}(v, \sqrt{-1}v).\]
Now let $\pi: X_{\gamma}\rightarrow X$ be the covering space corresponding to $<\gamma>\subset\pi_1(R)$.  Notice that we can identify $X_{\gamma}$ with an annulus $A(r)=\{z|r^{-1}<|z|<r\}$ up to rotations by requiring that the orientation of $\tilde{\gamma}\subset X_{\gamma}$ agrees with that of $\mathbb{S}^1$.  Define $\phi_{\gamma}\in Q(X)$ by
\[\phi_{\gamma}=\pi_*(\dfrac{dz^2}{z}).\]
Wolpert (Theorem 3.1, \cite{bib12}) showed the following important relation between $\phi_{\gamma}$ and $\psi_{\gamma}$:
\[\psi_{\gamma}=-\dfrac{l_{\gamma}(X)}{2\pi^3}\phi_{\gamma}.\]
Here are the facts about the quadratic differentials which will be used in the proof of the equivalence of $g_{1/l}$ and the Teichm\"{u}ller metric (Theorem 4.1-4.4, \cite{bib3}):
\begin{thm}
For $\epsilon>0$ sufficiently small, any $\phi\in Q(X)$ can be uniquely expressed as
\[\phi=\phi_0+\sum_{l_{\gamma}(X)<\epsilon}a_{\gamma}\phi_{\gamma}\]
with $\textrm{Res}_{\gamma}(\phi_0)=0$ for all $\gamma$ in the sum above such that
\begin{itemize}
\item[(i)] $\left\|\phi_0\right\|_T=O(\left\|\phi\right\|_T)$ and $\left\|a_{\gamma}\phi_{\gamma}\right\|_T=O(\left\|\phi\right\|_T)$;
\item[(ii)] $\left\|\phi_0\right\|_{WP}\leq C(\epsilon)\left\|\phi_0\right\|_T$ for some constant $C(\epsilon)$ depending on $\epsilon$.
\end{itemize}
If $\phi=\phi_{\delta}$ with $\epsilon<l_{\delta}(X)<2\epsilon$, then we have $\left\|a_{\gamma}\phi_{\gamma}\right\|_T=O(l_{\delta}(X)\left\|\phi\right\|_T)$.  Moreover, $\psi_{\gamma}$ is proportional to $\phi_{\gamma}$ such that $\left\|\psi_{\gamma}\right\|_T\leq 2$ and $\left\|\psi_{\gamma}\right\|_T\rightarrow 2$ as $l_{\gamma}\rightarrow 0$.
\end{thm}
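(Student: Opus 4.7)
The plan is to construct $\phi_0$ by subtracting off appropriate multiples of the $\phi_\gamma$ at each short geodesic. I define a residue functional $\textrm{Res}_\gamma$ on $Q(X)$ by pairing $\phi$ against $dz^2/z$ on the annular cover $X_\gamma\cong A(r_\gamma)$, normalized so that $\textrm{Res}_\gamma(\phi_\gamma)$ is a universal constant up to exponentially small corrections coming from the non-identity terms of the Poincar\'e series defining $\phi_\gamma$. Existence and uniqueness of the decomposition then reduce to invertibility of the matrix $M_{\gamma\gamma'}=\textrm{Res}_{\gamma'}(\phi_\gamma)$ indexed by short geodesics. Because the collars $C_\gamma$ are pairwise disjoint for $\epsilon$ small enough and the Poincar\'e series for $\phi_\gamma$ decays exponentially outside $C_\gamma$, the diagonal of $M$ is bounded below by a universal constant while the off-diagonal entries are exponentially small in the mutual collar distance, so $M$ is invertible and the $a_\gamma$ are obtained by inversion.

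For the Teichm\"uller bounds in (i), a direct collar estimate---pairing the test form $dz^2/z$ against $|\phi|$ on $C_\gamma$---gives $|\textrm{Res}_\gamma(\phi)|\leq C\|\phi\|_T/\|\phi_\gamma\|_T$. Combining this with the asymptotic $\|\phi_\gamma\|_T\sim 4\pi^3/l_\gamma$, obtained by isolating the identity term of the Poincar\'e series on $X_\gamma$ and estimating the tail, yields $\|a_\gamma\phi_\gamma\|_T=O(\|\phi\|_T)$, and then $\|\phi_0\|_T=O(\|\phi\|_T)$ by the triangle inequality.

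The Weil--Petersson estimate (ii) is the main obstacle. I would split $\int_X|\phi_0|^2\rho^{-2}$ into a thick-part contribution and a sum over the thin collars. The thick part is controlled pointwise by the $L^1$ density of $\phi_0$ times a constant $C(\epsilon)$, since $\rho$ is bounded below on the complement of the short collars. On each collar $C_\gamma$, I expand $\phi_0$ in the Fourier modes associated to the annular uniformizer; the residue-zero condition removes precisely the mode proportional to $\phi_\gamma$, which is the unique mode whose hyperbolic $L^2$ integral diverges as $l_\gamma\to 0$. A direct computation on a hyperbolic annulus---where $\rho\sim(|z|\log|z|)^{-1}$ near the core geodesic---shows that every remaining mode has an $L^2/L^1$ ratio bounded by a constant depending only on $\epsilon$. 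The delicate point is to verify that the residue condition really kills the single divergent mode and to track the interplay between the Fourier-mode decay and the singular weight $\rho^{-2}$, ensuring uniformity in $l_\gamma$.

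The sharper bound when $\phi=\phi_\delta$ with $\epsilon<l_\delta<2\epsilon$ follows from a refined cross-residue estimate: since $C_\delta$ is disjoint from every short collar $C_\gamma$ and $\phi_\delta$ is itself a Poincar\'e series, $|\textrm{Res}_\gamma(\phi_\delta)|$ carries both an exponential-decay factor in the separation distance and an additional factor of $l_\delta$ from the explicit normalization of $\phi_\delta$; combined with $\|\phi_\gamma\|_T\sim 4\pi^3/l_\gamma$ and $\|\phi_\delta\|_T\asymp 1/l_\delta$ this produces $\|a_\gamma\phi_\gamma\|_T=O(l_\delta\|\phi_\delta\|_T)$. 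Finally, Wolpert's identity $\psi_\gamma=-\frac{l_\gamma}{2\pi^3}\phi_\gamma$ and the refined asymptotic $\|\phi_\gamma\|_T=\frac{4\pi^3}{l_\gamma}+O(e^{-2\pi^2/l_\gamma})$ yield $\|\psi_\gamma\|_T\to 2$ as $l_\gamma\to 0$; the uniform bound $\|\psi_\gamma\|_T\leq 2$ is then an immediate consequence of Gardiner's gradient estimate $\|\partial l_\gamma\|_T\leq 2l_\gamma$ applied to $\psi_\gamma=\partial l_\gamma/l_\gamma$.
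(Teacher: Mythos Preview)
The paper does not prove this theorem at all: it is quoted as a collection of results from McMullen's paper (Theorems 4.1--4.4 of \cite{bib3}), with no argument given here. So there is no ``paper's own proof'' to compare against beyond the bare citation.

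Your sketch is broadly in line with McMullen's actual arguments: defining $\mathrm{Res}_\gamma$ via the annular cover, obtaining the decomposition from diagonal dominance of the residue matrix, getting the $L^1$ bounds from collar estimates, and deducing the $\psi_\gamma$ asymptotics from Wolpert's identity together with the gradient bound $\|\partial l_\gamma\|_T\le 2l_\gamma$. The constant $4\pi^3/l_\gamma$ for $\|\phi_\gamma\|_T$ is correct. Two places deserve more care. First, for (ii) your Fourier argument on the collar is the right idea, but you should be explicit that the relevant comparison is between the hyperbolic $L^2$ norm and the $L^1$ norm \emph{on the thick part} (McMullen's Theorem 4.3 bounds $\|\phi_0\|_{WP}$ by a constant times the $L^1$ norm of $\phi_0$ restricted to the $\epsilon$-thick part, not the full $\|\phi_0\|_T$); the residue-zero condition is what allows the thin-part $L^2$ integral to be absorbed. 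Second, for the $\phi_\delta$ refinement your ``exponential decay in the separation distance'' is not quite the mechanism McMullen uses: the extra factor of $l_\delta$ comes more directly from the fact that $\phi_\delta$ restricted to a short collar $C_\gamma$ has small $L^1$ mass controlled by $l_\delta$ (since $l_\delta>\epsilon$ means the identity term of its Poincar\'e series contributes nothing special on $C_\gamma$), rather than from any distance between collars. As written, your cross-residue estimate is plausible but not sharp enough to isolate the $O(l_\delta)$ factor without this observation.
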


\bigskip
The following two lemmas will play the main roles in the proof of the equivalence as well.
\begin{lem}
There is a Hermitian metric $g$ of the form
\[g(v,v)=A(\epsilon)\left\|v\right\|_{WP}^2+B\sum_{l_{\gamma}(X)<\epsilon}|\psi_{\gamma}(v)|^2\]
such that $\left\|v\right\|_T^2\leq g(v,v)\leq O(\left\|v\right\|_T^2)$ for all sufficiently small $\epsilon>0$.  Hence we have
\[\left\|v\right\|_T^2\sim\left\|v\right\|_{WP}^2+\sum_{l_{\gamma}(X)<\epsilon}|\partial\log l_{\gamma}(v)|^2\]
for $\epsilon>0$ sufficiently small and for all $v\in T_X T_g$.
\end{lem}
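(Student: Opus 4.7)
The plan is to verify the sandwich inequality $\|v\|_T^2\leq g(v,v)\leq O(\|v\|_T^2)$ by choosing suitable constants $A(\epsilon)$ and $B$, and exploiting the decomposition theorem (Theorem 4.1 above) for quadratic differentials together with Wolpert's identity $\psi_\gamma=-\tfrac{l_\gamma}{2\pi^3}\phi_\gamma$. The second (equivalence) statement follows immediately from the first by absorbing $\epsilon$-dependent constants.

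For the lower bound $\|v\|_T^2\leq g(v,v)$, I would use duality: since $T_X^*T_g=Q(X)$ with the $L^1$ (Teichm\"uller) norm, we have $\|v\|_T=\sup_{\|\phi\|_T\leq 1}|[v;\phi]|$. Fix $\phi$ with $\|\phi\|_T\leq 1$ and apply Theorem 4.1 to write $\phi=\phi_0+\sum_{l_\gamma<\epsilon}a_\gamma\phi_\gamma$. I would bound the two pieces separately. For the first, Cauchy--Schwarz in the Weil--Petersson inner product gives $|[v;\phi_0]|\leq\|v\|_{WP}\|\phi_0\|_{WP}$, and then Theorem 4.1(ii) plus Theorem 4.1(i) yield $\|\phi_0\|_{WP}\leq C(\epsilon)\|\phi_0\|_T=O(C(\epsilon))$. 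For the second, I rewrite $[v;\phi_\gamma]=-\tfrac{2\pi^3}{l_\gamma}\psi_\gamma(v)$ using Wolpert's formula, and combine with the fact that $\|a_\gamma\phi_\gamma\|_T=|a_\gamma|\cdot\tfrac{2\pi^3}{l_\gamma}\|\psi_\gamma\|_T=O(1)$ and $\|\psi_\gamma\|_T$ is bounded below by a positive constant for small $l_\gamma$ (since it tends to $2$). This yields $|a_\gamma[v;\phi_\gamma]|\leq C|\psi_\gamma(v)|$ for a uniform constant $C$. Applying Cauchy--Schwarz on the finite sum of at most $N$ geodesics (noting $N=3g/2+1$ is bounded) and the trivial $(x+y)^2\leq 2x^2+2y^2$, I obtain $|[v;\phi]|^2\leq A(\epsilon)\|v\|_{WP}^2+B\sum|\psi_\gamma(v)|^2$ with $A(\epsilon)=O(C(\epsilon)^2)$ and $B=O(N)$. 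Taking the supremum over $\phi$ gives the lower bound.

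For the upper bound $g(v,v)\leq O(\|v\|_T^2)$, I would use two standard estimates. First, identifying $v$ with its harmonic Beltrami representative $\mu=\bar\phi_v/\rho^2$, one has $\|v\|_T=\|\mu\|_\infty$, so
\[\|v\|_{WP}^2=\int_X|\mu|^2\rho^2\,dzd\bar z\leq\|\mu\|_\infty^2\cdot\operatorname{Area}(X)=4\pi(g-1)\|v\|_T^2\]
by Gauss--Bonnet. Second, $|\psi_\gamma(v)|=|[v;\psi_\gamma]|\leq\|v\|_T\|\psi_\gamma\|_T\leq 2\|v\|_T$. Since the sum has at most $N$ terms, both pieces of $g(v,v)$ are bounded by a constant (depending on $\epsilon$ only through $A(\epsilon)$) times $\|v\|_T^2$.

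The main obstacle is the lower bound, because one has to carry the Wolpert scaling factor $\tfrac{2\pi^3}{l_\gamma}$ through carefully: the quantity $|[v;\phi_\gamma]|$ blows up as $l_\gamma\to 0$, and only after pairing with the decomposition coefficient $a_\gamma$ (which is correspondingly small, by Theorem 4.1) does one get a clean bound in terms of the tame quantity $\psi_\gamma(v)$. Getting the correct $\epsilon$-uniform dependence requires using the bound $\|\psi_\gamma\|_T\to 2$ to ensure $\|\psi_\gamma\|_T$ stays uniformly bounded below for $l_\gamma<\epsilon$, a fact supplied by the final sentence of Theorem 4.1. Once this scaling bookkeeping is in place, everything else is a routine Cauchy--Schwarz estimate with a uniformly bounded number of terms.
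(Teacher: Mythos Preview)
Your approach is essentially the same as the paper's: decompose a norming quadratic differential via Theorem~4.1.2, bound the residual piece $\phi_0$ against $\|v\|_{WP}$ via Cauchy--Schwarz and Theorem~4.1.2(ii), and bound each short-geodesic piece by $|\psi_\gamma(v)|$ using that $\|\psi_\gamma\|_T$ stays bounded below. The paper writes the decomposition directly in terms of $\psi_\gamma$ rather than $\phi_\gamma$, but your conversion via Wolpert's identity $\psi_\gamma=-\tfrac{l_\gamma}{2\pi^3}\phi_\gamma$ is exactly equivalent, and your bookkeeping of the $l_\gamma^{-1}$ scaling is correct.

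There is one slip in your upper-bound argument. You assert that for the \emph{harmonic} Beltrami representative $\mu=\bar\phi_v/\rho^2$ one has $\|v\|_T=\|\mu\|_\infty$. This is false: the infinitesimal Teichm\"uller norm is $\|v\|_T=\inf_{\nu\sim v}\|\nu\|_\infty$, and the infimum is realized by a Teichm\"uller differential $k|\phi|/\phi$, not by the harmonic representative; in general one only has $\|v\|_T\le\|\mu\|_\infty$, which is the wrong direction for your Gauss--Bonnet step. The conclusion $\|v\|_{WP}=O(\|v\|_T)$ you want is nonetheless true and standard: one gets it by Cauchy--Schwarz on the cotangent side, namely $\|\phi\|_{L^1}=\int_X|\phi|\rho^{-1}\cdot\rho\le\|\phi\|_{WP}\,\mathrm{Area}(X)^{1/2}$, and then dualizing (this is McMullen's Proposition~2.4, which the paper simply cites). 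With that correction, your proof matches the paper's.
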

\begin{proof}
By Cauchy-Schwarz inequality and the duality between $Q(X)=T^*_X T_g$ and $M(X)=T_X T_g$, we know that $\left\|v\right\|_{WP}=O(\left\|v\right\|_T)$ (Proposition 2.4, \cite{bib3}).  Now by Theorem 4.1.2 we have $|\psi_{\gamma}(v)|\leq 2\left\|v\right\|_T$, and since there are at most $N$ terms in the expression of $g$, we obtain $g(v,v)\leq O(\left\|v\right\|_T^2)$.

To prove $\left\|v\right\|_T^2\leq g(v,v)$, we first fix $v\in T_X T_g$ and pick $\phi\in Q(X)$ such that $\left\|\phi\right\|_T=1$ and $\phi(v)=\left\|v\right\|_T$.  Then as long as $\epsilon>0$ is small enough, by Theorem 4.1.2 we can obtain
\[\phi=\phi_0+\sum_{l_{\gamma}(X)<\epsilon}a_{\gamma}\psi_{\gamma}\]
with $\textrm{Res}_{\gamma}(\phi_0)=0$ and $\left\|\psi_{\gamma}\right\|_T\geq1$.  Moreover we have $\left\|\phi_0\right\|_T=O(1)$ and $\left\|a_{\gamma}\psi_{\gamma}\right\|_T=O(1)$ (as $\left\|\phi\right\|_T=1$).  Then by duality and Theorem 4.1.2 again we have $|\phi_0(v)|\leq D(\epsilon)\left\|v\right\|_{WP}$ for some constant $D(\epsilon)$ depending on $\epsilon$.  Since $\left\|\psi_{\gamma}\right\|_T\geq1$ and $\left\|a_{\gamma}\psi_{\gamma}\right\|_T=O(1)$, we know that $|a_{\gamma}|\leq E$, where $E$ is a constant independent of $\epsilon$.  Hence we have
\[\left\|v\right\|_T=\phi(v)\leq D(\epsilon)\left\|v\right\|_{WP}+E\sum_{l_{\gamma}(X)<\epsilon}|\psi_{\gamma}|.\]
Notice that $N$ is a bound for the number of terms in the sum above, so by Cauchy-Schwarz inequality we obtain
\[\left\|v\right\|_T^2\leq ND(\epsilon)^2\left\|v\right\|_{WP}^2+NE^2\sum_{l_{\gamma}(X)<\epsilon}|\psi_{\gamma}|^2.\]
Thus taking $A(\epsilon)=ND(\epsilon)$ and $B=NE^2$, the result follows.
\end{proof}
\begin{lem}
For $\epsilon<l_{\delta}(X)<2\epsilon$, we have
\[|\psi_{\delta}(v)|^2\leq D(\epsilon)\left\|v\right\|_{WP}^2+O(\epsilon\sum_{l_{\gamma}(X)<\epsilon}|\psi_{\gamma}(v)|^2).\]
\end{lem}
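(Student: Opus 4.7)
The plan is to transfer the estimate from $\psi_\delta$ to the Bers-type quadratic differential $\phi_\delta$ via Wolpert's proportionality $\psi_\delta=-\frac{l_\delta(X)}{2\pi^3}\phi_\delta$, and then apply the decomposition provided by Theorem 4.1.2. Since $\epsilon<l_\delta(X)<2\epsilon$, the last sentence of that theorem yields
\[\phi_\delta=\phi_0+\sum_{l_\gamma(X)<\epsilon}a_\gamma\phi_\gamma\]
with $\mathrm{Res}_\gamma(\phi_0)=0$ and the enhanced coefficient bound $\|a_\gamma\phi_\gamma\|_T=O(l_\delta\|\phi_\delta\|_T)$; this borderline estimate is the entire source of the $\epsilon$ appearing in the conclusion.

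I would estimate $\phi_\delta(v)$ term by term. For $\phi_0(v)$, use the Weil--Petersson Cauchy--Schwarz $|\phi_0(v)|\leq \|\phi_0\|_{WP}\|v\|_{WP}$ and combine parts (i) and (ii) of Theorem 4.1.2 to control $\|\phi_0\|_{WP}\leq C(\epsilon)\|\phi_0\|_T=O(C(\epsilon)\|\phi_\delta\|_T)$. For each short-geodesic term, rewrite $\phi_\gamma$ in terms of $\psi_\gamma$ via $\phi_\gamma=-\frac{2\pi^3}{l_\gamma}\psi_\gamma$, so that $a_\gamma\phi_\gamma=b_\gamma\psi_\gamma$ with
\[|b_\gamma|=\frac{\|a_\gamma\phi_\gamma\|_T}{\|\psi_\gamma\|_T}=O(l_\delta\|\phi_\delta\|_T),\]
where the denominator is controlled by the limit $\|\psi_\gamma\|_T\to 2$ as $l_\gamma\to 0$ (shrinking $\epsilon$ if necessary so that $\|\psi_\gamma\|_T\geq 1$). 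Pairing with $v$ yields $|a_\gamma\phi_\gamma(v)|\leq O(l_\delta\|\phi_\delta\|_T)\,|\psi_\gamma(v)|$.

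Combining these bounds and multiplying by $\frac{l_\delta}{2\pi^3}$ to return to $\psi_\delta(v)$, the normalization $\frac{l_\delta}{2\pi^3}\|\phi_\delta\|_T=\|\psi_\delta\|_T\leq 2$ absorbs one factor of $\|\phi_\delta\|_T$ while producing a second factor of $l_\delta$ in the short-geodesic sum, giving
\[|\psi_\delta(v)|\leq C_1(\epsilon)\|v\|_{WP}+C_2\,l_\delta\sum_{l_\gamma<\epsilon}|\psi_\gamma(v)|\leq C_1(\epsilon)\|v\|_{WP}+O(\epsilon)\sum_{l_\gamma<\epsilon}|\psi_\gamma(v)|.\]
Squaring via Cauchy--Schwarz, using the uniform bound $N=\frac{3g}{2}+1$ on the number of short-geodesic terms, and absorbing $\epsilon^2$ into $\epsilon$ for small enough $\epsilon$ then yields the stated inequality.

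The main delicate point will be tracking why the factor in front of $\sum|\psi_\gamma(v)|^2$ actually tends to zero with $\epsilon$ rather than remaining a generic constant. This requires simultaneously exploiting both occurrences of $l_\delta$ in the argument: the one from the borderline coefficient bound $\|a_\gamma\phi_\gamma\|_T=O(l_\delta\|\phi_\delta\|_T)$ in Theorem 4.1.2, and the one from the conversion $\phi_\delta\mapsto\psi_\delta=-\frac{l_\delta}{2\pi^3}\phi_\delta$. Together with the normalization $\|\psi_\delta\|_T\leq 2$ they compound into $l_\delta^2\|\phi_\delta\|_T=O(l_\delta)=O(\epsilon)$; dropping any of these three ingredients destroys the required smallness and would yield only a constant, insufficient for the proof of the equivalence of $g_{1/l}$ with the Teichm\"uller metric.
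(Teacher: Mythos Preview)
Your proposal is correct and follows essentially the same route as the paper's proof: apply the decomposition of Theorem~4.1.2 to $\psi_\delta$ (equivalently $\phi_\delta$), invoke the borderline bound $\|a_\gamma\phi_\gamma\|_T=O(l_\delta\|\phi_\delta\|_T)$ together with $\|\psi_\delta\|_T\leq 2$ and $\|\psi_\gamma\|_T\geq 1$ to get coefficients of order $O(\epsilon)$, control $|\psi_0(v)|$ by $D(\epsilon)\|v\|_{WP}$, and then square using Cauchy--Schwarz and the bound $N$ on the number of terms exactly as in Lemma~4.1.1. The paper compresses this into two lines by writing the decomposition directly in the $\psi$-basis, whereas you route through $\phi_\delta$ and convert back, but the two computations are identical up to bookkeeping.
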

\begin{proof}
By Theorem 4.1.2 we can derive that $\displaystyle \psi_{\delta}=\psi_0+\epsilon\sum_{l_{\gamma}(X)<\epsilon}a_{\gamma}\psi_{\gamma}$ with $a_{\gamma}=O(l_{\delta}(X))=O(\epsilon)$ and $|\psi_0(v)|\leq D(epsilon)\left\|v\right\|_{WP}$.  Then using similar argument as in Lemma 4.1.1 we obtain the result.
\end{proof}

Now we are ready to prove the equivalence.
\begin{thm}
For all sufficiently small $\epsilon>0$, there exists a $\delta>0$ such that $\omega_{1/l}$ defines a K\"{a}hler metric $g_{1/l}$ on $T_g$ which is equivalent to the Teichm\"{u}ller metric.
\end{thm}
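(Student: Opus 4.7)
The plan is to compute $g_{1/l}(v,v) = \omega_{1/l}(v,\sqrt{-1}v)$ explicitly and then match it against the characterization of the Teichm\"uller norm provided by Lemma 4.1.1. The form $\omega_{1/l}$ is manifestly closed, being $\omega_{WP}$ (closed) plus $\partial\overline{\partial}$-exact pieces, so once positivity of the associated Hermitian form is established, $\omega_{1/l}$ is automatically a K\"ahler form, and the equivalence $g_{1/l}\sim g_T$ will fall out of the same estimate.

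I would split each summand $-\sqrt{-1}\delta\,\partial\overline{\partial}Log(\epsilon/l_\gamma)$ according to the regime of $l_\gamma(X)$. For $\gamma$ with $l_\gamma(X)<\epsilon/e$, one has $Log=\log$ on the relevant range, so
$$\partial\overline{\partial}\log l_\gamma = \frac{\partial\overline{\partial} l_\gamma}{l_\gamma} + \frac{\partial l_\gamma\wedge\overline{\partial} l_\gamma}{l_\gamma^{2}}.$$
Combined with the identity $\tfrac{\sqrt{-1}}{2}\tfrac{\partial l_\gamma\wedge\overline{\partial} l_\gamma}{l_\gamma^{2}}(v,\sqrt{-1}v)=|\psi_\gamma(v)|^{2}$ quoted before Theorem 4.1.2, the contribution of such a $\gamma$ to $g_{1/l}(v,v)$ becomes
$$2\delta|\psi_\gamma(v)|^{2}\;+\;\delta\,\sqrt{-1}\,\tfrac{\partial\overline{\partial} l_\gamma}{l_\gamma}(v,\sqrt{-1}v).$$
The first piece is precisely of the type entering Lemma 4.1.1. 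For $\gamma$ in the transition range $\epsilon/e\le l_\gamma<\epsilon$, both $l_\gamma$ and $Log'(\epsilon/l_\gamma)$ are bounded uniformly, so the contribution is majorized by a constant multiple of $\|v\|_{WP}^{2}+|\psi_\gamma(v)|^{2}$.

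The main obstacle is controlling the Hessian term $\sqrt{-1}\partial\overline{\partial}l_\gamma/l_\gamma$. I would invoke Wolpert's estimates on the derivatives of the hyperbolic length function: $\sqrt{-1}\partial\overline{\partial}l_\gamma$ is positive semi-definite (plurisubharmonicity of $l_\gamma$) and is bounded above by $C\,l_\gamma\,\omega_{WP}$ for a universal constant $C=C(g)$. Consequently the Hessian term is nonnegative (so it only aids positivity) and of size $O(\|v\|_{WP}^{2})$; since the sum contains at most $N=\tfrac{3g}{2}+1$ terms, the aggregate error remains $O(\|v\|_{WP}^{2})$ and can be absorbed into the leading $\|v\|_{WP}^{2}$.

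Assembling these ingredients and choosing $\delta>0$ small enough to dominate the bounded transition-region constants, one obtains
$$g_{1/l}(v,v)\;\asymp\;\|v\|_{WP}^{2}+\sum_{l_\gamma(X)<\epsilon}|\partial\log l_\gamma(v)|^{2}.$$
To complete the sum on the right, the $|\psi_\gamma(v)|^{2}$-terms coming from geodesics in the transition range $\epsilon/e\le l_\gamma<\epsilon$ are supplied by Lemma 4.1.2, which bounds each of them by $\|v\|_{WP}^{2}$ plus an $O(\epsilon)$-fraction of the remaining short-geodesic sum; this discrepancy is absorbed by choosing $\epsilon$ small. Lemma 4.1.1 then identifies the right-hand side with $\|v\|_T^{2}$ up to multiplicative constants, which simultaneously proves the positive-definiteness of $\omega_{1/l}$ and the equivalence $g_{1/l}\sim g_T$.
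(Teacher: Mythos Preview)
Your overall strategy---expand each $\partial\overline{\partial}$ term, isolate the $|\psi_\gamma(v)|^2$ contribution, and match against Lemma~4.1.1---is exactly the paper's strategy. But the step where you dispose of the Hessian term $\dfrac{\partial\overline{\partial}l_\gamma}{l_\gamma}$ does not work as stated, and this is the crux of the argument.

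First a minor point: your formula for $\partial\overline{\partial}\log l_\gamma$ has the wrong sign on the gradient term; it should read
\[
\partial\overline{\partial}\log l_\gamma=\frac{\partial\overline{\partial}l_\gamma}{l_\gamma}-\frac{\partial l_\gamma\wedge\overline{\partial}l_\gamma}{l_\gamma^{2}}.
\]
More seriously, the bound you invoke, $\sqrt{-1}\,\partial\overline{\partial}l_\gamma\le C\,l_\gamma\,\omega_{WP}$, is not a Wolpert estimate and is in fact false near the boundary. Using the identity $\dfrac{\partial\overline{\partial}l_\gamma}{l_\gamma}=-l_\gamma\,\partial\overline{\partial}\bigl(\tfrac{1}{l_\gamma}\bigr)+\dfrac{2\,\partial l_\gamma\wedge\overline{\partial}l_\gamma}{l_\gamma^{2}}$ together with McMullen's bound $\bigl|\partial\overline{\partial}(1/l_\gamma)(v,\sqrt{-1}v)\bigr|=O(\|v\|_T^2)$, one sees that for $\|v\|_T=1$ the Hessian term satisfies $\tfrac{\sqrt{-1}}{2}\tfrac{\partial\overline{\partial}l_\gamma}{l_\gamma}(v,\sqrt{-1}v)=2|\psi_\gamma(v)|^{2}+O(l_\gamma)$. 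In the pinching direction $|\psi_\gamma(v)|^{2}$ is of order $\|v\|_T^{2}$, not $\|v\|_{WP}^{2}$; so the Hessian term is not an error to be absorbed into $\|v\|_{WP}^{2}$ but is itself the source of the leading $|\psi_\gamma|^{2}$ contribution. Your bookkeeping therefore misattributes the main term.

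The paper sidesteps this by writing $Log(2\epsilon/l_\delta)$ as a function of $1/l_\delta$ rather than of $l_\delta$. The chain rule then produces $2\epsilon\,Log'\cdot\partial\overline{\partial}(1/l_\delta)$, and McMullen's estimate on $\partial\overline{\partial}(1/l_\delta)$ (Theorem~3.1 of \cite{bib3}) makes this an honest $O(\epsilon)$ error, while the $Log''$ piece yields exactly $|\psi_\delta(v)|^{2}$ when $l_\delta<\epsilon$. Your argument can be repaired by making this same substitution; once you do, the rest of your outline (handling the transition range via Lemma~4.1.2 and concluding via Lemma~4.1.1) lines up with the paper's proof.
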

\begin{proof}
Consider the $(1,1)$-form
\[\omega_1=(F(\epsilon)+A(\epsilon))\omega_{WP}-B\sum_{l_{\delta}(X)<2\epsilon}\dfrac{\sqrt{-1}}{2}\partial\overline{\partial}Log\dfrac{2\epsilon}{l_{\delta}}\]
where
\[F(\epsilon)=4NBD(\epsilon)\sup_{[1,2]}|Log''(x)|.\]
Observe that $\omega_1$ and $\omega_{1/l}$ are indeed of the same form (up to scaling and replacing $\epsilon$ by $\dfrac{\epsilon}{2}$).  Therefore, to prove the theorem it suffices to show that $g_1(v,v)=\omega_1(v,\sqrt{-1}v)$ is equivalent to the Teichm\"{u}ller metric.

Now let $v\in T_X T_g$ have Teichm\"{u}ller norm $\left\|v\right\|_T=1$.  We first compute
\[\partial\overline{\partial}Log\dfrac{2\epsilon}{l_{\delta}}=2\epsilon\bigg(Log'\dfrac{2\epsilon}{l_{\delta}}\bigg)\partial\overline{\partial}\bigg(\dfrac{1}{l_{\delta}}\bigg)+\dfrac{4\epsilon^2}{l_{\delta}^2}\bigg(Log''\dfrac{2\epsilon}{l_{\delta}}\bigg)\dfrac{\partial l_{\delta}\wedge\overline{\partial}l_{\delta}}{l_{\delta}^2}.\]
Using the fact that the function $\dfrac{1}{l_{\delta}}$ is pluriharmonic (Theorem 3.1, \cite{bib3}), we have
\[\bigg|\partial\overline{\partial}\bigg(\dfrac{1}{l_{\delta}}\bigg)(v,\sqrt{-1}v)\bigg|=O(1).\]
Since $Log'(x)$ is bounded and $|\psi_{\gamma}(v)|^2=\dfrac{\sqrt{-1}}{2}\dfrac{\partial l_{\gamma}\wedge\overline{\partial}l_{\gamma}}{l_{\gamma}^2}(v, \sqrt{-1}v)$, we know that
\[\dfrac{\sqrt{-1}}{2}\partial\overline{\partial}Log\dfrac{2\epsilon}{l_{\delta}}(v,\sqrt{-1}v)=\dfrac{4\epsilon^2}{l_{\delta}^2}\bigg(Log''\dfrac{2\epsilon}{l_{\delta}}\bigg)|\psi_{\delta}(v)|^2+O(\epsilon).\]
Now if $l_{\delta}<\epsilon$, then $Log''\dfrac{2\epsilon}{l_{\delta}}=\log''\dfrac{2\epsilon}{l_{\delta}}=-\dfrac{l_{\delta}^2}{4\epsilon^2}$, and thus
\[-\dfrac{\sqrt{-1}}{2}\partial\overline{\partial}Log\dfrac{2\epsilon}{l_{\delta}}(v,\sqrt{-1}v)=|\psi_{\delta}(v)|^2+O(\epsilon).\]
If $\epsilon\leq l_{\delta}<2\epsilon$, by Lemma 4.1.2 we have
\begin{align*}
\bigg|\dfrac{\sqrt{-1}}{2}\partial\overline{\partial}Log\dfrac{2\epsilon}{l_{\delta}}(v,\sqrt{-1}v)\bigg|&\leq 4\psi_{\delta}(v)|^2\sup_{[1,2]}|Log''(x)|+O(\epsilon)\\
&\leq \dfrac{F(\epsilon)}{NB}\left\|v\right\|_{WP}^2+O(\epsilon\sum_{l_{\gamma}(X)<\epsilon}|\psi_{\gamma}(v)|^2)=O(\epsilon).
\end{align*}
Hence using these two estimates we obtain
\begin{align*}
g_1(v,v)&\geq A(\epsilon)\left\|v\right\|_{WP}^2+B\sum_{l_{\delta}(X)<\epsilon}|\psi_{\delta}(v)|^2\\
&\quad+B\sum_{\epsilon\leq l_{\delta}(X)<2\epsilon}\bigg(\dfrac{F(\epsilon)}{NB}\left\|v\right\|_{WP}^2-\bigg|\dfrac{\sqrt{-1}}{2}\partial\overline{\partial}Log\dfrac{2\epsilon}{l_{\delta}}(v,\sqrt{-1}v)\bigg|\bigg)+O(\epsilon)\\
&\geq A(\epsilon)\left\|v\right\|_{WP}^2+B(1+O(\epsilon))\sum_{l_{\delta}(X)<\epsilon}|\psi_{\delta}(v)|^2+O(\epsilon).
\end{align*}
Then by Lemma 4.1.1 we have
\[g_1(v,v)\geq \left\|v\right\|_T^2+O(\epsilon)=1+O(\epsilon),\]
which implies that $\left\|v\right\|_T^2=O(g_1(v,v))$ when $\epsilon$ is sufficiently small.  By similar argument in Lemma 4.1.1 we can also prove $g_1(v,v)=O(\left\|v\right\|_T^2)$.  This finishes the proof.
\end{proof}
\newpage

\section{K\"{a}hler-Einstein Metric}

\subsection{Existence of the K\"{a}hler-Einstein Metric}
A K\"{a}hler-Einstein metric $g$ is a K\"{a}hler metric whose Ricci tensor is proportional to itself, i.e., $R_{i\overline{j}}=cg_{i\overline{j}}$.  Cheng-Yau\cite{bib17} and Mok-Yau\cite{bib18} showed the existence of the K\"{a}hler-Einstein Metric $g_{KE}$ on $T_g$ with Ricci curvature equal to $-1$.  More precisely, the existence of $g_{KE}$ is ensured by the fact that $T_g$ is a bounded domain of holomorphy (note that Bers-Ehrenpreis\cite{bib19} showed that $T_g$ is a domain of holomorphy and Bers\cite{bib20}\cite{bib21} showed that $T_g$ is a bounded domain in $\mathbb{C}^{3g-3}$) and the following theorem (Main Theorem, \cite{bib18}):
\begin{thm}
Any bounded domain of holomorphy admits a complete K\"{a}hler-Einstein metric.
\end{thm}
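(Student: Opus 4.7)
The plan is to prove the theorem by solving a complex Monge--Amp\`{e}re equation on $\Omega$. A K\"{a}hler metric of the form $\omega = \sqrt{-1}\,\partial\overline{\partial} u$ with $u$ strictly plurisubharmonic has Ricci form $\textrm{Ric}(\omega) = -\sqrt{-1}\,\partial\overline{\partial}\log\det(u_{i\overline{j}})$; thus the equation $\textrm{Ric}(\omega) = -\omega$ is equivalent (after absorbing pluriharmonic data into a holomorphic change of frame) to the real equation
\[
\det(u_{i\overline{j}}) = e^{(n+1)u}
\]
on $\Omega$, with the additional requirement that $u \to \infty$ at $\partial\Omega$ so that the resulting metric is complete. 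So the task reduces to producing a smooth strictly plurisubharmonic solution to this Monge--Amp\`{e}re equation blowing up on $\partial\Omega$.

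First I would exploit that a bounded domain of holomorphy is pseudoconvex, hence admits a bounded strictly plurisubharmonic exhaustion function $\rho : \Omega \to (-\infty,0)$. Setting something like $v_0 = -\log(-\rho)$ (or using the Bergman kernel) produces a background complete K\"{a}hler metric on $\Omega$ with quantitative geometric control near the boundary, and fixes the model against which the final metric will be compared. Next I would exhaust $\Omega$ by an increasing sequence of smoothly bounded strictly pseudoconvex subdomains $\Omega_m \Subset \Omega_{m+1} \Subset \Omega$, and on each $\Omega_m$ solve the Dirichlet problem
\[
\det((u_m)_{i\overline{j}}) = e^{(n+1)u_m} \quad\textrm{in } \Omega_m, \qquad u_m \to +\infty \textrm{ at } \partial\Omega_m.
\]
Existence on each $\Omega_m$ follows from Cheng--Yau's treatment of the singular Dirichlet problem on strictly pseudoconvex domains, using the continuity method starting from a suitable subsolution.

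The crucial step is then to obtain a priori $C^0$, $C^2$ and $C^3$ bounds for $u_m$ on each compact subset $K \Subset \Omega$ that are independent of $m$, so that a subsequence converges to a limit $u$ on $\Omega$. The $C^0$ bound comes from comparison with explicit sub-- and supersolutions built from $\rho$; the $C^2$ bound is the genuinely hard estimate, obtained by applying the maximum principle to an auxiliary function of the form $\log\,\textrm{tr}_{g_0}(g_m) + \phi(u_m)$ for a carefully chosen $\phi$, using the plurisubharmonicity of $\rho$ to absorb bad terms at the boundary; and the $C^3$ bound follows by Calabi's third-order identity once $C^2$ is in place. Standard elliptic bootstrapping then upgrades to $C^{\infty}$ on compact subsets. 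Finally, completeness of the limit metric is read off from the comparison $u \geq -C\log(-\rho) - C'$ near $\partial\Omega$, which forces the distance to $\partial\Omega$ in the metric $\sqrt{-1}\,\partial\overline{\partial}u$ to be infinite.

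The main obstacle is unquestionably the $C^2$ estimate uniform in $m$: on a non-compact domain with the solution forced to blow up at the boundary, one cannot simply apply the maximum principle to the Laplacian of $u_m$ in a naive way, and the choice of the auxiliary function together with the use of Yau's generalized maximum principle is the technical heart of the argument. Completeness, though conceptually a separate issue, is closely tied to this estimate because it is the boundary behavior of the solution that must be controlled quantitatively against the exhaustion function $\rho$.
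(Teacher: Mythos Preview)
Your proposal outlines the Cheng--Yau/Mok--Yau approach and is broadly correct as a sketch of how the theorem is actually proved in the literature. However, the paper does not give a proof of this statement at all: it is quoted as the Main Theorem of \cite{bib18} (Mok--Yau), with \cite{bib17} (Cheng--Yau) also cited, and is used purely as a black box to guarantee the existence of the K\"{a}hler--Einstein metric on $T_g$. So there is nothing in the paper to compare your argument against; you have written a proof outline where the paper has only a citation.

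That said, your outline is essentially the strategy of the cited works, so if the goal were to supply the missing proof, you are on the right track. One small correction: the exhaustion step and the interior estimates are really the Cheng--Yau part of the story (for strictly pseudoconvex domains with smooth boundary), while the extension to an arbitrary bounded domain of holomorphy---where the boundary may be very irregular---is the contribution of Mok--Yau, and requires an additional argument to pass from the approximating strictly pseudoconvex domains to the limit. Your sketch somewhat blurs this distinction by treating the $\Omega_m$ as exhausting subdomains on which Cheng--Yau already applies and then passing to the limit, which is indeed the Mok--Yau idea, but you should be aware that the uniform estimates on compacta and the completeness of the limit metric in the absence of any boundary regularity are precisely what \cite{bib18} supplies beyond \cite{bib17}.
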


\bigskip
Notice that $g_{KE}$ descends to $\mathcal{M}_g$ as a canonical K\"{a}hler metric as well.

\subsection{A Conjecture of Yau}
In \cite{bib15}, Yau proposed the following conjecture:
\begin{conj}
The K\"{a}hler-Einstein metric is equivalent to the Teichm\"{u}ller metric.
\end{conj}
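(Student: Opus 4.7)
The plan is to establish the equivalence via a chain of auxiliary metrics, following the Liu-Sun-Yau strategy flagged in the introduction. Since Royden proved that $\omega_T$ coincides with the Kobayashi metric on $T_g$ and McMullen's Theorem 4.1.3 gives $g_{1/l} \sim \omega_T$, it suffices to exhibit two further equivalences, $g_{KE} \sim \tau$ and $\tau \sim g_{1/l}$, where $\tau$ is an auxiliary complete K\"ahler metric on $\mathcal{M}_g$. The natural candidate is the Ricci metric $\tau := -\mathrm{Ric}(\omega_{WP})$: because Theorem 3.2.1 implies $\mathrm{Ric}(\omega_{WP})$ is negative, $\tau$ is a genuine K\"ahler form, and its completeness near the boundary divisor of $\mathcal{M}_g$ should follow from the fact that $\mathrm{Ric}(\omega_{WP})$ blows up precisely in the directions where $\omega_{WP}$ is incomplete.

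First I would derive an explicit curvature formula for $\tau$ from the Wolpert-Schumacher-Siu formula for $\omega_{WP}$ together with the structural lemmas on the harmonic lifts (Lemmas 3.2.4-3.2.6) and the commutator operators $\xi_k$ and $Q_{k\overline{l}}$ previewed in Section 4.3. The goal is to establish (a) that both the holomorphic sectional and Ricci curvatures of $\tau$ are bounded above by a negative constant on $\mathcal{M}_g$, and (b) that the Ricci curvature is bounded below. Granted these bounds, the Schwarz-Yau Lemma (Theorem 2.3.1) applied to the identity map from $(\mathcal{M}_g, g_{KE})$ into $(\mathcal{M}_g, \tau)$ delivers $g_{KE} \geq C_1 \tau$, since $g_{KE}$ is complete with Ricci curvature identically $-1$.

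The reverse direction requires more care, since the holomorphic sectional curvature of $\tau$ alone is not pinched from below. Following the recipe of Section 4.5, I would pass to the perturbed Ricci metric $\tilde{\tau} = \tau + C\,\omega_{WP}$ with $C$ sufficiently large; the extra Weil-Petersson piece should force bounded, strictly negative bisectional and Ricci curvatures (Theorem 4.5.2). Applying the volume-form version of the Schwarz-Yau Lemma (Theorem 2.3.2) in the opposite direction, or alternatively a maximum principle on the K\"ahler-Einstein equation $\det(g_{KE,i\overline{j}}) = e^{\Phi}\det(\tilde{\tau}_{i\overline{j}})$, produces $\tilde{\tau} \geq C_2\,g_{KE}$. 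Since Theorem 4.5.3 identifies $\tilde{\tau} \sim \tau$, the two-sided bound closes up to $g_{KE} \sim \tau$. A parallel Schwarz-Yau argument, fed by the length-function derivative estimates of Lemma 5.2.1 and McMullen's norm comparison (Lemma 4.1.1), then yields $\tau \sim g_{1/l}$, completing the chain.

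The hard part will be the curvature estimates (a) and (b). The curvature formula for $\tau$ involves $(\Box+1)^{-1}$ applied to $f_{i\overline{j}} = A_i\overline{A_j}$, and its terms become tractable only after the substitutions provided by the commutator identities of Lemmas 4.3.1-4.3.2. Near the boundary of $\mathcal{M}_g$ the harmonic Beltrami differentials associated to pinching directions concentrate on the plumbing collars, so the analysis must be carried out in rs-coordinates with sharp norm bounds on each ingredient (Lemmas 4.4.1-4.4.7). Pinning down the sign of the dominant boundary contribution to the holomorphic sectional curvature, and thereby guaranteeing a \emph{uniform} negative upper bound across the entire moduli space, is where the real difficulty lies; once those estimates are in place the two-sided Schwarz-Yau comparison runs essentially mechanically.
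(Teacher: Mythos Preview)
Your overall chain $g_{KE} \sim \tau \sim g_{1/l} \sim \omega_T$ matches the paper, but you have the roles of $\tau$ and $\tilde\tau$ reversed in the Schwarz--Yau steps, and this is a genuine gap. Your goal (a) --- a uniform negative upper bound for the holomorphic sectional curvature of the \emph{Ricci} metric $\tau$ --- is not established by the paper and is very likely false: Theorem~4.4.3 proves negativity only in the degenerate (pinching) directions near the boundary, with merely $O(1)$ control and no sign in the non-degenerate directions and on compact subsets of $\mathcal M_g$. This is exactly why $\tilde\tau = \tau + C\,\omega_{WP}$ is introduced; the large Weil--Petersson contribution supplies the missing negativity (Theorem~4.5.2). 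Consequently the metric Schwarz--Yau lemma (Theorem~2.3.1), which requires the \emph{target} to have holomorphic sectional curvature $\le -\varepsilon$, must be applied from $(\mathcal M_g, g_{KE})$ into $(\mathcal M_g, \tilde\tau)$, not into $(\mathcal M_g, \tau)$; this gives $g_{KE} \ge c_0\tilde\tau \ge c_1\tau$.

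For the reverse bound there is a second gap. The volume-form Schwarz--Yau (Theorem~2.3.2), run with source $\tau$ (complete, with Ricci and scalar curvature bounded below by Theorem~4.4.3 --- so no need for $\tilde\tau$ here, contrary to what you propose) and target $g_{KE}$ (Ricci $\equiv -1$), yields only the determinant comparison $\det(g_{KE}) \le c_2 \det(\tau)$, not the metric inequality $\tilde\tau \ge C_2\, g_{KE}$ you assert. To upgrade to $g_{KE} \le c_3\tau$ one must combine this determinant bound with the already-obtained $g_{KE} \ge c_1\tau$ via the elementary linear-algebra fact of Lemma~5.1.1 (if $B \ge \alpha A$ and $\det B \le \beta \det A$ for positive Hermitian $A,B$, then $B \le \gamma A$). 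This step is essential and is absent from your outline.
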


\bigskip
Finally it was confirmed by Liu-Sun-Yau\cite{bib1}, and we will review the answer to this conjecture in chapter 5.

\newpage

\section{Ricci Metric}
The Ricci metric is one of the new complete K\"{a}hler metrics (another one being the perturbed Ricci metric, which will be examined in the next section) introduced by Liu-Sun-Yau\cite{bib1}.  Their main purpose of introducing these two metrics was to build up a bridge for proving the equivalence of the K\"{a}hler-Einstein metric to the Teichm\"{u}ller metric (and McMullen metric).

\subsection{Definition of the Ricci Metric}
Ahlfors\cite{bib8} proved that the Ricci curvature of the Weil-Petersson metric is negative, and Trapani\cite{bib10} further showed that the negative of the Ricci curvature of the Weil-Petersson metric defines a (complete) K\"{a}hler metric on the moduli space $\mathcal{M}_g$.  This is just the Ricci metric brought by Yau et. al..
\begin{defn}
The Ricci metric $\tau$ on the moduli space $\mathcal{M}_g$ is defined by
\[\tau_{i\overline{j}}=-R_{i\overline{j}}=h^{\alpha\overline{\beta}}R_{i\overline{j}\alpha\overline{\beta}}.\]
\end{defn}

\bigskip
Before we step into the next subsection, let's go through some useful definitions and a lemma. Here we first define a new operator (which will play a main role in the curvature formula) acting on functions on the fibers.
\begin{defn}
For each $1\leq k\leq n$ and for any $f\in C^{\infty}(X_s)$, we define the commutator operator $\xi_k$ which acts on $C^{\infty}(X_s)$ by
\[\xi_k(f)=\overline{\partial}^*(i(B_k)\partial f)=-\lambda^{-1}\partial_z(A_k\partial_zf).\]
\end{defn}

\bigskip
$\xi_k$ is called the commutator operator because $\xi_k$ is indeed the commutator of $(\Box+1)$ and $v_k$, which we can see from the following lemma (Lemma 3.6, \cite{bib1}):
\begin{lem}
As operators acting on $C^{\infty}(X_s)$, we have
\begin{itemize}
\item[(i)] $(\Box+1)v_k-v_k(\Box+1)=\Box v_k-v_k\Box=\xi_k;$
\item[(ii)] $(\Box+1)\overline{v_l}-\overline{v_l}(\Box+1)=\Box \overline{v_l}-\overline{v_l}\Box=\overline{\xi}_l;$
\item[(iii)] $\xi_k(f)=-A_k\partial_z(\lambda^{-1}\partial_zf)=-A_kP(f)=-A_kK_1K_0(f).$
\end{itemize}
Furthermore, we have
\[(\Box+1)v_k(e_{i\overline{j}})=\xi_k(e_{i\overline{j}})+\xi_i(e_{k\overline{j}})+L_kB_i\cdot \overline{B_j}\]
\end{lem}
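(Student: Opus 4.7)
The plan is direct computation, exploiting two structural facts: the K\"ahler-Einstein equation $\lambda = \partial_z\partial_{\overline{z}}\log\lambda$ on each fiber, and the harmonicity of $B_k$, which translates locally to $\partial_z(\lambda A_k) = 0$ (since $\lambda A_k$ is the complex conjugate of a holomorphic quadratic differential on $X_s$).

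For part (i), the first equality $(\Box+1)v_k - v_k(\Box+1) = \Box v_k - v_k \Box$ is immediate since the identity operator commutes with $v_k$. To evaluate $[\Box, v_k]f$, I would substitute $v_k = \partial_k + a_k\partial_z$ and $\Box = -\lambda^{-1}\partial_z\partial_{\overline{z}}$ and expand. The two third-order contributions $\lambda^{-1}\partial_z\partial_{\overline{z}}\partial_k f$ and $\lambda^{-1}a_k\partial_z^2\partial_{\overline{z}}f$ cancel by commutativity of partial derivatives, leaving the term $-\lambda^{-1}\partial_z(A_k\partial_z f) = \xi_k(f)$ together with a residual second-order contribution whose coefficient against $\partial_z\partial_{\overline{z}}f$ simplifies to $-\lambda^{-2}\bigl(\partial_k\lambda + \partial_z(a_k\lambda)\bigr)$. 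Using $a_k\lambda = -\partial_k\partial_{\overline{z}}\log\lambda$, commuting $\partial_z$ with $\partial_k$, and applying the K\"ahler-Einstein equation give $\partial_z(a_k\lambda) = -\partial_k\lambda$, so this coefficient vanishes. Part (ii) is the complex conjugate of (i).

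For part (iii), expanding both $-\lambda^{-1}\partial_z(A_k\partial_z f)$ and $-A_k\partial_z(\lambda^{-1}\partial_z f)$ by the Leibniz rule shows their difference is proportional to $\partial_z(\lambda A_k)\cdot \partial_z f$, which vanishes by the harmonicity of $B_k$. The remaining identifications $-A_k\partial_z(\lambda^{-1}\partial_z f) = -A_k P(f) = -A_k K_1 K_0(f)$ follow immediately from the definitions of $P$ and the Maass operators, after setting $\rho^2 = \lambda$.

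For the final formula, combining (i) with Lemma 3.2.1 gives $(\Box+1)v_k(e_{i\overline{j}}) = v_k(f_{i\overline{j}}) + \xi_k(e_{i\overline{j}})$, and expanding $v_k(f_{i\overline{j}}) = v_k(A_i)\overline{A_j} + A_i v_k(\overline{A_j})$ reduces the task to handling these two pieces. For the first, Lemma 3.2.4(iv) yields $v_k(A_i)\overline{A_j} = L_kB_i\cdot\overline{B_j} + f_{i\overline{j}}\partial_z a_k$. For the second, I would invoke the intermediate identity $v_k(\overline{a_j}) = -\lambda^{-1}\partial_z e_{k\overline{j}}$ established inside the proof of Lemma 3.2.6, together with $\overline{A_j} = \partial_z\overline{a_j}$, to derive $v_k(\overline{A_j}) = \partial_z(v_k(\overline{a_j})) - (\partial_z a_k)\overline{A_j}$; multiplying by $A_i$ and applying (iii) then gives $A_i v_k(\overline{A_j}) = \xi_i(e_{k\overline{j}}) - f_{i\overline{j}}\partial_z a_k$. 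The $f_{i\overline{j}}\partial_z a_k$ contributions cancel, and the desired identity drops out. The main obstacle is bookkeeping: each step relies on a different structural input (K\"ahler-Einstein for (i), harmonicity for (iii), and the Lemma 3.2.6 relation for the coupled formula), and several apparently non-vanishing terms have to be recognized as cancelling only after appeal to the correct auxiliary identity.
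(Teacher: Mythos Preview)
Your proposal is correct and follows essentially the same approach as the paper: direct expansion for (i) using the K\"ahler--Einstein identity $\partial_z(a_k\lambda)=-\partial_k\lambda$, harmonicity $\partial_z(\lambda A_k)=0$ for (iii), and for the final formula the reduction $(\Box+1)v_k(e_{i\overline{j}})=v_k(f_{i\overline{j}})+\xi_k(e_{i\overline{j}})$ followed by a Leibniz split of $v_k(f_{i\overline{j}})$. The only cosmetic difference is that the paper handles the split via the Lie-derivative form $L_k(B_i\cdot\overline{B_j})=L_kB_i\cdot\overline{B_j}+B_i\cdot L_k\overline{B_j}$ and then quotes Lemma~3.2.4(iii) for $B_i\cdot L_k\overline{B_j}=-A_iP(e_{k\overline{j}})$, whereas you work at the level of coefficient functions using Lemma~3.2.4(iv) and the identity $v_k(\overline{a_j})=-\lambda^{-1}\partial_z e_{k\overline{j}}$ from Lemma~3.2.6; these are the same computation, and (noting that the relation $(\Box+1)e_{i\overline{j}}=f_{i\overline{j}}$ is Lemma~3.2.2 in the paper, not 3.2.1) your argument goes through.
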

\begin{proof}
Notice that (ii) can be derived from (i) by taking conjugation.  To prove (i), we consider
\begin{align*}
(\Box+1)v_k-v_k(\Box+1)&=\Box v_k+v_k-v_k\Box-v_k=\Box v_k-v_k\Box\\
&=-\lambda^{-1}\partial_z\partial_{\overline{z}}(a_k\partial_z+\partial_k)-(a_k\partial_z+\partial_k)(-\lambda^{-1}\partial_z\partial_{\overline{z}})\\
&=-\lambda^{-1}\partial_z(\partial_{\overline{z}}a_k\partial_z+a_k\partial_{\overline{z}}\partial_z+\partial_{\overline{z}}\partial_k)+a_k\partial_z(\lambda^{-1})\partial_z\partial_{\overline{z}}\\
&\quad+\lambda^{-1}a_k\partial_z\partial_z\partial_{\overline{z}}+\partial_k(\lambda^{-1})\partial_z\partial_{\overline{z}}+\lambda^{-1}\partial_k\partial_z\partial_{\overline{z}}\\
&=-\lambda^{-1}\partial_z(A_k\partial_z)-\lambda^{-1}\partial_za_k\partial_{\overline{z}}\partial_z-\lambda^{-1}a_k\partial_z\partial_{\overline{z}}\partial_z-\lambda^{-1}\partial_z\partial_{\overline{z}}\partial_k\\
&-\lambda^{-2}\partial_z\lambda a_k\partial_z\partial_{\overline{z}}+\lambda^{-1}a_k\partial_z\partial_z\partial_{\overline{z}}-\lambda^{-2}\partial_k\lambda\partial_z\partial_{\overline{z}}+\lambda^{-1}\partial_k\partial_z\partial_{\overline{z}}\\
&=\xi_k-\lambda^{-1}(\partial_za_k+\lambda^{-1}\partial_z\lambda a_k+\lambda^{-1}\partial_k\lambda)\partial_z\partial_{\overline{z}}=\xi_k.
\end{align*}
Note that we used the fact that $\partial_za_k=-\lambda^{-1}\partial_z\lambda a_k-\lambda^{-1}\partial_k\lambda$ in the last equality.  Now we can prove (iii).  Using the harmonicity of $B_k$, we know that $\overline{\partial}^*B_k=0$, which implies that $\partial_z(\lambda A_k)=0$.  Hence
\begin{align*}
\xi_k(f)&=-\lambda^{-1}\partial_z(A_k\partial_zf)=-\lambda^{-1}\partial_z(\lambda A_k\lambda^{-1}\partial_zf)\\
&=-\lambda^{-1}\partial_z(\lambda A_k)\lambda^{-1}\partial_zf-\lambda^{-1}\lambda A_k\partial_z(\lambda^{-1}\partial_zf)\\
&=-A_kP(f)=-A_kK_1K_0(f).
\end{align*}
The last part can be proved by (i) in this way:
\begin{align*}
(\Box+1)v_k(e_{i\overline{j}})&=v_k((\Box+1)(e_{i\overline{j}}))+\xi_k(e_{i\overline{j}})=v_k(f_{i\overline{j}})+\xi_k(e_{i\overline{j}})\\
&=L_k(B_i\cdot\overline{B_j})+\xi_k(e_{i\overline{j}})=L_k(B_i)\cdot\overline{B_j}+B_i\cdot L_k(\overline{B_j})+\xi_k(e_{i\overline{j}})\\
&=L_k(B_i)\cdot\overline{B_j}-A_iP(e_{k\overline{j}})+\xi_k(e_{i\overline{j}})=L_k(B_i)\cdot\overline{B_j}+\xi_i(e_{k\overline{j}})+\xi_k(e_{i\overline{j}}).
\end{align*}
\end{proof}

Since the terms in the curvature formula will be highly symmetric with respect to indices, it is more convenient to introduce the symmetrization operators.
\begin{defn}
Let $U$ be any quantity which depends on indices $i, k, \alpha, \overline{j}, \overline{l}, \overline{\beta}$. The symmetrization operator $\sigma_1$ is defined by taking the summation of all orders of the triple $(i, k, \alpha)$, i.e.,
\begin{align*}
\sigma_1(U(i, k, \alpha, \overline{j}, \overline{l}, \overline{\beta}))&=U(i, k, \alpha, \overline{j}, \overline{l}, \overline{\beta})+U(i, \alpha, k, \overline{j}, \overline{l}, \overline{\beta})+U(k, i, \alpha, \overline{j}, \overline{l}, \overline{\beta})\\
&\quad+U(k, \alpha, i, \overline{j}, \overline{l}, \overline{\beta})+U(\alpha, i, k, \overline{j}, \overline{l}, \overline{\beta})+U(\alpha, k, i, \overline{j}, \overline{l}, \overline{\beta}).
\end{align*}
In a similar fashion, we define $\sigma_2$ to be the symmetrization operator of $\overline{j}$ and $\overline{\beta}$ and $\tilde{\sigma_1}$ to be the symmetrization operator of $\overline{j}, \overline{l}, \overline{\beta}$.
\end{defn}

\subsection{Curvature Formula of the Ricci Metric}
As to compute the curvature of the Ricci metric, we need to compute the first and second order derivatives of $\tau_{i\overline{j}}$.  Let's compute the first order derivatives first.

\begin{thm} (Theorem 3.2, \cite{bib1})
\[\partial_k\tau_{i\overline{j}}=h^{\alpha\overline{\beta}}\bigg\{\sigma_1\int_{X_s}(\xi_k(e_{i\overline{j}})e_{\alpha\overline{\beta}})dv\bigg\}+\tau_{p\overline{j}}\Gamma^p_{ik}\]
where $\Gamma^p_{ik}$ is the Christoffel symbol of the Weil-Petersson metric.
\end{thm}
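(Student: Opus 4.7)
The plan is to differentiate $\tau_{i\bar j} = h^{\alpha\bar\beta}R_{i\bar j\alpha\bar\beta}$ directly, using the Weil--Petersson curvature formula of Theorem 3.2.1, $R_{i\bar j\alpha\bar\beta} = \int_{X_s}(e_{i\bar j}f_{\alpha\bar\beta}+e_{i\bar\beta}f_{\alpha\bar j})\,dv$. The derivative splits as $(\partial_k h^{\alpha\bar\beta})R_{i\bar j\alpha\bar\beta} + h^{\alpha\bar\beta}\partial_k R_{i\bar j\alpha\bar\beta}$; the first piece, combined with correction terms that emerge from moving $v_k$ past $\Box+1$, is what should eventually produce the Christoffel contribution $\tau_{p\bar j}\Gamma^p_{ik}$.

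To minimize the bookkeeping, I would first carry out the computation in normal coordinates for $\omega_{WP}$ at a base point $s_0$. There $\partial_k h_{p\bar q}$, $\Gamma^p_{ik}$ and, by Corollary 3.1, $L_kB_\alpha$ all vanish at $s_0$, so the metric piece disappears. The fiber-integration identity $\partial_k\int_{X_s}\eta = \int_{X_s}L_k\eta$ together with $L_k\,dv=0$ (Lemma 3.2.4(i)) and the product rule then expresses $\partial_k R_{i\bar j\alpha\bar\beta}$ as a sum of four integrals of the shape $\int v_k(e)\cdot f\,dv$ or $\int e\cdot v_k(f)\,dv$.

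The central step is to convert every such integrand into an expression of the form $\xi_\cdot(e_{\cdot\bar j})\,e_{\cdot\bar\beta}$. Using $f=(\Box+1)e$ and the self-adjointness of $\Box+1$, I would rewrite $\int v_k(e_{i\bar j})f_{\alpha\bar\beta}\,dv$ as $\int(\Box+1)v_k(e_{i\bar j})\cdot e_{\alpha\bar\beta}\,dv$; Lemma 4.3.1(iv), with $L_kB_i=0$ at $s_0$, replaces $(\Box+1)v_k(e_{i\bar j})$ by $\xi_k(e_{i\bar j})+\xi_i(e_{k\bar j})$. The commutation identity of Lemma 4.3.1(i) handles the $v_k(f)$ pieces, yielding contributions of the form $\xi_\alpha(e_{k\bar\beta})$. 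Because $\xi_\cdot$ is formally self-adjoint on $L^2(X_s,dv)$ (which one checks by integrating $-\lambda^{-1}\partial_z(A_\cdot\partial_z(\cdot))$ by parts and invoking $\partial_z(\lambda A_\cdot)=0$ from the harmonicity of $B_\cdot$), the terms involving $e_{i\bar\beta}$ and $f_{\alpha\bar j}$ can be re-paired so that $\bar j$ always sits inside the $\xi$ and $\bar\beta$ outside. A direct count then shows that the four pieces produce exactly the six terms of $\sigma_1\int\xi_k(e_{i\bar j})e_{\alpha\bar\beta}\,dv$ symmetrized over $(i,k,\alpha)$.

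For arbitrary holomorphic coordinates, the same computation applies but now $L_kB_\alpha = h^{p\bar q}(\partial_k h_{\alpha\bar q})B_p = \Gamma^p_{\alpha k}B_p$, which is the identity proved in Corollary 3.1 written without imposing normality. Consequently Lemma 4.3.1(iv) contributes additional terms of the form $\Gamma^p_{\cdot k}\int f_{p\bar\cdot}e_{\cdot\bar\cdot}\,dv$, while the metric piece gives $(\partial_k h^{\alpha\bar\beta})R_{i\bar j\alpha\bar\beta} = -h^{p\bar\beta}\Gamma^\alpha_{pk}R_{i\bar j\alpha\bar\beta}$. These should telescope, using the K\"ahler symmetry $\Gamma^p_{ik}=\Gamma^p_{ki}$ and the WP curvature formula, into precisely $\tau_{p\bar j}\Gamma^p_{ik}$. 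The main obstacle is exactly this final combinatorial reconciliation: verifying that the spurious Christoffel contributions coming from $L_kB_\alpha$ inside each of the four pieces cancel cleanly against the expansion of $(\partial_k h^{\alpha\bar\beta})R_{i\bar j\alpha\bar\beta}$, so that after all cancellations only the single tensorial correction $\tau_{p\bar j}\Gamma^p_{ik}$ survives alongside the symmetrized $\xi$-integral.
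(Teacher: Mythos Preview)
Your plan is correct and mirrors the paper's proof almost step for step: the paper also starts from $\partial_k R_{i\bar j\alpha\bar\beta}$, pushes $v_k$ inside via $L_k\,dv=0$, uses self-adjointness of $\Box+1$ together with the commutator identity (your Lemma~4.3.1) to produce the $\xi$-terms, invokes $B_i\cdot L_k\overline{B_j}=\xi_i(e_{k\bar j})$ and the self-adjointness of $\xi_k$ to assemble the six terms into $\sigma_1$, and then uses $L_kB_i=\Gamma^p_{ik}B_p$ to extract exactly two Christoffel pieces $\Gamma^p_{ik}R_{p\bar j\alpha\bar\beta}+\Gamma^p_{\alpha k}R_{i\bar j p\bar\beta}$, the second of which cancels against $(\partial_k h^{\alpha\bar\beta})R_{i\bar j\alpha\bar\beta}=-h^{\alpha\bar\beta}\Gamma^p_{k\alpha}R_{i\bar j p\bar\beta}$.

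The only difference is presentational: you propose a warm-up in WP-normal coordinates before tackling the general case, whereas the paper works in arbitrary coordinates from the start. Since both sides of the identity are (after moving $\tau_{p\bar j}\Gamma^p_{ik}$ to the left) the WP-covariant derivative $\nabla_k^{WP}\tau_{i\bar j}$, your normal-coordinate computation already suffices at every point provided you check that the right-hand integral is genuinely tensorial in $i,k,\alpha$; the paper's direct computation is what establishes this, and your ``combinatorial reconciliation'' is precisely that check. In the paper's organization the cancellation is immediate---only the single $\Gamma^p_{\alpha k}$ term appears and it matches the metric-derivative term exactly---so the obstacle you flag is smaller than you anticipate.
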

\begin{proof}
Notice that $\Box+1$ is self-adjoint and by Lemma 3.2.2 we know that $(\Box+1)e_{i\overline{j}}=f_{i\overline{j}}$.  Then using Lemma 4.3.1 and Theorem 3.2.1 we have
\begin{align*}
\partial_kR_{i\overline{j}\alpha\overline{\beta}}&=\partial_k\int_{X_s}(e_{i\overline{j}}f_{\alpha\overline{\beta}}+e_{i\overline{\beta}}f_{\alpha\overline{j}})dv\\
&=\int_{X_s}L_k\{(e_{i\overline{j}}f_{\alpha\overline{\beta}}+e_{i\overline{\beta}}f_{\alpha\overline{j}})dv\}\\
&=\int_{X_s}(v_k(e_{i\overline{j}})f_{\alpha\overline{\beta}}+e_{i\overline{j}}v_k(f_{\alpha\overline{\beta}})+v_k(e_{i\overline{\beta}})f_{\alpha\overline{j}}+e_{i\overline{\beta}}v_k(f_{\alpha\overline{j}}))dv\qquad\qquad\qquad\
\end{align*}
\begin{align*}
\quad\quad\quad\ &=\int_{X_s}(v_k(e_{i\overline{j}})(\Box+1)e_{\alpha\overline{\beta}}+e_{i\overline{j}}v_k(f_{\alpha\overline{\beta}})+v_k(e_{i\overline{\beta}})(\Box+1)e_{\alpha\overline{j}}+e_{i\overline{\beta}}v_k(f_{\alpha\overline{j}}))dv\\
&=\int_{X_s}((\Box+1)v_k(e_{i\overline{j}})e_{\alpha\overline{\beta}}+e_{i\overline{j}}v_k(f_{\alpha\overline{\beta}})+(\Box+1)v_k(e_{i\overline{\beta}})e_{\alpha\overline{j}}+e_{i\overline{\beta}}v_k(f_{\alpha\overline{j}}))dv\\
&=\int_{X_s}((v_k(\Box+1)+\xi_k)(e_{i\overline{j}})e_{\alpha\overline{\beta}}+e_{i\overline{j}}v_k(f_{\alpha\overline{\beta}})+(v_k(\Box+1)+\xi_k)(e_{i\overline{\beta}})e_{\alpha\overline{j}}\\
&\quad+e_{i\overline{\beta}}v_k(f_{\alpha\overline{j}}))dv\\
&=\int_{X_s}(v_k(f_{i\overline{j}})e_{\alpha\overline{\beta}}+e_{i\overline{j}}v_k(f_{\alpha\overline{\beta}})+v_k(f_{i\overline{\beta}})e_{\alpha\overline{j}}+e_{i\overline{\beta}}v_k(f_{\alpha\overline{j}}))dv\\
&\quad+\int_{X_s}(\xi_k(e_{i\overline{j}})e_{\alpha\overline{\beta}}+\xi_k(e_{i\overline{\beta}})e_{\alpha\overline{j}})dv\\
&=\int_{X_s}((L_kB_i\cdot\overline{B_j})e_{\alpha\overline{\beta}}+(L_kB_{\alpha}\cdot\overline{B_{\beta}})e_{i\overline{j}}+(L_kB_i\cdot\overline{B_{\beta}})e_{\alpha\overline{j}}+(L_kB_{\alpha}\cdot\overline{B_j})e_{i\overline{\beta}})dv\\
&\quad+\int_{X_s}((B_i\cdot L_k\overline{B_j})e_{\alpha\overline{\beta}}+(B_{\alpha}\cdot L_k\overline{B_{\beta}})e_{i\overline{j}}+(B_i\cdot L_k\overline{B_{\beta}})e_{\alpha\overline{j}}+(B_{\alpha}\cdot L_k\overline{B_j})e_{i\overline{\beta}})dv\\
&\quad+\int_{X_s}(\xi_k(e_{i\overline{j}})e_{\alpha\overline{\beta}}+\xi_k(e_{i\overline{\beta}})e_{\alpha\overline{j}})dv=I+II+III.
\end{align*}
Notice that $B_1, ..., B_n$ is a basis of $T_sM_g$, so we can simplify $I$ in this way:
\begin{align*}
I&=\int_{X_s}(L_kB_i\cdot(\overline{B_j}e_{\alpha\overline{\beta}}+\overline{B_{\beta}}e_{\alpha\overline{j}})+L_kB_{\alpha}\cdot(\overline{B_{\beta}}e_{i\overline{j}}+\overline{B_j}e_{i\overline{\beta}})dv\\
&=<L_kB_i,\overline{B_j}e_{\alpha\overline{\beta}}+\overline{B_{\beta}}e_{\alpha\overline{j}}>_{WP}+<L_kB_{\alpha},\overline{B_{\beta}}e_{i\overline{j}}+\overline{B_j}e_{i\overline{\beta}}>_{WP}\\
&=h^{p\overline{q}}<L_kB_i,\overline{B_q}>_{WP}<B_p,\overline{B_j}e_{\alpha\overline{\beta}}+\overline{B_{\beta}}e_{\alpha\overline{j}}>_{WP}\\
&\quad+h^{p\overline{q}}<L_kB_{\alpha},\overline{B_q}>_{WP}<B_p,\overline{B_{\beta}}e_{i\overline{j}}+\overline{B_j}e_{i\overline{\beta}}>_{WP}\\
&=h^{p\overline{q}}\int_{X_s}(L_kB_i\cdot\overline{B_q})dv\int_{X_s}(B_p\cdot(\overline{B_j}e_{\alpha\overline{\beta}}+\overline{B_{\beta}}e_{\alpha\overline{j}}))dv\\
&\quad+h^{p\overline{q}}\int_{X_s}(L_kB_{\alpha}\cdot\overline{B_q})dv\int_{X_s}(B_p\cdot(\overline{B_{\beta}}e_{i\overline{j}}+\overline{B_j}e_{i\overline{\beta}}))dv\\
&=h^{p\overline{q}}\int_{X_s}(L_kB_i\cdot\overline{B_q})dv\int_{X_s}(f_{p\overline{j}}e_{\alpha\overline{\beta}}+f_{p\overline{\beta}}e_{\alpha\overline{j}})dv\\
&\quad+h^{p\overline{q}}\int_{X_s}(L_kB_{\alpha}\cdot\overline{B_q})dv\int_{X_s}(f_{p\overline{\beta}}e_{i\overline{j}}+f_{p\overline{j}}e_{i\overline{\beta}})dv\\
&=h^{p\overline{q}}\partial_kh_{i\overline{q}}R_{p\overline{j}\alpha\overline{\beta}}+h^{p\overline{q}}\partial_kh_{\alpha\overline{q}}R_{i\overline{j}p\overline{\beta}}=\Gamma^p_{ik}R_{p\overline{j}\alpha\overline{\beta}}+\Gamma^p_{\alpha k}R_{i\overline{j}p\overline{\beta}}.
\end{align*}
By Lemma 3.2.4 and Lemma 4.3.1, we have
\begin{align*}
B_i\cdot L_k\overline{B_j}&=(A_i\partial_z\otimes d\overline{z})\cdot(-\partial_z(\lambda^{-1}\partial_ze_{k\overline{j}})\partial_z\otimes d\overline{z}-f_{k\overline{j}}\partial_z\otimes dz+f_{k\overline{j}}\partial_{\overline{z}}\otimes d\overline{z})\\
&=-A_i\partial_z(\lambda^{-1}\partial_ze_{k\overline{j}})=\xi_i(e_{k\overline{j}}).
\end{align*}
Hence $II$ can be simplified as well: \[II=\int_{X_s}(\xi_i(e_{k\overline{j}})e_{\alpha\overline{\beta}}+\xi_{\alpha}(e_{k\overline{\beta}})e_{i\overline{j}}+\xi_i(e_{k\overline{\beta}})e_{\alpha\overline{j}}+\xi_{\alpha}(e_{k\overline{j}})e_{i\overline{\beta}})dv.\]
Then using the fact that $\xi_k$ is a real symmetric operator, i.e., $\displaystyle \int_{X_s}\xi_k(e_{i\overline{\beta}})e_{\alpha\overline{j}}dv=\int_{X_s}\xi_k(e_{\alpha\overline{j}})e_{i\overline{\beta}}dv$, we know that
\begin{align*}
\partial_kR_{i\overline{j}\alpha\overline{\beta}}&=\Gamma^p_{ik}R_{p\overline{j}\alpha\overline{\beta}}+\Gamma^p_{\alpha k}R_{i\overline{j}p\overline{\beta}}\\
&\quad+\int_{X_s}(\xi_i(e_{k\overline{j}})e_{\alpha\overline{\beta}}+\xi_{\alpha}(e_{k\overline{\beta}})e_{i\overline{j}}+\xi_i(e_{k\overline{\beta}})e_{\alpha\overline{j}}+\xi_{\alpha}(e_{k\overline{j}})e_{i\overline{\beta}})dv\\
&\quad+\int_{X_s}(\xi_k(e_{i\overline{j}})e_{\alpha\overline{\beta}}+\xi_k(e_{i\overline{\beta}})e_{\alpha\overline{j}})dv\\
&=\Gamma^p_{ik}R_{p\overline{j}\alpha\overline{\beta}}+\Gamma^p_{\alpha k}R_{i\overline{j}p\overline{\beta}}+\int_{X_s}(\xi_i(e_{k\overline{j}})e_{\alpha\overline{\beta}}+\xi_i(e_{\alpha\overline{j}})e_{k\overline{\beta}})dv\\
&\quad+\int_{X_s}(\xi_{\alpha}(e_{k\overline{\beta}})e_{i\overline{j}}+\xi_{\alpha}(e_{i\overline{\beta}})e_{k\overline{j}})dv+\int_{X_s}(\xi_k(e_{i\overline{j}})e_{\alpha\overline{\beta}}+\xi_k(e_{\alpha\overline{j}})e_{i\overline{\beta}})dv\\
&=\Gamma^p_{ik}R_{p\overline{j}\alpha\overline{\beta}}+\Gamma^p_{\alpha k}R_{i\overline{j}p\overline{\beta}}+\sigma_1\int_{X_s}(\xi_k(e_{i\overline{j}})e_{\alpha\overline{\beta}})dv.
\end{align*}
Finally by the definition of $\tau_{i\overline{j}}$ we obtain
\begin{align*}
\partial_k\tau_{i\overline{j}}&=h^{\alpha\overline{\beta}}\partial_kR_{i\overline{j}\alpha\overline{\beta}}+\partial_kh^{\alpha\overline{\beta}}R_{i\overline{j}\alpha\overline{\beta}}\\
&=h^{\alpha\overline{\beta}}(\partial_kR_{i\overline{j}\alpha\overline{\beta}}-R_{i\overline{j}p\overline{\beta}}\Gamma^p_{k\alpha})\\
&=h^{\alpha\overline{\beta}}(\sigma_1\int_{X_s}(\xi_k(e_{i\overline{j}})e_{\alpha\overline{\beta}})dv+\Gamma^p_{ik}R_{p\overline{j}\alpha\overline{\beta}})\\
&=h^{\alpha\overline{\beta}}\bigg\{\sigma_1\int_{X_s}(\xi_k(e_{i\overline{j}})e_{\alpha\overline{\beta}})dv\bigg\}+\tau_{p\overline{j}}\Gamma^p_{ik}.
\end{align*}
\end{proof}

Before we go into the computation of the second order derivatives of $\tau_{i\overline{j}}$, let's first compute the commutator of $\xi_k$ and $\overline{v_l}$.
\begin{lem} (Lemma 3.7, \cite{bib1})
For any smooth function $f\in C^{\infty}(X_s)$,
\[\overline{v_l}(\xi_kf)-\xi_k(\overline{v_l}f)=\overline{P}(e_{k\overline{l}})-2f_{k\overline{l}}\Box f+\lambda^{-1}\partial_zf_{k\overline{l}}\partial_{\overline{z}}f.\]
\end{lem}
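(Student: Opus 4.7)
The plan is to expand $[\overline{v_l},\xi_k]f = \overline{v_l}(\xi_k f) - \xi_k(\overline{v_l}f)$ directly, using the explicit form $\xi_k f = -A_k P(f) = -A_k\partial_z(\lambda^{-1}\partial_z f)$ from Lemma 4.3.1(iii). By the Leibniz rule,
\[
[\overline{v_l},\xi_k]f = -\,\overline{v_l}(A_k)\,P(f)\;-\;A_k\,[\overline{v_l},P]f,
\]
so the proof reduces to computing $\overline{v_l}(A_k)$ and the commutator $[\overline{v_l},P]f$.

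First, since $A_k = \partial_{\overline{z}}a_k$ and a direct check from $\overline{v_l} = \partial_{\overline{l}}+\overline{a_l}\partial_{\overline{z}}$ gives $[\overline{v_l},\partial_{\overline{z}}] = -(\partial_{\overline{z}}\overline{a_l})\partial_{\overline{z}}$, one obtains $\overline{v_l}(A_k) = \partial_{\overline{z}}\bigl(\overline{v_l}(a_k)\bigr) - (\partial_{\overline{z}}\overline{a_l})A_k$. Substituting the identity $\overline{v_l}(a_k) = -\lambda^{-1}\partial_{\overline{z}}e_{k\overline{l}}$ (established within the proof of Lemma 3.2.6) produces the $\overline{P}(e_{k\overline{l}})$ contribution to the right-hand side. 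For $[\overline{v_l},P]f$, I would push $\overline{v_l}$ past the two factors of $\partial_z$ in $P$ using $[\partial_z,\overline{v_l}] = \overline{A_l}\,\partial_{\overline{z}}$ (which follows from $\partial_z\overline{a_l}=\overline{A_l}$), while tracking the derivatives of $\lambda^{-1}$. The clean term that should emerge is $\overline{A_l}\,\Box f$, via $-\lambda^{-1}\partial_z\partial_{\overline{z}}f = \Box f$; the remaining pieces are first order in $f$ with coefficients built from $\overline{v_l}(\lambda^{-1})$, $\overline{A_l}$, and $\lambda^{-1}$.

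Finally I would assemble the two pieces and organize by the number of derivatives of $f$. The $-2f_{k\overline{l}}\,\Box f$ term should arise by combining the $A_k\cdot\overline{A_l}\,\Box f$ contribution from $[\overline{v_l},P]f$ with a second copy obtained by expanding $A_k\,\partial_z(\lambda^{-1}\overline{A_l}\partial_{\overline{z}}f)$, while the first-order term $\lambda^{-1}\partial_z f_{k\overline{l}}\,\partial_{\overline{z}}f$ should assemble from the leftover $\partial_{\overline{z}}f$ pieces after using the harmonicity relation $\partial_z(\lambda A_k) = 0$ (equivalently $\overline{\partial}^*B_k = 0$, invoked in the proof of Lemma 4.3.1) to rewrite $\partial_z A_k$ in terms of $A_k$ and $\partial_z\lambda$.

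The main obstacle is the bookkeeping: cross-terms involving base derivatives such as $\overline{v_l}(\lambda^{-1})$ and $\partial_{\overline{z}}\overline{a_l}$ do not cancel in an obvious way. To handle them I would appeal to the definition $a_k = -\lambda^{-1}\partial_k\partial_{\overline{z}}\log\lambda$ together with the K\"{a}hler--Einstein relation $\lambda = \partial_z\partial_{\overline{z}}\log\lambda$, which expresses these quantities via second derivatives of $\log\lambda$ and lets them recombine with the $-\overline{v_l}(A_k)P(f)$ contribution to give the stated form. As a consistency check I would also run the Jacobi-identity calculation $[\overline{v_l},\xi_k] = [v_k,\overline{\xi_l}] + [\Box,[\overline{v_l},v_k]]$, using Lemma 4.3.1(i)--(ii) for the outer bracket and Lemma 3.2.6 for the inner bracket of the second term; this should reproduce the same right-hand side and, simultaneously, yield the complex-conjugate companion identity for $[v_k,\overline{\xi_l}]$.
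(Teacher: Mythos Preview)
Your approach is essentially the same as the paper's: decompose $[\overline{v_l},\xi_k]f = -\overline{v_l}(A_k)\,P(f) - A_k\,[\overline{v_l},P]f$, compute $[\overline{v_l},P]$ by first finding $[\overline{v_l},\partial_z]=-\overline{A_l}\partial_{\overline z}$ and $[\overline{v_l},\lambda^{-1}\partial_z]$, and then combine using harmonicity. The paper obtains
\[
\overline{v_l}P-P\overline{v_l}=-2\overline{A_l}\Box-\partial_{\overline z}\overline{a_l}\,P-\lambda^{-2}\partial_z\lambda\,\overline{A_l}\partial_{\overline z}+\lambda^{-1}\partial_z\overline{A_l}\,\partial_{\overline z},
\]
and then the $-\partial_{\overline z}\overline{a_l}\,P$ term, after multiplication by $-A_k$, combines directly with $-\overline{v_l}(A_k)P(f)$ via the identity $\overline{v_l}(A_k)+A_k\partial_{\overline z}\overline{a_l}=-\overline{P}(e_{k\overline l})$ (which you effectively derived). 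So the ``cross-terms'' you flag as the main obstacle in fact cancel cleanly at this step; you do not need to reopen the definition of $a_k$ or the K\"ahler--Einstein relation beyond what already went into $\overline{v_l}(\lambda^{-1})=\lambda^{-1}\partial_{\overline z}\overline{a_l}$. The remaining $\partial_{\overline z}f$ terms collapse to $\lambda^{-1}\partial_z f_{k\overline l}\,\partial_{\overline z}f$ using only $\partial_zA_k=-\lambda^{-1}\partial_z\lambda\,A_k$, exactly as you say. Your Jacobi-identity cross-check is a nice extra but is not needed.
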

\begin{proof}
Let's fix local holomorphic coordinates and compute locally.  We first consider the commutator of $\overline{v_l}$ and $\partial_z$:
\begin{align*}
\overline{v_l}\partial_z-\partial_z\overline{v_l}&=\partial_{\overline{l}}\partial_z+\overline{a_l}\partial_{\overline{z}}\partial_z-\partial_z\partial_{\overline{l}}-\partial_z\overline{a_l}\partial_{\overline{z}}-\overline{a_l}\partial_z\partial_{\overline{z}}\\
&=-\partial_z\overline{a_l}\partial_{\overline{z}}=-\overline{A_l}\partial_{\overline{z}}.
\end{align*}
Then we know that the commutator of $\overline{v_l}$ and $\lambda^{-1}\partial_z$ is
\begin{align*}
\overline{v_l}(\lambda^{-1}\partial_z)-\lambda^{-1}\partial_z(\overline{v_l})&=\overline{v_l}(\lambda^{-1})\partial_z+\lambda^{-1}(\overline{v_l}\partial_z-\partial_z\overline{v_l})\\
&=-\lambda^{-2}(\partial_{\overline{l}}\lambda+\overline{a_l}\partial_{\overline{z}}\lambda)\partial_z-\lambda^{-1}\overline{A_l}\partial_{\overline{z}}\\
&=\lambda^{-1}\partial_{\overline{z}}\overline{a_l}\partial_z-\lambda^{-1}\overline{A_l}\partial_{\overline{z}}.
\end{align*}
These two formulae imply
\begin{align*}
\overline{v_l}P-P\overline{v_l}&=-\overline{v_l}(\partial_z(\lambda^{-1}\partial_z))+\partial_z(\lambda^{-1}\partial_z)\overline{v_l}\\
&=(\overline{A_l}\partial_{\overline{z}}-\partial_z\overline{v_l})(\lambda^{-1}\partial_z)+\partial_z(\overline{v_l}(\lambda^{-1}\partial_z)-\lambda^{-1}\partial_{\overline{z}}\overline{a_l}\partial_z+\lambda^{-1}\overline{A_l}\partial_{\overline{z}})\\
&=\overline{A_l}\partial_{\overline{z}}(\lambda^{-1}\partial_z)-\partial_z(\lambda^{-1}\partial_{\overline{z}}\overline{a_l}\partial_z)+\partial_z(\lambda^{-1}\overline{A_l}\partial_{\overline{z}})\\
&=-\lambda^{-2}\partial_{\overline{z}}\lambda\overline{A_l}\partial_z+\lambda^{-1}\overline{A_l}\partial_{\overline{z}}\partial_z+\lambda^{-2}\partial_z\lambda\partial_{\overline{z}}\overline{a_l}\partial_z-\lambda^{-1}\partial_{\overline{z}}\overline{A_l}\partial_z-\lambda^{-1}\partial_{\overline{z}}\overline{a_l}\partial_z\partial_z\\
&-\lambda^{-2}\partial_z\lambda\overline{A_l}\partial_{\overline{z}}+\lambda^{-1}\partial_z\overline{A_l}\partial_{\overline{z}}+\lambda^{-1}\overline{A_l}\partial_z\partial_{\overline{z}}.
\end{align*}
Note that the harmonicity gives us $\partial_{\overline{z}}(\lambda\overline{A_l})=0$, so we have $\partial_{\overline{z}}\overline{A_l}=-\lambda^{-1}\partial_{\overline{z}}\lambda\overline{A_l}$.  Thus we obtain
\begin{align*}
\overline{v_l}P-P\overline{v_l}&=\lambda^{-1}\partial_{\overline{z}}\overline{A_l}\partial_z-\overline{A_l}\Box-\partial_{\overline{z}}\overline{a_l}\partial_z(\lambda^{-1}\partial_z)-\lambda^{-1}\partial_{\overline{z}}\overline{a_l}\partial_z\partial_z\\
&-\lambda^{-2}\partial_z\lambda\overline{A_l}\partial_{\overline{z}}+\lambda^{-1}\partial_z\overline{A_l}\partial_{\overline{z}}-\overline{A_l}\Box\\
&=-2\overline{A_l}\Box-\partial_{\overline{z}}\overline{a_l}P-\lambda^{-2}\partial_z\lambda\overline{A_l}\partial_{\overline{z}}+\lambda^{-1}\partial_z\overline{A_l}\partial_{\overline{z}}.
\end{align*}
Then since by Lemma 4.3.1 we know that $\xi_k=-A_kP$, we have
\begin{align*}
\overline{v_l}(\xi_kf)-\xi_k(\overline{v_l}f)&=-\overline{v_l}(A_k)P(f)+A_k(\overline{v_l}P(f)-P\overline{v_l}(f))\\
&=-\overline{v_l}(A_k)P(f)-2A_k\overline{A_l}\Box f-A_k\partial_{\overline{z}}\overline{a_l}P(f)-\lambda^{-2}\partial_z\lambda A_k\overline{A_l}\partial_{\overline{z}}f\\
&\quad+\lambda^{-1}A_k\partial_z\overline{A_l}\partial_{\overline{z}}f\\
&=-(\overline{v_l}(A_k)+A_k\partial_{\overline{z}}\overline{a_l})P(f)-2f_{k\overline{l}}\Box f-\lambda^{-2}\partial_z\lambda A_k\overline{A_l}\partial_{\overline{z}}f\\
&\quad+\lambda^{-1}A_k\partial_z\overline{A_l}\partial_{\overline{z}}f.
\end{align*}
Now we use the fact that $\overline{v_l}(A_k)+A_k\partial_{\overline{z}}\overline{a_l}=-\overline{P}(e_{k\overline{l}})$ and the following result from harmonicity: $-\lambda^{-1}\partial_z\lambda A_k=\partial_zA_k$, so we finish the proof:
\begin{align*}
\overline{v_l}(\xi_kf)-\xi_k(\overline{v_l}f)&=\overline{P}(e_{k\overline{l}})P(f)-2f_{k\overline{l}}\Box f+\lambda^{-1}\partial_zA_k\overline{A_l}\partial_{\overline{z}}f+\lambda^{-1}A_k\partial_z\overline{A_l}\partial_{\overline{z}}f\\
&=\overline{P}(e_{k\overline{l}})P(f)-2f_{k\overline{l}}\Box f+\lambda^{-1}\partial_zf_{k\overline{l}}\partial_{\overline{z}}f.
\end{align*}
\end{proof}

For convenience we may define the commutator of $\xi_k$ and $\overline{v_l}$ as an operator.
\begin{defn}
For each $k, l$, we define the operator $Q_{k\overline{l}}: C^{\infty}(X_s)\longrightarrow C^{\infty}(X_s)$ by
\[Q_{k\overline{l}}(f)=\overline{P}(e_{k\overline{l}})P(f)-2f_{k\overline{l}}\Box f+\lambda^{-1}\partial_zf_{k\overline{l}}\partial_{\overline{z}}f\]
\end{defn}

\bigskip
We can now compute the curvature formula of the Ricci metric (Theorem 3.3, \cite{bib1}).
\begin{thm}
Let $s_1, ..., s_n$ be local holomorphic coordinates at $s\in M_g$.  Then at $s$, we have
\begin{align*}
\tilde{R}_{i\overline{j}k\overline{l}}&=h^{\alpha\overline{\beta}}\bigg\{\sigma_1\sigma_2\int_{X_s}\big\{(\Box+1)^{-1}(\xi_k(e_{i\overline{j}}))\overline{\xi}_l(e_{\alpha\overline{\beta}})+(\Box+1)^{-1}(\xi_k(e_{i\overline{j}}))\overline{\xi}_{\beta}(e_{\alpha\overline{l}})\big\}dv\bigg\}\\
&\quad+h^{\alpha\overline{\beta}}\bigg\{\sigma_1\int_{X_s}Q_{k\overline{l}}(e_{i\overline{j}})e_{\alpha\overline{\beta}}dv\bigg\}\\
&\quad-\tau^{p\overline{q}}h^{\alpha\overline{\beta}}h^{\gamma\overline{\delta}}\bigg\{\sigma_1\int_{X_s}\xi_k(e_{i\overline{q}})e_{\alpha\overline{\beta}}dv\bigg\}\bigg\{\tilde{\sigma}_1\int_{X_s}\overline{\xi}_l(e_{p\overline{j}})e_{\gamma\overline{\delta}}dv\bigg\}\\
&\quad+\tau_{p\overline{j}}h^{p\overline{q}}R_{i\overline{q}k\overline{l}},
\end{align*}
where $\tilde{R}_{i\overline{j}k\overline{l}}$ is the curvature tensor of the Ricci metric.
\end{thm}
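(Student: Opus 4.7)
The plan is to start from the first-order formula of Theorem 4.3.1 and apply $\partial_{\overline{l}}$, working at the center of a Weil-Petersson normal coordinate system at the point $s$. Normality ensures that $\partial_{\overline{l}}h^{\alpha\overline{\beta}}|_s=0$, $\Gamma^p_{ik}|_s=0$, and $\partial_{\overline{l}}\Gamma^p_{ik}|_s=h^{p\overline{q}}R_{i\overline{q}k\overline{l}}$. Consequently, the $\tau_{p\overline{j}}\Gamma^p_{ik}$ piece of Theorem 4.3.1 contributes, under $\partial_{\overline{l}}$, only $\tau_{p\overline{j}}h^{p\overline{q}}R_{i\overline{q}k\overline{l}}$, which is the fourth term of the target identity; and the derivative of $h^{\alpha\overline{\beta}}$ sitting in front of the integral drops out at $s$. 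The remaining task is therefore to evaluate $h^{\alpha\overline{\beta}}\sigma_1\partial_{\overline{l}}\int_{X_s}\xi_k(e_{i\overline{j}})e_{\alpha\overline{\beta}}\,dv$.

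Using $L_{\overline{v_l}}dv=0$ (the conjugate of Lemma 3.2.4(i)) together with Leibniz, this reduces to
\[h^{\alpha\overline{\beta}}\sigma_1\int_{X_s}\bigl\{[\overline{v_l}\xi_k(e_{i\overline{j}})]e_{\alpha\overline{\beta}}+\xi_k(e_{i\overline{j}})\overline{v_l}(e_{\alpha\overline{\beta}})\bigr\}\,dv.\]
I then invoke Lemma 4.3.2 to rewrite $\overline{v_l}\xi_k=\xi_k\overline{v_l}+Q_{k\overline{l}}$. The $Q_{k\overline{l}}$ piece pairs directly against $e_{\alpha\overline{\beta}}$ to produce $h^{\alpha\overline{\beta}}\sigma_1\int Q_{k\overline{l}}(e_{i\overline{j}})e_{\alpha\overline{\beta}}\,dv$, the second line of the formula. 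For the leftover $\xi_k\overline{v_l}(e_{i\overline{j}})$ contribution I transfer $\xi_k$ onto $e_{\alpha\overline{\beta}}$ via the real self-adjointness of $\xi_k$ (already used in the proof of Theorem 4.3.1), leaving an integrand $\overline{v_l}(e_{i\overline{j}})\cdot\xi_k(e_{\alpha\overline{\beta}})$.

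Next I apply Lemma 4.3.1(ii), $(\Box+1)\overline{v_l}-\overline{v_l}(\Box+1)=\overline{\xi}_l$, together with $(\Box+1)e_{i\overline{j}}=f_{i\overline{j}}$, to express $\overline{v_l}(e_{i\overline{j}})=(\Box+1)^{-1}[\overline{v_l}(f_{i\overline{j}})+\overline{\xi}_l(e_{i\overline{j}})]$. Since $(\Box+1)^{-1}$ is self-adjoint, pairing with $\xi_k(e_{\alpha\overline{\beta}})$ moves $(\Box+1)^{-1}$ onto $\xi_k(e_{\alpha\overline{\beta}})$, and the $\overline{\xi}_l(e_{i\overline{j}})$ summand delivers $\int (\Box+1)^{-1}\xi_k(e_{\alpha\overline{\beta}})\cdot\overline{\xi}_l(e_{i\overline{j}})\,dv$, which under the symmetrizations $\sigma_1\sigma_2$ (swapping $(i,\overline{j})$ with $(\alpha,\overline{\beta})$) coincides with the first summand in the first line of the target formula. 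The other integrand $\xi_k(e_{i\overline{j}})\overline{v_l}(e_{\alpha\overline{\beta}})$ is handled analogously after first using the conjugate of Lemma 3.2.3 to rewrite $\overline{v_l}(e_{\alpha\overline{\beta}})=\overline{v_\beta}(e_{\alpha\overline{l}})$ and then applying the same $(\Box+1)$-commutator to extract $\overline{\xi}_\beta(e_{\alpha\overline{l}})$; this yields the second summand.

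The only pieces still unaccounted for are those involving $\overline{v_l}(f_{i\overline{j}})$ and $\overline{v_\beta}(f_{\alpha\overline{l}})$. Writing $f_{i\overline{j}}=B_i\cdot\overline{B_j}$ and applying Leibniz to $L_{\overline{v_l}}$, I use Lemma 3.2.4(ii)--(v) together with Corollary 3.2.1 (which, by conjugation, kills the harmonic parts of $L_{\overline{l}}\overline{B_j}$ and $L_{v_k}B_i$ at the WP normal-coordinate center). The surviving contributions can be re-expressed through factors of type $h^{p\overline{q}}\partial_{\overline{l}}h_{i\overline{q}}$, which, after regrouping and re-applying Theorem 4.3.1 in normal coordinates, reconstruct precisely $-\tau^{p\overline{q}}\partial_k\tau_{i\overline{q}}\partial_{\overline{l}}\tau_{p\overline{j}}$, the third term of the target formula. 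The hard part will be exactly this bookkeeping: the $\sigma_1$, $\sigma_2$, and $\tilde{\sigma}_1$ symmetrizations must be threaded through several successive self-adjointness and commutator moves, and the harmonic residues from the $\overline{v_l}(f_{i\overline{j}})$ terms must assemble into the $\tau^{p\overline{q}}$ product exactly, leaving no unmatched remainder.
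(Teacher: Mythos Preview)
Your overall plan is close to the paper's, but there is a genuine confusion about where the third line of the formula comes from, and a secondary issue about normal versus general coordinates.

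\medskip
\textbf{The source of the third term.} You never write down the curvature identity
\[
\tilde{R}_{i\overline{j}k\overline{l}}=\partial_{\overline{l}}\partial_k\tau_{i\overline{j}}-\tau^{p\overline{q}}\,\partial_k\tau_{i\overline{q}}\,\partial_{\overline{l}}\tau_{p\overline{j}},
\]
and your proposal reads as if all four lines are supposed to emerge from $\partial_{\overline{l}}\partial_k\tau_{i\overline{j}}$ alone. In particular, you claim that the leftover pieces $\overline{v_l}(f_{i\overline{j}})$ (equivalently the $L_{\overline{l}}\overline{B_j}\cdot B_i$ contributions) ``reconstruct precisely $-\tau^{p\overline{q}}\partial_k\tau_{i\overline{q}}\partial_{\overline{l}}\tau_{p\overline{j}}$''. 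That cannot be right: at a Weil--Petersson normal-coordinate center Corollary~3.2.1 gives $L_{\overline{l}}\overline{B_j}=0$, and the factors $h^{p\overline{q}}\partial_{\overline{l}}h_{i\overline{q}}=\overline{\Gamma^{q}_{il}}$ you invoke are also zero there. So in your set-up these residues vanish identically and contribute nothing. The third line instead comes directly from the second summand of the curvature formula above, after inserting Theorem~4.3.1 (with $\Gamma^p_{ik}=0$) for $\partial_k\tau_{i\overline{q}}$ and its conjugate for $\partial_{\overline{l}}\tau_{p\overline{j}}$. This is exactly what the paper does: it computes $\partial_{\overline{l}}\partial_k\tau_{i\overline{j}}$, then separately subtracts $\tau^{p\overline{q}}\partial_k\tau_{i\overline{q}}\partial_{\overline{l}}\tau_{p\overline{j}}$ using Theorem~4.3.1, and only then shows that all the Christoffel debris cancels. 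You need to insert this step explicitly.

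\medskip
\textbf{Normal versus general coordinates.} The statement is for arbitrary local holomorphic coordinates, and the paper proves it that way, tracking every $\Gamma$-term and exhibiting the cancellations; this matters later because the curvature estimates are carried out in pinching coordinates, not normal ones (see the remark right after the theorem). Your shortcut of working at a WP normal center is legitimate, but only once you observe that both sides transform as $(0,4)$-tensors under holomorphic coordinate changes (the $e_{i\overline{j}}$, $\xi_k$, $h^{\alpha\overline{\beta}}$, $\tau^{p\overline{q}}$, and $R_{i\overline{q}k\overline{l}}$ all transform with Jacobian factors that are constant along the fiber). You should say this, otherwise you have only proved the formula in one coordinate system.
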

\begin{proof}
Let's compute the second derivatives of $\tau_{i\overline{j}}$ first.  By Lemma 4.3.1 and Lemma 4.3.2 we obtain
\begin{align*}
\partial_{\overline{l}}\int_{X_s}\xi_k(e_{i\overline{j}})e_{\alpha\overline{\beta}}dv&=\int_{X_s}(\overline{v_l}(\xi_k(e_{i\overline{j}}))e_{\alpha\overline{\beta}}+\xi_k(e_{i\overline{j}})\overline{v_l}(e_{\alpha\overline{\beta}}))dv\\
&=\int_{X_s}(\xi_k(\overline{v_l}(e_{i\overline{j}}))e_{\alpha\overline{\beta}}+\xi_k(e_{i\overline{j}})\overline{v_l}(e_{\alpha\overline{\beta}})+Q_{k\overline{l}}(e_{i\overline{j}})e_{\alpha\overline{\beta}})dv\\
&=\int_{X_s}(\xi_k(e_{\alpha\overline{\beta}})\overline{v_l}(e_{i\overline{j}})+\xi_k(e_{i\overline{j}})\overline{v_l}(e_{\alpha\overline{\beta}})+Q_{k\overline{l}}(e_{i\overline{j}})e_{\alpha\overline{\beta}})dv\\
&=\int_{X_s}(\Box+1)^{-1}(\xi_k(e_{\alpha\overline{\beta}}))(\Box+1)(\overline{v_l}(e_{i\overline{j}}))dv\\
&\quad+\int_{X_s}(\Box+1)^{-1}(\xi_k(e_{i\overline{j}}))(\Box+1)(\overline{v_l}(e_{\alpha\overline{\beta}}))dv\\
&\quad+\int_{X_s}Q_{k\overline{l}}(e_{i\overline{j}})e_{\alpha\overline{\beta}}dv\\
&=\int_{X_s}(\Box+1)^{-1}(\xi_k(e_{\alpha\overline{\beta}}))(\overline{\xi}_l(e_{i\overline{j}})+\overline{\xi}_j(e_{i\overline{l}})+L_{\overline{l}}\overline{B_j}\cdot B_{i})dv\\
&\quad+\int_{X_s}(\Box+1)^{-1}(\xi_k(e_{i\overline{j}}))(\overline{\xi}_l(e_{\alpha\overline{\beta}})+\overline{\xi}_{\beta}(e_{\alpha\overline{l}})+L_{\overline{l}}\overline{B_{\beta}}\cdot B_{\alpha})dv\\
&\quad+\int_{X_s}Q_{k\overline{l}}(e_{i\overline{j}})e_{\alpha\overline{\beta}}dv.
\end{align*}
From Lemma 3.2.5 we know that $L_kB_i$ is harmonic.  As $B_1, ..., B_n$ is a basis of the space of harmonic Beltrami differentials, and notice that $\displaystyle \int_{X_s} L_kB_i\cdot \overline{B_q}dv=\partial_kh_{i\overline{q}}$, we have $L_kB_i=\Gamma^s_{ik}B_s$.  Hence we can derive that
\begin{align*}
&\quad\int_{X_s}((\Box+1)^{-1}(\xi_k(e_{\alpha\overline{\beta}}))(L_{\overline{l}}\overline{B_j}\cdot B_{i})+(\Box+1)^{-1}(\xi_k(e_{i\overline{j}}))(L_{\overline{l}}\overline{B_{\beta}}\cdot B_{\alpha}))dv\\
&=\int_{X_s}((\Box+1)^{-1}(\xi_k(e_{\alpha\overline{\beta}}))(\overline{\Gamma^t_{jl}}\overline{A_t}A_i)+(\Box+1)^{-1}(\xi_k(e_{i\overline{j}}))(\overline{\Gamma^t_{\beta l}}\overline{A_t}A_{\alpha}))dv\\
&=\overline{\Gamma^t_{jl}}\int_{X_s}\xi_k(e_{\alpha\overline{\beta}})(\Box+1)^{-1}(A_i\overline{A_t})dv+\overline{\Gamma^t_{\beta l}}\int_{X_s}\xi_k(e_{i\overline{j}})(\Box+1)^{-1}(A_{\alpha}\overline{A_t})dv\\
&=\overline{\Gamma^t_{jl}}\int_{X_s}\xi_k(e_{\alpha\overline{\beta}})e_{i\overline{t}}dv+\overline{\Gamma^t_{\beta l}}\int_{X_s}\xi_k(e_{i\overline{j}})e_{\alpha\overline{t}}dv.
\end{align*}
Therefore we have
\begin{align*}
\partial_{\overline{l}}\int_{X_s}\xi_k(e_{i\overline{j}})e_{\alpha\overline{\beta}}dv&=\int_{X_s}(\Box+1)^{-1}(\xi_k(e_{\alpha\overline{\beta}}))(\overline{\xi}_l(e_{i\overline{j}})+\overline{\xi}_j(e_{i\overline{l}}))dv\\
&\quad+\int_{X_s}(\Box+1)^{-1}(\xi_k(e_{i\overline{j}}))(\overline{\xi}_l(e_{\alpha\overline{\beta}})+\overline{\xi}_{\beta}(e_{\alpha\overline{l}}))dv\\
&\quad+\overline{\Gamma^t_{jl}}\int_{X_s}\xi_k(e_{\alpha\overline{\beta}})e_{i\overline{t}}dv+\overline{\Gamma^t_{\beta l}}\int_{X_s}\xi_k(e_{i\overline{j}})e_{\alpha\overline{t}}dv\\
&\quad+\int_{X_s}Q_{k\overline{l}}(e_{i\overline{j}})e_{\alpha\overline{\beta}}dv.
\end{align*}
Moreover, we know that
\begin{align*}
\partial_{\overline{l}}\Gamma^p_{ik}&=\partial_{\overline{l}}(h^{p\overline{q}}\partial_kh_{i\overline{q}})\\
&=-h^{p\overline{\beta}}h^{\alpha\overline{q}}\partial_{\overline{l}}h_{\alpha\overline{\beta}}\partial_kh_{i\overline{q}}+h^{p\overline{q}}\partial_{\overline{l}}\partial_kh_{i\overline{q}}\\
&=h^{p\overline{q}}(\partial_{\overline{l}}\partial_kh_{i\overline{q}}-h^{\alpha\overline{\beta}}\partial_{\overline{l}}h_{\alpha\overline{q}}\partial_kh_{i\overline{\beta}})\\
&=h^{p\overline{q}}R_{i\overline{q}k\overline{l}}.
\end{align*}
Then we can use Theorem 4.3.1 to get the formula for the second derivatives of $\tau_{i\overline{j}}$:
\begin{align*}
\partial_{\overline{l}}\partial_k\tau_{i\overline{j}}&=(\partial_{\overline{l}}h^{\alpha\overline{\beta}})\bigg\{\sigma_1\int_{X_s}(\xi_k(e_{i\overline{j}})e_{\alpha\overline{\beta}})dv\bigg\}+h^{\alpha\overline{\beta}}\bigg\{\sigma_1\partial_{\overline{l}}\int_{X_s}(\xi_k(e_{i\overline{j}})e_{\alpha\overline{\beta}})dv\bigg\}\\
&\quad+\partial_{\overline{l}}\tau_{p\overline{j}}\Gamma^p_{ik}+\tau_{p\overline{j}}\partial_{\overline{l}}\Gamma^p_{ik}\\
&=-h^{\alpha\overline{t}}\overline{\Gamma^{\beta}_{lt}}\bigg\{\sigma_1\int_{X_s}(\xi_k(e_{i\overline{j}})e_{\alpha\overline{\beta}})dv\bigg\}\\
&\quad+h^{\alpha\overline{\beta}}\bigg\{\sigma_1\sigma_2\int_{X_s}(\Box+1)^{-1}(\xi_k(e_{i\overline{j}}))(\overline{\xi}_l(e_{\alpha\overline{\beta}})+\overline{\xi}_{\beta}(e_{\alpha\overline{l}}))dv\bigg\}\\
&\quad+h^{\alpha\overline{\beta}}\bigg\{\sigma_1\int_{X_s}Q_{k\overline{l}}(e_{i\overline{j}})e_{\alpha\overline{\beta}}dv\bigg\}+h^{\alpha\overline{\beta}}\overline{\Gamma^t_{jl}}\bigg\{\sigma_1\int_{X_s}\xi_k(e_{\alpha\overline{\beta}})e_{i\overline{t}}dv\bigg\}\\
&\quad+h^{\alpha\overline{\beta}}\overline{\Gamma^t_{\beta l}}\bigg\{\sigma_1\int_{X_s}\xi_k(e_{i\overline{j}})e_{\alpha\overline{t}}dv\bigg\}+h^{\gamma\overline{\delta}}\bigg\{\tilde{\sigma}_1\int_{X_s}(\overline{\xi}_l(e_{p\overline{j}})e_{\gamma\overline{\delta}})dv\bigg\}\Gamma^p_{ik}\\
&\quad+\tau_{p\overline{q}}\overline{\Gamma^q_{jl}}\Gamma^p_{ik}+\tau_{p\overline{j}}h^{p\overline{q}}R_{i\overline{q}k\overline{l}}.
\end{align*}
Hence we obtain the desired result:
\begin{align*}
\tilde{R}_{i\overline{j}k\overline{l}}&=-h^{\alpha\overline{\beta}}\overline{\Gamma^{t}_{\beta l}}\bigg\{\sigma_1\int_{X_s}(\xi_k(e_{i\overline{j}})e_{\alpha\overline{t}})dv\bigg\}\\
&\quad+h^{\alpha\overline{\beta}}\bigg\{\sigma_1\sigma_2\int_{X_s}(\Box+1)^{-1}(\xi_k(e_{i\overline{j}}))(\overline{\xi}_l(e_{\alpha\overline{\beta}})+\overline{\xi}_{\beta}(e_{\alpha\overline{l}}))dv\bigg\}\\
&\quad+h^{\alpha\overline{\beta}}\bigg\{\sigma_1\int_{X_s}Q_{k\overline{l}}(e_{i\overline{j}})e_{\alpha\overline{\beta}}dv\bigg\}+h^{\alpha\overline{\beta}}\overline{\Gamma^t_{jl}}\bigg\{\sigma_1\int_{X_s}\xi_k(e_{\alpha\overline{\beta}})e_{i\overline{t}}dv\bigg\}\\
&\quad+h^{\alpha\overline{\beta}}\overline{\Gamma^t_{\beta l}}\bigg\{\sigma_1\int_{X_s}\xi_k(e_{i\overline{j}})e_{\alpha\overline{t}}dv\bigg\}+h^{\gamma\overline{\delta}}\bigg\{\tilde{\sigma}_1\int_{X_s}(\overline{\xi}_l(e_{p\overline{j}})e_{\gamma\overline{\delta}})dv\bigg\}\Gamma^p_{ik}\\
&\quad+\tau_{p\overline{q}}\overline{\Gamma^q_{jl}}\Gamma^p_{ik}+\tau_{p\overline{j}}h^{p\overline{q}}R_{i\overline{q}k\overline{l}}\\
&-\tau^{p\overline{q}}\bigg(h^{\alpha\overline{\beta}}\bigg\{\sigma_1\int_{X_s}(\xi_k(e_{i\overline{q}})e_{\alpha\overline{\beta}})dv\bigg\}+\tau_{p\overline{q}}\Gamma^p_{ik}\bigg)\bigg(h^{\gamma\overline{\delta}}\bigg\{\tilde{\sigma}_1\int_{X_s}(\overline{\xi}_l(e_{p\overline{j}})e_{\gamma\overline{\delta}})dv\bigg\}+\tau_{p\overline{q}}\overline{\Gamma^q_{jl}}\bigg)\\
&=h^{\alpha\overline{\beta}}\bigg\{\sigma_1\sigma_2\int_{X_s}\big\{(\Box+1)^{-1}(\xi_k(e_{i\overline{j}}))\overline{\xi}_l(e_{\alpha\overline{\beta}})+(\Box+1)^{-1}(\xi_k(e_{i\overline{j}}))\overline{\xi}_{\beta}(e_{\alpha\overline{l}})\big\}dv\bigg\}\\
&\quad+h^{\alpha\overline{\beta}}\bigg\{\sigma_1\int_{X_s}Q_{k\overline{l}}(e_{i\overline{j}})e_{\alpha\overline{\beta}}dv\bigg\}\\
&-\tau^{p\overline{q}}h^{\alpha\overline{\beta}}h^{\gamma\overline{\delta}}\bigg\{\sigma_1\int_{X_s}\xi_k(e_{i\overline{q}})e_{\alpha\overline{\beta}}dv\bigg\}\bigg\{\tilde{\sigma}_1\int_{X_s}\overline{\xi}_l(e_{p\overline{j}})e_{\gamma\overline{\delta}}dv\bigg\}\\
&\quad+\tau_{p\overline{j}}h^{p\overline{q}}R_{i\overline{q}k\overline{l}}.
\end{align*}
\end{proof}

In fact the curvature formula of the Ricci metric can be simplified using the normal coordinates, but for the sake of easier estimates on the asymptotic behavior of the curvature (since normal coordinates near boundary points are difficult to be described), the general formula will be used in the upcoming computations.

\newpage

\section{The Asymptotic Behavior of the Ricci Metric}
Using the formula of $R_{i\overline{j}k\overline{l}}$ (Theorem 3.2.1) we can obtain the sign of the curvature of the Weil-Petersson metric very easily, but the formula of $\tilde{R}_{i\overline{j}k\overline{l}}$ (Theorem 4.3.2) does not help us find out the sign of the curvature of the Ricci metric directly.  Hence we need to do some estimates on the asymptotics of the Ricci metric so that we can estimate the curvature of the Ricci metric.

\subsection{Estimates on the Asymptotics of the Ricci Metric}
Let's define the $C^k$ norm of $\sigma\in S(p)$ as Wolpert\cite{bib11} did:
\begin{defn}
Let $Q$ be an operator which is a composition of operators $K_*$ and $L_*$.  Denote $|Q|$ to be the number of the factors in the composition.  For any $\sigma\in S(p)$, define
\[\left\|\sigma\right\|_0=\sup_X|\sigma|\]
and
\[\left\|\sigma\right\|_k=\sum_{|Q|\leq k}\left\|Q\sigma\right\|_0.\]
We can also localize the norm on a subset of $X$.  Let $\Omega\subset X$ be a domain.  Then we can define
\[\left\|\sigma\right\|_{0,\Omega}=\sup_{\Omega}|\sigma|\]
and
\[\left\|\sigma\right\|_{k,\Omega}=\sum_{|Q|\leq k}\left\|Q\sigma\right\|_{0,\Omega}.\]
\end{defn}

\bigskip
Note that both of the definitions depend on the choice of conformal metric on $X$.  In this section, $X$ will always be equipped with the K\"{a}hler-Einstein metric unless otherwise specified.

We also need the following version of the Schauder estimate proved by Wolpert (Lemma A.4.2, \cite{bib11}):
\begin{thm}
Let $X$ be a closed Riemann surface.  Let $f$ and $g$ be smooth functions on $X$ such that $(\Box+1)g=f$.  Then for any integer $k\geq 0$, there is a constant $c_k$ such that $\left\|g\right\|_{k+1}\leq c_k\left\|f\right\|_k$.
\end{thm}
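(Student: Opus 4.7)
The plan is to invoke standard interior elliptic (Schauder) regularity for the second-order uniformly elliptic operator $\Box + 1$, together with the maximum principle, and then translate between the Maass norm $\left\|\cdot\right\|_k$ and the ordinary $C^k$-norm in a fixed finite atlas on the closed surface $X$. First I would cover $X$ by finitely many smooth coordinate charts on which the K\"{a}hler-Einstein conformal factor $\lambda$ is smooth and bounded above and away from $0$. In each such chart $\Box + 1 = -\lambda^{-1}\partial_z\partial_{\overline z} + 1$ is uniformly elliptic of second order with smooth coefficients and strictly positive zeroth-order part. Moreover, each Maass operator $K_p$ or $L_p$ is a first-order differential operator with smooth bounded coefficients, as is visible from $K_p(\sigma)=\rho^{p-1}\partial_z(\rho^{-p}\sigma)$ and the analogous expression for $L_p$; hence a composition of length $m$ is an $m$-th order operator whose principal symbol is uniformly non-degenerate. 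It follows that on this atlas the Maass norm $\left\|\cdot\right\|_m$ is equivalent, with constants depending only on the metric and on $m$, to the standard $C^m$-norm.

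Next I would establish the $L^\infty$ bound $\left\|g\right\|_0 \leq \left\|f\right\|_0$ via the strong maximum principle. Since $\Box = -\lambda^{-1}\partial_z\partial_{\overline z}$ is a non-negative elliptic operator and $\Box+1$ has strictly positive zeroth-order term, at an interior maximum of $g$ on the compact surface $X$ one has $\Box g \geq 0$, so $g \leq (\Box+1)g = f$; applying the same argument to $-g$ yields $\left\|g\right\|_0 \leq \left\|f\right\|_0 \leq \left\|f\right\|_k$.

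Finally I would apply the classical interior Schauder estimate chart by chart: for each chart $U$ and each ball $B_{2r}\Subset U$,
\[
\|g\|_{C^{k+2}(B_r)} \leq C_k\bigl(\|f\|_{C^k(B_{2r})} + \|g\|_{C^0(B_{2r})}\bigr).
\]
Summing over the finite cover and inserting the sup-norm bound from the previous paragraph gives $\|g\|_{C^{k+2}(X)} \leq C_k' \|f\|_{C^k(X)}$; translating back through the norm equivalence of the first paragraph produces $\left\|g\right\|_{k+2} \leq c_k'\left\|f\right\|_k$, which is strictly stronger than the stated inequality $\left\|g\right\|_{k+1}\leq c_k\left\|f\right\|_k$. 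The main technical point, and the only step that requires any real verification, is the uniform equivalence of the Maass norms with the ordinary $C^m$-norms across the chosen atlas; this is a routine computation from the explicit local formulas for $K_p$ and $L_p$ together with the smoothness and boundedness of $\lambda$ on $X$.
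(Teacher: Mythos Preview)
The paper does not give its own proof of this theorem; it simply quotes it as Lemma~A.4.2 of Wolpert~\cite{bib11} and uses it as a black box. Your outline---reduce to local elliptic regularity on a finite atlas, control $\|g\|_0$ by the maximum principle, and pass between the Maass norms and ordinary $C^k$-norms via the explicit first-order formulas for $K_p$, $L_p$---is the standard route and is essentially what Wolpert does.

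One technical point needs correction. The interior estimate you invoke,
\[
\|g\|_{C^{k+2}(B_r)} \leq C_k\bigl(\|f\|_{C^k(B_{2r})} + \|g\|_{C^0(B_{2r})}\bigr),
\]
is \emph{not} a valid elliptic estimate in the integer $C^k$ scale: for a second-order operator, $f\in C^0$ does not force $g\in C^2$ (the classical failure of $C^0\!\to\!C^2$ regularity for the Laplacian). Genuine Schauder estimates live in H\"older spaces and give $\|g\|_{C^{k+2,\alpha}} \leq C\bigl(\|f\|_{C^{k,\alpha}} + \|g\|_{C^0}\bigr)$. So your claim to obtain the ``strictly stronger'' bound $\|g\|_{k+2}\leq c_k'\|f\|_k$ is false as stated. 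What rescues the argument is precisely that the theorem only asks for a gain of one derivative: for $k\geq 1$ combine the embedding $C^k\hookrightarrow C^{k-1,\alpha}$ with Schauder to get
\[
\|g\|_{C^{k+1}}\leq \|g\|_{C^{k+1,\alpha}} \leq C\bigl(\|f\|_{C^{k-1,\alpha}} + \|g\|_{C^0}\bigr)\leq C'\|f\|_{C^k},
\]
and for $k=0$ use $L^p$ theory (Calder\'on--Zygmund plus $W^{2,p}\hookrightarrow C^1$ for large $p$) together with your maximum-principle bound $\|g\|_0\leq\|f\|_0$. With this adjustment your outline goes through.
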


\bigskip
Before estimating the asymptotics of the Ricci metric in the pinching coordinates, we need to specify some notations first.  Let $(t, s)=(t_1, ..., t_m, s_{m+1}, ..., s_n)$ be the pinching coordinates near $X_{0,0}$.  For $|(t,s)|<\delta$, let $\Omega^j_c$ be the $j$-th genuine collar on $X_{t,s}$ which contains a short geodesic $\gamma_j$ with length $l_j$.  Let $u_j=\dfrac{l_j}{2\pi}$, $\displaystyle u_0=\sum^m_{j=1}u_j+\sum^n_{j=m+1}|s_j|$, $r_j=|z_j|$ and $\tau_j=u_j\log r_j$, where $z_j$ is the properly normalized rs-coordinate on $\Omega^j_c$ such that
\[\Omega^j_c=\{z_j:c^{-1}e^{-\frac{2\pi^2}{l_j}}<|z_j|<c\}.\]
Hence we know that the K\"{a}hler-Einstein metric $\lambda$ on $X_{t,s}$ restricted to the collar $\Omega^j_c$ is given by
\[\lambda=\dfrac{1}{2}u_j^2r_j^{-2}\csc^2\tau_j.\]
In addition, let $\displaystyle \Omega_c=\cup^m_{j=1}\Omega^j_c$ and $R_c=X_{t,s}-\Omega_c$.  Note that the constant c may be changed finitely many times in the following estimates, but this will not affect the result.  Also note that by Wolpert\cite{bib11}, we know that $u_j=\dfrac{l_j}{2\pi}\sim-\dfrac{\pi}{\log|t_j|}$.

\bigskip
The first step we have to do for the estimates is to find all the harmonic Beltrami differentials $B_1, ..., B_n$ corresponding to the tangent vectors $\frac{\partial}{\partial t_1}, ..., \frac{\partial}{\partial s_n}$. Notice that Masur\cite{bib9} constructed $3g-3$ regular holomorphic quadratic differentials $\psi_1, ..., \psi_n$ corresponding to the cotangent $dt_1, ..., ds_n$ on the plumbing collars using plumbing coordinate.  However, we need to find them in rs-coordinates so that we can have an easier estimate of the curvature.  Let us collect Trapani's results (Lemma 6.2-6.5, \cite{bib10}) in one theorem using Masur's notations and formulation.
\begin{thm}
Let $(t, s)$ be the pinching coordinates on $\overline{\mathcal{M}_g}$ near $X_{0,0}$ which corresponds to a codimension $m$ boundary point of $\overline{\mathcal{M}_g}$.  Then there exist constants $M$, $delta>0$ and $0<c<1$ such that if $(t, s)<delta$, then the $j$-th plumbing collar on $X_{t,s}$ contains the genuine collar $\Omega^j_c$.  Furthermore, a rs-coordinate $z_j$ can be chosen on the collar $\Omega^j_c$ properly such that the holomorphic quadratic differentials $\psi_1, ..., \psi_n$ corresponding to the cotangent vectors $dt_1, ..., ds_n$ have the form $\psi_i=\varphi_i(z_j)dz_j^2$ on the genuine collar $\Omega^j_c$ for $1\leq j\leq m$, where
\begin{itemize}
\item[(i)] $\varphi_i(z_j)=\dfrac{1}{z_j^2}q_i^j(z_j)$ if $i\geq m+1$;
\item[(ii)] $\varphi_i(z_j)=(-\dfrac{t_j}{\pi})\dfrac{1}{z_j^2}(q_j(z_j)+1)$ if $i=j$;
\item[(iii)] $\varphi_i(z_j)=(-\dfrac{t_i}{\pi})\dfrac{1}{z_j^2}q_i^j(z_j)$ if $1\leq i\leq m$ and $i\neq j$,
\end{itemize}
where $\beta_i^j$ and $\beta_j$ are functions of $(t, s)$, $q_i^j$ and $q_j$ are functions of $(t, s, z_j)$ given by
\[q_i^j(z_j)=\sum_{k<0}\alpha^j_{ik}(t,s)t_j^{-k}z_j^k+\sum_{k>0}\alpha^j_{ik}(t,s)z_j^k\]
and
\[q_j(z_j)=\sum_{k<0}\alpha_{jk}(t,s)t_j^{-k}z_j^k+\sum_{k>0}\alpha_{jk}(t,s)z_j^k\]
such that
\begin{itemize}
\item[(1)] $\displaystyle \sum_{k<0}|\alpha^j_{ik}|c^{-k}\leq M$ and $\displaystyle \sum_{k>0}|\alpha^j_{ik}|c^k\leq M$ if $i\neq j$;
\item[(2)] $\displaystyle \sum_{k<0}|\alpha_{jk}|c^{-k}\leq M$ and $\displaystyle \sum_{k>0}|\alpha_{jk}|c^k\leq M$.
\end{itemize}
\end{thm}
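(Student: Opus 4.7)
The plan is to start from Masur's original construction of the regular holomorphic quadratic differentials $\psi_1,\ldots,\psi_n$ dual to $dt_1,\ldots,ds_n$, which was carried out in the plumbing coordinates $\eta_j,\zeta_j$ (with $\eta_j\zeta_j=t_j$), and then transport the resulting expansions to the rs-coordinate $z_j$ via the conformal change relating the two. The inclusion of the genuine collar $\Omega^j_c$ in the $j$-th plumbing collar will follow first from a comparison of the hyperbolic length of $\gamma_j$ with the modulus of the plumbing annulus, using Keen's collar theorem together with the standard estimate $u_j=l_j/2\pi\sim -\pi/\log|t_j|$ already quoted in the text. Choosing $\delta$ small enough and $c$ appropriately makes the inclusion uniform in $(t,s)$.

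Next I would write $\psi_i$ as a Laurent series in $\eta_j$ on the plumbing annulus. Masur's holomorphicity-at-$(0,0)$ argument gives three cases according to whether $\psi_i$ is the differential that opens the $j$-th node or not. If $i\geq m+1$, the differential $\psi_i$ deforms along a non-pinching direction, hence extends holomorphically to the pair of punctured disks at $(0,0)$ and has at most a simple pole at each puncture, yielding $\varphi_i=z_j^{-2}q_i^j(z_j)$ after passing to rs-coordinates. If $i=j$, the opening of the node forces, from $d\eta_j\, d\zeta_j=-t_j\, d\eta_j^2/\eta_j^2$, a leading term proportional to $-t_j/\pi z_j^2$, i.e.\ the ``$+1$'' inside $(q_j+1)$. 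If $1\leq i\leq m$ with $i\neq j$, the cross-dependence on the other plumbing parameter $t_i$ produces a Laurent expansion of the same shape as the $i\geq m+1$ case but scaled by $-t_i/\pi$. Since the rs-coordinate is related to $\eta_j$ by a holomorphic unit $z_j=\eta_j\, g(\eta_j,t,s)$ determined by normalizing the K\"ahler--Einstein metric to its model form on the collar, re-expanding in $z_j$ preserves the Laurent structure and produces the stated splitting into negative and positive powers around the central circle $|z_j|=|t_j|^{1/2}$.

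To derive the quantitative bounds on $\alpha^j_{ik}$ and $\alpha_{jk}$, I would combine Cauchy's coefficient formula on the circles $|z_j|=c$ and $|z_j|=|t_j|/c$ with uniform pointwise bounds on $\psi_i$ along those circles. The key input is that the family $\{\psi_i(t,s)\}$ depends holomorphically on $(t,s)$ and has uniformly bounded $L^1$-norm (equivalently, uniformly bounded hyperbolic $L^\infty$-norm on the thick part) as $(t,s)\to 0$. This uniformity, which is exactly what Masur established in his construction, transfers through the rs-coordinate change to give suprema on the two boundary circles of $\Omega^j_c$ bounded by a single constant $M$, and Cauchy's formula then converts these into the series estimates $\sum_{k<0}|\alpha^j_{ik}|c^{-k}\leq M$ and $\sum_{k>0}|\alpha^j_{ik}|c^k\leq M$.

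The main obstacle is precisely this last step: obtaining the uniform bound on $\psi_i$ on $\partial\Omega^j_c$ that is independent of $(t,s)$. The difficulty is that the collar is degenerating as $t_j\to 0$, so naive Cauchy estimates on a fixed annulus are unavailable. One must exploit the specific normalization of the Masur differentials, namely the duality pairing $[\partial/\partial t_i;\psi_j]=\delta_{ij}$ against the harmonic Beltrami differentials on $X_{t,s}$, together with the reproducing property of the hyperbolic metric on the collar, to control $\varphi_i$ on the two boundary circles. This is the heart of Trapani's argument and is what justifies using the rs-coordinate rather than the plumbing coordinate for all subsequent asymptotic estimates on the Weil--Petersson and Ricci metrics.
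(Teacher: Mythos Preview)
The paper does not prove this theorem at all: it is stated as a compilation of Trapani's Lemmas 6.2--6.5 (reference \cite{bib10}), reformulated in Masur's notation, and is used as a black box for the subsequent estimates. So there is nothing to compare your argument against in the paper itself; what you have written is an outline of how Trapani's proof actually proceeds.

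As such an outline, your sketch is broadly on the right track, but there is one genuine slip. You say that Cauchy's formula on the circles $|z_j|=c$ and $|z_j|=|t_j|/c$ ``converts these into the series estimates $\sum_{k<0}|\alpha^j_{ik}|c^{-k}\leq M$ and $\sum_{k>0}|\alpha^j_{ik}|c^k\leq M$.'' Cauchy on $|z_j|=c$ only gives the termwise bound $|\alpha^j_{ik}|c^k\leq M$ for each $k>0$, not the sum. To bound the \emph{sum} you must apply Cauchy on a strictly larger circle $|z_j|=c'$ with $c'>c$ (which is available precisely because the plumbing collar strictly contains $\Omega^j_c$), obtaining $|\alpha^j_{ik}|\leq M'/c'^{\,k}$ and hence $\sum_{k>0}|\alpha^j_{ik}|c^k\leq M'\sum_{k>0}(c/c')^k<\infty$; likewise for the negative tail using a circle slightly inside $|z_j|=|t_j|/c$. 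This is exactly why one must first establish that the genuine collar sits compactly inside the plumbing collar with room to spare, and why the final constant $c$ in the statement is strictly smaller than the constant governing the plumbing annulus. A second, smaller point: the precise factor $-t_j/\pi$ in case (ii) is not forced by the relation $d\eta_j\,d\zeta_j=-t_j\,d\eta_j^2/\eta_j^2$ alone; it comes from the specific normalization of Masur's dual basis against the harmonic Beltrami differentials, and you should say so rather than leave it implicit.
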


\bigskip
Using this theorem we can derive the following refined version of Masur's estimates of the Weil-Petersson metric (Theorem 1, \cite{bib9}).  Note that in the remaining section, $(t, s)$ will be assumed to have small norm and $X=X_{t,s}$.
\begin{cor}
Let $(t, s)$ be the pinching coordinates.  Then
\begin{itemize}
\item[(i)] $h^{i\overline{i}}=2\dfrac{|t_i|^2}{u_i^3}(1+O(u_0))$ and $h_{i\overline{i}}=\dfrac{1}{2}\dfrac{u_i^3}{|t_i|^2}(1+O(u_0))$ if $i\leq m$;
\item[(ii)] $h^{i\overline{j}}=O(|t_it_j|)$ and $h_{i\overline{j}}=O(\dfrac{u_i^3u_j^3}{|t_it_j|})$ if $\leq i, j\leq m$ and $i\neq j$;
\item[(iii)] $h^{i\overline{j}}=O(|t_i|)$ and $h_{i\overline{j}}=O(\dfrac{u_i^3}{|t_i|})$ if $i\leq m$ and $j\geq m+1$;
\item[(iv)] $h^{i\overline{j}}=O(1)$ and $h_{i\overline{j}}=O(1)$ if $i, j\geq m+1$.
\end{itemize}
\end{cor}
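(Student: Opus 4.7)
The plan is to first obtain the asymptotics of the cometric $h^{i\bar j}$ by direct integration using Trapani's rs-coordinate expressions for the dual basis $\psi_1,\dots,\psi_n$ of $Q(X_{t,s})$ (Theorem 4.4.2), and then recover the asymptotics of $h_{i\bar j}$ by matrix inversion. The starting identity is the $L^2$-representation
\[
h^{i\bar j} \;=\; \int_{X_{t,s}} \varphi_i\,\overline{\varphi_j}\,\lambda^{-1}\,d\mu,
\]
where $\psi_i=\varphi_i(z)\,dz^2$ and $d\mu$ is the Euclidean area element. This follows because $\{\psi_i\}$ is the basis of $Q(X_{t,s})$ dual to $dt_1,\dots,ds_n$ and the pairing on quadratic differentials with respect to the hyperbolic metric $\lambda$ is (up to a constant) $\lambda^{-1}$ times conjugation, so the WP cometric reads off directly. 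I would decompose $X_{t,s}=R_c\cup\bigcup_{k=1}^{m}\Omega_c^{k}$ and dispose of the thick part first: on $R_c$ the K\"ahler-Einstein metric $\lambda$ is uniformly two-sided bounded, while by Trapani's formulas $\varphi_i=O(|t_i|)$ for $i\le m$ and $\varphi_i=O(1)$ for $i>m$; hence $R_c$ contributes at most $O(|t_it_j|)$, $O(|t_i|)$ or $O(1)$ in the four cases of the statement, which is absorbed into the claimed error terms.

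On each genuine collar $\Omega_c^{k}$, I would substitute the formula $\lambda^{-1}=2u_k^{-2}r_k^{2}\sin^2\tau_k$ from Lemma 2.1.3 and change variables via $\tau_k=u_k\log r_k$, so that $r_k^{-1}dr_k=u_k^{-1}d\tau_k$; the range of $\tau_k$ becomes $[-\pi,0]$ up to $O(u_k)$ endpoint perturbations, and $\int_{-\pi}^{0}\sin^2\tau\,d\tau=\pi/2$. The diagonal case $i=j=k\le m$ then drops out from the leading singular piece $\varphi_k\approx -t_k/(\pi z_k^{2})$ of Trapani's formula:
\[
\int_{\Omega_c^{k}}\frac{|t_k|^{2}}{\pi^{2}r_k^{4}}\cdot 2u_k^{-2}r_k^{2}\sin^2\tau_k\cdot r_k\,dr_k\,d\theta_k \;=\; \frac{4|t_k|^{2}}{\pi u_k^{3}}\cdot\frac{\pi}{2}\bigl(1+O(u_0)\bigr) \;=\; \frac{2|t_k|^{2}}{u_k^{3}}\bigl(1+O(u_0)\bigr).
\]
The off-diagonal and mixed cases are handled by the same scheme: the stated powers of $|t_i|$ come precisely from whether Trapani's coefficient in front of $z_k^{-2}$ carries a factor of $t_i/\pi$ (when $i\le m$) or is merely a bounded regular $q_i^{k}$ (when $i>m$). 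The subleading Laurent pieces of $q_i^{k}$ have no constant term, so Trapani's uniform bounds $\sum|\alpha_{im}^{k}|c^{|m|}\le M$ combined with the $\theta_k$-averaging push those contributions into $O(u_0)$; cross contributions from other collars $\Omega_c^{j}$ with $j\ne k$ are smaller for the same reasons.

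Finally, to pass from $h^{i\bar j}$ to $h_{i\bar j}$ I would use that $(h^{i\bar j})$ is diagonally dominant in the pinching block: writing $h^{i\bar j}=D^{i\bar j}+E^{i\bar j}$ with $D$ the $i=j\le m$ diagonal part, the remainder $E$ is strictly smaller than $D$ by factors of at least $u_0$, so the Neumann series $(h_{i\bar j})=D^{-1}\sum_{p\ge 0}(-ED^{-1})^{p}$ converges and delivers the claimed leading terms for $h_{i\bar j}$. The main technical obstacle is uniformity of the $O(u_0)$ relative error: one has to verify that every off-diagonal entry is \emph{genuinely} (not merely comparably) smaller than its diagonal counterparts, since otherwise the Neumann inversion could degrade the error bound. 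This is exactly what the vanishing-of-constant-term property of the $q_i^{k}$'s and Trapani's quantitative Laurent bounds provide; once they are in hand, the matrix inversion transports both the leading coefficients and the $O(u_0)$ relative errors through without loss.
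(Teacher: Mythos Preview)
Your proposal is correct and follows essentially the same approach as the paper: both use the integral representation $h^{i\bar j}=\int_X\psi_i\overline{\psi_j}\,\lambda^{-2}\,dv$, decompose into the thick part $R_c$ and the genuine collars $\Omega_c^k$, plug in Trapani's rs-coordinate expressions (Theorem~4.4.2), and evaluate the leading contribution on $\Omega_c^i$ via the substitution $\tau=u_i\log r_i$ together with $\int\sin^2\tau\,d\tau=\pi/2$ up to $O(u_0)$ endpoint errors. The only minor difference is in the inversion step: the paper recovers $h_{i\bar j}$ by the cofactor/determinant argument spelled out later in Corollary~4.4.2, whereas you use diagonal dominance and a Neumann series; both rely on the same smallness of the off-diagonal entries relative to the diagonal and yield the same bounds.
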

\begin{proof}
In fact the last three claims coincide with the Schumacher's formulation of the Masur's result (Theorem [M], \cite{bib13}), so here we just prove (i).  However, we should note that in fact all the claims can be proved using the same method.

Let's make a useful claim about some integral estimates:
\begin{clm}
We have
\begin{itemize}
\item[(1)]$\displaystyle\int_{c^{-1}e^{-\frac{\pi}{u_j}}}^c\dfrac{1}{r_j}\sin^2\tau_jdr_j=\dfrac{1}{u_j}(\dfrac{\pi}{2}+O(u_0));$
\item[(2)] $\displaystyle\int_{c^{-1}e^{-\frac{\pi}{u_j}}}^cr_j^{k-1}\sin^2\tau_jdr_j=O(u_j^2)c^k$ for $k\geq 1$;
\item[(3)] $\displaystyle\int_{c^{-1}e^{-\frac{\pi}{u_j}}}^cr_j^{k-1}\sin^2\tau_jdr_j=O(u_j^2)c^{-k}\Big(e^{-\frac{\pi}{u_j}}\Big)^k$ for $k\leq -1$.
\end{itemize}
\end{clm}
\begin{proof}
For (1), we consider
\begin{align*}
\bigg|\int_{c^{-1}e^{-\frac{\pi}{u_j}}}^c\dfrac{1}{r_j}\sin^2\tau_jdr_j\bigg|&=\dfrac{1}{u_j}\bigg|\int_{-u_j\log c-\pi}^{u_j\log c}\dfrac{1-\cos 2\tau_j}{2}d\tau_j\bigg|\\
&=\dfrac{1}{u_j}\bigg|\bigg[\dfrac{\tau_j}{2}-\dfrac{\sin2\tau_j}{4}\bigg]_{-u_j\log c-\pi}^{u_j\log c}\bigg|\\
&\leq\dfrac{1}{u_j}\bigg(\dfrac{\pi+|2u_j\log c|+|\sin(2u_j\log c)|}{2}\bigg)\\
&=\dfrac{1}{u_j}(\dfrac{\pi}{2}+O(u_0)).
\end{align*}
Then for (2), we have
\begin{align*}
\bigg|\int_{c^{-1}e^{-\frac{\pi}{u_j}}}^cr_j^{k-1}\sin^2\tau_jdr_j\bigg|&\leq\dfrac{1}{k}\bigg|\bigg[r_j^k\sin^2\tau_j\bigg]_{c^{-1}e^{-\frac{\pi}{u_j}}}^c\bigg|\\
&\quad+\dfrac{1}{k}\bigg|\int_{c^{-1}e^{-\frac{\pi}{u_j}}}^cr_j^k\cdot2\sin\tau_j\cos\tau_j\cdot\dfrac{u_j}{r_j}dr_j\bigg|\\
\end{align*}
\begin{align*}
\qquad\qquad\qquad\qquad\qquad\qquad\qquad&=\dfrac{1}{k}(c^k-c^{-k}\Big(e^{-\frac{\pi}{u_j}}\Big)^k)\sin^2(u_j\log c)\\
&\quad+\dfrac{u_j}{k}\bigg|\int_{c^{-1}e^{-\frac{\pi}{u_j}}}^cr_j^{k-1}\sin2\tau_jdr_j\bigg|\\
&\leq O(u_j^2)c^k+\dfrac{u_j}{k^2}\bigg(\bigg|\bigg[r_j^k\sin2\tau_j\bigg]_{c^{-1}e^{-\frac{\pi}{u_j}}}^c\bigg|\\
&\quad+\bigg|\int_{c^{-1}e^{-\frac{\pi}{u_j}}}^cr_j^k\cos2\tau_j\cdot\dfrac{2u_j}{r_j}dr_j\bigg|\bigg)\\
&\leq O(u_j^2)c^k+\dfrac{u_j}{k^2}(c^k-c^{-k}\Big(e^{-\frac{\pi}{u_j}}\Big)^k)|\sin(2u_j\log c)|\\
&\quad+\dfrac{2u_j^2}{k^2}\int_{c^{-1}e^{-\frac{\pi}{u_j}}}^cr_j^{k-1}dr_j\\
&=O(u_j^2)c^k+\dfrac{2u_j^2}{k^3}(c^k-c^{-k}\Big(e^{-\frac{\pi}{u_j}}\Big)^k)\\
&=O(u_j^2)c^k.
\end{align*}
(3) can be done similarly.
\end{proof}
Now we come to the computation of $h^{i\overline{i}}$ for $i\leq m$.  Since $h^{i\overline{j}}$ is given by the formula
\[
h^{i\overline{j}}=\int_X\psi_i\overline{\psi_j}\lambda^{-2}dv,
\]
we know that
\[
h^{i\overline{i}}=\int_X|\psi_i|^2\lambda^{-2}dv=\sum_{l=1}^m\int_{\Omega_c^l}|\varphi_i|^2\lambda^{-2}dv+\int_{R_c}|\psi_i|^2\lambda^{-2}dv.
\]
For $l=i$, we have
\begin{align*}
\int_{\Omega_c^i}|\varphi_i|^2\lambda^{-2}dv&=\int_0^{2\pi}\int_{c^{-1}e^{-\frac{\pi}{u_i}}}^c\dfrac{|t_i|^2}{\pi^2r_i^4}|q_i+\beta_i|^2\cdot\dfrac{2r_i^2}{u_i^2}\sin^2\tau_ir_idr_id\theta_i\\
&=\dfrac{2|t_i|^2}{\pi^2u_i^2}\int_0^{2\pi}\int_{c^{-1}e^{-\frac{\pi}{u_i}}}^c\dfrac{1}{r_i}\sin^2\tau_i|q_i+1|^2dr_id\theta_i.
\end{align*}
Now using Wolpert's estimate on the length of the short geodesic $\gamma_i$ (Example 4.3, \cite{bib11}), we obtain $l_i=-\dfrac{2\pi^2}{\log|t_i|}(1+O(u_i))$, which implies $-\log|t_i|=\dfrac{\pi}{u_i}+O(1)$, i.e., $-k<-\log|t_i|-\dfrac{\pi}{u_i}<k$ for some $k>0$.  Thus $\mu<\dfrac{e^{-\frac{\pi}{u_i}}}{|t_i|}<\mu^{-1}$, so on the collar $\Omega_c^i$ we know that
\[
|q_i|\leq\sum_{k<0}|\alpha_{ik}(t,s)||t_i|^{-k}(c^{-1}e^{-\frac{\pi}{u_i}})^k+\sum_{k>0}|\alpha_{ik}(t,s)|c^k=O(1).
\]
Note that
\[
|q_i+1|^2=|q_i|^2+|q_i|+|\overline{q_i}|+1\leq O(1)(\sum_{k<0}|\alpha_{ik}(t,s)||t_i|^{-k}r_i^k+\sum_{k>0}|\alpha_{ik}(t,s)|r_i^k)+1.
\]
Hence along with the claim made before, we have
\begin{align*}
\int_{\Omega_c^i}|\varphi_i|^2\lambda^{-2}dv&\leq \dfrac{2|t_i|^2}{\pi^2u_i^2}\int_0^{2\pi}\bigg(O(1)\sum_{k<0}|\alpha_{ik}(t,s)||t_i|^{-k}\int_{c^{-1}e^{-\frac{\pi}{u_i}}}^cr_i^{k-1}\sin^2\tau_idr_i\\
&\quad+O(1)\sum_{k>0}|\alpha_{ik}(t,s)|\int_{c^{-1}e^{-\frac{\pi}{u_i}}}^cr_i^{k-1}\sin^2\tau_idr_i\\
&\quad+\int_{c^{-1}e^{-\frac{\pi}{u_i}}}^c\dfrac{1}{r_i}\sin^2\tau_idr_i\bigg)d\theta_i\\
&=\dfrac{4|t_i|^2}{\pi u_i^2}\bigg(O(u_i^2)\sum_{k<0}|\alpha_{ik}(t,s)||t_i|^{-k}c^{-k}\Big(e^{-\frac{\pi}{u_j}}\Big)^k\\
&\quad+O(u_i^2)\sum_{k>0}|\alpha_{ik}(t,s)|c^k+\dfrac{1}{u_j}(\dfrac{\pi}{2}+O(u_0))\bigg)\\
&=\dfrac{2|t_i|^2}{u_i^3}(O(u_i^3)+1+O(u_0))=\dfrac{2|t_i|^2}{u_i^3}(1+O(u_0)).
\end{align*}
Similarly for $l\neq i$, we have
\begin{align*}
\int_{\Omega_c^l}|\varphi_i|^2\lambda^{-2}dv&=\dfrac{2|t_i|^2}{\pi^2u_l^2}\int_0^{2\pi}\int_{c^{-1}e^{-\frac{\pi}{u_l}}}^c\dfrac{1}{r_l}\sin^2\tau_l|q_i^l|^2dr_ld\theta_l\\
&\leq\dfrac{2|t_i|^2}{\pi^2u_l^2}\int_0^{2\pi}\bigg(O(1)\sum_{k<0}|\alpha_{ik}^l(t,s)||t_l|^{-k}\int_{c^{-1}e^{-\frac{\pi}{u_l}}}^cr_l^{k-1}\sin^2\tau_ldr_l\\
&\quad+O(1)\sum_{k>0}|\alpha_{ik}^l(t,s)|\int_{c^{-1}e^{-\frac{\pi}{u_l}}}^cr_l^{k-1}\sin^2\tau_ldr_l
\end{align*}
\begin{align*}
&=\dfrac{4|t_i|^2}{\pi u_i^2}\bigg(O(u_i^2)\sum_{k<0}|\alpha_{ik}(t,s)||t_i|^{-k}c^{-k}\Big(e^{-\frac{\pi}{u_j}}\Big)^k\ \\
&\quad+O(u_i^2)\sum_{k>0}|\alpha_{ik}(t,s)|c^k\bigg)\\
&=O(|t_i^2|)).
\end{align*}
Then we use Masur's result ((iii) in the proof of Theorem 1, \cite{bib9}) to obtain $\displaystyle\int_{R_c}|\psi_i|^2\lambda^{-2}dv=O(|t_i^2|))$.  Therefore the first part of (i) is done.  The second part can be proved by the same argument as that in the proof of Corollary 4.4.2, which will be shown later.
\end{proof}

Then by Theorem 4.4.2 and Corollary 4.4.1, we can obtain the following expressions for the harmonic Beltrami differentials $B_i=A_i\partial_z\otimes d\overline{z}$:
\begin{lem} (Lemma 4.2, \cite{bib1})
On the genuine collar $\Omega_c^j$ for $c$ small enough, the coefficient functions $A_i$ of the harmonic Beltrami differentials $B_i$ have the form:
\begin{itemize}
\item[(i)] $A_i(z_j)=\dfrac{z_j}{\overline{z_j}}\sin^2\tau_j(\overline{p_i^j(z_j)}+\overline{b_i^j})$ if $i\neq j$;
\item[(ii)] $A_j(z_j)=\dfrac{z_j}{\overline{z_j}}\sin^2\tau_j(\overline{p_j(z_j)}+\overline{b_j})$
\end{itemize}
where
\begin{itemize}
\item[(a)] $\displaystyle p_i^j(z_j)=\sum_{k<0}a_{ik}^j\rho_j^{-k}z_j^k+\sum_{k>0}a_{ik}^jz_j^k$ if $i\neq j$;
\item[(b)] $\displaystyle p_j(z_j)=\sum_{k<0}a_{jk}\rho_j^{-k}z_j^k+\sum_{k>0}a_{jk}z_j^k$
\end{itemize}
such that $\rho_j=e^{-\frac{2\pi^2}{l_j}}$ and the coefficients satisfy the following conditions:
\begin{itemize}
\item[(1)] $\displaystyle \sum_{k<0}|a_{ik}^j|c^{-k}=O(u_j^{-2})$ and $\displaystyle \sum_{k>0}|a_{ik}^j|c^k=O(u_j^{-2})$ if $i\geq m+1$;
\item[(2)] $\displaystyle \sum_{k<0}|a_{ik}^j|c^{-k}=O(u_j^{-2})O(\dfrac{u_i^3}{|t_i|})$ and $\displaystyle \sum_{k>0}|a_{ik}^j|c^k=O(u_j^{-2})O(\dfrac{u_i^3}{|t_i|})$ if $i\leq m$ and $i\neq j$;
\item[(3)] $\displaystyle \sum_{k<0}|a_{jk}|c^{-k}=O(\dfrac{u_j}{|t_j|})$ and $\displaystyle \sum_{k>0}|a_{jk}|c^k=O(\dfrac{u_j}{|t_j|})$;
\item[(4)] $|b_i^j|=O(u_j)$ if $i\geq m+1$;
\item[(5)] $|b_i^j|=O(u_j)O(\dfrac{u_i^3}{|t_i|})$ if $i\leq m$ and $i\neq j$;
\item[(6)] $b_j=-\dfrac{u_j}{\pi\overline{t_j}}+O(\dfrac{u_ju_0}{|t_j|})$.
\end{itemize}
\end{lem}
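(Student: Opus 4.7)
The plan is to identify the harmonic Beltrami differentials $B_i=A_i\,\partial_z\otimes d\overline{z}$ with explicit linear combinations of the holomorphic quadratic differentials $\psi_k=\varphi_k\,dz^2$ of Theorem 4.4.2, and then to substitute the rs-coordinate expansions of $\varphi_k$ together with the collar form of the K\"ahler--Einstein metric from Lemma 2.1.1.

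First I would observe that, since every harmonic Beltrami differential has the form $\overline{\Phi}/\lambda$ for some $\Phi\in Q(X)$, we may write $A_i=\lambda^{-1}\sum_k c_{ik}\overline{\varphi_k}$ for unique coefficients $c_{ik}$. Using $dz\,d\overline{z}=\lambda^{-1}dv$ (which follows from $dv=\tfrac{\sqrt{-1}}{2}\lambda\,dz\wedge d\overline{z}$) inside the duality relation $\delta_{ij}=[B_i;\psi_j]=\int_X A_i\varphi_j\,dz\,d\overline{z}$ converts the duality into $\sum_k c_{ik}h^{j\overline{k}}=\delta_{ij}$; Hermiticity of the Weil--Petersson metric then forces $c_{ik}=h_{i\overline{k}}$, yielding the global identity
\[
A_i=\lambda^{-1}\sum_{k=1}^n h_{i\overline{k}}\,\overline{\varphi_k}.
\]

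Restricting to the collar $\Omega_c^j$, Lemma 2.1.1 gives $\lambda^{-1}=2u_j^{-2}r_j^2\sin^2\tau_j$, and Theorem 4.4.2 writes each $\overline{\varphi_k(z_j)}$ as $\overline{z_j}^{-2}$ times a conjugate Laurent series, the only non-Laurent contribution being the constant ``$+1$'' appearing inside $\overline{q_j+1}$ in the $k=j$ summand. Using the simplification $r_j^2/\overline{z_j}^2=z_j/\overline{z_j}$, the whole sum factors cleanly as
\[
A_i=\frac{z_j}{\overline{z_j}}\sin^2\tau_j\bigl(\overline{p_i^j(z_j)}+\overline{b_i^j}\bigr),
\]
where $\overline{b_i^j}$ is the $\overline{z_j}$-independent constant and $\overline{p_i^j(z_j)}$ collects the positive- and negative-frequency parts. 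Extracting the constant gives $\overline{b_i^j}=-\frac{2\overline{t_j}\,h_{i\overline{j}}}{\pi u_j^2}$; inserting the four asymptotic regimes of $h_{i\overline{j}}$ from Corollary 4.4.1 produces precisely (4)--(6), e.g.\ for $i=j$ one uses $h_{j\overline{j}}=\tfrac{u_j^3}{2|t_j|^2}(1+O(u_0))$ to obtain $b_j=-\frac{u_j}{\pi\overline{t_j}}+O(u_ju_0/|t_j|)$. For the Laurent part, the coefficients $a_{ik}^j$ arise by summing the Theorem 4.4.2 coefficients $\alpha_{\cdot k}^j$ weighted by the corresponding $h_{i\overline{\cdot}}$ (and by an extra factor $-\overline{t_\cdot}/\pi$ when the index is $\leq m$); the $\ell^1$ bounds (1)--(3) then follow by multiplying the $\sum|\alpha|$ bounds of Theorem 4.4.2 against the $h_{i\overline{k}}$ asymptotics of Corollary 4.4.1.

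The main obstacle is the notational translation between the principal-part normalization $t_j^{-k}$ used in Theorem 4.4.2 and the normalization $\rho_j^{-k}$ with $\rho_j=e^{-2\pi^2/l_j}$ used in the lemma. Wolpert's length estimate $l_j=-2\pi^2(\log|t_j|)^{-1}(1+O(u_j))$ gives $|t_j|=\rho_j\,e^{O(1)}$, so $|t_j|/\rho_j$ is bounded above and away from zero; hence the ratio $|\rho_j^{-k}/t_j^{-k}|=(|t_j|/\rho_j)^k$ is merely a constant-to-the-$k$, which is absorbed by shrinking the collar constant $c$. Beyond that the argument is straight bookkeeping in which the four sub-cases in (1)--(6) correspond exactly to the four asymptotic blocks of Corollary 4.4.1.
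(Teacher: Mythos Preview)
Your proposal is correct and follows essentially the same route as the paper: the paper too starts from the duality formula $A_i=\lambda^{-1}\sum_l h_{i\overline{l}}\,\overline{\varphi_l}$, invokes Wolpert's length estimate to control the ratio $\rho_j/|t_j|$ (absorbing it by shrinking $c$), and then reads off the orders from Theorem~4.4.2 and Corollary~4.4.1. The only difference is that you spell out the derivation of the duality formula and the explicit extraction of $\overline{b_i^j}=-\tfrac{2\overline{t_j}h_{i\overline{j}}}{\pi u_j^2}$, whereas the paper simply quotes the duality and leaves the case-by-case computation to the reader.
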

\begin{proof}
From the duality formula between harmonic Beltrami differentials and the holomorphic quadratic differentials:
\[B_i=\lambda^{-1}\sum_{l=1}^nh_{i\overline{l}}\overline{\psi_l},\]
we know that $\displaystyle A_i=\lambda^{-1}\sum_{l=1}^nh_{i\overline{l}}\overline{\phi_l}$.  Also, note that by Wolpert's estimate of the length of the short geodesic $\gamma_j$ (Example 4.3, \cite{bib11}) we have $l_j=-\dfrac{2\pi^2}{\log|t_j|}(1+O(u_0))$, so we know that there exists a constant $0<\mu<1$ such that $\mu|t_j|<\rho_j<\mu^{-1}|t_j|$.  Then by Theorem 4.4.2 (but we need to replace $c$ by $\mu c$) and Corollary 4.4.1, we can compute the desired orders for each cases.
\end{proof}

In order to estimate the asymptotics of the Ricci metric, we need to estimate the norms of the harmonic Beltrami differentials, and the above expressions for the harmonic Beltrami differentials can help us to do so.
\begin{lem}
Let $\left\|\cdot\right\|_k$ be the $C^k$ norm defined before.  We have
\begin{itemize}
\item[(i)] $\left\|A_i\right\|_{0,\Omega_c^i}=O(\dfrac{u_i}{|t_i|})$ and $\left\|A_i\right\|_{0,X-\Omega_c^i}=O(\dfrac{u_i^3}{|t_i|})$ if $i\leq m$;
\item[(ii)] $\left\|A_i\right\|_0=O(1)$ if $i\geq m+1$;
\item[(iii)] $\left\|f_{i\overline{i}}\right\|_{0,\Omega_c^i}=O(\dfrac{u_i^2}{|t_i|^2})$ and $\left\|f_{i\overline{i}}\right\|_{0,X-\Omega_c^i}=O(\dfrac{u_i^6}{|t_i|^2})$ if $i\leq m$;
\item[(iv)] $\left\|f_{i\overline{j}}\right\|_{0,\Omega_c^i}=O(\dfrac{u_iu_j^3}{|t_it_j|})$, $\left\|f_{i\overline{j}}\right\|_{0,\Omega_c^j}=O(\dfrac{u_i^3u_j}{|t_it_j|})$ and $\left\|f_{i\overline{j}}\right\|_{0,X-(\Omega_c^i\cup\Omega_c^j)}=O(\dfrac{u_i^3u_j^3}{|t_it_j|})$ if $i, j\leq m$ and $i\neq j$;
\item[(v)] $\left\|f_{i\overline{j}}\right\|_{0,\Omega_c^i}=O(\dfrac{u_i}{|t_i|})$ and $\left\|f_{i\overline{j}}\right\|_{0,X-\Omega_c^i}=O(\dfrac{u_i^3}{|t_i|})$ if $i\leq m$ and $j\geq m+1$;
\item[(vi)] $\left\|f_{i\overline{j}}\right\|_0=O(1)$ if $i, j\geq m+1$;
\item[(vii)] $|f_{i\overline{i}}|_{L^1}=O(\dfrac{u_i^3}{|t_i|^2})$ if $i\leq m$;
\item[(viii)] $|f_{i\overline{j}}|_{L^1}=O(\dfrac{u_i^3u_j^3}{|t_it_j|})$ if $i, j\leq m$ and $i\neq j$;
\item[(ix)] $|f_{i\overline{j}}|_{L^1}=O(\dfrac{u_i^3}{|t_i|})$ if $i\leq m$ and $j\geq m+1$;
\item[(x)] $|f_{i\overline{j}}|_{L^1}=O(1)$ if $i, j\geq m+1$.
\end{itemize}
\end{lem}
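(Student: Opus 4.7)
The plan is to prove each estimate by substituting the explicit collar expressions from Lemma 4.4.1 into the quantity at hand and reading off the order using the coefficient bounds stated there, together with the integral estimates that were already computed inside the proof of Corollary 4.4.1. The argument proceeds in three layers: first the pointwise bounds (i)--(ii) for $A_i$, then the pointwise products (iii)--(vi), which are immediate from the first layer, and finally the $L^1$-bounds (vii)--(x).

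For the pointwise $A_i$ estimates I would argue collar by collar. On the genuine collar $\Omega_c^l$ the representation $A_i = (z_l/\overline{z_l})\sin^2\tau_l(\overline{p_i^l(z_l)} + \overline{b_i^l})$ (and its diagonal analogue when $i=l$) immediately gives $|A_i(z_l)| \leq |p_i^l(z_l)| + |b_i^l|$. The key observation is that $\rho_l = e^{-2\pi^2/l_l} = e^{-\pi/u_l}$ is precisely the lower radius of the collar up to the factor $c^{-1}$; consequently $\rho_l^{-k}r_l^k \leq c^{-k}$ for $k<0$ and $r_l^k \leq c^k$ for $k>0$ whenever $c^{-1}e^{-\pi/u_l} \leq r_l \leq c$. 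This bounds $|p_i^l(z_l)|$ term by term by $\sum_{k<0}|a_{ik}^l|c^{-k} + \sum_{k>0}|a_{ik}^l|c^k$, whose orders are given explicitly by Lemma 4.4.1. Adding the $|b_i^l|$ estimate from the same lemma and taking the dominant contribution among the collars, supplemented by a Masur-type bound on the thick part $R_c$ via the duality $A_i = \lambda^{-1}\sum_l h_{i\overline{l}}\overline{\phi_l}$ together with Corollary 4.4.1, yields (i) and (ii). The pointwise estimates (iii)--(vi) on $f_{i\overline{j}}$ then follow at once from $|f_{i\overline{j}}| \leq |A_i|\,|A_j|$, applied region by region.

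For the $L^1$ bounds (vii)--(x) I would split $X = \bigl(\bigcup_l\Omega_c^l\bigr)\cup R_c$, and on each collar use $dv = \tfrac{1}{2}u_l^2 r_l^{-1}\csc^2\tau_l\,dr_l\,d\theta_l$, so that the two factors $\sin^2\tau_l$ coming from $A_i$ and $\overline{A_j}$ combine with $\csc^2\tau_l$ to leave an integrand proportional to $r_l^{-1}\sin^2\tau_l$ times a Laurent polynomial in $z_l$. The elementary integrals $\int r_l^{k-1}\sin^2\tau_l\,dr_l$ were already evaluated in the claim inside the proof of Corollary 4.4.1, so one reads off the orders directly; the thick part contributes a remainder no larger than the stated order by Masur's estimates. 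The dominant contribution comes from the collar whose index matches one of $i,j$, when either lies in $\{1,\ldots,m\}$.

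The main obstacle is not conceptual but combinatorial: the cases split according to whether each of $i,j$ lies in $\{1,\ldots,m\}$ and whether the collar index $l$ equals $i$ or $j$, producing many sub-cases in which one must identify the dominant term and check that no cross term in $(\overline{p_i^l}+\overline{b_i^l})(p_j^l+b_j^l)$ produces a strictly larger order than stated. The most delicate piece is the leading coefficient $b_j = -u_j/(\pi\overline{t_j}) + O(u_j u_0/|t_j|)$ from Lemma 4.4.1, which drives the sharp $O(u_j/|t_j|)$ behavior of $A_j$ on $\Omega_c^j$ and the $O(u_j^3/|t_j|^2)$ behavior of $|f_{j\overline{j}}|_{L^1}$.
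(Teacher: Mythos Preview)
Your overall strategy is exactly the one the paper uses, and the $L^1$ part of your plan (combining the two $\sin^2\tau_l$ factors with the $\csc^2\tau_l$ from $dv$ and invoking the collar integrals $\int r_l^{k-1}\sin^2\tau_l\,dr_l$ from the claim inside Corollary~4.4.1) is fine. There is, however, a concrete gap in your pointwise estimates.

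When you write $|A_i(z_l)|\le |p_i^l(z_l)|+|b_i^l|$ you have discarded the factor $\sin^2\tau_l$, and you then bound $|p_i^l|$ termwise by $\sum_{k<0}|a_{ik}^l|c^{-k}+\sum_{k>0}|a_{ik}^l|c^k$. For the diagonal collar $l=i\le m$ this happens to work, because condition~(3) of Lemma~4.4.1 gives $\sum|a_{ik}|c^{\pm k}=O(u_i/|t_i|)$. But for a foreign collar $l\neq i$ (and in particular for the claim $\|A_i\|_{0,X\setminus\Omega_c^i}=O(u_i^3/|t_i|)$, or for all of~(ii)), conditions~(1) and~(2) only give $\sum|a_{ik}^l|c^{\pm k}=O(u_l^{-2})\cdot(\ldots)$, so your bound blows up as $u_l\to 0$ and does not yield the stated order.

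The missing ingredient is precisely the $\sin^2\tau_l$ you dropped. The paper's proof first records the elementary pointwise inequality (after shrinking $c$ so that $\tan(u_l\log c)<-10u_l$)
\[
|r_l^{k}\sin^2\tau_l|\;\le\;(\log c)^2\,u_l^{2}\,c^{|k|}\qquad(k\neq 0),
\]
which encodes the fact that $\sin\tau_l$ vanishes to first order in $u_l$ at the collar ends where $r_l^k$ is largest. Keeping $\sin^2\tau_l$ attached to the Laurent series gives
\[
\sin^2\tau_l\,|p_i^l(z_l)|\;\le\;(\log c)^2 u_l^{2}\Bigl(\sum_{k<0}|a_{ik}^l|c^{-k}+\sum_{k>0}|a_{ik}^l|c^{k}\Bigr)=O(u_l^{2})\cdot O(u_l^{-2})(\ldots),
\]
so the dangerous $u_l^{-2}$ is exactly cancelled and the stated orders in (i)--(ii) follow on every collar. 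With this correction your plan matches the paper's proof.
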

\begin{rem}
Form the lemma, we can also know that $\left\|f_{i\overline{j}}\right\|_0=O(1)\left\|A_i\right\|_0\left\|A_j\right\|_0$ for any $i, j$.
\end{rem}
\begin{proof}
Choose $c$ sufficiently small such that for all $j\leq m$, $\tan(u_j\log c)<-10u_j$.  Then by some calculus argument we can show that when $1\leq p\leq 10$, on the collar $\Omega_c^j$,
\[|r_j^k\sin^p\tau_j|\leq c^k|\log c|^pu_j^p\]
if $k\geq 1$;
\[|r_j^k\sin^p\tau_j|\leq c^{-k}|\log c|^p\rho_j^ku_j^p\]
if $k\leq -1$.
The first two claims can be proved similarly, so let's work out (i) only.  Notice that on $\Omega_c^i$ we have
\begin{align*}
|A_i|&=\bigg|\dfrac{z_i}{\overline{z_i}}\bigg||\sin^2\tau_i(\overline{p_i}+\overline{b_i})|\\
&\leq \sum_{k<0}|a_{ik}|\rho_i^{-k}r_i^k\sin^2\tau_i+\sum_{k>0}|a_{ik}|r_i^k\sin^2\tau_i+|b_i|\\
&\leq (\log c)^2u_i^2(\sum_{k<0}|a_{ik}|c^{-k}+\sum_{k>0}|a_{ik}|c^k)+|b_i|\\
&=O(u_i^2)O(\dfrac{u_i}{|t_i|})+O(u_i^2)O(\dfrac{u_i}{|t_i|})+O(\dfrac{u_i}{|t_i|})=O(\dfrac{u_i}{|t_i|}).
\end{align*}
Thus $\left\|A_i\right\|_{0,\Omega_c^i}=O(\dfrac{u_i}{|t_i|})$.  In a similar way we can also show that $|A_i|=O(\dfrac{u_i^3}{|t_i|})$ on $\Omega_c^j$ with $j\neq i$.  Then by Masur's result\cite{bib9}, the duality formula, Theorem 4.4.2 and Corollary 4.4.1, we have $|A_i|=O(\dfrac{u_i^3}{|t_i|})$ on $R_c$, so we obtain $\left\|A_i\right\|_{0,X-\Omega_c^i}=O(\dfrac{u_i^3}{|t_i|})$.

(iii)-(vi) can be proved using (i)-(ii) and the fact that $f_{i\overline{j}}=A_i\overline{A_j}$.  For (vii), on $\Omega_c^i$ we know that
\begin{align*}
|f_{i\overline{i}}|&=\sin^4\tau_i|p_i+b_i|^2\leq\sin^4\tau_i|p_i|^2+\sin^4\tau_i|p_i\overline{b_i}|+\sin^4\tau_i|\overline{p_i}b_i|+\sin^4\tau_i|b_i|^2\\
&\leq O(\dfrac{u_i^6}{|t_i|^2})+O(\dfrac{u_i^2}{|t_i|^2})\sin^4\tau_i.
\end{align*}
Hence
\begin{align*}
|f_{i\overline{i}}|_{L^1(\Omega_c^i)}&\leq\int_{\Omega_c^i}\bigg(O(\dfrac{u_i^6}{|t_i|^2})+O(\dfrac{u_i^2}{|t_i|^2})\sin^4\tau_i\bigg)dv\\
&\leq \int_X O(\dfrac{u_i^6}{|t_i|^2})dv+O(\dfrac{u_i^2}{|t_i|^2})\int_{\Omega_c^i}\sin^4\tau_idv\\
&=O(\dfrac{u_i^6}{|t_i|^2})+O(\dfrac{u_i^3}{|t_i|^2})=O(\dfrac{u_i^3}{|t_i|^2}).
\end{align*}
Notice that by (iii) we have $|f_{i\overline{i}}|_{L^1(X-\Omega_c^i)}=O(\dfrac{u_i^6}{|t_i|^2})$, which implies that $|f_{i\overline{i}}|_{L^1}=O(\dfrac{u_i^3}{|t_i|^2})$.  (viii) and (ix) can be proved in the same way.  Lastly, we observe that (x) follows form (vi) and the Gauss-Bonnet theorem, and thus the proof is completed.

\end{proof}

We also need to estimate the functions $e_{i\overline{j}}$ for the estimate of the asymptotics of the Ricci metric, and thus we need to construct some approximation functions of them on the collars.

First we fix a constant $c_1<c$ and define the cut-off function $\eta\in C^{\infty}(\mathbb{R},[0,1])$ by
\[
\left\{
\begin{array}{ll}
\eta(x)=1 & \text{if } x\leq\log c_1;\\
\eta(x)=0 & \text{if } x\geq \log c;\\
0<\eta(x)<1 & \text{if } \log c_1<x<\log c.
\end{array}\right.
\]
Note that the derivatives of $\eta$ are bounded by some constants depending only on $c$ and $c_1$.

Let $\tilde{e}_{i\overline{j}}(z)$ be the function on $X$ (where $z=z_i$ on the collar $\Omega^i_c$) defined by:
\begin{itemize}
\item[(i)] if $i, j\leq m$ and $i=j$,
\[
\tilde{e}_{i\overline{j}}(z)=
\left\{
\begin{array}{ll}
\frac{1}{2}\sin^2\tau_i(|b_i|^2) & \text{when } z\in\Omega^i_{c_1};\\
\frac{1}{2}\sin^2\tau_i(|b_i|^2)\eta(\log r_i) & \text{when } z\in\Omega^i_c \text{ and } c_1<r_i<c;\\
\frac{1}{2}\sin^2\tau_i(|b_i|^2)\eta(\log\rho_i-\log r_i) & \text{when } z\in\Omega^i_c \text{ and } c^{-1}\rho_i<r_i<c^{-1}_1\rho_i;\\
0 & \text{when } z\in X-\Omega^i_{c};
\end{array}\right.
\]
\item[(ii)] if $i, j\leq m$ and $i\neq j$,
\[
\tilde{e}_{i\overline{j}}(z)=
\left\{
\begin{array}{ll}
\frac{1}{2}\sin^2\tau_i(\overline{b_i}b^i_j) & \text{when } z\in\Omega^i_{c_1};\\
\frac{1}{2}\sin^2\tau_i(\overline{b_i}b^i_j)\eta(\log r_i) & \text{when } z\in\Omega^i_c \text{ and } c_1<r_i<c;\\
\frac{1}{2}\sin^2\tau_i(\overline{b_i}b^i_j)\eta(\log\rho_i-\log r_i) & \text{when } z\in\Omega^i_c \text{ and } c^{-1}\rho_i<r_i<c^{-1}_1\rho_i;\\
\frac{1}{2}\sin^2\tau_j(\overline{b^j_i}b_j) & \text{when } z\in\Omega^j_{c_1};\\
\frac{1}{2}\sin^2\tau_j(\overline{b^j_i}b_j)\eta(\log r_j) & \text{when } z\in\Omega^j_c \text{ and } c_1<r_j<c;\\
\frac{1}{2}\sin^2\tau_j(\overline{b^j_i}b_j)\eta(\log\rho_j-\log r_j) & \text{when } z\in\Omega^j_c \text{ and } c^{-1}\rho_j<r_j<c^{-1}_1\rho_j;\\
0 & \text{when } z\in X-(\Omega^i_{c}\cup\Omega^j_{c});
\end{array}\right.
\]
\item[(iii)] if $i\leq m$ and $j\geq m+1$,
\[
\tilde{e}_{i\overline{j}}(z)=
\left\{
\begin{array}{ll}
\frac{1}{2}\sin^2\tau_i(\overline{b_i}b^i_j) & \text{when } z\in\Omega^i_{c_1};\\
\frac{1}{2}\sin^2\tau_i(\overline{b_i}b^i_j)\eta(\log r_i) & \text{when } z\in\Omega^i_c \text{ and } c_1<r_i<c;\\
\frac{1}{2}\sin^2\tau_i(\overline{b_i}b^i_j)\eta(\log\rho_i-\log r_i) & \text{when } z\in\Omega^i_c \text{ and } c^{-1}\rho_i<r_i<c^{-1}_1\rho_i;\\
0 & \text{when } z\in X-\Omega^i_{c};
\end{array}\right.
\]
\end{itemize}

Moveover, let $\tilde{f}_{i\overline{j}}=(\Box+1)\tilde{e}_{i\overline{j}}$.  Hence by direct computation we have

\begin{itemize}
\item[(i)] if $i, j\leq m$ and $i=j$,
\[\tilde{f}_{i\overline{j}}(z)=\sin^4\tau_i(|b_i|^2)\]
when $z\in\Omega^i_{c_1}$;
\begin{align*}
\tilde{f}_{i\overline{j}}(z)&=\sin^4\tau_i(|b_i|^2)\eta(\log r_i)-\frac{1}{2u_i}\sin^2\tau_i\sin2\tau_i(|b_i|^2)\eta'(\log r_i)\\
&-\frac{1}{4u_i^2}\sin^4\tau_i(|b_i|^2)\eta''(\log r_i)
\end{align*}
when $z\in\Omega^i_c$ and $c_1<r_i<c$;
\begin{align*}
\tilde{f}_{i\overline{j}}(z)&=\sin^4\tau_i(|b_i|^2)\eta(\log\rho_i-\log r_i)+\frac{1}{2u_i}\sin^2\tau_i\sin2\tau_i(|b_i|^2)\eta'(\log\rho_i-\log r_i)\\
&-\frac{1}{4u_i^2}\sin^4\tau_i(|b_i|^2)\eta''(\log\rho_i-\log r_i)
\end{align*}
when $z\in\Omega^i_c$ and $c^{-1}\rho_i<r_i<c^{-1}_1\rho_i$;
\[\tilde{f}_{i\overline{j}}(z)=0\]
when $z\in X-\Omega^i_{c}$.
\item[(ii)] if $i, j\leq m$ and $i\neq j$,
\[\tilde{f}_{i\overline{j}}(z)=\sin^4\tau_i(\overline{b_i}b^i_j)\]
when $z\in\Omega^i_{c_1}$;
\begin{align*}
\tilde{f}_{i\overline{j}}(z)&=\sin^4\tau_i(\overline{b_i}b^i_j)\eta(\log r_i)-\frac{1}{2u_i}\sin^2\tau_i\sin2\tau_i(\overline{b_i}b^i_j)\eta'(\log r_i)\\
&-\frac{1}{4u_i^2}\sin^4\tau_i(\overline{b_i}b^i_j)\eta''(\log r_i)
\end{align*}
when $z\in\Omega^i_c$ and $c_1<r_i<c$;
\begin{align*}
\tilde{f}_{i\overline{j}}(z)&=\sin^4\tau_i(\overline{b_i}b^i_j)\eta(\log\rho_i-\log r_i)+\frac{1}{2u_i}\sin^2\tau_i\sin2\tau_i(\overline{b_i}b^i_j)\eta'(\log\rho_i-\log r_i)\\
&-\frac{1}{4u_i^2}\sin^4\tau_i(\overline{b_i}b^i_j)\eta''(\log\rho_i-\log r_i)
\end{align*}
when $z\in\Omega^i_c$ and $c^{-1}\rho_i<r_i<c^{-1}_1\rho_i$;
\[\tilde{f}_{i\overline{j}}(z)=\sin^4\tau_j(\overline{b^j_i}b_j)\]
when $z\in\Omega^j_{c_1}$;
\begin{align*}
\tilde{f}_{i\overline{j}}(z)&=\sin^4\tau_j(\overline{b^j_i}b_j)\eta(\log r_j)-\frac{1}{2u_j}\sin^2\tau_j\sin2\tau_j(\overline{b^j_i}b_j)\eta'(\log r_j)\\
&-\frac{1}{4u_j^2}\sin^4\tau_j(\overline{b^j_i}b_j)\eta''(\log r_j)
\end{align*}
when $z\in\Omega^j_c$ and $c_1<r_j<c$;
\begin{align*}
\tilde{f}_{i\overline{j}}(z)&=\sin^4\tau_j(\overline{b^j_i}b_j)\eta(\log\rho_j-\log r_j)+\frac{1}{2u_j}\sin^2\tau_j\sin2\tau_j(\overline{b^j_i}b_j)\eta'(\log\rho_j-\log r_j)\\
&-\frac{1}{4u_j^2}\sin^4\tau_j(\overline{b^j_i}b_j)\eta''(\log\rho_j-\log r_j)
\end{align*}
when $z\in\Omega^j_c$ and $c^{-1}\rho_j<r_j<c^{-1}_1\rho_j$;
\[\tilde{f}_{i\overline{j}}(z)=0\]
when $z\in X-(\Omega^i_{c}\cup\Omega^j_{c})$.
\item[(iii)] if $i\leq m$ and $j\geq m+1$,
\[\tilde{f}_{i\overline{j}}(z)=\sin^4\tau_i(\overline{b_i}b^i_j)\]
when $z\in\Omega^i_{c_1}$;
\begin{align*}
\tilde{f}_{i\overline{j}}(z)&=\sin^4\tau_i(\overline{b_i}b^i_j)\eta(\log r_i)-\frac{1}{2u_i}\sin^2\tau_i\sin2\tau_i(\overline{b_i}b^i_j)\eta'(\log r_i)\\
&-\frac{1}{4u_i^2}\sin^4\tau_i(\overline{b_i}b^i_j)\eta''(\log r_i)
\end{align*}
when $z\in\Omega^i_c$ and $c_1<r_i<c$;
\begin{align*}
\tilde{f}_{i\overline{j}}(z)&=\sin^4\tau_i(\overline{b_i}b^i_j)\eta(\log\rho_i-\log r_i)+\frac{1}{2u_i}\sin^2\tau_i\sin2\tau_i(\overline{b_i}b^i_j)\eta'(\log\rho_i-\log r_i)\\
&-\frac{1}{4u_i^2}\sin^4\tau_i(\overline{b_i}b^i_j)\eta''(\log\rho_i-\log r_i)
\end{align*}
when $z\in\Omega^i_c$ and $c^{-1}\rho_i<r_i<c^{-1}_1\rho_i$;
\[\tilde{f}_{i\overline{j}}(z)=0\]
when $z\in X-\Omega^i_{c}$.
\end{itemize}

By construction we know that the supports of these approximation functions are contained in the corresponding collars.  Now we can estimate the functions $e_{i\overline{j}}$.

\begin{lem} We have the following estimates for $e_{i\overline{j}}$:
\begin{itemize}
\item[(i)] $e_{i\overline{i}}=\tilde{e}_{i\overline{i}}+O(\dfrac{u_i^4}{|t_i|^2})$ if $i\leq m$;
\item[(ii)] $e_{i\overline{j}}=\tilde{e}_{i\overline{j}}+O(\dfrac{u_i^3u_j^3}{|t_it_j|})$ if $i, j\leq m$ and $i\neq j$;
\item[(iii)] $e_{i\overline{j}}=\tilde{e}_{i\overline{j}}+O(\dfrac{u_i^3}{|t_i|})$ if $i\leq m$ and $j\geq m+1$;
\item[(iv)] $\left\|e_{i\overline{j}}\right\|_0=O(1)$ if $i, j\geq m+1$;
\item[(v)] $|\tilde{e}_{i\overline{i}}|_{L^1}=O(\dfrac{u_i^3}{|t_i|^2})$ if $i\leq m$;
\item[(vi)] $|\tilde{e}_{i\overline{j}}|_{L^1}=O(\dfrac{u_i^3u_j^3}{|t_it_j|})$ if $i, j\leq m$ and $i\neq j$;
\item[(vii)] $|\tilde{e}_{i\overline{j}}|_{L^1}=O(\dfrac{u_i^3}{|t_i|})$ if $i\leq m$ and $j\geq m+1$.
\end{itemize}
\end{lem}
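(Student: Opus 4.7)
The plan is to exploit the identity $(\Box+1)(e_{i\overline{j}} - \tilde e_{i\overline{j}}) = f_{i\overline{j}} - \tilde f_{i\overline{j}}$, which follows from Lemma 3.2.2 and the defining formula $\tilde f_{i\overline{j}} = (\Box+1)\tilde e_{i\overline{j}}$. Since $\Box$ is a positive multiple of the negative Laplace--Beltrami operator of the K\"{a}hler--Einstein metric, the maximum principle for $\Box+1$ yields $\|e_{i\overline{j}} - \tilde e_{i\overline{j}}\|_0 \leq \|f_{i\overline{j}} - \tilde f_{i\overline{j}}\|_0$, reducing each bound in (i)--(iv) to a pointwise estimate on the residual. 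Claim (iv) is then immediate from Lemma 4.4.2(vi) by taking $\tilde e_{i\overline{j}}\equiv 0$. For the $L^1$-bounds (v)--(vii) I would integrate the explicit formula for $\tilde e_{i\overline{j}}$ against the volume form $dv = \tfrac12 u_l^2 r_l^{-1}\csc^2\tau_l\,dr_l\,d\theta_l$; the only analytic input is the identity $\int_{\Omega_c^l}\sin^2\tau_l\,dv = \pi^2 u_l(1+O(u_0))$ (compare claim (1) in the proof of Corollary 4.4.1), after which the stated orders follow by multiplying by the sizes of $|b_l|^2$ and $|\overline{b_l}b_j^l|$ supplied by Lemma 4.4.2.

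For the main cases (i)--(iii), I would split $X_{t,s}$ into three pieces. On the complement $X_{t,s}\setminus\bigcup_l\Omega_c^l$ one has $\tilde f_{i\overline{j}}=0$ by construction, and Lemma 4.4.2 directly bounds $f_{i\overline{j}}=A_i\overline{A_j}$ of the required order. On the transition annuli $c_1<|z_l|<c$ and $c^{-1}\rho_l<|z_l|<c_1^{-1}\rho_l$, the $\eta'$ and $\eta''$ derivatives in $\tilde f_{i\overline{j}}$ carry factors $u_l^{-1}$ and $u_l^{-2}$, but these are compensated by $\sin^2\tau_l = O(u_l^2)$ on the annulus. On the inner collar $\Omega_{c_1}^l$, the elementary identity $(\Box+1)(\tfrac12\sin^2\tau_l) = \sin^4\tau_l$ cancels the leading term, leaving in case (ii), say, a residual of the form $\sin^4\tau_i(\overline{p_i}p_j^i + \overline{p_i}b_j^i + \overline{b_i}p_j^i)$. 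These residual cross terms I would control by combining the dual coefficient bounds of Lemma 4.4.2 with the elementary estimates
\[
r_l\sin^2\tau_l = O(u_l^2), \qquad (\rho_l/r_l)\sin^2\tau_l = O(u_l^2)
\]
on $\Omega_{c_1}^l$, which are proved by maximising $s\mapsto s^2 e^{-s/u_l}$ over $s\in(0,\pi)$. They yield, for instance, $|\sin^2\tau_l\,p_l| = O(u_l^3/|t_l|)$ and $|\sin^2\tau_i\,p_j^i| = O(u_j^3/|t_j|)$, which absorb the $\overline{p_i}p_j^i$ and $\overline{p_i}b_j^i$ cross terms into the stated targets.

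The principal obstacle is the mixed term $\sin^4\tau_i\,\overline{b_i}\,p_j^i$ on $\Omega_{c_1}^i$ in case (ii): since $b_i$ is a constant independent of $z_i$, the two gain estimates above cannot both be exploited, and a direct pointwise bound falls short of the target $O(u_i^3 u_j^3/|t_it_j|)$ by two powers of $u_i$. Overcoming this requires using that $p_j^i$ has no zero angular mode in $\theta_i$; a Fourier decomposition on the collar reduces $(\Box+1)^{-1}$ to a family of radial inverses $(\Box_k+1)^{-1}$ whose operator norms gain an extra factor $O(u_l^2/k^2)$ for $k\neq 0$, supplying precisely the missing $u_i^2$. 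Combined with the Schauder regularity of Theorem 4.4.1 to pass from the resulting $L^2$-improvement back to an $L^\infty$-bound on $e_{i\overline{j}}-\tilde e_{i\overline{j}}$, this angular-mode sharpening of the maximum principle is the essential technical content of the lemma.
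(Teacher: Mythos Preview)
Your overall architecture matches the paper exactly: maximum principle reduces (i)--(iv) to a $C^0$ bound on $f_{i\overline{j}}-\tilde f_{i\overline{j}}$, the surface is split into complement, transition annuli, and inner collar, and (v)--(vii) are done by direct integration. The paper carries out (i) in detail and then says ``(ii) and (iii) follow nearly the same argument,'' so your level of detail is already finer than the paper's.

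However, the ``principal obstacle'' you identify in case (ii) is a phantom, and the Fourier/Schauder machinery you propose is unnecessary. The shortfall comes from splitting $\sin^4\tau_i=\sin^2\tau_i\cdot\sin^2\tau_i$ and pairing only one $\sin^2$ factor with $p_j^i$. Instead, use the calculus estimate from the proof of Lemma~4.4.2 with $p=4$:
\[
|r_i^k\sin^4\tau_i|\le c^{|k|}|\log c|^4 u_i^4\qquad(k\neq 0),
\]
and pair \emph{all four} sine factors with $p_j^i$ (which has no $k=0$ mode). This gives
\[
|\sin^4\tau_i\,p_j^i|\le O(u_i^4)\Big(\sum_{k\neq 0}|a_{jk}^i|c^{|k|}\Big)=O(u_i^4)\cdot O(u_i^{-2})O\Big(\frac{u_j^3}{|t_j|}\Big)=O\Big(\frac{u_i^2u_j^3}{|t_j|}\Big),
\]
and multiplying by $|b_i|=O(u_i/|t_i|)$ yields exactly the target $O(u_i^3u_j^3/|t_it_j|)$. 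The same trick handles the symmetric term $\sin^4\tau_j\,\overline{p_i^j}\,b_j$ on $\Omega_{c_1}^j$. So the direct pointwise bound suffices; no angular-mode analysis is needed.
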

\begin{proof}
For (i), since by the maximum principle we know that
\[
\left\|e_{i\overline{i}}-\tilde{e_{i\overline{i}}}\right\|_0\leq\left\|f_{i\overline{i}}-\tilde{f}_{i\overline{i}}\right\|_0,
\]
it suffices to show that $\left\|f_{i\overline{i}}-\tilde{f}_{i\overline{i}}\right\|_0=O(\dfrac{u_i^4}{|t_i|^2})$.  Using the fact that $\tilde{f}_{i\overline{i}}=0$ on $X-\Omega_c^i$, by Lemma 4.4.2 we have \[\left\|f_{i\overline{i}}-\tilde{f}_{i\overline{i}}\right\|_0=\left\|f_{i\overline{i}}\right\|_0=O(\dfrac{u_i^6}{|t_i|^2}).\]  As on $\Omega_{c_1}^i$ we have
\begin{align*}
|f_{i\overline{i}}-\tilde{f}_{i\overline{i}}|&\leq|\sin^4\tau_i(\overline{p_i}b_i)|+|\sin^4\tau_i(\overline{b_i}p_i)|+|\sin^4\tau_i(\overline{p_i}p_i)|\\
&=O(\dfrac{u_i^5}{|t_i|})O(\dfrac{u_i}{|t_i|})+O(\dfrac{u_i^5}{|t_i|})O(\dfrac{u_i}{|t_i|})+O(\dfrac{u_i^3}{|t_i|})O(\dfrac{u_i^3}{|t_i|})=O(\dfrac{u_i^6}{|t_i|^2}),
\end{align*}
we know that $\left\|f_{i\overline{i}}-\tilde{f}_{i\overline{i}}\right\|_{0, \Omega_{c_1}^i}=O(\dfrac{u_i^6}{|t_i|^2})$.  Now on $\Omega_c^i-\Omega_{c_1}^i$ with $c_1<r_i<c$ we have
\[|\sin\tau_i|\leq|\tau_i|\leq u_i|\log c_1|=O(u_i),\]
so we can conclude that
\begin{align*}
|f_{i\overline{i}}-\tilde{f}_{i\overline{i}}|&\leq(1-\eta)|b_i|^2\sin^4\tau_i+|\sin^4\tau_i(\overline{p_i}b_i)|+|\sin^4\tau_i(\overline{b_i}p_i)|+|\sin^4\tau_i(\overline{p_i}p_i)|\\
&\quad+\frac{|b_i|^2|\eta'|}{2u_i}\sin^2\tau_i|\sin2\tau_i|+\frac{|b_i|^2|\eta''|}{4u_i^2}\sin^4\tau_i\\
&=O(\dfrac{u_i^2}{|t_i|^2})O(u_i^4)+O(\dfrac{u_i^6}{|t_i|^2})+O(\dfrac{u_i^2}{|t_i|^2})O(\dfrac{1}{u_i})O(u_i^3)+O(\dfrac{u_i^2}{|t_i|^2})O(\dfrac{1}{u_i^2})O(u_i^4)\\
&=O(\dfrac{u_i^4}{|t_i|^2}).
\end{align*}
Similarly we also have $|f_{i\overline{i}}-\tilde{f}_{i\overline{i}}|\leq O(\dfrac{u_i^4}{|t_i|^2})$ on $\Omega^i_c-\Omega^i_{c_1}$ with $c^{-1}\rho_i<r_i<c^{-1}_1\rho_i$, which implies that $\left\|f_{i\overline{i}}-\tilde{f}_{i\overline{i}}\right\|_{0, \Omega_c^i-\Omega_{c_1}^i}=O(\dfrac{u_i^4}{|t_i|^2})$ and thus
$\left\|f_{i\overline{i}}-\tilde{f}_{i\overline{i}}\right\|_0=O(\dfrac{u_i^4}{|t_i|^2})$.  (ii) and (iii) follows nearly the same argument.

(iv) is easy since by the maximum principle and Lemma 4.4.2 we have $\left\|e_{i\overline{j}}\right\|_0\leq\left\|f_{i\overline{j}}\right\|_0=O(1)$.

The last three claims can be proved by direct computation.  Let's work out (v) explicitly as an example:
\begin{align*}
|\tilde{e}_{i\overline{i}}|_{L^1}&=\int_X|\tilde{e}_{i\overline{i}}|dv\\
&=\int_{\Omega_c^i}|\tilde{e}_{i\overline{i}}|dv\\
&\leq\int_{\Omega_c^i}\dfrac{1}{2}\sin^2\tau_i(|b_i|^2)dv\\
&=\int_0^{2\pi}\int_{c^{-1}e^{-\frac{\pi}{u_i}}}^c\dfrac{1}{2}\sin^2\tau_i(|b_i|^2)\cdot\dfrac{u_i^2}{2r_i^2}\csc^2\tau_i\cdot r_idr_id\theta_i\\
&=\dfrac{u_i^2|b_i|^2}{4}\int_0^{2\pi}\int_{c^{-1}e^{-\frac{\pi}{u_i}}}^c\dfrac{1}{r_i}dr_id\theta_i\\
&=O(\dfrac{u_i^4}{|t_i|^2})O(u_i^{-1})=O(\dfrac{u_i^3}{|t_i|^2}).
\end{align*}
\end{proof}

The following $L^2$ norm estimates will be used in the estimate of the Ricci metric.
\begin{lem}
Let $f\in C^{\infty}(X,\mathbb{C})$.  Then
\[
\int_X|(\Box+1)^{-1}f|^2dv\leq\int_X(\Box+1)^{-1}f\cdot\overline{f}dv\leq\int_X|f|^2dv.
\]
\end{lem}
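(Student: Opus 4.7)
The plan is to exploit the fact that $\Box=-\lambda^{-1}\partial_z\partial_{\overline{z}}$ is a self-adjoint non-negative operator on $C^\infty(X)$ with respect to the $L^2$ inner product coming from the volume form $dv=\frac{\sqrt{-1}}{2}\lambda\,dz\wedge d\overline{z}$. Setting $g=(\Box+1)^{-1}f$ so that $(\Box+1)g=f$, both inequalities then reduce to straightforward algebraic manipulations of the $L^2$ inner product.

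First I would verify the basic operator-theoretic properties of $\Box$. Using Stokes' theorem on the closed Riemann surface $X$, integration by parts in $z$ shows
\[
\int_X \Box g\cdot\overline{g}\,dv \;=\; \int_X |\partial_{\overline{z}}g|^2\,dx\wedge dy \;\geq\; 0,
\]
and similarly $\int_X(\Box g)\overline{h}\,dv=\int_X g\,\overline{\Box h}\,dv$ for all smooth $g,h$. These two facts are the only inputs needed beyond elementary algebra, and the compactness of $X$ eliminates any boundary terms.

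For the first inequality, I would expand
\[
\int_X g\,\overline{f}\,dv \;=\; \int_X g\,\overline{(\Box+1)g}\,dv \;=\; \int_X g\,\Box\overline{g}\,dv \;+\; \int_X|g|^2\,dv,
\]
and note that by self-adjointness the first term equals $\int_X\Box g\cdot\overline{g}\,dv\geq 0$, yielding $\int_X|g|^2\,dv\leq\int_X g\,\overline{f}\,dv$. For the second inequality, I would expand the squared norm of $f$ as
\[
\int_X|f|^2\,dv \;=\; \int_X|(\Box+1)g|^2\,dv \;=\; \int_X|\Box g|^2\,dv \;+\; 2\int_X\Box g\cdot\overline{g}\,dv \;+\; \int_X|g|^2\,dv,
\]
and subtract the identity $\int_X g\,\overline{f}\,dv=\int_X|g|^2\,dv+\int_X\Box g\cdot\overline{g}\,dv$ from the first part to obtain
\[
\int_X|f|^2\,dv-\int_X g\,\overline{f}\,dv \;=\; \int_X|\Box g|^2\,dv \;+\; \int_X\Box g\cdot\overline{g}\,dv \;\geq\; 0.
\]

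There is essentially no obstacle here: once one has self-adjointness and non-negativity of $\Box$, everything is a two-line computation. The only point requiring care is to confirm that $\int_X g\,\overline{f}\,dv$ is in fact real (so that the stated inequalities between real and a priori complex quantities are meaningful), which follows automatically from the decomposition above since both $\int_X|g|^2\,dv$ and $\int_X\Box g\cdot\overline{g}\,dv$ are real.
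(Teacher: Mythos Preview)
Your proof is correct. The paper's own proof takes a slightly different route: it simply observes that the eigenvalues of $(\Box+1)^{-1}$ lie in $(0,1]$ and then invokes the spectral decomposition of this compact self-adjoint operator on the closed surface $X$ to read off both inequalities at once. Your argument instead works directly with integration by parts and the identity $f=(\Box+1)g$, using only the self-adjointness and non-negativity of $\Box$ without ever appealing to the spectral theorem. The spectral approach is a one-line conceptual argument but presupposes the eigenfunction expansion on a compact manifold; your approach is more elementary and self-contained, and it makes the reality of the middle term $\int_X g\,\overline f\,dv$ explicit rather than implicit. Both routes are standard and equally valid here.
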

\begin{proof}
We just need to note that the eigenvalues of $(\Box+1)^{-1}$ are all less than or equal to $1$.  Then the result would follow from the spectral decomposition of $(\Box+1)^{-1}$.
\end{proof}

Let's estimate the asymptotics of the Ricci metric in the following corollary (Corollary 4.2, \cite{bib1}).

\begin{cor}
Let $(t, s)$ be the pinching coordinates.  Then we have
\begin{itemize}
\item[(i)] $\tau_{i\overline{i}}=\dfrac{3}{4\pi^2}\dfrac{u_i^2}{|t_i|^2}(1+O(u_0))$ and $\tau^{i\overline{i}}=\dfrac{4\pi^2}{3}\dfrac{|t_i|^2}{u_i^2}(1+O(u_0))$ if $\leq i\leq m$;
\item[(ii)] $\tau_{i\overline{j}}=O(\dfrac{u_i^2u_j^2}{|t_it_j|}(u_i+u_j))$ and $\tau^{i\overline{j}}=O(|t_it_j|)$ if $\leq i,j\leq m$ and $i\neq j$;
\item[(iii)] $\tau_{i\overline{j}}=O(\dfrac{u_i^2}{|t_i|})$ and $\tau^{i\overline{j}}=O(|t_i|)$ if $i\leq m$ or $j\geq m+1$;
\item[(iv)] $\tau_{i\overline{j}}=O(1)$ and $\tau^{i\overline{j}}=O(1)$ if $i, j\geq m+1$.
\end{itemize}
\end{cor}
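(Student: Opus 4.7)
The strategy is to combine the Weil--Petersson curvature formula
$R_{i\overline{j}\alpha\overline{\beta}}=\int_{X_s}(e_{i\overline{j}}f_{\alpha\overline{\beta}}+e_{i\overline{\beta}}f_{\alpha\overline{j}})\,dv$ from Theorem 3.2.1 with the expansion
$\tau_{i\overline{j}}=h^{\alpha\overline{\beta}}R_{i\overline{j}\alpha\overline{\beta}}$, and then plug in all the ingredients we have set up: the inverse Weil--Petersson estimates of Corollary 4.4.1, the pointwise and $L^1$ estimates of the $f_{i\overline{j}}$'s in Lemma 4.4.2, and the approximation $e_{i\overline{j}}=\tilde e_{i\overline{j}}+(\text{small})$ of Lemma 4.4.3. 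The first task is to identify, for each $(i,\overline{j})$ falling in one of the four regimes, the unique ``dominant'' pair of indices $(\alpha,\beta)$ in the double sum, and then to show that all other contributions are of strictly lower order in $u_0$.

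First I would treat the diagonal short--geodesic case $i=j\le m$, which fixes the leading constant $\tfrac{3}{4\pi^2}$. The expected dominant term comes from $\alpha=\beta=i$: multiplying $h^{i\overline{i}}\sim 2|t_i|^2/u_i^3$ by the pair of integrals $\int_{X}e_{i\overline{i}}f_{i\overline{i}}\,dv+\int_{X}e_{i\overline{i}}f_{i\overline{i}}\,dv$. Using Lemma 4.4.3(i) to replace $e_{i\overline{i}}$ by $\tilde e_{i\overline{i}}=\tfrac12\sin^2\tau_i|b_i|^2$ (the cut--off region $c_1<r_i<c$ contributes only $O(u_i^4/|t_i|^2)$ as in the proof of Lemma 4.4.3), and the leading term $A_i=\tfrac{z_i}{\overline{z_i}}\sin^2\tau_i\,\overline{b_i}+O(u_i^3/|t_i|)$ from Lemma 4.4.1 (so $f_{i\overline{i}}=\sin^4\tau_i|b_i|^2+\text{error}$), the integral reduces on $\Omega_c^i$ to $\tfrac12|b_i|^4\int_{\Omega_c^i}\sin^6\tau_i\,dv$. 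Changing variables $\tau_i=u_i\log r_i$ in the rs--coordinate expression of $dv$ gives $\int_{\Omega_c^i}\sin^6\tau_i\,dv=\tfrac{3\pi^2 u_i}{8}(1+O(u_0))$, and the explicit value $|b_i|^2=u_i^2/(\pi^2|t_i|^2)(1+O(u_0))$ from Lemma 4.4.1(6) produces
\[
2\,h^{i\overline{i}}\int_X e_{i\overline{i}}f_{i\overline{i}}\,dv=\frac{3}{4\pi^2}\frac{u_i^2}{|t_i|^2}\bigl(1+O(u_0)\bigr).
\]

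Next I would show that every remaining pair $(\alpha,\beta)$ gives only an $O(u_0)$ correction to this dominant term. The control is purely a bookkeeping exercise combining the three bounds:
\begin{equation*}
|h^{\alpha\overline{\beta}}|\text{ from Cor.~4.4.1},\qquad \|e_{i\overline{j}}\|_0\text{ from Lem.~4.4.3},\qquad |f_{\alpha\overline{\beta}}|_{L^1}\text{ from Lem.~4.4.2},
\end{equation*}
together with H\"older: $|\int e_{i\overline{j}}f_{\alpha\overline{\beta}}\,dv|\le \|e_{i\overline{j}}\|_0\,|f_{\alpha\overline{\beta}}|_{L^1}$. A direct case analysis (diagonal $\alpha=\beta$ with $\alpha\neq i$; off--diagonal $\alpha\neq\beta$; and the plumbing/plane mixed indices) produces contributions all bounded by $u_0$ times the dominant one, because the gain in $|h^{\alpha\overline{\beta}}|$ is always overwhelmed by the extra factors of $u_\alpha,u_\beta$ in the two integrals. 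The other three regimes of (i)--(iv) in the corollary are handled the same way: one identifies the surviving dominant pair from Corollary 4.4.1 (it may vanish entirely, which already gives the full stated order), and bounds the rest by the H\"older argument.

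Finally the estimates for $\tau^{i\overline{j}}$ follow by matrix inversion. Writing $(\tau_{i\overline{j}})=D+E$ where $D$ is the diagonal matrix with entries $\tfrac{3}{4\pi^2}u_i^2/|t_i|^2$ for $i\le m$ and the compact $(m{+}1{\le} i,j)$--block is $O(1)$, the bounds just proved show $D^{-1}E$ has entries of size $O(u_0)$; a Neumann series then produces the claimed orders for $\tau^{i\overline{i}}$ and $\tau^{i\overline{j}}$. The main obstacle in the whole argument is not any single step but the combinatorial care needed to verify that, across all index patterns in the double sum $h^{\alpha\overline{\beta}}(\cdots)$, no hidden pairing produces a term of the same order as the dominant one; keeping track of the precise weights $u_i/|t_i|$ attached to each possible $(i,j,\alpha,\beta)$ configuration is the bulk of the work.
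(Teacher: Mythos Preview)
Your overall strategy matches the paper's: identify the dominant contribution $h^{i\overline{i}}R_{i\overline{i}i\overline{i}}$ via the explicit collar computation, then kill all other $(\alpha,\beta)$ pairs by case analysis. Your leading-term calculation is correct and agrees with the paper's. The inverse-matrix step is also fine in spirit; the paper uses cofactor expansion rather than a Neumann series, but either works once you know the $(n-m)\times(n-m)$ block $(\tau_{i\overline{j}})_{i,j\ge m+1}$ is non-degenerate at the boundary (this comes from Masur's result on $(h_{i\overline{j}})_{i,j\ge m+1}$ together with Wolpert's lower bound $\tau\ge c\, h$, and you should state it explicitly).

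There is, however, a real gap in your error analysis. The single H\"older bound you write, $\bigl|\int e_{i\overline{j}}f_{\alpha\overline{\beta}}\,dv\bigr|\le \|e_{i\overline{j}}\|_0\,|f_{\alpha\overline{\beta}}|_{L^1}$, is \emph{not} sharp enough in the subcase $\alpha=\beta\le m$, $\alpha\ne i$. With $\|e_{i\overline{i}}\|_0=O(u_i^2/|t_i|^2)$, $|f_{\alpha\overline{\alpha}}|_{L^1}=O(u_\alpha^3/|t_\alpha|^2)$ and $h^{\alpha\overline{\alpha}}=O(|t_\alpha|^2/u_\alpha^3)$, the product is $O(u_i^2/|t_i|^2)$, i.e.\ the \emph{same} order as the dominant term, so your sentence ``the gain in $|h^{\alpha\overline{\beta}}|$ is always overwhelmed'' is false as stated. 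The paper repairs this in two ways you have not invoked: (a) it exploits the support of the approximation $\tilde e_{i\overline{i}}$ on $\Omega_c^i$ to replace $\|f_{\alpha\overline{\alpha}}\|_0$ by the much smaller localized norm $\|f_{\alpha\overline{\alpha}}\|_{0,\Omega_c^i}=O(u_\alpha^6/|t_\alpha|^2)$, and (b) for the crossed integral $\int e_{\alpha\overline{i}}f_{i\overline{\beta}}\,dv$ it uses the $L^2$ inequality of Lemma~4.4.4 (Cauchy--Schwarz plus spectral bound for $(\Box+1)^{-1}$) together with $|f_{i\overline{\alpha}}|^2=f_{i\overline{i}}f_{\alpha\overline{\alpha}}$, rather than a $C^0$--$L^1$ split. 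In several other subcases one must also swap which factor receives the $C^0$ bound and which the $L^1$ bound. So the bookkeeping is not merely combinatorial: you need the localization afforded by the supports of the $\tilde e_{i\overline{j}}$ and the $L^2$ lemma, not just the global H\"older estimate you wrote.
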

\begin{rem}
In fact Trapani\cite{bib10} has also estimated the Ricci metric, but the estimates in this corollary are more refined.
\end{rem}
\begin{proof}
Let's prove the first part.  In the following, all universal constants will be denoted as $C_0$.  Recall that $\tau_{i\overline{j}}=h^{\alpha\overline{\beta}}R_{i\overline{j}\alpha\overline{\beta}}$.

For (i), notice that if $\alpha\neq \beta$ or $\alpha=\beta\geq m+1$, then by Lemma 4.4.2 and Corollary 4.4.1 we have $|h^{\alpha\overline{\beta}}|\left\|A_{\alpha}\right\|_0\left\|A_{\beta}\right\|_0=O(1)$. Thus by the maximum principle and Lemma 4.4.4 we have
\begin{align*}
|R_{i\overline{i}\alpha\overline{\beta}}|&\leq\bigg|\int_{X}f_{i\overline{i}}e_{\alpha\overline{\beta}}dv\bigg|+\bigg|\int_{X}f_{i\overline{\beta}}e_{\alpha\overline{i}}dv\bigg|\\
&\leq\left\|e_{\alpha\overline{\beta}}\right\|_0|f_{i\overline{i}}|_{L^1}+\bigg(\int_X|e_{\alpha\overline{i}}|^2dv\int_X|f_{i\overline{\beta}}|^2dv\bigg)^{\frac{1}{2}}\\
&\leq\left\|f_{\alpha\overline{\beta}}\right\|_0|f_{i\overline{i}}|_{L^1}+\bigg(\int_X|f_{\alpha\overline{i}}|^2dv\int_X|f_{i\overline{\beta}}|^2dv\bigg)^{\frac{1}{2}}\\
&=\left\|f_{\alpha\overline{\beta}}\right\|_0|f_{i\overline{i}}|_{L^1}+\bigg(\int_{X}f_{\alpha\overline{\alpha}}f_{i\overline{i}}dv\int_{X}f_{i\overline{i}}f_{\beta\overline{\beta}}\bigg)^{\frac{1}{2}}dv\\
&\leq2\left\|A_{\alpha}\right\|_0\left\|A_{\beta}\right\|_0|f_{i\overline{i}}|_{L^1}.
\end{align*}
Hence in this case we have
\[|h^{\alpha\overline{\beta}}R_{i\overline{i}\alpha\overline{\beta}}|=O(1)|f_{i\overline{i}}|_{L^1}=O(\dfrac{u_i^3}{|t_i|^2}).\]

If $\alpha=\beta\leq m$ and $\alpha\neq i$, then we consider
\[
|R_{i\overline{i}\alpha\overline{\alpha}}|\leq\bigg|\int_{X}e_{i\overline{i}}f_{\alpha\overline{\alpha}}dv\bigg|+\bigg|\int_{X}e_{i\overline{\alpha}}f_{\alpha\overline{i}}dv\bigg|.
\]
Note that we have
\begin{align*}
\bigg|\int_{X}e_{i\overline{\alpha}}f_{\alpha\overline{i}}dv\bigg|&\leq\left\|e_{i\overline{\alpha}}\right\|_0|f_{\alpha\overline{i}}|_{L^1}\leq\left\|f_{i\overline{\alpha}}\right\|_0|f_{\alpha\overline{i}}|_{L^1}\\
&=O(\dfrac{u_iu_{\alpha}}{|t_it_{\alpha}|})O(\dfrac{u_i^3u_{\alpha}^3}{|t_it_{\alpha}|})=O(\dfrac{u_i^3u_{\alpha}^3}{|t_it_{\alpha}|^2})
\end{align*}
and by Lemma 4.4.3 we also have
\begin{align*}
\bigg|\int_{X}e_{i\overline{i}}f_{\alpha\overline{\alpha}}dv\bigg|&\leq\bigg|\int_{X}\tilde{e}_{i\overline{i}}f_{\alpha\overline{\alpha}}dv\bigg|+\bigg|\int_{X}(e_{i\overline{i}}-\tilde{e}_{i\overline{i}})f_{\alpha\overline{\alpha}}dv\bigg|\\
&\leq\bigg|\int_{\Omega_c^i}\tilde{e}_{i\overline{i}}f_{\alpha\overline{\alpha}}dv\bigg|+\left\|e_{i\overline{i}}-\tilde{e}_{i\overline{i}}\right\|_0|f_{\alpha\overline{\alpha}}|_{L^1}\\
&\leq|f_{\alpha\overline{\alpha}}|_{0,\Omega_c^i}|\tilde{e}_{i\overline{i}}|_{L^1}+\left\|e_{i\overline{i}}-\tilde{e}_{i\overline{i}}\right\|_0|f_{\alpha\overline{\alpha}}|_{L^1}\\
&=O(\dfrac{u_{\alpha}^6}{|t_{\alpha}|^2})O(\dfrac{u_i^3}{|t_i|^2})+O(\dfrac{u_i^4}{|t_i|^2})O(\dfrac{u_{\alpha}^3}{|t_{\alpha}|^2})=O(\dfrac{u_i^3u_{\alpha}^3}{|t_it_{\alpha}|^2}).
\end{align*}
Therefore we obtain $|h^{\alpha\overline{\alpha}}R_{i\overline{i}\alpha\overline{\alpha}}|=O(\dfrac{|t_{\alpha}|^2}{u_{\alpha}^3})O(\dfrac{u_i^3u_{\alpha}^3}{|t_it_{\alpha}|^2})=O(\dfrac{u_i^3}{|t_i|^2})$.

Now we have to consider $h^{i\overline{i}}R_{i\overline{i}i\overline{i}}$.  Notice that \[R_{i\overline{i}i\overline{i}}=2\int_X e_{i\overline{i}}f_{i\overline{i}}dv=\int_X \tilde{e}_{i\overline{i}}\tilde{f}_{i\overline{i}}dv+\int_X \tilde{e}_{i\overline{i}}(f_{i\overline{i}}-\tilde{f}_{i\overline{i}})dv+\int_X (e_{i\overline{i}}-\tilde{e}_{i\overline{i}})\tilde{f}_{i\overline{i}}.\]
First we have
\[
\bigg|\int_X \tilde{e}_{i\overline{i}}(f_{i\overline{i}}-\tilde{f}_{i\overline{i}})dv\bigg|\leq\left\|f_{i\overline{i}}-\tilde{f}_{i\overline{i}}\right\|_0|\tilde{e}_{i\overline{i}}|_{L^1}=O(\dfrac{u_i^4}{|t_i|^2})O(\dfrac{u_i^3}{|t_i|^2})=O(\dfrac{u_i^7}{|t_i|^4})
\]
and
\[
\bigg|\int_X f_{i\overline{i}}(e_{i\overline{i}}-\tilde{e}_{i\overline{i}})dv\bigg|\leq\left\|e_{i\overline{i}}-\tilde{e}_{i\overline{i}}\right\|_0|f_{i\overline{i}}|_{L^1}=O(\dfrac{u_i^4}{|t_i|^2})O(\dfrac{u_i^3}{|t_i|^2})=O(\dfrac{u_i^7}{|t_i|^4}).
\]
In addition, we have $\left\|\tilde{e}_{i\overline{i}}\right\|_{0,\Omega_c^i-\Omega_{c_1}^i}=O(\dfrac{u_i^4}{|t_i|^2})$, $\left\|\tilde{f}_{i\overline{i}}\right\|_{0,\Omega_c^i-\Omega_{c_1}^i}=O(\dfrac{u_i^4}{|t_i|^2})$ and
\begin{align*}
\int_{\Omega_{c_1}^i} \tilde{e}_{i\overline{i}}\tilde{f}_{i\overline{i}}dv&=\dfrac{|b_i|^4u_i^2}{4}\int_0^{2\pi}\int_{c_1^{-1}e^{-\frac{\pi}{u_i}}}^{c_1}\dfrac{1}{r_i}\sin^4\tau_idr_id\theta_i\\
&=\dfrac{|b_i|^4u_i^2}{4}\int_0^{2\pi}\dfrac{3\pi}{8u_i}(1+O(u_0))d\theta_i\\
&=\dfrac{3\pi^2}{16}|b_i|^4u_i(1+O(u_0)),
\end{align*}
so
\[
\int_X \tilde{e}_{i\overline{i}}\tilde{f}_{i\overline{i}}dv=\int_{\Omega_{c_1}^i} \tilde{e}_{i\overline{i}}\tilde{f}_{i\overline{i}}dv+\int_{\Omega_c^i-\Omega_{c_1}^i} \tilde{e}_{i\overline{i}}\tilde{f}_{i\overline{i}}dv=\dfrac{3\pi^2}{16}|b_i|^4u_i(1+O(u_0))+O(\dfrac{u_i^8}{|t_i|^4}).
\]
Hence we know that
\begin{align*}
h^{i\overline{i}}R_{i\overline{i}i\overline{i}}&=2\dfrac{|t_i|^2}{u_i^3}(1+O(u_0))(\dfrac{3\pi^2}{8}|b_i|^4u_i(1+O(u_0))+O(\dfrac{u_i^7}{|t_i|^4}))\\
&=\dfrac{3\pi^2}{4}\dfrac{|t_i|^2}{u_i^2}(\dfrac{u_i^4}{\pi^4|t_i|^4})(1+O(u_0))+O(\dfrac{u_i^4}{|t_i|^2})(1+O(u_0))\\
&=\dfrac{3}{4\pi^2}\dfrac{u_i^2}{|t_i|^2}(1+O(u_0)).
\end{align*}
Then combining all three cases we can obtain the result.

For (ii), if $\alpha\neq \beta$ and $\alpha\neq i$, or $\alpha=\beta\geq m+1$, then we have
\[
|R_{i\overline{j}\alpha\overline{\beta}}|\leq\bigg|\int_{X}f_{i\overline{j}}e_{\alpha\overline{\beta}}dv\bigg|+\bigg|\int_{X}f_{i\overline{\beta}}e_{\alpha\overline{j}}dv\bigg|.
\]
Let's estimate the first term:
\begin{align*}
\bigg|\int_{X}f_{i\overline{j}}e_{\alpha\overline{\beta}}dv\bigg|&\leq\left\|e_{\alpha\overline{\beta}}\right\|_0|f_{i\overline{j}}|_{L^1}\\
&\leq\left\|f_{\alpha\overline{\beta}}\right\|_0|f_{i\overline{j}}|_{L^1}=\left\|A_{\alpha}\right\|_0\left\|A_{\beta}\right\|_0O(\dfrac{u_i^3u_j^3}{|t_it_j|}).
\end{align*}
The second term can be estimated in this way:
\begin{align*}
\bigg|\int_{X}f_{i\overline{\beta}}e_{\alpha\overline{j}}dv\bigg|&\leq\bigg|\int_{X}f_{i\overline{\beta}}\tilde{e}_{\alpha\overline{j}}dv\bigg|+\bigg|\int_{X}f_{i\overline{\beta}}(e_{\alpha\overline{j}}-\tilde{e}_{\alpha\overline{j}})dv\bigg|\\
&=\bigg|\int_{X-\Omega_c^i}f_{i\overline{\beta}}\tilde{e}_{\alpha\overline{j}}dv\bigg|+\bigg|\int_{X}f_{i\overline{\beta}}(e_{\alpha\overline{j}}-\tilde{e}_{\alpha\overline{j}})dv\bigg|\\
&\leq\left\|f_{i\overline{\beta}}\right\|_{0,X-\Omega_c^i}|\tilde{e}_{\alpha\overline{j}}|_{L^1}+\left\|e_{\alpha\overline{j}}-\tilde{e}_{\alpha\overline{j}}\right\|_0|f_{i\overline{\beta}}|_{L^1}.
\end{align*}
Recall that $|h^{\alpha\overline{\beta}}|\left\|A_{\alpha}\right\|_0\left\|A_{\beta}\right\|_0=O(1)$ and also note that by Lemma 4.4.2, Lemma 4.4.3 and Corollary 4.4.1 we have
\[
|h^{\alpha\overline{\beta}}|\left\|f_{i\overline{\beta}}\right\|_{0,X-\Omega_c^i}|\tilde{e}_{\alpha\overline{j}}|_{L^1}=O(\dfrac{u_i^3u_j^3}{|t_it_j|})
\]
and
\[
|h^{\alpha\overline{\beta}}|\left\|e_{\alpha\overline{j}}-\tilde{e}_{\alpha\overline{j}}\right\|_0|f_{i\overline{\beta}}|_{L^1}=O(\dfrac{u_i^3u_j^3}{|t_it_j|})
\]
in this case.  Hence we know that $|h^{\alpha\overline{\beta}}R_{i\overline{j}\alpha\overline{\beta}}|=O(\dfrac{u_i^3u_j^3}{|t_it_j|})$ for $\alpha\neq \beta$ and $\alpha\neq i$ or for $\alpha=\beta\geq m+1$.  This is also true for $\alpha\neq \beta$ and $\alpha=i$ by noting that the second term above may be replaced by $\displaystyle \bigg|\int_{X}e_{i\overline{\beta}}f_{\alpha\overline{j}}dv\bigg|$ and thus similar estimate can be done.

Now if $\alpha=\beta\leq m$ and $\alpha\neq i, j$, then we have
\[
|R_{i\overline{j}\alpha\overline{\alpha}}|\leq\bigg|\int_{X}f_{i\overline{j}}e_{\alpha\overline{\alpha}}dv\bigg|+\bigg|\int_{X}f_{i\overline{\alpha}}e_{\alpha\overline{j}}dv\bigg|.
\]
Since
\begin{align*}
\bigg|\int_{X}f_{i\overline{j}}e_{\alpha\overline{\alpha}}dv\bigg|&\leq\bigg|\int_{\Omega_c^{\alpha}}f_{i\overline{j}}\tilde{e}_{\alpha\overline{\alpha}}dv\bigg|+\bigg|\int_{X}f_{i\overline{j}}(e_{\alpha\overline{\alpha}}-\tilde{e}_{\alpha\overline{\alpha}})dv\bigg|\\
&\leq\left\|f_{i\overline{j}}\right\|_{0,\Omega_c^{\alpha}}|\tilde{e}_{\alpha\overline{\alpha}}|_{L^1}+\left\|e_{\alpha\overline{\alpha}}-\tilde{e}_{\alpha\overline{\alpha}}\right\|_0|f_{i\overline{j}}|_{L^1}\\
&=O(\dfrac{u_i^3u_j^3}{|t_it_j|})O(\dfrac{u_{\alpha}^3}{|t_{\alpha}|^2})+O(\dfrac{u_{\alpha}^3}{|t_{\alpha}|^2})O(\dfrac{u_i^3u_j^3}{|t_it_j|})=O(\dfrac{u_{\alpha}^3}{|t_{\alpha}|^2})O(\dfrac{u_i^3u_j^3}{|t_it_j|})
\end{align*}
and
\begin{align*}
\bigg|\int_{X}f_{i\overline{\alpha}}e_{\alpha\overline{j}}dv\bigg|&\leq\bigg|\int_{X-\Omega_c^i}f_{i\overline{\alpha}}\tilde{e}_{\alpha\overline{j}}dv\bigg|+\bigg|\int_{X}f_{i\overline{\alpha}}(e_{\alpha\overline{j}}-\tilde{e}_{\alpha\overline{j}})dv\bigg|\\
&\leq\left\|f_{i\overline{\alpha}}\right\|_{0,X-\Omega_c^i}|\tilde{e}_{\alpha\overline{j}}|_{L^1}+\left\|e_{\alpha\overline{j}}-\tilde{e}_{\alpha\overline{j}}\right\|_0|f_{i\overline{\alpha}}|_{L^1}\\
&=O(\dfrac{u_i^3}{|t_it_{\alpha}|})O(\dfrac{u_{\alpha}^3u_j^3}{|t_{\alpha}t_j|})+O(\dfrac{u_{\alpha}^3u_j^3}{|t_{\alpha}t_j|})O(\dfrac{u_i^3u_{\alpha}^3}{|t_it_{\alpha}|})=O(\dfrac{u_i^3u_i^3u_{\alpha}^3}{|t_it_j||t_{\alpha}|^2}),
\end{align*}
we obtain $|h^{\alpha\overline{\alpha}}R_{i\overline{j}\alpha\overline{\alpha}}|=O(\dfrac{u_i^3u_j^3}{|t_it_j|})$.

Lastly, if $\alpha=\beta=i$, then we have
\[
|R_{i\overline{j}i\overline{i}}|=2\bigg|\int_{X}f_{i\overline{j}}e_{i\overline{i}}dv\bigg|\leq\left\|f_{i\overline{i}}\right\|_0|f_{i\overline{j}}|_{L^1}=O(\dfrac{u_i^2}{|t_i|^2})O(\dfrac{u_i^3u_j^3}{|t_it_j|}).
\]
Thus we have $|h^{i\overline{i}}R_{i\overline{j}i\overline{i}}|=O(\dfrac{u_i^2u_j^3}{|t_it_j|})$.  Similarly we can derive $|h^{j\overline{j}}R_{i\overline{j}j\overline{j}}|=O(\dfrac{u_i^3u_j^2}{|t_it_j|})$.  Therefore we know that
\[\tau_{i\overline{j}}=O(\dfrac{u_i^3u_j^3}{|t_it_j|})+O(\dfrac{u_i^2u_j^3}{|t_it_j|})+O(\dfrac{u_i^3u_j^2}{|t_it_j|})=O(\dfrac{u_i^2u_j^2}{|t_it_j|}(u_i+u_j))\]
for $i, j\leq m$ and $i\neq j$.

For (iii), if $\alpha\neq\beta$ or $\alpha=\beta\geq m+1$, then we have
\begin{align*}
|R_{i\overline{j}\alpha\overline{\beta}}|&\leq\bigg|\int_{X}f_{i\overline{j}}e_{\alpha\overline{\beta}}dv\bigg|+\bigg|\int_{X}f_{i\overline{\beta}}e_{\alpha\overline{j}}dv\bigg|\\
&\leq C_0(\left\|f_{\alpha\overline{\beta}}\right\|_0|f_{i\overline{j}}|_{L^1}+\left\|f_{\alpha\overline{j}}\right\|_0|f_{i\overline{\beta}}|_{L^1})\\
&=\left\|A_{\alpha}\right\|_0\left\|A_{\beta}\right\|_0O(\dfrac{u_i^3}{|t_i|})+O(1)\left\|A_{\alpha}\right\|_0|f_{i\overline{\beta}}|_{L^1}.
\end{align*}
Since by Lemma 4.4.2 we know that in this case $|h^{\alpha\overline{\beta}}|\left\|A_{\alpha}\right\|_0|f_{i\overline{\beta}}|_{L^1}=O(\dfrac{u_i^3}{|t_i|})$, we have
\[
|h^{\alpha\overline{\beta}}R_{i\overline{j}\alpha\overline{\beta}}|=|h^{\alpha\overline{\beta}}|\left\|A_{\alpha}\right\|_0\left\|A_{\beta}\right\|_0O(\dfrac{u_i^3}{|t_i|})+|h^{\alpha\overline{\beta}}|\left\|A_{\alpha}\right\|_0|f_{i\overline{\beta}}|_{L^1}=O(\dfrac{u_i^3}{|t_i|}).
\]

If $\alpha=\beta\leq m$ and $\alpha=i$, we have
\[
|R_{i\overline{j}\alpha\overline{\alpha}}|\leq\bigg|\int_{X}f_{i\overline{j}}e_{\alpha\overline{\alpha}}dv\bigg|+\bigg|\int_{X}f_{i\overline{\alpha}}e_{\alpha\overline{j}}dv\bigg|.
\]
Then as we also have
\[
\bigg|\int_{X}f_{i\overline{\alpha}}e_{\alpha\overline{j}}dv\bigg|\leq\left\|f_{\alpha\overline{j}}\right\|_0|f_{i\overline{\alpha}}|_{L^1}=O(\dfrac{u_{\alpha}}{|t_{\alpha}|})O(\dfrac{u_i^3u_{\alpha}^3}{|t_it_{\alpha}|})=O(\dfrac{u_i^3u_{\alpha}^3}{|t_i||t_{\alpha}|^2})
\]
and
\begin{align*}
\bigg|\int_{X}f_{i\overline{j}}e_{\alpha\overline{\alpha}}dv\bigg|&\leq\bigg|\int_{\Omega_c^{\alpha}}f_{i\overline{j}}\tilde{e}_{\alpha\overline{\alpha}}dv\bigg|+\bigg|\int_{X}f_{i\overline{j}}(e_{\alpha\overline{\alpha}}-\tilde{e}_{\alpha\overline{\alpha}})dv\bigg|\\
&\leq\left\|f_{i\overline{j}}\right\|_{0,\Omega_c^{\alpha}}|\tilde{e}_{\alpha\overline{\alpha}}|_{L^1}+\left\|e_{\alpha\overline{\alpha}}-\tilde{e}_{\alpha\overline{\alpha}}\right\|_0|f_{i\overline{j}}|_{L^1}\\
&=O(\dfrac{u_i^3}{|t_i|})O(\dfrac{u_{\alpha}^3}{|t_{\alpha}|^2})+O(\dfrac{u_{\alpha}^3}{|t_{\alpha}|^2})O(\dfrac{u_i^3}{|t_i|})=O(\dfrac{u_{\alpha}^3}{|t_{\alpha}|^2})O(\dfrac{u_i^3}{|t_i|}),
\end{align*}
we can derive that $|h^{\alpha\overline{\alpha}}R_{i\overline{j}\alpha\overline{\alpha}}|=O(\dfrac{u_i^3}{|t_i|})$.

Now if $\alpha=\beta=i$, we have
\[
|R_{i\overline{j}i\overline{i}}|\leq2\bigg|\int_{X}f_{i\overline{j}}e_{i\overline{i}}dv\bigg|\leq\left\|f_{i\overline{i}}\right\|_0|f_{i\overline{j}}|_{L^1}=O(\dfrac{u_i^2}{|t_i|^2})O(\dfrac{u_i^3}{|t_i|})=O(\dfrac{u_i^5}{|t_i|^3})
\]
which implies $|h^{i\overline{i}}R_{i\overline{j}i\overline{i}}|=O(\dfrac{u_i^2}{|t_i|})$.  Hence we proved (iii).

For (iv), if $\alpha\neq \beta$ or $\alpha=\beta\geq m+1$, then we have
\begin{align*}
|R_{i\overline{j}\alpha\overline{\beta}}|&\leq\bigg|\int_{X}e_{i\overline{j}}f_{\alpha\overline{\beta}}dv\bigg|+\bigg|\int_{X}e_{i\overline{\beta}}f_{\alpha\overline{j}}dv\bigg|\\
&\leq C_0(\left\|f_{i\overline{j}}\right\|_0\left\|f_{\alpha\overline{\beta}}\right\|_0+\left\|f_{i\overline{\beta}}\right\|_0\left\|f_{\alpha\overline{j}}\right\|_0)\\
&=O(1)\left\|A_i\right\|_0\left\|A_j\right\|_0\left\|A_{\alpha}\right\|_0\left\|A_{\beta}\right\|_0+O(1)\left\|A_i\right\|_0\left\|A_{\beta}\right\|_0\left\|A_{\alpha}\right\|_0\left\|A_j\right\|_0\\
&=O(1)\left\|A_{\alpha}\right\|_0\left\|A_{\beta}\right\|_0.
\end{align*}
Hence $|h^{\alpha\overline{\beta}}R_{i\overline{j}\alpha\overline{\beta}}|=|h^{\alpha\overline{\beta}}|\left\|A_{\alpha}\right\|_0\left\|A_{\beta}\right\|_0O(1)=O(1)$ in this case.  If $\alpha=\beta\leq m$, since we have
\begin{align*}
|R_{i\overline{j}\alpha\overline{\alpha}}|&\leq\bigg|\int_{X}e_{i\overline{j}}f_{\alpha\overline{\alpha}}dv\bigg|+\bigg|\int_{X}e_{i\overline{\alpha}}f_{\alpha\overline{j}}dv\bigg|\\
&\leq \left\|e_{i\overline{j}}\right\|_0|f_{\alpha\overline{\alpha}}|_{L^1}+\bigg(\int_{X}|e_{i\overline{\alpha}}|^2dv+\int_{X}|f_{\alpha\overline{j}}|^2dv\bigg)^{\frac{1}{2}}\qquad\quad
\end{align*}
\begin{align*}
&\leq O(1)O(\dfrac{u_{\alpha}^3}{|t_{\alpha}^2|})+\bigg(\int_{X}|f_{i\overline{\alpha}}|^2dv+\int_{X}|f_{\alpha\overline{j}}|^2dv\bigg)^{\frac{1}{2}}\\
&\leq O(\dfrac{u_{\alpha}^3}{|t_{\alpha}^2|})+\left\|A_i\right\|_0\left\|A_j\right\|_0||f_{\alpha\overline{\alpha}}|_{L^1}\\
&=O(\dfrac{u_{\alpha}^3}{|t_{\alpha}^2|})+O(1)O(1)O(\dfrac{u_{\alpha}^3}{|t_{\alpha}^2|})=O(\dfrac{u_{\alpha}^3}{|t_{\alpha}^2|}),
\end{align*}
we know that $|h^{\alpha\overline{\alpha}}R_{i\overline{j}\alpha\overline{\alpha}}|=O(1)$.  Therefore the result follows.

To prove the second part, we first note that the matrix $A:=(\tau_{i\overline{j}})_{i, j\geq m+1}$ can be extended to the boundary non-degenerately using the result of Masur\cite{bib9} about the non-degeneracy of the matrix $B:=(h_{i\overline{j}})_{i, j\geq m+1}$ on the boundary and the work by Wolpert showing that $A$ is bounded from below by a constant multiple of $B$ (in which the constant only depends on $g$).  This implies that $A$ has a positive lower bound.  By (iv) in the first part we also know that $A$ is bounded from above and each of its entries is bounded.  Then by direct computation we can show that $\displaystyle\det(\tau)=(\prod_{k=1}^m\dfrac{3}{4\pi^2}\dfrac{u_k^2}{|t_k|^2})\det(A)(1+O(u_0))$ (indeed we just need to use the explicit formula for $\det(\tau)$, i.e. $\displaystyle\det(\tau)=\sum_{\sigma\in S_n}\textrm{sgn}(\sigma)\tau_{1\overline{\sigma(1)}}\cdots\tau_{n\overline{\sigma(n)}}$, and notice that the sum $\displaystyle\sum_{\sigma\in S_n'}\textrm{sgn}(\sigma)\tau_{1\overline{1}}\cdots\tau_{m\overline{m}}\tau_{m+1\overline{\sigma(m+1)}}\cdots\tau_{n\overline{\sigma(n)}}$, where $S_n'$ denotes the subgroup of $S_n$ fixing $1$ to $m$, has the dominating order).  Let $\epsilon<1$ be fixed.  As we know that for $|(t, s)|$ small enough, $1+O(u_0)\geq 1-\epsilon$, we have
\[
\dfrac{\displaystyle\prod_{k=1}^m\dfrac{3}{4\pi^2}\dfrac{u_k^2}{|t_k|^2}}{\det(\tau)}\leq\dfrac{1}{\det(A)(1-\epsilon)}=O(1).
\]

Now let $\Phi_{ij}$ be the $(i, j)$-minor of $(\tau_{i\overline{j}})$.  Similarly we can also show that
\[
\det(\Phi_{ij})=\left\{
\begin{array}{ll}
(\prod_{k=1, k\neq i}^m\dfrac{3}{4\pi^2}\dfrac{u_k^2}{|t_k|^2})\det(A)(1+O(u_0)) & \text{if } i=j\leq m;\\
(\prod_{k=1, k\neq i, j}^m\dfrac{3}{4\pi^2}\dfrac{u_k^2}{|t_k|^2})O(\dfrac{u_i^2u_j^2}{|t_it_j|}) & \text{if } i, j\leq m \text{ and } i\neq j;\\
(\prod_{k=1, k\neq i}^m\dfrac{3}{4\pi^2}\dfrac{u_k^2}{|t_k|^2})O(\dfrac{u_i^2}{|t_i|}) & \text{if } i\leq m \text{ and } j\geq m+1;\\
(\prod_{k=1}^m\dfrac{3}{4\pi^2}\dfrac{u_k^2}{|t_k|^2})O(1) & \text{if } i, j\geq m+1.
\end{array}\right.
\]
Finally by the fact that $|\tau^{i\overline{j}}|=\bigg|\dfrac{\det(\Phi_{ij})}{\det(\tau)}\bigg|$ we proved the second part.
\end{proof}

\subsection{Estimates on the Curvature of the Ricci Metric}
Now we start to estimate the holomorphic sectional curvature of the Ricci metric.  The following estimates on the norms will be required for the estimate of the error terms.
\begin{lem} (Lemma 4.6, \cite{bib1})
Let $f, g\in C^{\infty}(X, \mathbb{C})$ be smooth functions such that $(\Box+1)f=g$.  Then there exists a constant $C_0$ such that
\begin{itemize}
\item[(i)] $|K_0f|_{L^2}\leq C_0|K_0g|_{L^2}$;
\item[(ii)] $|K_1K_0f|_{L^2}\leq C_0|K_0g|_{L^2}$.
\end{itemize}
\end{lem}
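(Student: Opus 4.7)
The plan is to reduce both inequalities to the spectral properties of $\Box$ on functions together with a single Weitzenb\"{o}ck-type commutation identity for the Maass operators, and then to conclude by integration by parts and Cauchy--Schwarz. Two ingredients do all the work: on $S(0)=C^{\infty}(X,\mathbb{C})$ one has $\Box = -L_1 K_0$ (immediate from the definitions of $L_1$ and $K_0$), and on $S(1)$ the identity
\[
K_0 L_1 - L_2 K_1 = \mathrm{Id}
\]
holds, which follows from a direct local computation using the K\"{a}hler--Einstein condition $\lambda = \partial_z\partial_{\overline{z}}\log\lambda$. Together with the standard adjoint relations $(K_p)^{*}=-L_{p+1}$ for Maass operators on a closed Riemann surface, these are the only facts I will need.

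For (i), integration by parts gives
\[
|K_0 f|_{L^2}^2 = \langle K_0 f, K_0 f\rangle = \langle f, -L_1 K_0 f\rangle = \langle f, \Box f\rangle,
\]
and similarly $|K_0 g|_{L^2}^2 = \langle g, \Box g\rangle$. Since $\Box$ is self-adjoint and non-negative with discrete spectrum on the closed surface, I would diagonalize: writing $f = \sum a_k\varphi_k$ and $g = \sum b_k\varphi_k$ with $\Box\varphi_k = \mu_k\varphi_k$, the relation $(\Box+1)f = g$ forces $b_k = (\mu_k+1)a_k$, hence
\[
|K_0 f|^2_{L^2} = \sum_k \mu_k|a_k|^2 \le \sum_k \mu_k(\mu_k+1)^2|a_k|^2 = |K_0 g|^2_{L^2},
\]
which establishes (i), in fact with $C_0 = 1$.

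For (ii), I apply $K_0$ to $(\Box+1)f=g$ and feed in both identities to eliminate the stray $K_0 f$ term:
\[
K_0 g = K_0(\Box+1)f = -K_0 L_1 K_0 f + K_0 f = -(L_2 K_1+1)K_0 f + K_0 f = -L_2 K_1 K_0 f.
\]
Pairing with $K_0 f$ and using adjointness,
\[
|K_1 K_0 f|_{L^2}^{2} = \langle -L_2 K_1 K_0 f,\, K_0 f\rangle = \langle K_0 g,\, K_0 f\rangle \le |K_0 g|_{L^2}\,|K_0 f|_{L^2} \le |K_0 g|_{L^2}^{2},
\]
by Cauchy--Schwarz and part (i), which proves (ii).

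The main obstacle is the Weitzenb\"{o}ck identity $K_0 L_1 - L_2 K_1 = \mathrm{Id}$: this is where the specific curvature of the K\"{a}hler--Einstein metric enters, and where sign and normalization conventions must be tracked carefully (a routine but somewhat tedious coordinate computation in which all terms involving derivatives of the unknown section cancel, leaving only the curvature term $2\rho^{-2}(\log\rho)_{z\overline{z}}=\mathrm{Id}$ after one invokes the K\"{a}hler--Einstein equation). Once this identity is in hand, both estimates reduce to a line of integration by parts.
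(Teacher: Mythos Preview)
Your argument is correct. Both parts go through as written: the spectral proof of (i) is clean and yields $C_0=1$, and for (ii) the operator identity $K_0L_1-L_2K_1=\mathrm{Id}$ on $S(1)$ does hold under the K\"ahler--Einstein normalization (your coordinate computation sketch is accurate), after which one line of integration by parts plus Cauchy--Schwarz and (i) finishes it.

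The paper takes a closely related but differently packaged route: it writes down a single \emph{pointwise} Bochner formula
\[
(\Box+1)h+|K_1K_0f|^2=K_0f\,\overline{K_0g}+\overline{K_0f}\,K_0g-|f-g|^2,\qquad h=|K_0f|^2,
\]
integrates over $X$ (so the $\Box h$ term drops), and applies Cauchy--Schwarz once to obtain both (i) and (ii) simultaneously. Your commutation identity $K_0L_1-L_2K_1=\mathrm{Id}$ is exactly the operator-level content hiding inside that Bochner formula, so the two proofs use the same curvature input. The differences are organizational: you isolate (i) as a purely spectral fact requiring no curvature at all, and you obtain the sharp constant $C_0=1$ in both inequalities, whereas the paper's integrated Bochner identity gives both estimates in one stroke but with $C_0=2$. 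Either approach is perfectly adequate for the downstream asymptotic estimates.
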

\begin{proof}
This lemma follows from applying Schwarz inequality on the Bochner formula:
\[(\Box+1)h+|K_1K_0f|^2=K_0f\overline{K_0g}+\overline{K_0f}K_0g-|f-g|^2\]
with $h=|K_0f|^2$.
\end{proof}

We will need the estimate on the section $K_0f_{i\overline{j}}$ as well (Lemma 4.7-4.8, \cite{bib1}):
\begin{lem}
Let $K_0$ and $K_1$ be the Maass operators, and let $P=K_1K_0$.  We have
\begin{itemize}
\item[(i)] $\left\|K_0f_{i\overline{i}}\right\|_{0,\Omega_c^i}=O(\dfrac{u_i^2}{|t_i|^2})$ and $\left\|K_0f_{i\overline{i}}\right\|_{0,X-\Omega_c^i}=O(\dfrac{u_i^6}{|t_i|^2})$ if $i\leq m$;
\item[(ii)] $\left\|K_0f_{i\overline{j}}\right\|_{0,\Omega_c^i}=O(\dfrac{u_iu_j^3}{|t_it_j|})$, $\left\|K_0f_{i\overline{j}}\right\|_{0,\Omega_c^j}=O(\dfrac{u_i^3u_j}{|t_it_j|})$ and $\left\|K_0f_{i\overline{j}}\right\|_{0,X-(\Omega_c^i\cup\Omega_c^j)}=O(\dfrac{u_i^3u_j^3}{|t_it_j|})$ if $i, j\leq m$ and $i\neq j$;
\item[(iii)] $\left\|K_0f_{i\overline{j}}\right\|_{0,\Omega_c^i}=O(\dfrac{u_i}{|t_i|})$ and $\left\|K_0f_{i\overline{j}}\right\|_{0,X-\Omega_c^i}=O(\dfrac{u_i^3}{|t_i|})$ if $i\leq m$ and $j\geq m+1$;
\item[(iv)] $\left\|K_0f_{i\overline{j}}\right\|_0=O(1)$ if $i, j\geq m+1$;
\item[(v)] $|K_0f_{i\overline{i}}|_{L^2}^2=O(\dfrac{u_i^5}{|t_i|^4})$ if $i\leq m$;
\item[(vi)] $|K_0f_{i\overline{j}}|_{L^2}^2=O(\dfrac{u_i^3u_j^3}{|t_it_j|^2})$ if $i, j\leq m$ and $i\neq j$;
\item[(vii)] $|K_0f_{i\overline{j}}|_{L^2}^2=O(\dfrac{u_i^3}{|t_i|^2})$ if $i\leq m$ and $j\geq m+1$;
\item[(viii)] $|K_0f_{i\overline{i}}|_{L^2}^2=O(1)$ if $i, j\geq m+1$;
\item[(ix)] $\left\|f_{i\overline{i}}-\tilde{f}_{i\overline{i}}\right\|_1=O(\dfrac{u_i^4}{|t_i|^2})$ if $i\leq m$;
\item[(x)] $|f_{i\overline{i}}|_{L^2}^2=O(\dfrac{u_i^5}{|t_i|^4})$ if $i\leq m$;
\item[(xi)] $|P(\tilde{e}_{i\overline{i}})|_{L^1}=O(\dfrac{u^3}{|t_i|^2})$ if $i\leq m$.
\end{itemize}
\end{lem}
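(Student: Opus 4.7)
The plan is to establish all eleven bounds by direct computation in rs-coordinates on the genuine collars $\Omega_c^j$, using the explicit expressions for $A_i$ from Lemma 4.4.1, the sin-integral claim that appeared in the proof of Corollary 4.4.1, and the Maass operator identity $K_0=\lambda^{-1/2}\partial_z$ (together with the analogous identity for $K_1$). Outside the collars, everything reduces to the uniform $C^\infty$ bounds of Masur \cite{bib9} applied to $A_i$ on the compact remainder $R_c$.

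For the pointwise estimates (i)--(iv), I would apply the product rule $K_0 f_{i\overline{j}}=(K_0 A_i)\overline{A_j}+A_i\,K_0\overline{A_j}$ (reinterpreting $A_i$ and $\overline{A_j}$ as sections of the appropriate twisted bundles on which the Maass operators act). On $\Omega_c^j$ the factor $\lambda^{-1/2}\sim u_j^{-1}r_j|\sin\tau_j|$ combines with $\partial_z$ applied to the Laurent series $\sin^2\tau_j(\overline{p_i^j}+\overline{b_i^j})$ from Lemma 4.4.1, and a term-by-term comparison shows that no net power of $u_j$ is lost relative to $A_i$ itself. Consequently the orders in (i)--(iv) match exactly those of Lemma 4.4.2 for $f_{i\overline{j}}$.

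For the $L^2$ bounds (v)--(viii) and (x), I would integrate the pointwise bounds against the volume form. The refinement over the crude $C^0$-norm-squared-times-volume estimate comes from the hidden $\sin^k\tau_j$ factors already present in $A_i$ and $K_0 A_i$: combining them with the collar volume element $\tfrac{1}{2}u_j^2 r_j^{-1}\csc^2\tau_j\,dr_j\,d\theta_j$ and applying the basic identity $\int r_j^{-1}\sin^{2p}\tau_j\,dr_j=O(u_j^{-1})$ from the proof of Corollary 4.4.1 produces a net extra factor of $u_j$ that accounts for the claimed orders. For (ix) I would handle $K_0(f_{i\overline{i}}-\tilde f_{i\overline{i}})$ and $L_0(f_{i\overline{i}}-\tilde f_{i\overline{i}})$ term by term using the explicit decomposition of the difference on $\Omega_{c_1}^i$ and on the cut-off transition annuli, where $\eta'$ and $\eta''$ contribute only bounded factors, so the order obtained in the proof of Lemma 4.4.3 is preserved after one Maass differentiation. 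For (xi), on $\Omega_{c_1}^i$ we have $\tilde e_{i\overline{i}}=\tfrac{1}{2}|b_i|^2\sin^2\tau_i$, and computing $P(\tilde e_{i\overline{i}})=K_1 K_0\tilde e_{i\overline{i}}$ directly yields $|b_i|^2$ times a bounded function of $\tau_i$; the $L^1$-norm then reduces to $|b_i|^2=O(u_i^2/|t_i|^2)$ multiplied by the sin-integral contribution $O(u_i)$ from the volume element, giving the asserted $O(u_i^3/|t_i|^2)$.

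The main obstacle will be the bookkeeping of simultaneous powers of $u_j$ coming from three independent sources: the metric factors $\lambda^{\pm 1/2}$, the separation in Lemma 4.4.1 between the Laurent part $p_i^j$ and the boundary-dominating constant term $b_i^j$, and the volume element on the collar. The arguments themselves are purely computational and require no structural input beyond Lemmas 4.4.1--4.4.3 and the sin-integral used in their proofs.
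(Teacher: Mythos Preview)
Your proposal is correct and follows essentially the same approach the paper indicates: the paper's own proof consists of the single remark that ``this lemma can be proved by similar computational method used in Lemma 4.4.2,'' and your outline is precisely a fleshed-out version of that, carrying the rs-coordinate expressions of Lemma 4.4.1, the sin-integral estimates from Corollary 4.4.1, and Masur's compact-part bounds through one application of the Maass operators. Your identification of the three sources of $u_j$-powers (metric factors, the $p/b$ splitting, and the collar volume) is exactly the bookkeeping the computation requires.
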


\bigskip
Note that this lemma can be proved by similar computational method used in Lemma 4.4.2.

\bigskip
For the estimate of the curvature of $\tau$ by its curvature formula (Theorem 4.3.2), we need to control the term $\displaystyle \int_X Q_{k\overline{l}}(e_{i\overline{j}})e_{\alpha\overline{\beta}}dv$.  The next lemma (Theorem 4.9, \cite{bib1}) would help us to do so:
\begin{lem}
We have
\begin{align*}
\int_X Q_{k\overline{l}}(e_{i\overline{j}})e_{\alpha\overline{\beta}}dv&=-\int_Xf_{k\overline{l}}(K_0e_{i\overline{j}}\overline{K_0}e_{\alpha\overline{\beta}}+\overline{K_0}e_{i\overline{j}}K_0e_{\alpha\overline{\beta}})dv\\
&\quad-\int_X(\Box e_{i\overline{j}}K_0e_{\alpha\overline{\beta}}\overline{K_0}e_{k\overline{l}}+\Box e_{\alpha\overline{\beta}}K_0e_{i\overline{j}}\overline{K_0}e_{k\overline{l}})dv.
\end{align*}
\end{lem}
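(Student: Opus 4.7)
The identity is an integration-by-parts computation on the closed Riemann surface $X_s$. Work in local holomorphic coordinates with $dv = \lambda\, dA$, $dA = \frac{\sqrt{-1}}{2} dz \wedge d\bar z$, so that $K_0 f = \lambda^{-1/2}\partial_z f$, $\overline{K_0} f = \lambda^{-1/2}\partial_{\bar z} f$, $\Box = -\lambda^{-1}\partial_z\partial_{\bar z}$, $P = \partial_z(\lambda^{-1}\partial_z\,\cdot\,)$, and $\bar P = \partial_{\bar z}(\lambda^{-1}\partial_{\bar z}\,\cdot\,)$; the IBP rule $\int(\partial_z f) g\, dA = -\int f(\partial_z g)\, dA$ (and its $\bar z$-analogue) holds on $X_s$. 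Split the LHS as $I_1 + I_2 + I_3$ following the three pieces of $Q_{k\bar l}(e_{i\bar j})$.

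The low-order part is handled first. One IBP on $\partial_z$ in $I_3 = \int \lambda^{-1}(\partial_z f_{k\bar l})(\partial_{\bar z}e_{i\bar j})e_{\alpha\bar\beta}\, dv$, using $\partial_z\partial_{\bar z} = -\lambda\Box$, yields $I_3 = \int f_{k\bar l}\Box e_{i\bar j}e_{\alpha\bar\beta}\, dv - \int f_{k\bar l}(\overline{K_0}e_{i\bar j})(K_0 e_{\alpha\bar\beta})\, dv$. Adding $I_2 = -2\int f_{k\bar l}\Box e_{i\bar j}e_{\alpha\bar\beta}\, dv$ and integrating the remaining $-\int f_{k\bar l}\Box e_{i\bar j}e_{\alpha\bar\beta}\, dv$ by parts on $\partial_{\bar z}$ produces
\[ I_2+I_3 = -\int_X (\partial_{\bar z}f_{k\bar l})(\partial_z e_{i\bar j})e_{\alpha\bar\beta}\, dA - \int_X f_{k\bar l}\bigl[(K_0 e_{i\bar j})(\overline{K_0}e_{\alpha\bar\beta})+(\overline{K_0}e_{i\bar j})(K_0 e_{\alpha\bar\beta})\bigr]\, dv, \]
recovering the first line of the target RHS and leaving a residual $-\int (\partial_{\bar z}f_{k\bar l})(\partial_z e_{i\bar j})e_{\alpha\bar\beta}\, dA$ to be absorbed by $I_1$.

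It remains to show that $I_1$ equals the negative of this residual plus the second line of the RHS. Starting from $I_1 = \int \partial_{\bar z}(\lambda^{-1}\partial_{\bar z}e_{k\bar l})\,\partial_z(\lambda^{-1}\partial_z e_{i\bar j})\,e_{\alpha\bar\beta}\lambda\, dA$, IBP the outer $\partial_{\bar z}$ of $\bar P(e_{k\bar l})$ and then IBP the inner $\partial_z$ of $P(e_{i\bar j})$ in each resulting summand, applying $\partial_z\partial_{\bar z}e = -\lambda\Box e$ and $\partial_{\bar z}(\lambda^{-1}\partial_z e_{i\bar j}) = -\Box e_{i\bar j} - \lambda^{-2}(\partial_{\bar z}\lambda)\partial_z e_{i\bar j}$. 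The output groups into three blocks: (a) the two target $\Box$-terms $-\int\Box e_{i\bar j}(K_0 e_{\alpha\bar\beta})(\overline{K_0}e_{k\bar l})\, dv - \int\Box e_{\alpha\bar\beta}(K_0 e_{i\bar j})(\overline{K_0}e_{k\bar l})\, dv$; (b) a non-target pair $\int\Box e_{k\bar l}\Box e_{i\bar j}e_{\alpha\bar\beta}\, dv - \int\Box e_{k\bar l}(\overline{K_0}e_{\alpha\bar\beta})(K_0 e_{i\bar j})\, dv$, which one further IBP on $\partial_{\bar z}$ collapses into $\int(\partial_{\bar z}\Box e_{k\bar l})(\partial_z e_{i\bar j})e_{\alpha\bar\beta}\, dA$; and (c) three curvature residuals of shape $\lambda^{-1}(\partial_{\bar z}\lambda)(\cdots)$, which combine after one more IBP and the K\"ahler--Einstein identity $\partial_z(\lambda^{-1}\partial_{\bar z}\lambda) = \partial_z\partial_{\bar z}\log\lambda = \lambda$ into $\int(\partial_{\bar z}e_{k\bar l})(\partial_z e_{i\bar j})e_{\alpha\bar\beta}\, dA$. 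Summing (b) and (c) and using $\partial_{\bar z}f_{k\bar l} = \partial_{\bar z}\Box e_{k\bar l} + \partial_{\bar z}e_{k\bar l}$ yields exactly $\int(\partial_{\bar z}f_{k\bar l})(\partial_z e_{i\bar j})e_{\alpha\bar\beta}\, dA$, which cancels the residual from the low-order step and closes the identity.

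The main obstacle is the $I_1$-calculation: the product $\bar P(e_{k\bar l})P(e_{i\bar j})$ carries four derivatives, so iterated IBPs generate many subterms loaded with factors of $\partial\lambda$ coming from the non-flatness of the metric. A naive accounting leaves an uncancellable trail of $\lambda^{-1}\partial_{\bar z}\lambda$ residuals; the critical analytic input is the K\"ahler--Einstein normalization, which allows the IBP that converts those residuals into the lower-order $\partial_{\bar z}e_{k\bar l}$ piece needed to complete $\partial_{\bar z}f_{k\bar l}$. Careful bookkeeping and a judicious choice of IBP directions are the technical crux.
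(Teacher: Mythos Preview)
The thesis does not actually prove this lemma: it only states the identity and cites it as Theorem~4.9 of Liu--Sun--Yau \cite{bib1}, so there is no proof in the paper to compare against.

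Your approach---splitting $Q_{k\bar l}(e_{i\bar j})$ into its three pieces and integrating by parts repeatedly on the closed surface, using the K\"ahler--Einstein relation $\partial_z\partial_{\bar z}\log\lambda=\lambda$ to absorb the $\partial\lambda$ residuals---is exactly the method used in the original Liu--Sun--Yau paper. Your handling of the low-order part $I_2+I_3$ is complete and correct: one IBP on $\partial_z$ in $I_3$, then one IBP on $\partial_{\bar z}$ in the remaining $-\int f_{k\bar l}\,\Box e_{i\bar j}\,e_{\alpha\bar\beta}\,dv$, cleanly produces the first line of the target plus the residual $-\int(\partial_{\bar z}f_{k\bar l})(\partial_z e_{i\bar j})e_{\alpha\bar\beta}\,dA$.

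For $I_1$ your write-up is a structural sketch rather than a line-by-line computation. The organization you propose---two IBPs to strip the outer derivatives of $\bar P(e_{k\bar l})P(e_{i\bar j})$, then grouping into (a) the two target $\Box$-terms, (b) a $\Box e_{k\bar l}$ pair that one further IBP collapses to $\int(\partial_{\bar z}\Box e_{k\bar l})(\partial_z e_{i\bar j})e_{\alpha\bar\beta}\,dA$, and (c) curvature residuals of the form $\lambda^{-1}(\partial_{\bar z}\lambda)(\cdots)$ that the K\"ahler--Einstein identity converts to $\int(\partial_{\bar z}e_{k\bar l})(\partial_z e_{i\bar j})e_{\alpha\bar\beta}\,dA$---is correct, and the final recombination via $\partial_{\bar z}f_{k\bar l}=\partial_{\bar z}\Box e_{k\bar l}+\partial_{\bar z}e_{k\bar l}$ is the right closing move. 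What is missing is the explicit bookkeeping showing that the (c) residuals really do reduce to exactly that single term with coefficient~$+1$; in practice this step generates several pieces whose signs and coefficients must be tracked carefully, and it is the place where errors most easily creep in. If you intend this as a finished proof rather than an outline, write out block~(c) in full.
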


\bigskip
For the estimate of the holomorphic sectional curvature of $\tau$, let $i=j=k=l$ for the formula in Theorem 4.3.2.  Then we consider
\[\tilde{R}_{i\overline{i}i\overline{i}}=G_1+G_2\]
where $G_1$ consists of all the terms in the formula with all the indices $\alpha$, $\beta$, $\gamma$, $\delta$, $p$ and $q$ being $i$ and $G_2=\tilde{R}_{i\overline{i}i\overline{i}}-G_1$.  If $i\leq m$, the dominating term is $G_1$ which is given by
\begin{align*}
G_1&=24h^{i\overline{i}}\int_X(\Box+1)^{-1}(\xi_i(e_{i\overline{i}}))\overline{\xi}_i(e_{i\overline{i}})dv\\
&\quad+6h^{i\overline{i}}\int_XQ_{i\overline{i}}(e_{i\overline{i}})e_{i\overline{i}}dv\\
&\quad-36\tau^{i\overline{i}}(h^{i\overline{i}})^2\bigg|\int_X\xi_i(e_{i\overline{i}})e_{i\overline{i}}dv\bigg|^2\\
&\quad+\tau_{i\overline{i}}h^{i\overline{i}}R_{i\overline{i}i\overline{i}}.
\end{align*}

Now we are prepared to estimate the holomorphic sectional curvature of $\tau$ (Theorem 4.4, \cite{bib1}):
\begin{thm}
Let $X_0\in\overline{\mathcal{M}_g}-\mathcal{M}_g$ be a codimension $m$ boundary point and let $(t_1, ..., s_n)$ be the pinching coordinate near $X_0$.  Then the holomorphic sectional curvature of $\tau$ is negative in the degenerate directions and is bounded in the non-degenerate directions.  More precisely, there exists a constant $\delta>0$ such that if $|(t,s)|<\delta$, then
\[\tilde{R}_{i\overline{i}i\overline{i}}=\dfrac{3u_i^4}{8\pi^4|t_i|^4}(1+O(u_0))>0\]
for $i\leq m$ and
\[\tilde{R}_{i\overline{i}i\overline{i}}=O(1)\]
for $i\geq m+1$.
Furthermore, the holomorphic sectional curvature, the bisectional curvature and the Ricci curvature of $\tau$ are bounded from below and above on $\mathcal{M}_g$.
\end{thm}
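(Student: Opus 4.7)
The plan is to take the curvature formula of Theorem 4.3.2 with $i=j=k=l$ and split $\tilde R_{i\bar i i\bar i}=G_1+G_2$ exactly as in the statement preceding the theorem, where $G_1$ is the sum of the four terms obtained by setting every remaining free index $(\alpha,\bar\beta,\gamma,\bar\delta,p,\bar q)$ equal to $i$ (respectively $\bar i$), and $G_2$ collects everything else. I would first dispose of $G_2$ by crude estimation: every summand inside $G_2$ contains at least one ``off--diagonal'' factor, and plugging in the collar norm bounds of Lemma 4.4.2, the approximation estimates of Lemma 4.4.3, the inverse--matrix asymptotics of Corollary 4.4.2, together with Cauchy--Schwarz applied to the $(\Box+1)^{-1}$ integrals via Lemma 4.4.5, yields $G_2=o(u_i^4/|t_i|^4)$ when $i\le m$ and $G_2=O(1)$ when $i\ge m+1$.

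For the dominating piece $G_1$ in a degenerate direction $i\le m$, I would handle the four terms separately. The fourth term $\tau_{p\bar j}h^{p\bar q}R_{i\bar q k\bar l}$ collapses to $\tau_{i\bar i}h^{i\bar i}R_{i\bar i i\bar i}$, whose leading asymptotics is already computed in the proof of Corollary 4.4.2. The $Q_{i\bar i}$--term is rewritten by Lemma 4.4.7 as
\[
\int_X Q_{i\bar i}(e_{i\bar i})e_{i\bar i}\,dv=-2\int_X f_{i\bar i}|K_0e_{i\bar i}|^2\,dv-2\mathrm{Re}\!\int_X \Box e_{i\bar i}\,K_0e_{i\bar i}\,\overline{K_0e_{i\bar i}}\,dv,
\]
and both integrals are then evaluated by replacing $e_{i\bar i}$ by the approximation $\tilde e_{i\bar i}$ of Lemma 4.4.3 (the errors being controlled via Lemma 4.4.6), after which the integrand is explicit in the rs--coordinate $(r_i,\tau_i)$ and reduces to the $\sin^{2k}\tau_i\, dr_i/r_i$ integrals appearing in the proof of Corollary 4.4.1. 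The first two terms of $G_1$, which involve $(\Box+1)^{-1}\xi_i(e_{i\bar i})$ and $\overline{\xi}_i(e_{i\bar i})$, are treated analogously: using the identity $\xi_i(f)=-A_i K_1K_0(f)$ from Lemma 4.3.1, one substitutes the expression for $A_i$ from Lemma 4.4.1, replaces $e_{i\bar i}$ by $\tilde e_{i\bar i}$, and again reduces to explicit collar integrals. The expected cancellations among the four terms will then produce the leading coefficient $3/(8\pi^4)$.

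For a non--degenerate direction $i\ge m+1$, every factor appearing in the curvature formula is uniformly bounded: the estimates in Lemma 4.4.2 give $\|A_i\|_0=O(1)$, the $L^1$ and sup norms of $e_{i\bar j},f_{i\bar j}$ are $O(1)$, and Corollary 4.4.2 shows both $\tau_{i\bar j}$ and its inverse matrix are bounded. Combined with Lemmas 4.4.5 and 4.4.6 to control the $(\Box+1)^{-1}$ and $K_0$--factors, this yields $\tilde R_{i\bar i i\bar i}=O(1)$. The boundedness of the holomorphic sectional, bisectional and Ricci curvatures on all of $\mathcal{M}_g$ follows by combining the asymptotic estimates just obtained, valid in a fixed neighborhood $|(t,s)|<\delta$ of each boundary stratum of $\overline{\mathcal{M}_g}$, with a standard compactness argument on the complement of these neighborhoods in $\overline{\mathcal{M}_g}$, where continuity of the curvature tensor produces uniform bounds.

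The main obstacle is the degenerate--direction asymptotic: none of the four summands of $G_1$ is individually of order $u_i^4/|t_i|^4$ with the required coefficient, so the statement rests on a delicate matching of leading terms. In practice, I would carry out the rs--coordinate integrals to one order beyond the naive leading order, tracking the $1+O(u_0)$ factors carefully, so that when the four contributions are added the subdominant $u_i^5/|t_i|^4$ terms cancel and one is left with exactly $\tfrac{3u_i^4}{8\pi^4|t_i|^4}(1+O(u_0))$. The key technical ingredients driving these cancellations are the identity $b_j=-u_j/(\pi\overline{t_j})+O(u_ju_0/|t_j|)$ from Lemma 4.4.1(6), which fixes the precise leading coefficients of $\tilde e_{i\bar i}$, and the operator identity for $\xi_k$ in Lemma 4.3.1(iii), which rewrites each $(\Box+1)^{-1}$--integral in a form amenable to the collar computation.
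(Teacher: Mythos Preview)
Your outline follows the paper's approach closely---the $G_1/G_2$ split, the use of Lemma 4.4.7 for the $Q$-term, the replacement $e_{i\bar i}\mapsto\tilde e_{i\bar i}$, and the compactness argument at the end are all exactly what the paper does. But there is one genuine missing idea.

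For the term $\int_X(\Box+1)^{-1}(\xi_i(e_{i\bar i}))\,\overline{\xi}_i(e_{i\bar i})\,dv$ you write that after substituting $A_i$ from Lemma 4.4.1 and replacing $e_{i\bar i}$ by $\tilde e_{i\bar i}$ one ``again reduces to explicit collar integrals''. This is where the argument would stall: even after the replacement, $(\Box+1)^{-1}(\xi_i(\tilde e_{i\bar i}))$ is \emph{not} explicit---$(\Box+1)^{-1}$ is a global inverse and cannot be evaluated by a collar computation. Lemma 4.4.5 gives only an $L^2$ bound, not the precise leading coefficient you need. The paper resolves this by introducing a \emph{second} approximation function
\[
d_i=-\tfrac{1}{8}\sin^2\tau_i\cos 2\tau_i\,|b_i|^2\overline{b_i}\qquad\text{(cut off on a slightly smaller collar $\Omega_{c_2}^i$)},
\]
chosen so that $(\Box+1)d_i$ matches $\xi_i(\tilde e_{i\bar i})$ up to $O(u_i^5/|t_i|^3)$; the maximum principle then gives $\|(\Box+1)^{-1}\xi_i(\tilde e_{i\bar i})-d_i\|_0=O(u_i^5/|t_i|^3)$, and the integral becomes the explicit $\int_{\Omega_{c_2}^i}d_i\,\overline{\xi}_i(\tilde e_{i\bar i})\,dv$. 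Without this auxiliary construction you cannot extract the coefficient $9/(16\pi^4)$ from this term, and the final sum will not close.

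A smaller point: your last paragraph suggests the leading coefficient emerges from cancellation of subdominant $u_i^5/|t_i|^4$ terms. In fact each of the four summands of $G_1$ is individually of exact order $u_i^4/|t_i|^4$, with coefficients $\tfrac{9}{16\pi^4}$, $-\tfrac{9}{16\pi^4}$, $-\tfrac{3}{16\pi^4}$, $\tfrac{9}{16\pi^4}$; these add to $\tfrac{3}{8\pi^4}$ directly, and the subleading pieces are simply absorbed into the $O(u_0)$ error. No second-order expansion is required.
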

\begin{proof}
The following lemma (indeed the method used in its proof) will be the main ingredient in the proof of this theorem:
\begin{lem}
If $i\leq m$, then $|G_2|=O(\dfrac{u_5}{|t_i|^4})$.
\end{lem}
This lemma can be proved by case-by-case checking and direct computation, and its proof can be found in the appendix of \cite{bib1}.

We first consider $\tilde{R}_{i\overline{i}i\overline{i}}$.  From Lemma 4.4.7 we have
\[\int_X Q_{i\overline{i}}(e_{i\overline{i}})e_{i\overline{i}}dv=\int_X|K_0e_{i\overline{i}}|^2(2e_{i\overline{i}}-4f_{i\overline{i}})dv,\]
which implies
\begin{align*}
G_1&=24h^{i\overline{i}}\int_X(\Box+1)^{-1}(\xi_i(e_{i\overline{i}}))\overline{\xi}_i(e_{i\overline{i}})dv+6h^{i\overline{i}}\int_X|K_0e_{i\overline{i}}|^2(2e_{i\overline{i}}-4f_{i\overline{i}})dv\\
&\quad-36\tau^{i\overline{i}}(h^{i\overline{i}})^2\bigg|\int_X\xi_i(e_{i\overline{i}})e_{i\overline{i}}dv\bigg|^2+\tau_{i\overline{i}}h^{i\overline{i}}R_{i\overline{i}i\overline{i}}.
\end{align*}
Now let $i\leq m$ first.  From the proof of the Corollary 4.4.2, we know that $h^{i\overline{i}}R_{i\overline{i}i\overline{i}}=\dfrac{3}{4\pi^2}\dfrac{u_i^2}{|t_i|^2}(1+O(u_0))$.  Thus by Corollary 4.4.2 we have
\[\tau_{i\overline{i}}h^{i\overline{i}}R_{i\overline{i}i\overline{i}}=\bigg(\dfrac{3}{4\pi^2}\dfrac{u_i^2}{|t_i|^2}(1+O(u_0))\bigg)^2=\dfrac{9}{16\pi^4}\dfrac{u_i^4}{|t_i|^4}(1+O(u_0)).\]
For the second term, we have
\begin{align*}
\int_X|K_0e_{i\overline{i}}|^2(2e_{i\overline{i}}-4f_{i\overline{i}})dv&=\int_X|K_0\tilde{e}_{i\overline{i}}|^2(2\tilde{e}_{i\overline{i}}-4\tilde{f}_{i\overline{i}})dv\\
&\quad+\int_X(|K_0e_{i\overline{i}}|^2-|K_0\tilde{e}_{i\overline{i}}|^2)(2\tilde{e}_{i\overline{i}}-4\tilde{f}_{i\overline{i}})dv\\
&\quad+\int_X|K_0e_{i\overline{i}}|^2(2(e_{i\overline{i}}-\tilde{e}_{i\overline{i}})-4(f_{i\overline{i}}-\tilde{f}_{i\overline{i}}))dv.
\end{align*}
Notice that by Lemma 4.4.6 and Lemma 4.4.3 we have
\begin{align*}
\bigg|\int_X(|K_0e_{i\overline{i}}|^2-|K_0\tilde{e}_{i\overline{i}}|^2)(2\tilde{e}_{i\overline{i}}-4\tilde{f}_{i\overline{i}})dv\bigg|&\leq\left\||K_0e_{i\overline{i}}|^2-|K_0\tilde{e}_{i\overline{i}}|^2\right\|_0\int_X(2|\tilde{e}_{i\overline{i}}|+4|\tilde{f}_{i\overline{i}}|)dv\\
&\leq \left\||K_0e_{i\overline{i}}|+|K_0\tilde{e}_{i\overline{i}}|\right\|_0\left\|K_0(e_{i\overline{i}}-\tilde{e}_{i\overline{i}})\right\|_0\\
&\quad\cdot\int_X(2|\tilde{e}_{i\overline{i}}|+4|\tilde{f}_{i\overline{i}}|)dv\\
&=O(\dfrac{u_i^2}{|t_i|^2})O(\dfrac{u_i^4}{|t_i|^2})O(\dfrac{u_i^3}{|t_i|^2})=O(\dfrac{u_i^9}{|t_i|^6})
\end{align*}
and
\begin{align*}
\bigg|\int_X|K_0e_{i\overline{i}}|^2(2(e_{i\overline{i}}-\tilde{e}_{i\overline{i}})-4(f_{i\overline{i}}-\tilde{f}_{i\overline{i}}))dv\bigg|&\leq C_0\left\|K_0e_{i\overline{i}}\right\|_0^2(2\left\|e_{i\overline{i}}-\tilde{e}_{i\overline{i}}\right\|_0+4\left\|f_{i\overline{i}}-\tilde{f}_{i\overline{i}}\right\|_0)\\
&=O(\dfrac{u_i^4}{|t_i|^4})O(\dfrac{u_i^4}{|t_i|^2})=O(\dfrac{u_i^8}{|t_i|^6}).
\end{align*}
Then consider
\[\int_X|K_0\tilde{e}_{i\overline{i}}|^2(2\tilde{e}_{i\overline{i}}-4\tilde{f}_{i\overline{i}})dv=\int_{\Omega_{c_1}^i}|K_0\tilde{e}_{i\overline{i}}|^2(2\tilde{e}_{i\overline{i}}-4\tilde{f}_{i\overline{i}})dv+\int_{\Omega_c^i-\Omega_{c_1}^i}|K_0\tilde{e}_{i\overline{i}}|^2(2\tilde{e}_{i\overline{i}}-4\tilde{f}_{i\overline{i}})dv.\]
Since we have
\begin{align*}
\bigg|\int_{\Omega_c^i-\Omega_{c_1}^i}|K_0\tilde{e}_{i\overline{i}}|^2(2\tilde{e}_{i\overline{i}}-4\tilde{f}_{i\overline{i}})dv\bigg|&\leq C_0\left\|K_0\tilde{e}_{i\overline{i}}\right\|_0^2(\left\|\tilde{e}_{i\overline{i}}\right\|_{0,\Omega_c^i-\Omega_{c_1}^i}+\left\|\tilde{f}_{i\overline{i}}\right\|_{0,\Omega_c^i-\Omega_{c_1}^i})\\
&=O(\dfrac{u_i^4}{|t_i|^4})O(\dfrac{u_i^4}{|t_i|^2})=O(\dfrac{u_i^8}{|t_i|^6})
\end{align*}
and by direct computation we also have
\[\int_{\Omega_{c_1}^i}|K_0\tilde{e}_{i\overline{i}}|^2(2\tilde{e}_{i\overline{i}}-4\tilde{f}_{i\overline{i}})dv=-\dfrac{3u_i^7}{64\pi^4|t_i|^6}(1+O(u_0)).\]
Therefore by Corollary 4.4.1 we obtain
\begin{align*}
6h^{i\overline{i}}\int_X|K_0e_{i\overline{i}}|^2(2e_{i\overline{i}}-4f_{i\overline{i}})dv&=\dfrac{12|t_i|^2}{u_i^3}(1+O(u_0))\bigg(-\dfrac{3u_i^7}{64\pi^4|t_i|^6}(1+O(u_0))+O(\dfrac{u_i^8}{|t_i|^6})\bigg)\\
&=-\dfrac{9u_i^4}{16\pi^4|t_i|^4}(1+O(u_0)).
\end{align*}

For the third term, using similar method as above, we consider
\begin{align*}
\int_X\xi_i(e_{i\overline{i}})e_{i\overline{i}}dv&=\int_X\xi_i(\tilde{e}_{i\overline{i}})\tilde{e}_{i\overline{i}}dv+\int_X\xi_i(\tilde{e}_{i\overline{i}})(e_{i\overline{i}}-\tilde{e}_{i\overline{i}})dv\\
&\quad+\int_X\xi_i(e_{i\overline{i}}-\tilde{e}_{i\overline{i}})e_{i\overline{i}}dv.
\end{align*}
Then since we have (by Lemma 4.3.1, Lemma 4.4.2 and Lemma 4.4.3)
\begin{align*}
\bigg|\int_X\xi_i(\tilde{e}_{i\overline{i}})(e_{i\overline{i}}-\tilde{e}_{i\overline{i}})dv\bigg|&\leq C_0\left\|\xi_i(\tilde{e}_{i\overline{i}})\right\|_0\left\|e_{i\overline{i}}-\tilde{e}_{i\overline{i}}\right\|_0\\
&\leq C_0\left\|A_i\right\|_0\left\|K_1K_0(\tilde{e}_{i\overline{i}})\right\|_0\left\|e_{i\overline{i}}-\tilde{e}_{i\overline{i}}\right\|_0\\
&\leq C_0\left\|A_i\right\|_0\left\|\tilde{e}_{i\overline{i}}\right\|_2\left\|e_{i\overline{i}}-\tilde{e}_{i\overline{i}}\right\|_0\\
&=O(\dfrac{u_i}{|t_i|})O(\dfrac{u_i^2}{|t_i|^2})O(\dfrac{u_i^4}{|t_i|^2})=O(\dfrac{u_i^7}{|t_i|^5}),
\end{align*}
\begin{align*}
\bigg|\int_X\xi_i(e_{i\overline{i}}-\tilde{e}_{i\overline{i}})e_{i\overline{i}}dv\bigg|&\leq \left\|\xi_i(e_{i\overline{i}}-\tilde{e}_{i\overline{i}})\right\|_0\int_Xe_{i\overline{i}}dv\\
&\leq\left\|A_i\right\|_0\left\|e_{i\overline{i}}-\tilde{e}_{i\overline{i}}\right\|_2\int_Xf_{i\overline{i}}dv\\
&\leq\left\|A_i\right\|_0\left\|f_{i\overline{i}}-\tilde{f}_{i\overline{i}}\right\|_1h_{i\overline{i}}\\
&=O(\dfrac{u_i}{|t_i|})O(\dfrac{u_i^4}{|t_i|^2})O(\dfrac{u_i^3}{|t_i|^2})=O(\dfrac{u_i^8}{|t_i|^5})
\end{align*}
and
\[\bigg|\int_{\Omega_c^i-\Omega_{c_1}^i}\xi_i(\tilde{e}_{i\overline{i}})\tilde{e}_{i\overline{i}}dv\bigg|\leq C_0\left\|\xi_i(\tilde{e}_{i\overline{i}})\right\|_0\left\|\tilde{e}_{i\overline{i}}\right\|_{0,\Omega_c^i-\Omega_{c_1}^i}=O(\dfrac{u_i^3}{|t_i|^3})O(\dfrac{u_i^4}{|t_i|^2})=O(\dfrac{u_i^7}{|t_i|^5}),\]
we know that
\[\int_X\xi_i(e_{i\overline{i}})e_{i\overline{i}}dv=\int_{\Omega_{c_1}^i}\xi_i(\tilde{e}_{i\overline{i}})\tilde{e}_{i\overline{i}}dv+O(\dfrac{u_i^7}{|t_i|^5}).\]
Note that we have the following explicit expression of $\xi_i(\tilde{e}_{i\overline{i}})$ on $\Omega_{c_1}^i$:
\[\xi_i(\tilde{e}_{i\overline{i}})=-\dfrac{z_i}{\overline{z_i}}\sin^2\tau_i\overline{b_i}P(\tilde{e}_{i\overline{i}})-\dfrac{z_i}{\overline{z_i}}\sin^2\tau_i\overline{p_i}P(\tilde{e}_{i\overline{i}}).\]
Since $\left\|\dfrac{z_i}{\overline{z_i}}\sin^2\tau_i\overline{p_i}P(\tilde{e}_{i\overline{i}})\right\|_{0,\Omega_{c_1}^i}=O(\dfrac{u_i^5}{|t_i|^3})$, we know that
\begin{align*}
\bigg|\int_{\Omega_{c_1}^i}\dfrac{z_i}{\overline{z_i}}\sin^2\tau_i\overline{p_i}P(\tilde{e}_{i\overline{i}})\tilde{e}_{i\overline{i}}dv\bigg|&\leq\left\|\dfrac{z_i}{\overline{z_i}}\sin^2\tau_i\overline{p_i}P(\tilde{e}_{i\overline{i}})\right\|_{0,\Omega_{c_1}^i}\int_{\Omega_{c_1}^i}\tilde{e}_{i\overline{i}}dv\\
&=O(\dfrac{u_i^5}{|t_i|^3})O(\dfrac{u_i^3}{|t_i|^2})=O(\dfrac{u_i^8}{|t_i|^5}).
\end{align*}
Then by direct computation again we have
\[\int_{\Omega_{c_1}^i}-\dfrac{z_i}{\overline{z_i}}\sin^2\tau_i\overline{b_i}P(\tilde{e}_{i\overline{i}})\tilde{e}_{i\overline{i}}dv=-\dfrac{u_i^6}{32\pi^3|t_i|^4t_i}(1+O(u_0)).\]
Therefore we obtain
\[\int_X\xi_i(e_{i\overline{i}})e_{i\overline{i}}dv=-\dfrac{u_i^6}{32\pi^3|t_i|^4t_i}(1+O(u_0))\]
which implies that
\begin{align*}
36\tau^{i\overline{i}}(h^{i\overline{i}})^2\bigg|\int_X\xi_i(e_{i\overline{i}})e_{i\overline{i}}dv\bigg|^2&\dfrac{144\pi^2|t_i|^2}{3u_i^2}(\dfrac{2|t_i|^2}{u_i^3})^2(\dfrac{u_i^6}{32\pi^3|t_i|^4t_i})^2(1+O(u_0))\\
&=\dfrac{3u_i^4}{16\pi^4|t_i|^4}(1+O(u_0)).
\end{align*}

For the estimate of the first term, again by similar method we can derive that
\[\int_X(\Box+1)^{-1}(\xi_i(e_{i\overline{i}}))\overline{\xi}_i(e_{i\overline{i}})dv=\int_X(\Box+1)^{-1}(\xi_i(\tilde{e}_{i\overline{i}}))\overline{\xi}_i(\tilde{e}_{i\overline{i}})dv+O(\dfrac{u_i^8}{|t_i|^6}).\]
However, to estimate $(\Box+1)^{-1}(\xi_i(\tilde{e}_{i\overline{i}}))$, we need to introduce another approximate function.  Choose $c_2<c_1$ and let $\eta_1\in C^{\infty}(\mathbb{R}, [0,1])$ be the cut-off function given by
\[
\left\{
\begin{array}{ll}
\eta(x)=1 & \text{if } x\leq\log c_2;\\
\eta(x)=0 & \text{if } x\geq \log c_1;\\
0<\eta(x)<1 & \text{if } \log c_2<x<\log c_1.
\end{array}\right.
\]
Now for $i\leq m$, define the function $d_i$ by
\[
d_i(z)=\left\{
\begin{array}{ll}
-\frac{1}{8}\sin^2\tau_i\cos2\tau_i|b_i|^2\overline{b_i} & \text{if } z\in\Omega_{c_2}^i;\\
-\frac{1}{8}\sin^2\tau_i\cos2\tau_i|b_i|^2\overline{b_i}\eta_1(\log r_i) & \text{if } z\in\Omega_{c_1}^i \text{ and } c_2<r_i<c_1;\\
-\frac{1}{8}\sin^2\tau_i\cos2\tau_i|b_i|^2\overline{b_i}\eta_1(\log\rho_i-\log r_i) & \text{if } z\in\Omega_{c_1}^i \text{ and } c_1^{-1}\rho_i<r_i<c_2^{-1}\rho_i;\\
0 & \text{if } z\in X-\Omega_{c_1}^i.
\end{array}
\right.
\]
Notice that we have this computational fact:
\[\left\|\xi_i(\tilde{e}_{i\overline{i}})-(\Box+1)d_i\right\|_0=O(\dfrac{u_i^5}{|t_i|^3}),\]
which implies
\[\left\|(\Box+1)^{-1}(\xi_i(\tilde{e}_{i\overline{i}}))-d_i\right\|_0=O(\dfrac{u_i^5}{|t_i|^3}).\]
Consider
\[\int_X(\Box+1)^{-1}(\xi_i(\tilde{e}_{i\overline{i}}))\overline{\xi}_i(\tilde{e}_{i\overline{i}})dv=\int_Xd_i\overline{\xi}_i(\tilde{e}_{i\overline{i}})dv+\int_X((\Box+1)^{-1}(\xi_i(\tilde{e}_{i\overline{i}}))-d_i)\overline{\xi}_i(\tilde{e}_{i\overline{i}})dv.\]
We know that
\begin{align*}
\bigg|\int_X((\Box+1)^{-1}(\xi_i(\tilde{e}_{i\overline{i}}))-d_i)\overline{\xi}_i(\tilde{e}_{i\overline{i}})dv\bigg|&\leq C_0\left\|(\Box+1)^{-1}(\xi_i(\tilde{e}_{i\overline{i}}))-d_i\right\|_0\left\|\overline{\xi}_i(\tilde{e}_{i\overline{i}})\right\|_0\\
&=O(\dfrac{u_i^5}{|t_i|^3})O(\dfrac{u_i^3}{|t_i|^3})=O(\dfrac{u_i^8}{|t_i|^6}).
\end{align*}
Moreover, we have
\[d_i\overline{\xi}_i(\tilde{e}_{i\overline{i}})=-d_i\dfrac{\overline{z_i}}{z_i}\sin^2\tau_ib_i\overline{P}(\tilde{e}_{i\overline{i}})-d_i\dfrac{\overline{z_i}}{z_i}\sin^2\tau_ip_i\overline{P}(\tilde{e}_{i\overline{i}}).\]
Using similar argument as before, from the facts
\[\left\|d_i\dfrac{\overline{z_i}}{z_i}\sin^2\tau_ip_i\overline{P}(\tilde{e}_{i\overline{i}})\right\|_0=O(\dfrac{u_i^8}{|t_i|^6}),\]
\[\left\|d_i\dfrac{\overline{z_i}}{z_i}\sin^2\tau_ib_i\overline{P}(\tilde{e}_{i\overline{i}})\right\|_{0,\Omega_{c_1}^i-\Omega_{c_2}^i}=O(\dfrac{u_i^8}{|t_i|^6})\]
and
\[-\int_{\Omega_{c_2}^i}d_i\dfrac{\overline{z_i}}{z_i}\sin^2\tau_ib_i\overline{P}(\tilde{e}_{i\overline{i}})=\dfrac{3u_i^7}{256\pi^4|t_i|^6}(1+O(u_0)),\]
we know that
\[\int_X(\Box+1)^{-1}(\xi_i(e_{i\overline{i}}))\overline{\xi}_i(e_{i\overline{i}})dv=\dfrac{3u_i^7}{256\pi^4|t_i|^6}(1+O(u_0)).\]
Hence we have
\[24h^{i\overline{i}}\int_X(\Box+1)^{-1}(\xi_i(e_{i\overline{i}}))\overline{\xi}_i(e_{i\overline{i}})dv=\dfrac{9u_i^4}{16\pi^4|t_i|^4}(1+O(u_0)).\]
Combining all the previous results, we have
\begin{align*}
G_1&=\dfrac{9u_i^4}{16\pi^4|t_i|^4}(1+O(u_0))-\dfrac{9u_i^4}{16\pi^4|t_i|^4}(1+O(u_0))-\dfrac{3u_i^4}{16\pi^4|t_i|^4}(1+O(u_0))\\
&\quad+\dfrac{9}{16\pi^4}\dfrac{u_i^4}{|t_i|^4}(1+O(u_0))\\
&=\dfrac{3u_i^4}{8\pi^4|t_i|^4}(1+O(u_0)).
\end{align*}
Thus with the aid of Lemma 4.4.8 we proved the case of $i\leq m$.  The case of $i\geq m+1$ can be proved by case-by-case checking as in the proof of Lemma 4.4.8 (Lemma 4.10, \cite{bib1}).

Before proving the boundedness of the curvatures, we first give a weak estimate on the full curvature of $\tau$.  Let
\[
\Lambda_i=\left\{
\begin{array}{ll}
\frac{u_i}{|t_i|}& \text{if } i\leq m;\\
1 & \text{if } i\geq m+1.
\end{array}\right.
\]
Again we can use the case-by-case checking to show the following estimates:
\[\tilde{R}_{i\overline{j}k\overline{l}}=O(\Lambda_i\Lambda_j\Lambda_k\Lambda_l)O(u_0)\]
if at least one of these indices $i, j, k, l$ is less than or equal to $m$ and not all of them are equal;
\[\tilde{R}_{i\overline{j}k\overline{l}}=O(1)\]
if $i, j, k, l\geq m+1$.

Now we can show the boundedness of the curvature.  For the holomorphic sectional curvature, from the first part of this theorem and Corollary 4.4.2, we know that there is a constant $C_0>1$ depending on $X_0$ and $\delta$ such that if $|(t,s)|<\delta$, then
\begin{itemize}
\item[(i)] $C_0^{-1}\tau_{i\overline{i}}^2\leq\tilde{R}_{i\overline{i}i\overline{i}}\leq C_0\tau_{i\overline{i}}^2$ if $i\leq m$;
\item[(ii)] $|\tilde{R}_{i\overline{i}i\overline{i}}|\leq C_0\tau_{i\overline{i}}^2$ if $i\geq m+1$.
\end{itemize}
We cover the divisor $Y=\tilde{\mathcal{M}_g}-\mathcal{M}_g$ by such open coordinate charts.  By the compactness of $Y$ we can pick a finite covering $\{\Xi_1, ..., \Xi_p\}$ of $Y$.  Note that there exists an open neighborhood $V$ of Y such that $\overline{V}\subset\cup_{s=1}^p\Xi_s$.  By the weak estimate and the argument above, we know that the holomorphic sectional curvature of $\tau$ is bounded from below and above on $V$.  However by the fact the $\mathcal{M}_g-V$ is a compact set in $\mathcal{M}_g$, the holomorphic sectional curvature is also bounded on $\mathcal{M}_g-V$ and thus they are bounded on $\mathcal{M}_g$.  Using this kind of compactness argument with the weak estimate, we can also show the boundedness of the bisectional curvature and the Ricci curvature of $\tau$.
\end{proof}

\begin{rem}
A complete proof of the boundedness of the bisectional curvature of $\tau$ can be found in the proof of the Theorem 3.2 in \cite{bib2}.
\end{rem}

\newpage

\section{Perturbed Ricci Metric}
As mentioned before, the perturbed Ricci metric is another useful new complete K\"{a}hler metric introduced by Yau et. al..  Now let's examine this metric.

\subsection{Definition and the Curvature Formula of the Perturbed Ricci Metric}
The perturbed Ricci metric is in fact obtained by adding a constant multiple of the Weil-Petersson metric to the Ricci metric.
\begin{defn}
For any constant $C>0$, the perturbed Ricci metric $\tilde{\tau}$ with constant $C$ is defined by
\[\tilde{\tau}_{i\overline{j}}=\tau_{i\overline{j}}+Ch_{i\overline{j}}.\]
\end{defn}

\bigskip
This metric is a natural complete metric whose holomorphic sectional curvature is negatively bounded.  To be more precise, the holomorphic sectional curvature of the perturbed Ricci metric near an interior point of the moduli space or in the non-degenerate directions near a boundary point is dominated by the curvature of the large constant multiple of the Weil-Petersson metric.  We will go through this in the next subsection.  To get prepared, we have to consider the curvature formula of the perturbed Ricci metric first.
\begin{thm} (Theorem 5.1, \cite{bib1})
Let $s_1, ..., s_n$ be normal coordinates at $s\in\mathcal{M}_g$ with respect to the Weil-Petersson metric.  Then at $s$, we have
\begin{align*}
P_{i\overline{j}k\overline{l}}&=h^{\alpha\overline{\beta}}\bigg\{\sigma_1\sigma_2\int_{X_s}\big\{(\Box+1)^{-1}(\xi_k(e_{i\overline{j}}))\overline{\xi}_l(e_{\alpha\overline{\beta}})+(\Box+1)^{-1}(\xi_k(e_{i\overline{j}}))\overline{\xi}_{\beta}(e_{\alpha\overline{l}})\big\}dv\bigg\}\\
&\quad+h^{\alpha\overline{\beta}}\bigg\{\sigma_1\int_{X_s}Q_{k\overline{l}}(e_{i\overline{j}})e_{\alpha\overline{\beta}}dv\bigg\}\\
&\quad-\tilde{\tau}^{p\overline{q}}h^{\alpha\overline{\beta}}h^{\gamma\overline{\delta}}\bigg\{\sigma_1\int_{X_s}\xi_k(e_{i\overline{q}})e_{\alpha\overline{\beta}}dv\bigg\}\bigg\{\tilde{\sigma}_1\int_{X_s}\overline{\xi}_l(e_{p\overline{j}})e_{\gamma\overline{\delta}}dv\bigg\}\\
&\quad+\tau_{p\overline{j}}h^{p\overline{q}}R_{i\overline{q}k\overline{l}}+CR_{i\overline{j}k\overline{l}},
\end{align*}
where $P_{i\overline{j}k\overline{l}}$ is the curvature tensor of the perturbed Ricci metric.
\end{thm}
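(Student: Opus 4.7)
The plan is to exploit the linearity of the perturbation $\tilde{\tau}_{i\overline{j}} = \tau_{i\overline{j}} + C h_{i\overline{j}}$ at the level of the metric, and then observe how non-linearity enters only through the inverse matrix in the curvature formula. The central simplification is that we work in normal coordinates for the Weil--Petersson metric at $s$, so that $\partial_k h_{i\overline{j}}(s) = 0$ and hence $\partial_k \tilde{\tau}_{i\overline{j}}(s) = \partial_k \tau_{i\overline{j}}(s)$; in particular the first derivatives of $\tilde{\tau}$ and $\tau$ agree at $s$, and the Christoffel symbols of $h$ vanish at $s$.

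Starting from the defining formula
\[
P_{i\overline{j}k\overline{l}} = \partial_k\partial_{\overline{l}}\tilde{\tau}_{i\overline{j}} - \tilde{\tau}^{p\overline{q}}\partial_k\tilde{\tau}_{i\overline{q}}\partial_{\overline{l}}\tilde{\tau}_{p\overline{j}},
\]
I would first expand the second order derivative as $\partial_k\partial_{\overline{l}}\tilde{\tau}_{i\overline{j}} = \partial_k\partial_{\overline{l}}\tau_{i\overline{j}} + C\,\partial_k\partial_{\overline{l}}h_{i\overline{j}}$. The second summand is identified with the Weil--Petersson curvature using Theorem 3.2.1: since $\partial_k h_{i\overline{j}}(s)=0$ in our coordinates, the inverse-metric term in the definition of $R_{i\overline{j}k\overline{l}}$ drops out, giving $\partial_k\partial_{\overline{l}}h_{i\overline{j}}(s) = R_{i\overline{j}k\overline{l}}(s)$. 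This accounts for the extra $CR_{i\overline{j}k\overline{l}}$ summand appearing in the final formula.

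For $\partial_k\partial_{\overline{l}}\tau_{i\overline{j}}$, I would quote the intermediate computation carried out in the proof of Theorem 4.3.2: after the $\sigma_1,\sigma_2$ manipulations, using $(\Box+1)^{-1}$ and the operator $Q_{k\overline{l}}$, one arrives at the three explicit pieces of the Ricci-metric curvature together with the term $\tau_{p\overline{j}}h^{p\overline{q}}R_{i\overline{q}k\overline{l}}$ and the Christoffel terms $\tau_{p\overline{q}}\overline{\Gamma^{q}_{jl}}\Gamma^{p}_{ik}$, all evaluated at $s$. Because we are in Weil--Petersson normal coordinates, the $\Gamma$-terms vanish, and what remains is exactly the first two integrals of the stated formula plus $\tau_{p\overline{j}}h^{p\overline{q}}R_{i\overline{q}k\overline{l}}$. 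The quadratic-in-first-derivatives term $\tilde{\tau}^{p\overline{q}}\partial_k\tilde{\tau}_{i\overline{q}}\partial_{\overline{l}}\tilde{\tau}_{p\overline{j}}$ is then handled by substituting $\partial_k\tilde{\tau} = \partial_k\tau$ and then invoking the formula for $\partial_k\tau_{i\overline{q}}$ from Theorem 4.3.1 (again at $s$, so the $\tau_{p\overline{q}}\Gamma^{p}_{ik}$ contribution vanishes); this produces precisely the third summand, with $\tilde{\tau}^{p\overline{q}}$ rather than $\tau^{p\overline{q}}$.

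There is no real obstacle here, only a bookkeeping subtlety to watch: the non-linearity of curvature in the metric manifests only in the inverse-metric factor, and it is essential that $\partial_k h_{i\overline{j}}(s) = 0$ so that the cross terms $\partial_k\tau\cdot\partial_{\overline{l}}h$ and $\partial_k h\cdot\partial_{\overline{l}}\tau$ in the expansion of $\partial_k\tilde{\tau}\cdot\partial_{\overline{l}}\tilde{\tau}$ drop out; this is exactly why the inverse matrix $\tilde{\tau}^{p\overline{q}}$ (and not some mixed expression) appears in the final formula. Assembling the three contributions in the order described yields the claimed identity.
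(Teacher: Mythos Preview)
Your proposal is correct and follows essentially the same approach as the paper: use Weil--Petersson normal coordinates so that $\partial_k h_{i\overline{j}}(s)=0$ and $\Gamma^p_{ik}(s)=0$, deduce $\partial_k\tilde{\tau}=\partial_k\tau$ and $\partial_k\partial_{\overline{l}}\tilde{\tau}=\partial_k\partial_{\overline{l}}\tau+CR_{i\overline{j}k\overline{l}}$, and then plug in the expression for $\partial_{\overline{l}}\partial_k\tau_{i\overline{j}}$ from the proof of Theorem~4.3.2 together with Theorem~4.3.1 for the first-derivative factors. The only nonlinearity is in the inverse metric, which is why $\tilde{\tau}^{p\overline{q}}$ replaces $\tau^{p\overline{q}}$.
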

\begin{proof}
By the normality of $s$ we know that $\partial_kh_{i\overline{q}}=\partial_{\overline{l}}h_{p\overline{j}}=0$, $\Gamma^p_{ik}=h^{p\overline{j}}\partial_kh_{i\overline{j}}=0$ and $R_{i\overline{j}k\overline{l}}=\partial_{\overline{l}}\partial_kh_{i\overline{j}}$ at $s$.  Then we have \[\partial_k\tilde{\tau}_{i\overline{q}}=\partial_k\tau_{i\overline{q}}+C\partial_kh_{i\overline{q}}=\partial_k\tau_{i\overline{q}},\]
\[\partial_{\overline{l}}\tilde{\tau}_{p\overline{j}}=\partial_{\overline{l}}\tau_{p\overline{j}}+C\partial_{\overline{l}}h_{p\overline{j}}=\partial_k\tau_{i\overline{j}}\]
and
\[\partial_{\overline{l}}\partial_k\tilde{\tau}_{i\overline{j}}=\partial_{\overline{l}}\partial_k\tau_{i\overline{j}}+C\partial_{\overline{l}}\partial_kh_{i\overline{j}}=\partial_{\overline{l}}\partial_k\tau_{i\overline{j}}+CR_{i\overline{j}k\overline{l}}\]
at $s$.  Hence using the formula of $\partial_{\overline{l}}\partial_k\tau_{i\overline{j}}$ and Theorem 4.3.2, we have
\begin{align*}
P_{i\overline{j}k\overline{l}}&=\partial_{\overline{l}}\partial_k\tilde{\tau}_{i\overline{j}}-\tilde{\tau}^{p\overline{q}}\partial_k\tilde{\tau}_{i\overline{q}}\partial_{\overline{l}}\tilde{\tau}_{p\overline{j}}\\
&=\partial_{\overline{l}}\partial_k\tau_{i\overline{j}}+CR_{i\overline{j}k\overline{l}}-\tilde{\tau}^{p\overline{q}}\partial_k\tau_{i\overline{q}}\partial_k\tau_{i\overline{j}}\\
&=h^{\alpha\overline{\beta}}\bigg\{\sigma_1\sigma_2\int_{X_s}(\Box+1)^{-1}(\xi_k(e_{i\overline{j}}))(\overline{\xi}_l(e_{\alpha\overline{\beta}})+\overline{\xi}_{\beta}(e_{\alpha\overline{l}}))dv\bigg\}\\
&\quad+h^{\alpha\overline{\beta}}\bigg\{\sigma_1\int_{X_s}Q_{k\overline{l}}(e_{i\overline{j}})e_{\alpha\overline{\beta}}dv\bigg\}+\tau_{p\overline{j}}h^{p\overline{q}}R_{i\overline{q}k\overline{l}}+CR_{i\overline{j}k\overline{l}}\\
&-\tilde{\tau}^{p\overline{q}}(h^{\alpha\overline{\beta}}\bigg\{\sigma_1\int_{X_s}\xi_k(e_{i\overline{q}})e_{\alpha\overline{\beta}}dv\bigg\})(h^{\gamma\overline{\delta}}\bigg\{\tilde{\sigma}_1\int_{X_s}\overline{\xi}_l(e_{p\overline{j}})e_{\gamma\overline{\delta}}dv\bigg\}),
\end{align*}
which is the desired formula.
\end{proof}

\subsection{Estimates on the Curvature of the Perturbed Ricci Metric}
The following linear algebra lemma will help us estimate the inverse matrix $(\tilde{\tau}^{i\overline{j}})$ near an interior point and a boundary point.
\begin{lem}
Let $D$ be a neighborhood of $0$ in $\mathbb{C}^n$ and let $A$ and $B$ be two positive definite $n\times n$ Hermitian matrix functions on $D$ such that they are bounded from above and below on $D$ and each entry of them is bounded on $D$.  Then each entry of the inverse matrix $(A+CB)^{-1}$ is of order $O(C^{-1})$ when $C$ is very large.
\end{lem}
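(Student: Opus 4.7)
The plan is to exploit the uniform positive-definiteness of $B$: for sufficiently large $C$, the term $CB$ dominates $A$, so $A+CB$ has operator norm bounded below by $C$ times a positive constant, and therefore its inverse has operator norm of order $C^{-1}$. The transition from operator norm bound to entry-wise bound is then a one-line consequence of positive-definiteness.

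First I would translate the hypothesis ``$B$ is positive definite and bounded from below on $D$'' into the statement that there exists $c_1>0$, independent of the point in $D$, such that $B \geq c_1 I$ as a Hermitian form at every point of $D$. Since $A$ is also positive definite, $A \geq 0$, so
\begin{equation*}
A + CB \;\geq\; C c_1 I
\end{equation*}
uniformly on $D$ for all $C>0$. Inverting this inequality of positive definite Hermitian matrices yields
\begin{equation*}
0 \;<\; (A+CB)^{-1} \;\leq\; \frac{1}{C c_1}\, I.
\end{equation*}

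Next I would deduce the entry-wise bound. Let $M = (A+CB)^{-1}$, which is Hermitian and positive definite. Since $M \leq (Cc_1)^{-1} I$, each diagonal entry satisfies $M_{ii} \leq (Cc_1)^{-1}$. Because the $2\times 2$ principal submatrices of $M$ are positive semidefinite, one has $|M_{ij}|^2 \leq M_{ii}\,M_{jj} \leq (Cc_1)^{-2}$, so every entry of $(A+CB)^{-1}$ is bounded in absolute value by $(Cc_1)^{-1} = O(C^{-1})$, as claimed.

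There is essentially no obstacle here: the lemma is a soft linear-algebra statement and does not require the upper bound on $A$ or $B$, nor the entrywise boundedness of $A$ and $B$ (these hypotheses are presumably stated for convenience in the later application to $\tilde{\tau}^{i\overline{j}} = (\tau+Ch)^{-1\;i\overline{j}}$, where $A$ plays the role of $\tau$ and $B$ the role of $h$ near a boundary point). The only subtlety worth verifying is that ``bounded from below'' for $B$ really means a uniform positive lower bound on the eigenvalues on all of $D$, which is what makes the constant $c_1$ independent of the point; once this is fixed, the estimate is immediate from operator monotonicity of matrix inversion on positive definite matrices.
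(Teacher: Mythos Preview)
Your argument is correct and in fact cleaner than the paper's. The paper proceeds via Cramer's rule: it observes that $\det(A+CB)$ is a polynomial in $C$ of degree $n$ whose leading coefficient $\det(B)$ is bounded below (by the lower bound on $B$), while the remaining coefficients are bounded (by the entrywise bounds on $A$ and $B$); hence for large $C$ one has $|\det(A+CB)|\geq \tfrac{1}{2}|\det(B)|C^n$. Each cofactor is a polynomial in $C$ of degree at most $n-1$ with bounded coefficients, so the quotient giving the $(i,j)$-entry of $(A+CB)^{-1}$ is $O(C^{-1})$.

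Your route through the operator inequality $A+CB\geq Cc_1 I$ and the $2\times 2$ principal minor trick is shorter and actually uses strictly less: you need only the uniform lower bound on $B$ and the positivity of $A$, not the upper bounds or the entrywise boundedness of $A$ and $B$. The paper's determinant approach, by contrast, genuinely consumes those extra hypotheses to control the lower-order coefficients. What the paper's method buys is that it exposes the polynomial-in-$C$ structure of $\det(\tilde{\tau})$ and of the minors, which is precisely the form reused in the proof of the next lemma (the asymptotics of $\tilde{\tau}^{i\overline{j}}$ in pinching coordinates); your operator-norm argument gives the $O(C^{-1})$ bound more directly but would not by itself yield those explicit expansions.
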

\begin{proof}
Notice that $\det(A+CB)$ is a polynomial of $C$ of degree $n$ whose leading term is $\det(B)$, which is bounded from below.  Moreover, all the other coefficients are bounded as they are solely products of the entries of $A$ and $B$.  Thus we can choose $C$ large enough so that $\dfrac{1}{2}|\det(B)|C^n\geq|\det(A+CB)-\det(B)C^n|$, which implies that $|\det(A+CB)|\geq|\det(B)|C^n-|\det(A+CB)-\det(B)C^n|\geq\dfrac{1}{2}|\det(B)|C^n$.  We also know that the determinant of the $(i, j)$-minor of $A+CB$ is a polynomial of $C$ of degree at most $n-1$ whose coefficients are all bounded by the same reason as above.  Since the $(i, j)$-entry of $(A+CB)^{-1}$ is $(-1)^{i+j}$ multiplying the quotient of the determinant of the $(j, i)$-minor of $A+CB$ by $\det(A+CB)$, the result follows.
\end{proof}

The estimates on the inverse matric $(\tilde{\tau}^{i\overline{j}})$ near a boundary point is like this:
\begin{lem} (Lemma 5.2, \cite{bib1})
Let $X_0\in\mathcal{M}_g$ be a codimension $m$ boundary point and let $(t_1, ..., s_n)$ be the pinching coordinates near $X_0$.  Then for $|(t, s)|<\delta$ with $\delta$ small and for any $C>0$, we have
\begin{itemize}
\item[(i)] $0<\tilde{\tau}^{i\overline{i}}<\tau^{i\overline{i}}$ for all $i$;
\item[(ii)] $\tilde{\tau}^{i\overline{j}}=O(|t_it_j|)$ if $i, j\leq m$ and $i\neq j$;
\item[(iii)] $\tilde{\tau}^{i\overline{j}}=O(|t_i|)$ if $i\leq m$ and $j\geq m+1$;
\item[(iv)] $\tilde{\tau}^{i\overline{j}}=O(1)$ if $i, j\geq m+1$.
\end{itemize}
Furthermore, the bounds in $(ii)$ to $(iv)$ are independent of the choice of $C$.
\end{lem}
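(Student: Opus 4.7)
The plan is to deduce (i) from Löwner ordering of Hermitian matrices and to obtain (ii)--(iv) by imitating the Cramer-rule computation carried out in the second half of the proof of Corollary~4.4.2, now applied to $\tilde{\tau} = \tau + Ch$.

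For (i), since $Ch$ is positive definite Hermitian, $\tilde{\tau} > \tau > 0$ strictly in the Löwner order, so by operator monotonicity of matrix inversion on the positive cone $0 < \tilde{\tau}^{-1} < \tau^{-1}$; comparing $(i,i)$-entries of these positive definite matrices yields $0 < \tilde{\tau}^{i\overline{i}} < \tau^{i\overline{i}}$.

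For (ii)--(iv), I first establish entry-wise estimates for $\tilde{\tau}$. By Corollary~4.4.1 and Corollary~4.4.2, each entry of $h$ is smaller in order than the corresponding entry of $\tau$ by at least one factor of $u_0$ (concretely, $h_{k\overline{k}} = O(u_k)\tau_{k\overline{k}}$ for $k \leq m$, and similar relations hold off the diagonal and in the mixed indices). Shrinking $\delta$ so that $Cu_0 \leq 1$, the $C$-dependent perturbation $Ch_{i\overline{j}}$ is absorbed into a $(1+O(u_0))$-factor with a universal constant, and every entry of $\tilde{\tau}$ has the same asymptotic order as the corresponding entry of $\tau$. On the non-degenerate block $\tilde{A} = (\tilde{\tau}_{i\overline{j}})_{i,j\geq m+1}$, both $(\tau_{i\overline{j}})_{i,j\geq m+1}$ and $(h_{i\overline{j}})_{i,j\geq m+1}$ extend continuously to strictly positive definite Hermitian matrices on the Deligne--Mumford boundary (by the Masur--Wolpert argument invoked at the end of the proof of Corollary~4.4.2) and are therefore uniformly bounded above and below in a neighborhood of $X_0$; Lemma~4.5.1 then gives $(\tilde{A}^{-1})_{ij} = O(C^{-1})$, hence $O(1)$ uniformly in $C>0$.

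Expanding $\det(\tilde{\tau})$ and each cofactor $\det(\tilde{\Phi}_{ij})$ by the Leibniz formula as in Corollary~4.4.2, the dominating permutation fixes as many degenerate indices as possible, yielding
\[
\det(\tilde{\tau}) = \Big(\prod_{k=1}^m \tilde{\tau}_{k\overline{k}}\Big)\det(\tilde{A})(1+O(u_0)),
\]
and for $i \neq j$ with $i,j \leq m$ the dominating cofactor term pairs $j \mapsto i$ and fixes the remaining degenerate indices, giving
\[
\det(\tilde{\Phi}_{ij}) = \pm\Big(\prod_{\substack{k\leq m\\ k\neq i,j}}\tilde{\tau}_{k\overline{k}}\Big)\tilde{\tau}_{j\overline{i}}\det(\tilde{A})(1+O(u_0));
\]
analogous expressions hold in the mixed and non-degenerate cases. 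Forming $\tilde{\tau}^{i\overline{j}} = (-1)^{i+j}\det(\tilde{\Phi}_{ji})/\det(\tilde{\tau})$ cancels $\det(\tilde{A})$ in (ii) and (iii), reducing those entries of $\tilde{\tau}^{-1}$ to local ratios of the form $\tilde{\tau}_{j\overline{i}}/(\tilde{\tau}_{i\overline{i}}\tilde{\tau}_{j\overline{j}})$ (or an obvious analogue) that are directly controlled by the above entry-wise estimates; in (iv), the surviving ratio is an entry of $\tilde{A}^{-1}$ and is $O(1)$ uniformly in $C$ by Lemma~4.5.1.

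The main obstacle is tracking $C$-independence of the implied constants throughout this calculation. Every occurrence of $C$ must either cancel in the final ratio---as happens for the $\det(\tilde{A})$ factor in (ii), (iii) and via the matched $C^{n-m}$ versus $C^{n-m-1}$ scaling of $\det(\tilde{A})$ and its cofactors in (iv)---or be absorbed into an $O(u_0)$-correction by the $\delta = \delta(C)$ choice. The most delicate point is the control of the non-dominating Leibniz terms: each such term carries a per-entry factor of $(1+C)$, but gains at least one extra power of $u_0$ relative to the dominating term, and this $u_0$-gain must be shown to override the $(1+C)$-loss once $\delta(C)$ has been shrunk so that $Cu_0 \leq 1$.
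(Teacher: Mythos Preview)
Your overall strategy---Cramer's rule combined with a Leibniz expansion of $\det(\tilde{\tau})$ and its cofactors, identifying the dominating permutation as the one fixing the degenerate indices---is exactly the paper's approach, and your treatment of part~(i) via L\"owner monotonicity is what the paper means by ``a fact of linear algebra.''

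The gap is in your handling of the $C$-independence in (ii)--(iv). You propose to absorb $Ch_{i\overline{j}}$ into a $(1+O(u_0))$-correction by shrinking $\delta$ so that $Cu_0\le 1$, i.e.\ you take $\delta=\delta(C)$. But the lemma asserts a \emph{fixed} $\delta$ that works for all $C>0$, with implied constants independent of $C$; your route proves only the weaker statement in which the neighborhood shrinks as $C\to\infty$. The paper avoids this by \emph{not} absorbing the $C$-dependence. Instead it records the degenerate diagonal entries in the exact form
\[
\tilde{\tau}_{k\overline{k}}=\frac{u_k^2}{|t_k|^2}\Bigl(\tfrac{3}{4\pi^2}+\tfrac{Cu_k}{2}\Bigr)(1+O(u_0)),
\]
and the degenerate off-diagonal and mixed entries as $\frac{u_i^2u_j^2}{|t_it_j|}(O(u_0)+CO(u_iu_j))$ and $\frac{u_i^2}{|t_i|}(O(1)+CO(u_i))$, with the $O(\cdot)$ symbols genuinely independent of $C$. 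In the Leibniz expansion the factors $\bigl(\tfrac{3}{4\pi^2}+\tfrac{Cu_k}{2}\bigr)$ for $k\le m$ appear in both $\det(\tilde{\tau})$ and each cofactor $\det(\tilde{\Phi}_{ij})$ for every degenerate index not deleted, and therefore cancel in the ratio $\det(\tilde{\Phi}_{ij})/\det(\tilde{\tau})$. For the one or two degenerate indices that \emph{are} deleted, only the uniform bound $\bigl(\tfrac{3}{4\pi^2}+\tfrac{Cu_k}{2}\bigr)^{-1}\le \tfrac{4\pi^2}{3}$ is needed. The remaining off-diagonal numerator $(O(u_0)+CO(u_iu_j))$ is then divided by at least one surviving factor $\bigl(\tfrac{3}{4\pi^2}+\tfrac{Cu_i}{2}\bigr)$, and the elementary inequality $\frac{a+Cb}{c+Cd}\le \frac{a}{c}+\frac{b}{d}$ for positive $a,b,c,d$ gives a $C$-independent bound. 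This is the mechanism you are missing: the $C$'s cancel structurally rather than being suppressed by a $\delta(C)$ choice, and your ``$(1+C)$-loss versus $u_0$-gain'' bookkeeping for the non-dominating Leibniz terms is likewise replaced by this explicit cancellation.
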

\begin{proof}
(i) is indeed a fact of linear algebra.  To prove (ii) to (iv), first let $A=(\tau_{i\overline{j}})_{i, j\geq m+1}$ and $B=(h_{i\overline{j}})_{i, j\geq m+1}$ again (notice that these two matrices in fact represent the non-degenerate directions of the Ricci metric and the Weil-Petersson metric).  Then using similar argument as in the proof of the second part of Corollary 4.4.2, we know that both $A$ and $B$ have positive lower bounds.  Moreover, by Corollary 4.4.1, $B$ is bounded from above and by Corollary 4.4.2, $A$ is bounded from above.  Hence both $A$ and $B$ are bounded from above and below and each of their entries is bounded on $\{(t, s):|(t, s)|\leq\delta\}$.  Now by Corollary 4.4.1 and Corollary 4.4.2 again we have
\[
(\tilde{\tau}_{i\overline{j}})=\left(
\begin{array}{cc}
\Upsilon & \Psi\\
\overline{\Psi}^T & A+CB
\end{array} \right)
\]
where $\Upsilon$ is an $m\times m$ matrix given by
\[
\Upsilon=\left(
\begin{array}{ccc}
\dfrac{u_1^2}{|t_1|^2}(\dfrac{3}{4\pi^2}+\dfrac{Cu_1}{2})(1+O(u_0)) & \cdots & \dfrac{u_1^2u_m^2}{|t_1t_m|}(O(u_0)+CO(u_1u_m))\\
\vdots & \ddots & \vdots\\
\dfrac{u_1^2u_m^2}{|t_1t_m|}(O(u_0)+CO(u_1u_m)) & \cdots & \dfrac{u_m^2}{|t_m|^2}(\dfrac{3}{4\pi^2}+\dfrac{Cu_m}{2})(1+O(u_0))
\end{array} \right)
\]
and $\Psi$ is an $m\times(n-m)$ matrix given by
\[
\Psi=\left(
\begin{array}{ccc}
\dfrac{u_1^2}{|t_1|}(O(1)+CO(u_1)) & \cdots & \dfrac{u_1^2}{|t_1|}(O(1)+CO(u_1))\\
\vdots & \ddots & \vdots\\
\dfrac{u_m^2}{|t_m|}(O(1)+CO(u_m)) & \cdots & \dfrac{u_m^2}{|t_m|}(O(1)+CO(u_m))
\end{array} \right).
\]
In fact, $\Upsilon$ represents the degenerate directions of the perturbed Ricci metric and $\Psi$ represents the mixed directions of the perturbed Ricci metric.  Now let $\tilde{\Phi}_{ij}$ be the $(i, j)$-minor of $(\tilde{\tau}_{i\overline{j}})$.  Then using similar computation techniques as in the proof of the second part of Corollary 4.4.2 we can show that
\[
\det(\tilde{\tau})=(\prod_{k=1}^m\dfrac{u_k^2}{|t_k|^2}(\dfrac{3}{4\pi^2}+\dfrac{Cu_k}{2}))\det(A+CB)(1+O(u_0))
\]
where the term $O(u_0)$ is independent of $C$, and
\[\det(\tilde{\Phi}_{ij})=(\prod_{k=1, k\neq i, j}^m\dfrac{u_k^2}{|t_k|^2}(\dfrac{3}{4\pi^2}+\dfrac{Cu_k}{2}))\det(A+CB)\dfrac{u_i^2u_j^2}{|t_it_j|}(O(1)+CO(u_i))\]
if $i, j\leq m$ and $i\neq j$;
\[\det(\tilde{\Phi}_{ij})=(\prod_{k=1, k\neq i}^m\dfrac{u_k^2}{|t_k|^2}(\dfrac{3}{4\pi^2}+\dfrac{Cu_k}{2}))\det(A+CB)O(\dfrac{u_i^2}{|t_i|})\]
if $i\leq m$ and $j\geq m+1$;
\[(\prod_{k=1}^m\dfrac{u_k^2}{|t_k|^2}(\dfrac{3}{4\pi^2}+\dfrac{Cu_k}{2}))\det(D_{ij})O(1)\]
if $i, j\geq m+1$, where $D_{ij}$ is the $(i, j)$-minor of $A+CB$.  Note that all the $O(\cdot)$ above are independent of $C$ as well.

It is worth to recall that $(\dfrac{3}{4\pi^2}+\dfrac{Cu_k}{2})^{-1}\leq\dfrac{4\pi^2}{3}=O(1)$ and by Lemma 4.5.1 we have $\bigg|\dfrac{\det(D_{ij})}{\det(A+CB)}\bigg|=O(C^{-1})=O(1)$, where these two bounds are also independent of $C$.  Hence by the fact that $|\tilde{\tau}^{i\overline{j}}|=\bigg|\dfrac{\det(\tilde{\Phi}_{ij})}{\det(\tilde{\tau})}\bigg|$, the result follows.
\end{proof}

Now we can estimate the curvature of the perturbed Ricci metric.
\begin{thm} (Theorem 5.2, \cite{bib1})
For a suitable choice of the positive constant $C$, the perturbed Ricci metric $\tilde{\tau}_{i\overline{j}}=\tau_{i\overline{j}}+Ch_{i\overline{j}}$ is complete and its holomorphic sectional curvature is negative and negatively bounded from above and below.  Furthermore, its bisectional curvature and Ricci curvature are bounded from above and below.
\end{thm}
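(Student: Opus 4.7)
The plan is to analyze the curvature formula of Theorem 4.5.1 by decomposing directions into degenerate and non-degenerate ones, and to play the Ricci-metric contribution off against the Weil-Petersson contribution $CR_{i\overline{j}k\overline{l}}$. Completeness is immediate: since $h$ is positive definite, $\tilde{\tau}(v,v)\ge\tau(v,v)$ for every $v$, so every $\tilde{\tau}$-Cauchy sequence is $\tau$-Cauchy, and completeness of $\tilde{\tau}$ follows from that of $\tau$.

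For the holomorphic sectional curvature I would split the analysis near the boundary. Fix a codimension $m$ boundary point with pinching coordinates $(t,s)$ and consider a degenerate direction $\partial_i$ with $i\le m$. By Corollaries 4.4.1 and 4.4.2, $h_{i\overline{i}}=O(u_i^3/|t_i|^2)$, which is a factor $u_i$ smaller than $\tau_{i\overline{i}}\sim 3u_i^2/(4\pi^2|t_i|^2)$; therefore $\tilde{\tau}_{i\overline{i}}=\tau_{i\overline{i}}(1+O(u_i))$, and via Lemma 4.5.2, $\tilde{\tau}^{i\overline{i}}=\tau^{i\overline{i}}(1+O(u_i))$. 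Comparing the formula of Theorem 4.5.1 term by term with that of Theorem 4.3.2 at $i=j=k=l$, the only discrepancies are a correction to the third term of size $(\tau^{i\overline{i}}-\tilde{\tau}^{i\overline{i}})(h^{i\overline{i}})^2\big|\int_{X_s}\xi_i(e_{i\overline{i}})e_{i\overline{i}}\,dv\big|^2$ and the additive contribution $CR_{i\overline{i}i\overline{i}}$. Using the computations in the proof of Theorem 4.4.3, both of these are of order $u_i^5/|t_i|^4$, a factor $u_i$ below the dominant $3u_i^4/(8\pi^4|t_i|^4)$ of $\tilde{R}_{i\overline{i}i\overline{i}}$. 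Hence $P_{i\overline{i}i\overline{i}}=\tilde{R}_{i\overline{i}i\overline{i}}(1+O(u_i))$, and the holomorphic sectional curvature of $\tilde{\tau}$ in degenerate directions inherits the asymptotic value $-2/3$ already obtained for the Ricci metric.

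In the complementary regime -- non-degenerate directions near the boundary and at interior points -- $\tau$ and $h$ are comparable, and the Weil-Petersson contribution $CR(v,\overline{v},v,\overline{v})$ becomes the dominant negative term. Here I would use that the Weil-Petersson holomorphic sectional curvature $R_{WP,\,i\overline{i}i\overline{i}}$ is bounded below by a positive multiple of $h_{i\overline{i}}^2$ in the non-degenerate block; this follows from Theorem 3.2.1 combined with Masur's non-degeneracy of $(h_{i\overline{j}})_{i,j\ge m+1}$ at the boundary and a compactness argument on $\overline{\mathcal{M}_g}$. Combining this with the $O(1)$ weak estimate of $\tilde{R}_{i\overline{j}k\overline{l}}$ from the proof of Theorem 4.4.3 shows that choosing $C$ large enough (depending only on the universal bound for $\tilde{R}$ in non-degenerate directions) forces $P(v,\overline{v},v,\overline{v})\ge c_1\,\tilde{\tau}(v,v)^2$ for a positive $c_1$. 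Since by Lemma 4.5.2 the non-degenerate block of $\tilde{\tau}$ is $A+CB$ with $A,B$ bounded above and below independently of $C$, $\tilde{\tau}(v,v)$ remains comparable to $h(v,v)$ uniformly on this block.

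For the two-sided bounds on bisectional and Ricci curvatures I would rerun the weak-estimate argument from Theorem 4.4.3, but applied to $P_{i\overline{j}k\overline{l}}$ in place of $\tilde{R}_{i\overline{j}k\overline{l}}$. The only structural change is the replacement $\tau^{p\overline{q}}\mapsto\tilde{\tau}^{p\overline{q}}$ in the third term, but Lemma 4.5.2 shows that $\tilde{\tau}^{i\overline{j}}$ has the same asymptotic orders as $\tau^{i\overline{j}}$, with the crucial feature that the bounds in (ii)--(iv) are independent of $C$. The additive $CR_{i\overline{j}k\overline{l}}$ is controlled term by term using Theorem 3.2.1 and Lemmas 4.4.2--4.4.3. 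A finite cover of $\overline{\mathcal{M}_g}\setminus\mathcal{M}_g$ by pinching charts together with compactness of a complementary subset of $\mathcal{M}_g$ patches the local bounds into global ones. The principal obstacle is the simultaneous choice of $C$: it must be large enough that in the non-degenerate regime the Weil-Petersson term overrides any negative $\tilde{R}$, yet the weak estimates for $P$ must not blow up with $C$, which requires precisely the $C$-independence statement of Lemma 4.5.2 in the mixed and non-degenerate blocks. The uniformity of that lemma is what makes the balance feasible.
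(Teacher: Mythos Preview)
Your approach is essentially the same as the paper's: completeness from $\tilde{\tau}\ge\tau$, then curvature estimates split into degenerate and non-degenerate directions, with the Ricci-metric contribution dominating in the former and the Weil--Petersson term $CR$ dominating in the latter for $C$ large, all controlled via Lemmas~4.5.1 and~4.5.2 and patched by compactness. The paper's own proof is only a sketch that defers the details to \cite{bib2}, so your write-up is in fact more explicit than what appears here.

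One technical point deserves attention. Theorem~4.5.1 is stated in \emph{normal} coordinates for the Weil--Petersson metric, whereas the asymptotic estimates you invoke (Corollaries~4.4.1, 4.4.2, and the computations inside the proof of Theorem~4.4.3) are all carried out in \emph{pinching} coordinates. The paper explicitly remarks after Theorem~4.3.2 that normal coordinates near boundary points are difficult to describe, which is precisely why the general-coordinate formula of Theorem~4.3.2 is used for the Ricci-metric curvature estimates. To make your term-by-term comparison rigorous you need the general-coordinate version of the perturbed-Ricci curvature formula; this amounts to reinstating the Christoffel-symbol terms that were dropped in the proof of Theorem~4.5.1 (namely the $\Gamma^p_{ik}$, $\overline{\Gamma^q_{jl}}$ contributions, exactly parallel to those in Theorem~4.3.1 and the full second-derivative computation preceding Theorem~4.3.2) and then estimating them in pinching coordinates. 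This is routine and is absorbed by the same weak estimates you cite, but it cannot simply be skipped.
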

\begin{proof}
As long as $C$ is non-negative it is obvious that $\tilde{\tau}$ is complete since it is greater than $\tau$ which is complete.

The proof of the boundedness of the holomorphic sectional curvature, the bisectional curvature and the Ricci curvature is in principle the same as the proof of Theorem 4.4.3 with the aid of Lemma 4.5.1 and Lemma 4.5.2.  For a detailed proof one can consult the proof of the Theorem 3.3 in \cite{bib2}.
\end{proof}

\subsection{Equivalence of the Perturbed Ricci Metric and the Ricci Metric}
The equivalence among the perturbed Ricci metric and the Ricci metric would be apparent after considering this lemma (Lemma 6.1, \cite{bib1}):
\begin{lem}
The Weil-Petersson metric is bounded from above by a constant multiple of the Ricci metric, i.e., there exists a constant $\alpha>0$ such that $\omega_{WP}\leq\alpha\tau$.
\end{lem}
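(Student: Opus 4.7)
The inequality $\omega_{WP}\leq\alpha\tau$ is a pointwise L\"owner inequality between the Hermitian positive definite matrices $H=(h_{i\overline{j}})$ and $T=(\tau_{i\overline{j}})$ at each point of $\mathcal{M}_g$. My plan is to recast it as a uniform bound on a single scalar function and then read off that bound from the asymptotic estimates already proved. Concretely, $H\leq \alpha T$ holds if and only if $T^{-1/2}HT^{-1/2}\leq\alpha I$. Since $T^{-1/2}HT^{-1/2}$ is positive semi-definite and shares its eigenvalues with $T^{-1}H$, its largest eigenvalue is dominated by its trace. Therefore it suffices to produce a constant $\alpha$ such that
\[
\varphi(X):=\operatorname{tr}\bigl(T^{-1}H\bigr)=\sum_{i,k}\tau^{i\overline{k}}\,h_{k\overline{i}}\leq\alpha
\]
uniformly for all $X\in\mathcal{M}_g$.

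The uniform bound on $\varphi$ is obtained by combining compactness of the Deligne--Mumford compactification $\overline{\mathcal{M}_g}$ with the asymptotic estimates of Section~4.4. On any compact subset of $\mathcal{M}_g$, $\varphi$ is a continuous positive function, hence bounded. Near a codimension~$m$ boundary point, I work in pinching coordinates $(t_1,\dots,t_m,s_{m+1},\dots,s_n)$ and decompose the sum into diagonal and off-diagonal contributions. Using Corollary 4.4.1 for the Weil--Petersson entries $h_{k\overline{i}}$ and Corollary 4.4.2 for the inverse Ricci entries $\tau^{i\overline{k}}$, each term is controlled: for $i=k\leq m$ one gets $O(|t_i|^2/u_i^2)\cdot O(u_i^3/|t_i|^2)=O(u_i)$; for $i=k\geq m+1$ both factors are $O(1)$; for $i\neq k$ with $i,k\leq m$, $O(|t_it_k|)\cdot O(u_i^3u_k^3/|t_it_k|)=O(u_i^3u_k^3)$; for $i\leq m<k$, $O(|t_i|)\cdot O(u_i^3/|t_i|)=O(u_i^3)$; and for $i,k\geq m+1$ both factors are $O(1)$. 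Since $u_i$ stays bounded on the chart, $\varphi$ extends continuously to the boundary trace in the chart and is bounded there.

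Finally, a finite subcover of $\overline{\mathcal{M}_g}$ by such coordinate charts produces a single constant $\alpha>0$ with $\varphi\leq\alpha$ on all of $\mathcal{M}_g$; this is precisely the desired $\omega_{WP}\leq\alpha\,\tau$.

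\textbf{Main obstacle.} All of the heavy lifting has already been done in Corollaries 4.4.1 and 4.4.2, so the only genuine step is the reduction to controlling the trace $\operatorname{tr}(T^{-1}H)$. The chart-by-chart asymptotic computation is routine but must be carried out in all mixed cases, and the ``patching'' step relies on the compactness of $\overline{\mathcal{M}_g}$ together with the observation that the $O(\cdot)$ constants appearing in Corollaries 4.4.1 and 4.4.2 are uniform on each pinching chart. I do not anticipate any conceptual difficulty beyond this bookkeeping.
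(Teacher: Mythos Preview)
Your proposal is correct and follows essentially the same route as the paper, which simply says the lemma ``follows directly from Corollary 4.4.1 and Corollary 4.4.2.'' Your reduction to a uniform bound on $\operatorname{tr}(T^{-1}H)$, followed by the chart-by-chart asymptotics and the compactness patching (exactly the mechanism used elsewhere in the paper, e.g.\ in the proof of Theorem~4.4.3), is a clean and complete way to unpack that one-line proof.
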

\begin{proof}
This lemma follows directly form Corollary 4.4.1 and Corollary 4.4.2.
\end{proof}

Here we prove the equivalence:
\begin{thm} (Theorem 6.1, \cite{bib1})
The Ricci metric and the perturbed Ricci metric are equivalent.
\end{thm}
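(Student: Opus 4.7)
The plan is to derive the equivalence directly from Lemma 4.5.3 (the unpublished Lemma 6.1 of \cite{bib1}) combined with the defining relation $\tilde{\tau}_{i\overline{j}} = \tau_{i\overline{j}} + C h_{i\overline{j}}$. The point is that once one knows $\omega_{WP} \leq \alpha\,\tau$ for some positive constant $\alpha$ independent of the point of $\mathcal{M}_g$, the perturbed Ricci metric is sandwiched between $\tau$ and a constant multiple of $\tau$, which is exactly equivalence.

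First I would note that since $h_{i\overline{j}}$ is positive semidefinite (indeed positive definite) and $C > 0$, the lower bound $\tau \leq \tilde{\tau}$ holds trivially from the definition, as $\tilde{\tau} - \tau = C h$. Next I would invoke Lemma 4.5.3 to get $h \leq \alpha\,\tau$ globally on $\mathcal{M}_g$. Substituting this into the definition yields
\[
\tilde{\tau} \;=\; \tau + C h \;\leq\; \tau + C\alpha\,\tau \;=\; (1 + C\alpha)\,\tau.
\]
Combining the two inequalities gives $\tau \leq \tilde{\tau} \leq (1 + C\alpha)\,\tau$, which is precisely the equivalence of the two metrics.

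There is essentially no obstacle here, because the hard work has already been done: Lemma 4.5.3 is what encodes the nontrivial asymptotic comparison of $h$ and $\tau$ near the Deligne--Mumford boundary (which in turn rests on the pinching-coordinate estimates in Corollary 4.4.1 and Corollary 4.4.2), while the interior behavior is handled by compactness of $\mathcal{M}_g$ relative to a neighborhood of the boundary. The only thing one must be careful about is that the constant $\alpha$ is uniform over all of $\mathcal{M}_g$, but this is exactly the content of Lemma 4.5.3 as stated. Hence the proof is just a one-line algebraic manipulation on top of that lemma, and no further curvature estimates or fiber-integration calculations are needed.
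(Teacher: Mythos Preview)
Your proposal is correct and follows essentially the same route as the paper's own proof: use the trivial inequality $\tau\leq\tilde{\tau}$ from the definition, then apply Lemma 4.5.3 ($\omega_{WP}\leq\alpha\tau$) to obtain $\tilde{\tau}\leq(1+C\alpha)\tau$, yielding the two-sided bound. The paper writes the same chain of inequalities; your added remarks about where the uniformity of $\alpha$ comes from are accurate but auxiliary.
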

\begin{proof}
As $\tilde{\tau}_{i\overline{j}}=\tau_{i\overline{j}}+Ch_{i\overline{j}}$ and $C>0$, the Ricci metric is indeed bounded above by the perturbed Ricci metric.  Then by Lemma, we also have $\dfrac{1}{1+C\alpha}\tilde{\tau}=\dfrac{1}{1+C\alpha}(\tau+C\omega_{WP})\leq\tau$.  Hence we know that $\dfrac{1}{1+C\alpha}\tilde{\tau}\leq\tau\leq(1+C\alpha)\tilde{\tau}$.
\end{proof}

\chapter{Equivalence of the K\"{a}hler Metrics on the Teichm\"{u}ller Spaces and the Moduli Spaces of Riemann Surfaces}
In this final chapter we will go through two main results of Liu-Sun-Yau\cite{bib1}, namely, the equivalence of the Ricci metric and the K\"{a}hler-Einstein metric, and the equivalence of the Ricci metric and the McMullen metric.  These equivalences together imply that the Teichm\"{u}ller metric is equivalent to the K\"{a}hler-Einstein metric, which provides a confirmed answer to Yau's conjecture\cite{bib15}.

\section{Equivalence of the Ricci Metric and the K\"{a}hler-Einstein Metric}
Let's quote a fact of linear algebra which we will need in the proof of the equivalence of the Ricci metric $\tau$ and the K\"{a}hler-Einstein metric $g_{KE}$:
\begin{lem}
Let $A$ and $B$ be positive definite $n\times n$ Hermitian matrices and let $\alpha$ and $\beta$ be positive constants such that $B\geq \alpha A$ and $\det(B)\leq \beta\det(A)$.  Then there exists a positive constant $\gamma$ depending on $\alpha$, $\beta$ and $n$ such that $B\leq \gamma A$.
\end{lem}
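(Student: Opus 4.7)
The plan is to simultaneously diagonalize $A$ and $B$ and then reduce the whole statement to an elementary inequality on the $n$ generalized eigenvalues.

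First I would choose a nonsingular matrix $P$ so that $P^{*}AP = I_{n}$ and $P^{*}BP = D$, where $D = \operatorname{diag}(\lambda_{1},\ldots,\lambda_{n})$ is real diagonal; this is possible because $A$ is positive definite Hermitian, and $\lambda_{1},\ldots,\lambda_{n}$ are then the eigenvalues of $A^{-1}B$, all positive. The matrix inequality $B \geq \alpha A$ pulls back under $v \mapsto Pv$ to $D \geq \alpha I_{n}$, so $\lambda_{i} \geq \alpha$ for every $i$. The determinantal hypothesis $\det(B) \leq \beta \det(A)$ pulls back, using $|\det P|^{2}$ on both sides, to $\prod_{i=1}^{n} \lambda_{i} \leq \beta$.

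Next, for each fixed index $i$ I would isolate $\lambda_{i}$ in the product: since every other factor satisfies $\lambda_{j} \geq \alpha > 0$, we have
\[
\lambda_{i} \;=\; \frac{\prod_{k=1}^{n} \lambda_{k}}{\prod_{j \neq i} \lambda_{j}} \;\leq\; \frac{\beta}{\alpha^{\,n-1}}.
\]
Setting $\gamma := \beta/\alpha^{n-1}$, which depends only on $\alpha$, $\beta$, and $n$, gives $D \leq \gamma I_{n}$. Pushing this inequality forward by $P^{-*}(\cdot)P^{-1}$ yields $B \leq \gamma A$, which is the desired conclusion.

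There is no real obstacle here; the only thing to be careful about is that simultaneous diagonalization is by congruence, not by unitary conjugation, so one should verify that both inequalities $B \geq \alpha A$ and the target $B \leq \gamma A$ transform correctly under congruence (they do, since for any vector $w$ and $v = Pw$ one has $w^{*}(P^{*}BP)w = v^{*}Bv$ and similarly for $A$). Once that is noted, the argument reduces to the one-line estimate above.
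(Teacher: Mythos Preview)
Your proof is correct. The simultaneous diagonalization by congruence is the natural tool here, and you handle it cleanly: the lower bound $\lambda_i \geq \alpha$ comes from $B \geq \alpha A$, the product bound $\prod_i \lambda_i \leq \beta$ from the determinant hypothesis, and the upper bound $\lambda_i \leq \beta/\alpha^{n-1}$ follows immediately. Your remark that the inequalities transform under congruence (not unitary conjugation) is exactly the point worth flagging.

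As for comparison with the paper: the paper does not actually prove this lemma. It introduces it with the phrase ``Let's quote a fact of linear algebra'' and states it without argument. So your write-up supplies a proof where the paper gives none; there is nothing to compare against, and your argument is the standard one.
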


\bigskip
Now we can start proving the first equivalence (Theorem 6.2, \cite{bib1}).
\begin{thm}
The Ricci metric and the K\"{a}hler-Einstein metric are equivalent.
\end{thm}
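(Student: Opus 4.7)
The strategy is to apply Yau's Schwarz lemma in both its metric and volume-form versions (Theorems 2.3.1 and 2.3.2) to the identity map on $\mathcal{M}_g$, using the \emph{perturbed} Ricci metric $\tilde{\tau}$ as the intermediate comparison object, and then close the argument by the equivalence $\tau \sim \tilde{\tau}$ already proved in Theorem 4.5.3. The perturbed Ricci metric is what one wants to work with here because, unlike $\tau$ itself, Theorem 4.5.2 guarantees that its holomorphic sectional curvature is bounded above by a \emph{negative} constant, which is exactly the hypothesis Schwarz-Yau demands on the target side.

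For one inequality I would apply the metric form of Schwarz-Yau to $\mathrm{id}\colon (\mathcal{M}_g, g_{KE}) \to (\mathcal{M}_g, \tilde{\tau})$. The K\"ahler-Einstein metric is complete (Theorem 4.2.1) and satisfies $\mathrm{Ric}(g_{KE}) = -g_{KE}$, hence its Ricci curvature is bounded below by $-1$. Theorem 4.5.2 provides a uniform negative upper bound on the holomorphic sectional curvature of $\tilde{\tau}$. The lemma then yields $\tilde{\tau} \le c_1 \, g_{KE}$, equivalently $g_{KE} \ge c_1^{-1}\, \tilde{\tau}$. For the reverse direction I would apply the volume-form version of Schwarz-Yau to $\mathrm{id}\colon (\mathcal{M}_g, \tilde{\tau}) \to (\mathcal{M}_g, g_{KE})$: the domain is a complete K\"ahler manifold whose sectional, scalar, and Ricci curvatures are all uniformly bounded by Theorem 4.5.2, and the target is K\"ahler-Einstein with Ricci form $-\omega_{KE}$, negatively bounded. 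This gives $\det(g_{KE}) \le c_2\, \det(\tilde{\tau})$.

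At this point the two outputs, namely a one-sided operator inequality $g_{KE} \ge c_1^{-1}\, \tilde{\tau}$ together with the determinant bound $\det g_{KE} \le c_2\, \det \tilde{\tau}$, are the exact hypotheses of the linear algebra Lemma 5.1.1 with $A = \tilde{\tau}$ and $B = g_{KE}$. Applying that lemma fiberwise on $\mathcal{M}_g$ produces the matching upper bound $g_{KE} \le \gamma\, \tilde{\tau}$, so $\tilde{\tau} \sim g_{KE}$. Combining with Theorem 4.5.3 gives $\tau \sim \tilde{\tau} \sim g_{KE}$.

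The main obstacle is ensuring that the hypotheses of Schwarz-Yau are verified \emph{uniformly} across the noncompact space $\mathcal{M}_g$, most notably the uniform negative upper bound on the holomorphic sectional curvature of $\tilde{\tau}$ and the uniform lower bound on its Ricci curvature; but this is precisely the content of Theorem 4.5.2 established in the previous chapter, so the genuine analytic work has already been done. The only subtlety left in the final argument is the asymmetry in what Schwarz-Yau provides: the metric version gives a one-sided comparison of metrics, while the volume version gives a one-sided comparison of determinants, and it is precisely Lemma 5.1.1 that packages these two pieces of information into a two-sided metric equivalence.
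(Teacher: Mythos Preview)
Your proposal is correct and follows essentially the same route as the paper: metric Schwarz--Yau from $g_{KE}$ to $\tilde{\tau}$ for one inequality, volume Schwarz--Yau for the determinant bound in the other direction, then Lemma~5.1.1 to upgrade to a two-sided metric comparison. The only cosmetic difference is that the paper applies the volume-form step with domain $(\mathcal{M}_g,\tau)$ rather than $(\mathcal{M}_g,\tilde{\tau})$, invoking Theorem~4.4.3 for the curvature bounds of $\tau$; your choice of $\tilde{\tau}$ throughout works equally well since Theorem~4.5.2 supplies the needed bounds, and the final passage through Theorem~4.5.3 is the same either way.
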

\begin{proof}
Consider the identity map $i_1: (\mathcal{M}_g, g_{KE}) \rightarrow (\mathcal{M}_g, \tilde{\tau})$.  From classical results we know that $g_{KE}$ is complete and its Ricci curvature is $-1$.  Moreover, by Theorem 4.5.2 we know that the holomorphic sectional curvature of $\tilde{\tau}$ is bounded from above by a negative constant.  Hence by Theorem 2.3.1, there is a positive constant $c_0$ such that $g_{KE}\geq c_0\tilde{\tau}$.  Thus we also have $g_{KE}\geq c_1\tau$ for some constant $c_1>0$ by Theorem 4.5.3.

Now we turn to consider the identity map $i_2: (\mathcal{M}_g, \tau) \rightarrow (\mathcal{M}_g, g_{KE})$.  By Theorem 4.4.3 we know that the scalar curvature and the Ricci curvature of $\tau$ is bounded from below.  In addition, the Ricci curvature of $g_{KE}$ is $-1$.  Then by Theorem 2.3.2 there exists a constant $c_2>0$ such that $\det(g_{KE})\leq c_2\det(\tau)$.  Hence using Lemma 5.1.1 there is a constant $c_3>0$ such that $g_{KE}\leq c_3\tau$, and this proves the theorem.
\end{proof}

\section{Equivalence of the Ricci Metric and the McMullen Metric}
For the comparison between the Ricci metric $\tau$ and the McMullen metric $g_{1/l}$, we are required to compute the first derivatives of the length functions of short geodesics (Lemma 6.3, \cite{bib1}).
\begin{lem}
Let $X_0\in\overline{\mathcal{M}_g}$ be a codimension $m$ boundary point and let $(t_1, ..., s_n)$ be the pinching coordinates around $X_0$.  Also, let $l_j$ be the length function of the short geodesic on the collar $\Omega_c^j$.  Then we have
\[
\partial_il_j=\left\{
\begin{array}{ll}
-\pi u_j\overline{b_i^j} & \text{if } i\neq j;\\
-\pi u_i\overline{b_i} & \text{if } i=j,
\end{array}\right.
\]
where $b_i^j$ and $b_i$ are defined in Lemma 4.4.1.
\end{lem}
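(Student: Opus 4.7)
The plan is to reduce the computation of $\partial_i l_j$ to an integral pairing of the harmonic Beltrami differential $B_i$ against a specific holomorphic quadratic differential, using the Wolpert/McMullen identity quoted in Section 4.1:
\[
\psi_j \;\equiv\; \partial\log l_j \;=\; -\frac{l_j}{2\pi^3}\,\phi_j, \qquad \phi_j \;:=\; \pi_*\!\left(\frac{dz^2}{z}\right).
\]
Combined with the natural duality $[B_i;\varphi]=\int_X A_i\,\varphi\,dz\,d\bar z$ from Section 2.2.5 and the fact that $\partial_i$ is represented by $B_i = A_i\,\partial_z\otimes d\bar z$, this gives
\[
\partial_i l_j \;=\; l_j\,\psi_j(\partial_i) \;=\; -\frac{l_j^{\,2}}{2\pi^3}\int_X A_i\,\varphi_j\,dz\,d\bar z,
\]
reducing the task to an asymptotic evaluation of the right-hand integral on the genuine collar $\Omega_c^j$, with the complement $X\setminus\Omega_c^j$ contributing only to the error.

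The second step is to pin down the leading form of $\varphi_j$ on $\Omega_c^j$ in the rs-coordinate $z_j$. Since $\Omega_c^j$ lifts isometrically to a large fundamental domain of the cyclic $\langle\gamma_j\rangle$-cover, and $dz^2/z$ on that cover is (up to a universal constant) a translate of the deck-invariant differential $dz^2/z^2$ while the remaining Poincar\'e-series translates decay uniformly inside the collar, one gets the expansion $\varphi_j(z_j) = c_0/z_j^{\,2} + (\text{decay})$ on $\Omega_c^j$ for an explicit constant $c_0$. Outside $\Omega_c^j$ the differential $\phi_j$ is exponentially small in $1/u_j$; combined with the decay bound $\|A_i\|_{0,\,X-\Omega_c^j}=O(u_i^3/|t_i|)$ from Lemma 4.4.2, the remainder $\int_{X\setminus\Omega_c^j}A_i\,\varphi_j$ is of strictly lower order.

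The main collar calculation is then direct. Substituting
\[
A_i(z_j) \;=\; \frac{z_j}{\bar z_j}\,\sin^2\tau_j\,\bigl(\overline{p_i^j(z_j)} + \overline{b_i^j}\bigr)
\]
from Lemma 4.4.1 (with the $i=j$ analogue using $p_j,b_j$) into $A_i\cdot c_0/z_j^{\,2} = (c_0\sin^2\tau_j/|z_j|^2)\bigl(\overline{p_i^j(z_j)}+\overline{b_i^j}\bigr)$ and passing to polar coordinates $(r_j,\theta_j)$, the angular integration $\int_0^{2\pi}d\theta_j$ annihilates every nonzero Fourier mode of $\overline{p_i^j}$, leaving only the constant $2\pi\,\overline{b_i^j}$. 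The remaining radial integral $\int_{c^{-1}e^{-\pi/u_j}}^{c}\sin^2\tau_j\,dr_j/r_j$, via the substitution $\tau_j=u_j\log r_j$ already performed in the claim inside the proof of Corollary 4.4.1, equals $\pi/(2u_j)$ to leading order. Assembling the constants with $l_j = 2\pi u_j$ yields $\partial_i l_j = -\pi u_j\,\overline{b_i^j}$; the diagonal case $i=j$ is identical upon substituting $p_j,b_j$ for $p_i^j,b_i^j$.

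The principal obstacle is pinning down the constant $c_0$ correctly, i.e.\ tracking the exact normalization when identifying the rs-coordinate $z_j$ with the natural coordinate on the annular cover $X_{\gamma_j}$ and when taking the Poincar\'e push-forward defining $\phi_j$. A secondary technical point is verifying that the non-dominant translates of $dz^2/z$ within $\Omega_c^j$ and the full tail integral over $X\setminus\Omega_c^j$ are indeed absorbed into an $o(1/u_j)$ error; this follows from the exponential decay of the translated differentials together with the $L^\infty$ bounds on $A_i$ from Lemma 4.4.2, and the overall consistency of the resulting formula can be cross-checked against the leading-order plumbing computation $\partial_j l_j \sim u_j^{\,2}/t_j$ obtained from $u_j = -\pi/\log|t_j|\,(1+O(u_0))$ together with $b_j = -u_j/(\pi\overline{t_j}) + O(u_ju_0/|t_j|)$.
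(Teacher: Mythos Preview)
Your approach via the Wolpert identity $\partial\log l_j=-\frac{l_j}{2\pi^3}\phi_j$ and the duality pairing is genuinely different from the paper's, and it is a reasonable route to an \emph{asymptotic} version of the formula. As written, however, it does not prove the exact equality in the lemma. The angular-integration argument kills the $\overline{p_i^j}$ modes only against the single term $c_0/z_j^{2}$; the remaining Laurent modes of $\phi_j$ on $\Omega_c^j$ pair nontrivially with the corresponding modes of $\overline{p_i^j}$, the radial integral $\int\sin^2\tau_j\,dr_j/r_j$ equals $\pi/(2u_j)$ only up to an $O(1)$ additive error (the collar is truncated at finite $c$), and the off-collar contribution is controlled only as a lower-order remainder. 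What you would actually obtain is $\partial_i l_j=-\pi u_j\,\overline{b_i^j}\,(1+o(1))$, which suffices for the application in Theorem~5.2.1 but is weaker than the stated lemma. You also leave your own ``principal obstacle'' of fixing $c_0$ unresolved.

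The paper's proof is purely local and exact, and sidesteps all of this. Since $B_i$ is harmonic, $\lambda A_i$ restricts to an antiholomorphic form $\kappa_i(\bar z_j)\,d\bar z_j^{\,2}$ on $\Omega_c^j$. Multiplying $\lambda=\tfrac12 u_j^2|z_j|^{-2}\csc^2\tau_j$ against the expression for $A_i$ in Lemma~4.4.1 shows at once that the coefficient of $\bar z_j^{-2}$ in $\kappa_i$ is $\tfrac12 u_j^2\,\overline{b_i^j}$ (resp.\ $\tfrac12 u_j^2\,\overline{b_j}$). Independently, one differentiates the explicit collar metric: writing the rs-coordinate on nearby fibers as $w=w(z,t,s)$ with $w(z,t_0,s_0)=z$, inserting the pulled-back metric $\Lambda=\tfrac12 u_j^2|w|^{-2}\csc^2(u_j\log|w|)\,|\partial_z w|^2$ into $A_i=-\partial_{\bar z}(\Lambda^{-1}\partial_i\partial_{\bar z}\log\Lambda)$, and extracting the $\bar z^{-2}$ Laurent coefficient gives $-u_j\,\partial_i u_j$ exactly (the contributions involving $\partial_i w$ cancel). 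Equating the two computations and using $l_j=2\pi u_j$ yields the lemma with no error term, bypassing both the normalization of $\phi_j$ and any global integration.
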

\begin{proof}
By the harmonicity we know that on $\Omega_c^j$, $\lambda A_i$ is an anti-holomorphic quadratic differential, so we can write $\lambda A_i$ as $\kappa_i(\overline{z})d\overline{z}^2$ using the rs-coordinate $z$ on $\Omega_c^j$.  Let $C_{-2}(\kappa_i)$ be the coefficient of the term $\dfrac{1}{\overline{z}^2}$ in the expansion of $\kappa_i$.  By Lemma 4.4.1 we know that
\[
C_{-2}(\kappa_i)=\left\{
\begin{array}{ll}
\dfrac{1}{2}u_j^2\overline{b_i^j} & \text{if } i\neq j;\\
\dfrac{1}{2}u_i^2\overline{b_i} & \text{if } i=j.
\end{array}\right.
\]
Now fix $(t_0,s_0)$ with small norm and let $X=X_{t_0,s_0}$.  Also, let $w$ and $z$ be the rs-coordinates on the $j$-th collar of $X_{t,s}$ and $X$ respectively.  Notice that $w=w(z,t,s)$ is holomorphic with respect to $z$ and $w(z,t_0,s_0)=z$.  Consider the pulled-back metric on the $j$-th collar of $X_{t,s}$ to $X$.  We then obtain
\[\Lambda=\dfrac{1}{2}\dfrac{u_j^2}{|w|^2}\csc^2(u_j\log|w|)\bigg|\dfrac{\partial w}{\partial z}\bigg|^2\]
being the K\"{a}hler-Einstein metric on the $j$-th collar of $X_{t,s}$.  Note that $A_i=\partial_{\overline{z}} a_i=-\partial_{\overline{z}}(\Lambda^{-1}\partial_i\partial_{\overline{z}}\log\Lambda)$, so we can compute that at $(t_0,s_0)$,
\[\kappa_i(\overline{z})=-\dfrac{u_j\partial_i u_j}{\overline{z}^2}+\dfrac{u_j^2+1}{\overline{z}^3}\partial_i\overline{w}|_{(t_0,s_0)}-\dfrac{u_j^2+1}{\overline{z}^2}\partial_i\partial_{\overline{z}}\overline{w}|_{(t_0,s_0)}-\partial_i\partial_{\overline{z}}\partial_{\overline{z}}\partial_{\overline{z}}\overline{w}|_{(t_0,s_0)}.\]
Then since $\overline{w}$ is anti-holomorphic with respect to $z$, we know from the above expression that $C_{-2}(\kappa_i)=(-u_j\partial_i u_j)+c(u_j^2+1)-c(u_j^2+1)+0=-u_j\partial_i u_j$, where $c=\partial_i\partial_{\overline{z}}\overline{w}(0,t_0,s_0)$.  Finally we have
\[
\partial_i u_j=\left\{
\begin{array}{ll}
-\dfrac{1}{2}u_j\overline{b_i^j} & \text{if } i\neq j;\\
-\dfrac{1}{2}u_i\overline{b_i} & \text{if } i=j,
\end{array}\right.
\]
and thus the lemma follows by noting that $l_j=2\pi u_j$.
\end{proof}

We can prove the second equivalence (Theorem 6.3, \cite{bib1}) now.
\begin{thm}
The Ricci metric and the McMullen metric are equivalent.
\end{thm}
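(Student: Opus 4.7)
The plan is to establish $\tau \asymp g_{1/l}$ by proving the two inequalities separately, one via the Schwarz--Yau lemma and the other via a direct asymptotic comparison in pinching coordinates.

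For the direction $\tau \leq C\,g_{1/l}$, I would apply the Schwarz--Yau lemma (Theorem 2.3.1) to the identity map $(\mathcal{M}_g,g_{1/l})\to(\mathcal{M}_g,\tilde\tau)$. The source is complete by Theorem 4.1.1 (i) and has sectional curvatures bounded from both sides by Theorem 4.1.1 (iv), so in particular its Ricci curvature is bounded from below. The target has holomorphic sectional curvature negatively bounded from above by Theorem 4.5.2. Hence Theorem 2.3.1 gives $\tilde\tau\leq C_1\,g_{1/l}$, and since $\tau\leq\tilde\tau$ by construction of the perturbed Ricci metric, we obtain $\tau\leq C_1\,g_{1/l}$.

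For the reverse direction $g_{1/l}\leq C'\tau$, I would combine Lemma 4.1.1 with the explicit formula in Lemma 5.2.1 and the asymptotics of Corollary 4.4.2. By the equivalence of the McMullen and Teichm\"uller metrics (Theorem 4.1.3) and Lemma 4.1.1, for sufficiently small $\epsilon$ and any $v\in T_X\mathcal{M}_g$,
$$
g_{1/l}(v,v)\;\asymp\;\|v\|_{WP}^2\;+\;\sum_{l_\gamma(X)<\epsilon}\bigl|\partial\log l_\gamma(v)\bigr|^2 .
$$
By Lemma 4.5.4 the first term is dominated by a constant multiple of $\tau(v,v)$, so only the length-function sum must be controlled. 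Fix a codimension-$m$ boundary point $X_{0,0}$ with pinching coordinates $(t,s)$ and write $v=\sum_i v^i\partial_{t_i}+\sum_{j\geq m+1} v^j\partial_{s_j}$. Lemma 5.2.1 gives $\partial_i l_k=-\pi u_k\overline{b_i^k}$ for $i\neq k$ and $\partial_k l_k=-\pi u_k\overline{b_k}$; substituting the asymptotics of Lemma 4.4.1 (in particular $b_k=-u_k/(\pi\overline{t_k})+O(u_ku_0/|t_k|)$ from item (6)) shows
$$
\frac{|\partial_k l_k|^2}{l_k^2}\;=\;\frac{u_k^2}{4\pi^2|t_k|^2}\bigl(1+O(u_0)\bigr),
$$
which exactly matches the order of $\tau_{k\bar k}\sim\tfrac{3}{4\pi^2}\tfrac{u_k^2}{|t_k|^2}$ from Corollary 4.4.2 (i). The mixed and cross contributions $(\partial_i l_k/l_k)v^i$ for $i\neq k$ or $i\geq m+1$ are controlled, after Cauchy--Schwarz, by the off-diagonal entries of $\tau$ given in Corollary 4.4.2 (ii)--(iv), using the bounds $|b_i^k|=O(u_k)$ and $|b_i^k|=O(u_k\cdot u_i^3/|t_i|)$ from Lemma 4.4.1 (4)--(5). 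This yields $g_{1/l}\leq C_2\tau$ on a neighborhood $V$ of the compactification divisor, and a compactness argument on the relatively compact set $\mathcal{M}_g\setminus V$ (as in the last paragraph of the proof of Theorem 4.4.3) upgrades the bound to all of $\mathcal{M}_g$.

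The main obstacle is the bookkeeping for the mixed directions: when $v$ has simultaneously non-negligible components in both a degenerate direction $\partial_{t_k}$ and some other direction $\partial_{t_i}$ or $\partial_{s_j}$, one must check that the cross products $|(\partial_i l_k/l_k)v^i\overline{(\partial_k l_k/l_k)v^k}|$ and the squared mixed terms do not exceed the off-diagonal $\tau$-entries permitted by Corollary 4.4.2. This reduces to a case-by-case matching of the orders coming from Lemma 4.4.1 and Lemma 5.2.1 against those of Corollary 4.4.2, entirely parallel to the case analysis that produced Corollary 4.4.2 itself; it is tedious but mechanical, and forms the only non-trivial computational content in the direction not covered by Schwarz--Yau.
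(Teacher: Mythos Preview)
Your proposal is correct and follows essentially the same architecture as the paper: Schwarz--Yau applied to $(\mathcal{M}_g,g_{1/l})\to(\mathcal{M}_g,\tilde\tau)$ for one direction, and Lemma 4.1.1 together with Lemma 5.2.1, Lemma 4.4.1, and Corollary 4.4.2 for the other. (Your ``Lemma 4.5.4'' should be Lemma 4.5.3.)

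The one place where the paper is more economical is the endgame of the second direction. You work with a general tangent vector and propose a case-by-case matching of cross terms against the off-diagonal asymptotics of $\tau$ from Corollary 4.4.2. The paper avoids this entirely: it only checks the \emph{diagonal} inequalities $(g_{1/l})_{i\bar i}\le C\,\tau_{i\bar i}$ in pinching coordinates, and then invokes a linear-algebra fact (in the spirit of Lemma 5.1.1): since the first direction already gives $g_{1/l}\ge c\,\tau$ as Hermitian forms, and the diagonals of $g_{1/l}$ are controlled by those of $\tau$ (with the diagonals dominating in both matrices near the boundary), one obtains $g_{1/l}\le C'\tau$ without any cross-term bookkeeping. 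Your route works, but this shortcut is the cleaner way to close the argument.
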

\begin{proof}
Since $g_{1/l}$ is complete and the Ricci curvature of $g_{1/l}$ is bounded form below, by Theorem 2.3.1 we know that
\[\tau<\tilde{\tau}\leq C_0g_{1/l}\]
for some positive constant $C_0$.

For the reverse inequality, we first fix a boundary point $X_0$ and the pinching coordinates near $X_0$.  By Lemma 4.1.1 and Theorem 4.1.3, we have
\begin{align*}
(g_{1/l})_{i\overline{i}}&=\left\|\dfrac{\partial}{\partial t_i}\right\|^2_{g_{1/l}}<C_1\left\|\dfrac{\partial}{\partial t_i}\right\|^2_T\\
&\leq C_2\bigg(\left\|\dfrac{\partial}{\partial t_i}\right\|^2_{WP}+\sum_{l_{\gamma}(X_0)<\epsilon}\bigg|(\partial\log l_{\gamma})\dfrac{\partial}{\partial t_i}\bigg|^2\bigg)=C_2\bigg(h_{i\overline{i}}+\sum^m_{j=1}|\partial_i\log l_j|^2\bigg).
\end{align*}
when $i\leq m$, for some constants $C_1$ and $C_2$.

Now we use Lemma 5.2.1 to obtain
\[
|\partial_i\log l_j|^2=\left\{
\begin{array}{ll}
\bigg|\dfrac{-\pi u_j\overline{b_i^j}}{l_j}\bigg|^2=\dfrac{1}{4}|b_i^j|^2 & \text{if } i\neq j;\\
\bigg|\dfrac{-\pi u_j\overline{b_i}}{l_j}\bigg|^2=\dfrac{1}{4}|b_i|^2 & \text{if } i=j.
\end{array}\right.
\]
Then by Lemma 4.4.1 we know that
\begin{align*}
\sum^m_{j=1}|\partial_i\log l_j|^2&=\dfrac{1}{4}|b_i|^2+\dfrac{1}{4}\sum^m_{j=1, j\neq i}|b_i^j|^2\\
&=\dfrac{1}{4}\dfrac{u_i^2}{\pi^2|t_i|^2}(1+O(u_0))+\dfrac{1}{4}\dfrac{u_i^2}{\pi^2|t_i|^2}O(u_0)=\dfrac{1}{4}\dfrac{u_i^2}{\pi^2|t_i|^2}(1+O(u_0)).
\end{align*}
From Corollary 4.4.1 and Corollary 4.4.2 we know that $h_{i\overline{i}}=\dfrac{1}{2}\dfrac{u_i^3}{|t_i|^2}(1+O(u_0))$ and $\tau_{i\overline{i}}=\dfrac{3}{4\pi^2}\dfrac{u_i^2}{|t_i|^2}(1+O(u_0))$ respectively, so we have
\[h_{i\overline{i}}+\sum^m_{j=1}|\partial_i\log l_j|^2\leq C_3\tau_{i\overline{i}}\]
for some constant $C_3$.  Hence we can find a constant $C_4=C_2C_3$ such that
\[(g_{1/l})_{i\overline{i}}\leq C_4\tau_{i\overline{i}}\]
for $i\leq m$.  For $i\geq m+1$, we just need to note that adopting similar argument as above we have
\[h_{i\overline{i}}+\sum^m_{j=1}|\partial_i\log l_j|^2=O(1)+O(u_0).\]
Then since $\tau_{i\overline{i}}=O(1)$ in this case, we have the above inequality for $i\geq m+1$ as well.  Now notice that $g_{1/l}$ is bounded from below by a constant multiple of $\tau$ and the diagonal entries of $((g_{1/l})_{i\overline{j}})$ is bounded from above by a constant multiple of the diagonal entries of $(\tau_{i\overline{j}})$ (and in fact the diagonal entries have dominating orders in respective matrix for both cases).  Therefore by a linear algebra fact similar to Lemma 5.1.1, we know that there is a constant $C_5$ such that
\[\tau\geq C_5g_{1/l}\]
at $X_0$.

Lastly by the same compactness argument as in the proof of Theorem 4.5.2 the theorem is proved.
\end{proof}
\begin{rem}
Since by Theorem 5.1.3, McMullen metric is equivalent to the Teichm\"{u}ller metric, and by Theorem 5.1.1 and Theorem 5.2.1 we know that McMullen metric is equivalent to the K\"{a}hler-Einstein metric, we can conclude that the K\"{a}hler-Einstein metric is equivalent to the Teichm\"{u}ller metric, which proves the Yau's conjecture.
\end{rem}

\newpage
\pagestyle{myheadings} \markright{\large{Bibliography} }
\addcontentsline{toc}{chapter}{Bibliography}
\bibliographystyle{ieee}
\bibliography{database}

\end{document}